\documentclass[11pt]{amsart}

\usepackage[T1]{fontenc}
\usepackage{lmodern}

\usepackage{amsmath,amsfonts,amssymb,amsthm}
\usepackage{graphics,subfig}
\usepackage{esint}
\usepackage{shuffle}
\usepackage{istgame}
\usepackage{footmisc}
\usepackage{tikz}
\usepackage{circuitikz}

\usepackage{lineno}

\usepackage{enumerate}
\usepackage{aligned-overset}

\newcommand{ \E }{ \mathbb{E} }
\newcommand{\PP}{\mathbb{P} }

\newcommand{\ignore}[1]{}

\usepackage[hidelinks,hyperindex,breaklinks]{hyperref}

\newcommand{\len}{\text{Len}}
\newcommand{\Exc}{\text{Exc}}

\newtheorem{theorem}{Theorem}
\newtheorem{remark}{Remark}
\newtheorem{lemma}{Lemma}
\newtheorem{corollary}{Corollary}

\title[An isoperimetric problem with a white-noise volume term]{Asymptotics of a planar isoperimetric problem with a white-noise volume term}

\author{Xiaopeng Cheng, Felix Otto, Matteo Palmieri}

\begin{document}

\begin{abstract}
In this work we study the maximal ratio $I$ between the white noise integrated over a set and the perimeter of the set, which can be seen as a random isoperimetric problem, and appears in the random-field Ising model (Ding-Wirth) and min-max optimal matching (Leighton-Shor). In the planar case considered here, such a ratio is scale-invariant and therefore the problem is critical. As such it requires an ultraviolet cutoff, which we impose by restricting to polygonal sets with side-length at least 1 contained in the ball $B_L$. Our main result establishes the leading-order asymptotics $\mathbb{E}I\approx i\ln^{3/4}L$ for some $i\in(0,\infty)$ and superconcentration at scale $O(\ln^{-1/4}L)$. This is done by relating to a simpler $(1+1)$-dimensional action, studied by the last two authors and C.~Wagner. Such a connection is found by a classical geometric linearization of the perimeter to the Dirichlet energy, justified by Ried-Wagner large-scale regularity theory, and allows a coarse-graining and scale-by-scale iteration argument.
\end{abstract}

\maketitle

\tableofcontents

\section{Introduction}\label{S:introduction}

\subsection{The random isoperimetric problem and its asymptotics}

Given a closed, oriented, curve $\gamma$ in the plane $\mathbb{R}^2$, we denote by $\len(\gamma)$ its length and, for every point $z$ not on the curve, we write $w(\gamma,z)\in\mathbb{Z}$ for its winding number, namely the number of times $\gamma$ goes around\footnote{by convention $w(\gamma,z)$ is positive counter-clockwise and negative clockwise. See the definition in \cite[Subsection 5.1]{dwarxiv}} $z$. For a two-dimensional white noise $\xi$, let us introduce the field term
\begin{align}\label{m107}
W(\gamma):=\int_{\mathbb{R}^2}w(\gamma,z)\xi(z).
\end{align}
This work is devoted to the study of the variational problem
\begin{align*}
\max_\gamma\frac{W(\gamma)}{\len(\gamma)}.
\end{align*}

\medskip

As it is stated, the problem is ill-posed, namely the maximal ratio is naively infinite. Indeed, by picking all the possible integer translates of a fixed circle of radius $\textstyle{\frac{1}{2}}$, one obtains infinitely many independent competitors with the same law, in view of the spatial independence of the noise. An easy fix is then to assume that the curves are contained in a certain compact set. There is however a second, more severe, obstruction, given by criticality: The two functionals scale linearly under isotropic rescalings,
\begin{align}\label{m104}
\gamma=t\hat\gamma\quad\mbox{implies}\quad \len(\gamma)=t\len(\hat\gamma)\quad\mbox{and}\quad W(\gamma)=_{\text{law}}t W(\hat\gamma),
\end{align}
and thus the law of $W(\gamma)/\len(\gamma)$ is the same for every circle $\gamma$, independently of its radius. Hence the ill-posedness persists, since every open bounded domain can be filled with infinitely many disjoint circles with vanishing radii.

\medskip

In order to restore well-posedness, one needs to fix both a macroscopic scale $L$ and a microscopic scale\footnote{These are also called infrared and ultraviolet cutoffs, respectively.}, which in view of the scaling \eqref{m104} can be set to 1. We do that by restricting to the configuration space
\begin{align*}
\Gamma_{L,1}:=\{&\mbox{closed, piecewise-linear curves $\gamma$}\\&\mbox{with image $\subset B_L$ and side-length $\ge1$}\},
\end{align*}
where $B_L$ is the ball with radius $L$ and center at 0. As is common in other critical settings, the problem should be oblivious to the precise small-scale regularization and to the macroscopic shape of the domain. Regarding the latter, we refer to Subsection~\ref{ss:relative} below. While one could think of many other ways of introducing the ultraviolet cutoff, the way we do it, which is inspired by \cite{dw}, preserves the isotropy of the problem and fits nicely with the coarse-graining procedure explained below.

\medskip

Our quantity of interest is therefore
\begin{align}\label{m79}
I_L:=\max_{\gamma\in\Gamma_{L,1}}\frac{W(\gamma)}{\len(\gamma)}.
\end{align}
The main result of this work provides the leading-order asymptotic of its expected value.

\begin{theorem}\label{T:1}
There exists
\begin{align*}
\lim_{L\uparrow\infty}\frac{\mathbb{E}I_L}{\ln^{3/4}L}\in(0,\infty).
\end{align*}
\end{theorem}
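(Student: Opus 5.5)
Our plan is to reduce Theorem~\ref{T:1} to the known asymptotics of the $(1+1)$-dimensional action studied by the last two authors with Wagner. In that problem one maximizes, over graphs $u$ of a function of one variable, a noise term minus a Dirichlet energy. The optimal value grows like $\ln^{3/2}$ of the scale ratio, and has a limiting constant, by a scale-by-scale (dyadic) iteration in which each dyadic scale contributes a roughly independent gain. We will first recast $I_L$ in a penalized, additive form. By scaling and duality, $I_L\ge\lambda$ holds iff $\max_\gamma \{W(\gamma)-\lambda\len(\gamma)\}>0$. We will work with the free energy $F_L(\lambda):=\max_{\gamma\in\Gamma_{L,1}}\{W(\gamma)-\lambda\len(\gamma)\}$, which is additive over disjoint regions. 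This additivity is what coarse-graining requires.

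\emph{Step 1 (linearization).} Near an optimal curve, write $\gamma$ locally as a graph $y=u(x)$ over a line segment of length $\ell$. Then $\len\approx\ell+\frac12\int u'^2$, and $W(\gamma)$ becomes the integral of $\xi$ over the subgraph, i.e.\ the $(1+1)$-dimensional noise term. A curve of length $\sim L$ at ratio $\lambda$ thus corresponds to the action $\int_0^\ell \xi\text{-term}-\frac{\lambda}{2}\int u'^2$. The baseline cost $\lambda\ell$ must be paid by the gain accumulated over all scales between $1$ and $L$. Using the $(1+1)$-d result, the gain per unit length is $\sim \lambda^{-1}\ln^{3/2}L$ (after rescaling the noise-to-energy ratio). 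Balancing $\lambda\sim\lambda^{-1}\ln^{3/2}L$ gives $\lambda\sim\ln^{3/4}L$. This heuristic fixes the exponent, and we aim to make it rigorous with matching constants.

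\emph{Step 2 (justifying graph structure).} Two ingredients justify the graph description. The first is that optimal curves are large-scale Lipschitz-flat graphs. For this we invoke the Ried--Wagner large-scale regularity theory: on scales $r\gg1$, the optimizer deviates from a line by $o(r)$ with high probability. This makes the quadratic expansion of length accurate up to relative errors that vanish as $L\to\infty$. The second is the lower bound, which is constructive. We take a large polygon (e.g.\ a circle approximated at scale $L$), cut it into segments of length $\ell$, and on each replace the side by the optimal $(1+1)$-d graph perturbation. The winding-number field term and the length then split over segments up to boundary errors.

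\emph{Step 3 (existence of the limit).} We identify $i$ as the corresponding constant from the $(1+1)$-d problem, with the appropriate normalization. We then show that $\ln^{-3/4}L\,\E I_L$ is asymptotically sandwiched between quantities converging to it. Superconcentration (fluctuations $O(\ln^{-1/4}L)$) lets us pass between the threshold $\lambda$ where $F_L(\lambda)$ changes sign and $\E I_L$.

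\emph{Main obstacle.} We expect the upper bound to be hardest. It must control arbitrary curves, including non-graph, multiply winding, and many-component ones. The iteration must show that coarse-graining at each dyadic scale loses only a $o(1)$ fraction of the $\ln^{3/2}$ gain, uniformly over these curves. Our first attempt is a multiscale union bound. At scale $r$, we will approximate $\gamma$ by a polygon with side-length $r$. Regularity will control the length discrepancy by a Dirichlet-type energy. The field increment will be a Gaussian with variance bounded by the area swept between successive approximations. Summing over scales then reproduces the $(1+1)$-d recursion with error terms whose entropy is controlled by the $r$-polygon counts.
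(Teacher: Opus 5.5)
There is a genuine gap: nothing in your plan produces \emph{matching} upper and lower constants, and the existence of the limit requires exactly that. Steps~1 and~3 rest on a one-shot balance that uses a single stiffness $\lambda$ for all scales between $1$ and $L$. With the correct inputs this balance is off by a constant factor. By the vertical rescaling $h=\lambda^{-2/3}\hat h$ one has $\max_h(W-\lambda D)(h)=_{\text{law}}\lambda^{-1/3}\ell A_\ell$ with $A_\ell\approx a^*\ln\ell$. Paying the baseline cost $\lambda\ell$ then gives $\lambda^{4/3}\approx a^*\ln L$, i.e.\ the constant $(a^*)^{3/4}$, whereas the actual limit is $(\tfrac43a^*)^{3/4}$.

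The factor $\tfrac43$ reflects the fact that the effective line tension changes from one window of scales to the next. The paper captures this through the exact identity \eqref{m169} for ratios. It linearizes only inside a window $[1,l]$ relative to the coarse-grained curve $\gamma_{\ge l}$, and compares $\gamma_{\ge l}$ with the coarse problem $I_{L/l}$. For the lower bound it decorates the optimizer in $\Gamma_{L,l}$, not a fixed polygon. This gives $\mathbb{E}I_L-\mathbb{E}I_{L/l}\approx a^*(\mathbb{E}I_L)^{-1/3}\ln l$ (Lemma~\ref{L:34}). Raising to the power $\tfrac43$ yields $\mathbb{E}^{4/3}I_L-\mathbb{E}^{4/3}I_{L/l}\approx\tfrac43a^*\ln l$, which is telescoped along $L_n=l_nL_{n-1}$. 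Your free-energy formulation could in principle encode the same mechanism, since integrating out a window lowers the tension by about $a^*\lambda^{-1/3}\ln l$. But this recursion is the heart of the proof, and your plan does not contain it.

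The iteration is also forced by the regularity theory. Lemma~\ref{L:18} gives flatness only up to scales $l$ with $\ln l\ll\ln^{1/8}L$, and a closed curve is not a graph at scale $L$. So your Step~2 claim of flatness on all scales $r\gg1$ is unavailable, and a single linearization over $[1,L]$ cannot yield the upper bound.

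Several inputs are also misquoted or insufficient.
\begin{enumerate}
\item The $(1+1)$-dimensional action grows like $a^*\ln l$ per unit length (Theorem~\ref{T:3}), not like $\ln^{3/2}l$. The stiffness enters as $\lambda^{-1/3}$, not $\lambda^{-1}$. Your exponent $\tfrac34$ comes out right only because these two errors cancel.
\item Superconcentration is not available as an input, because Theorem~\ref{T:5} is deduced from Lemma~\ref{L:34}. Borell's $O(1)$ concentration is what is needed, and it suffices.
\item A plain union bound over coarse polygons does not control the statistical error at the required precision. A supremum over coarse edges costs a factor $\ln^{2/3}L$, and $(\ln^{-1}l+\theta^2)\ln^{2/3}L$ is not small when $\ln l\ll\ln^{1/8}L$. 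The paper instead averages independent edge errors along a single curve. This requires replacing $\gamma^*$ by a simple, regular almost-maximizer (Lemmas~\ref{L:9}, \ref{L:27} and~\ref{L:35}).
\item The regularity theory needs the a priori bound $\mathbb{E}I_l\lesssim\ln^{3/4}l$ (through Lemma~\ref{L:7}), which you do not supply. The paper obtains it by a contradiction argument run through the same recursion.
\end{enumerate}
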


\begin{figure}
\centering
\resizebox{0.5\textwidth}{!}{%
\begin{circuitikz}[scale=0.5]
\tikzstyle{every node}=[font=\LARGE]
\node at (-75,47.75) [circ] {};
\draw [ line width=0.8pt](-75,47.75) to[short] (-74.75,48.5);
\draw [ line width=0.8pt](-74.75,48.5) to[short] (-74.5,49);
\draw [ line width=0.8pt](-74.5,49) to[short] (-74,49.25);
\draw [ line width=0.8pt](-74,49.25) to[short] (-73.75,49.75);
\draw [ line width=0.8pt](-73.75,49.75) to[short] (-73.25,49.5);
\draw [ line width=0.8pt](-73.25,49.5) to[short] (-72.75,50);
\draw [ line width=0.8pt](-72.75,50) to[short] (-72.5,50.75);
\draw [ line width=0.8pt](-72.5,50.75) to[short] (-71.75,51);
\draw [ line width=0.8pt](-71.75,51) to[short] (-71.25,51.75);
\draw [ line width=0.8pt](-71.25,51.75) to[short] (-70.25,52);
\draw [ line width=0.8pt](-70.25,52) to[short] (-70,52.75);
\node at (-70,52.75) [circ] {};
\draw [ line width=0.8pt](-70,52.75) to[short] (-69.25,53);
\draw [ line width=0.8pt](-69.25,53) to[short] (-68.75,52.75);
\draw [ line width=0.8pt](-68.75,52.75) to[short] (-68.25,52.5);
\draw [ line width=0.8pt](-68.25,52.5) to[short] (-67.75,52.5);
\draw [ line width=0.8pt](-67.75,52.5) to[short] (-67.25,53);
\draw [ line width=0.8pt](-67.25,53) to[short] (-66.75,52.75);
\draw [ line width=0.8pt](-66.75,52.75) to[short] (-66.25,53);
\draw [ line width=0.8pt](-66.25,53) to[short] (-66,53.5);
\draw [ line width=0.8pt](-66,53.5) to[short] (-65.5,53.5);
\draw [ line width=0.8pt](-65.5,53.5) to[short] (-65,54);
\draw [ line width=0.8pt](-65,54) to[short] (-64.25,53.75);
\draw [ line width=0.8pt](-64.25,53.75) to[short] (-63.5,54);
\draw [ line width=0.8pt](-63.5,54) to[short] (-62.75,53.75);
\draw [ line width=0.8pt](-62.75,53.75) to[short] (-62,54);
\draw [ line width=0.8pt](-62,54) to[short] (-61.25,54);
\node at (-61.25,54) [circ] {};
\draw [ line width=0.8pt](-61.25,54) to[short] (-60.5,53.75);
\draw [ line width=0.8pt](-60.5,53.75) to[short] (-60.25,53.25);
\draw [ line width=0.8pt](-60.25,53.25) to[short] (-59.5,53);
\draw [ line width=0.8pt](-59.5,53) to[short] (-58.75,53.25);
\draw [ line width=0.8pt](-58.75,53.25) to[short] (-58,52.75);
\draw [ line width=0.8pt](-58,52.75) to[short] (-57.5,52.75);
\draw [ line width=0.8pt](-57.5,52.75) to[short] (-57,53);
\draw [ line width=0.8pt](-57,53) to[short] (-56.25,53.25);
\draw [ line width=0.8pt](-56.25,53.25) to[short] (-55.5,53);
\draw [ line width=0.8pt](-55.5,53) to[short] (-55.25,52.5);
\draw [ line width=0.8pt](-55.25,52.5) to[short] (-54.75,52.25);
\draw [ line width=0.8pt](-54.75,52.25) to[short] (-55,51.5);
\draw [ line width=0.8pt](-55,51.5) to[short] (-54.75,51);
\draw [ line width=0.8pt](-54.75,51) to[short] (-54.25,50.5);
\draw [ line width=0.8pt](-54.25,50.5) to[short] (-53.75,50.25);
\node at (-53.75,50.25) [circ] {};
\node [font=\LARGE] at (-53.5,52.2) {$\gamma$};
\draw [ line width=0.8pt](-53.75,50.25) to[short] (-53.5,49.75);
\draw [ line width=0.8pt](-53.5,49.75) to[short] (-53,49.25);
\draw [ line width=0.8pt](-53,49.25) to[short] (-52.75,48.5);
\draw [ line width=0.8pt](-52.75,48.5) to[short] (-53,48);
\draw [ line width=0.8pt](-53,48) to[short] (-52.75,47.25);
\draw [ line width=0.8pt](-52.75,47.25) to[short] (-52.5,46.75);
\draw [ line width=0.8pt](-52.5,46.75) to[short] (-52.75,46);
\draw [ line width=0.8pt](-52.75,46) to[short] (-52.5,45.5);
\draw [ line width=0.8pt](-52.5,45.5) to[short] (-52.25,44.75);
\draw [ line width=0.8pt](-52.25,44.75) to[short] (-51.75,44.25);
\draw [ line width=0.8pt](-51.75,44.25) to[short] (-52,43.5);
\draw [ line width=0.8pt](-52,43.5) to[short] (-52.5,42.75);
\node at (-52.5,42.75) [circ] {};
\draw [ line width=0.8pt](-52.5,42.75) to[short] (-52.75,42);
\draw [ line width=0.8pt](-52.75,42) to[short] (-52.5,41.25);
\draw [ line width=0.8pt](-52.5,41.25) to[short] (-52.75,40.75);
\draw [ line width=0.8pt](-52.75,40.75) to[short] (-52.5,40);
\draw [ line width=0.8pt](-52.5,40) to[short] (-52.75,39.25);
\draw [ line width=0.8pt](-52.75,39.25) to[short] (-53.5,38.75);
\draw [ line width=0.8pt](-53.5,38.75) to[short] (-54,38);
\draw [ line width=0.8pt](-54,38) to[short] (-54.5,37.5);
\draw [ line width=0.8pt](-54.5,37.5) to[short] (-54.75,36.75);
\draw [ line width=0.8pt](-54.75,36.75) to[short] (-55.25,36);
\draw [ line width=0.8pt](-55.25,36) to[short] (-55.75,35.5);
\draw [ line width=0.8pt](-55.75,35.5) to[short] (-56.25,35.25);
\node at (-56.25,35.25) [circ] {};
\draw [ line width=0.8pt](-56.25,35.25) to[short] (-57,35.5);
\draw [ line width=0.8pt](-57,35.5) to[short] (-57.75,35.5);
\draw [ line width=0.8pt](-57.75,35.5) to[short] (-58.25,35.75);
\draw [ line width=0.8pt](-58.25,35.75) to[short] (-59,35.75);
\draw [ line width=0.8pt](-59,35.75) to[short] (-59.5,36.25);
\draw [ line width=0.8pt](-59.5,36.25) to[short] (-60.25,36.75);
\draw [ line width=0.8pt](-60.25,36.75) to[short] (-61,36.5);
\draw [ line width=0.8pt](-61,36.5) to[short] (-61.75,36.75);
\draw [ line width=0.8pt](-61.75,36.75) to[short] (-62.5,36.5);
\draw [ line width=0.8pt](-62.5,36.5) to[short] (-63.25,36.75);
\draw [ line width=0.8pt](-63.25,36.75) to[short] (-64,37);
\draw [ line width=0.8pt](-64,37) to[short] (-65,37.75);
\node at (-65,37.75) [circ] {};
\draw [ line width=0.8pt](-65,37.75) to[short] (-65.25,38.75);
\draw [ line width=0.8pt](-65.25,38.75) to[short] (-66,39.25);
\draw [ line width=0.8pt](-66,39.25) to[short] (-67,39.25);
\draw [ line width=0.8pt](-67,39.25) to[short] (-67.5,39.75);
\draw [ line width=0.8pt](-67.5,39.75) to[short] (-67.75,40.5);
\draw [ line width=0.8pt](-67.75,40.5) to[short] (-68.75,40.75);
\draw [ line width=0.8pt](-68.75,40.75) to[short] (-69.5,40.75);
\draw [ line width=0.8pt](-69.5,40.75) to[short] (-70.25,41);
\draw [ line width=0.8pt](-70.25,41) to[short] (-70.75,41.25);
\draw [ line width=0.8pt](-70.75,41.25) to[short] (-71.25,41.5);
\draw [ line width=0.8pt](-75,47.75) to[short] (-74.75,47);
\draw [ line width=0.8pt](-74.75,47) to[short] (-74.25,46.75);
\draw [ line width=0.8pt](-74.25,46.75) to[short] (-74,46.25);
\draw [ line width=0.8pt](-74,46.25) to[short] (-73.5,45.75);
\draw [ line width=0.8pt](-73.5,45.75) to[short] (-73.25,45);
\draw [ line width=0.8pt](-73.25,45) to[short] (-73.5,44.25);
\draw [ line width=0.8pt](-73.5,44.25) to[short] (-73.25,43.5);
\draw [ line width=0.8pt](-73.25,43.5) to[short] (-72.5,43);
\draw [ line width=0.8pt](-72.5,43) to[short] (-72.25,42.25);
\draw [ line width=0.8pt](-72.25,42.25) to[short] (-71.25,41.5);
\node at (-71.25,41.5) [circ] {};
\draw [ color={rgb,255:red,255; green,0; blue,0}, , line width=1pt](-75,47.75) to[short] (-71.25,41.5);
\draw [ color={rgb,255:red,255; green,0; blue,0}, , line width=1pt](-71.25,41.5) to[short] (-65,37.75);
\draw [ color={rgb,255:red,255; green,0; blue,0}, , line width=1pt](-65,37.75) to[short] (-56.25,35.25);
\draw [ color={rgb,255:red,255; green,0; blue,0}, , line width=1pt](-56.25,35.25) to[short] (-52.5,42.75);
\draw [ color={rgb,255:red,255; green,0; blue,0}, , line width=1pt](-52.5,42.75) to[short] (-53.75,50.25);
\draw [ color={rgb,255:red,255; green,0; blue,0}, , line width=1pt](-53.75,50.25) to[short] (-61.25,54);
\draw [ color={rgb,255:red,255; green,0; blue,0}, , line width=1pt](-61.25,54) to[short] (-70,52.75);
\draw [ color={rgb,255:red,255; green,0; blue,0}, , line width=1pt](-70,52.75) to[short] (-75,47.75);
\node [font=\LARGE, color={rgb,255:red,255; green,0; blue,0}] at (-55.8,39.5) {$\gamma_{\ge l}$};
\end{circuitikz}
}%
\caption{The curve $\gamma$ (in black) and its coarse-grained version $\gamma_{\ge l}$ (in red).}
\label{f6}
\end{figure}

What follows is a heuristic derivation of the result. The key ingredient is a coarse-graining decomposition of curves in $\Gamma_{L,1}$, which is performed with the aim of connecting the expectation $\mathbb{E}I_L$ at two different scales (see~\eqref{m125} below): Given $\gamma\in\Gamma_{L,1}$, let $\gamma_{\ge l}$ be the coarse-grained curve obtained from $\gamma$ by keeping one vertex every $l$. Let $e_1,\ldots,e_N$ be the edges of $\gamma_{\ge l}$. Then $\gamma$ can be reconstructed from $\gamma_{\ge l}$ by replacing each edge $e_n$ with an open curve $\gamma_n$ with the same endpoints (see Figure~\ref{f6}). This gives a decomposition which may be schematically\footnote{This sum could be rigorously defined in terms of currents.} written as
\begin{align}\label{m171}
\gamma=\gamma_{\ge l}+\sum_{n=1}^N\gamma_n.
\end{align}

\medskip

With this notation, the field $W(\cdot)$ is additive. Indeed, one may extend the definition of the winding number to the open curves $\gamma_n$ by setting $w(\gamma_n,z)=w(\tilde\gamma_n,z)$, where $\tilde\gamma_n$ is the closed curve obtained from $\gamma_n$ by connecting the end and the starting point with a segment. By applying the formula in \eqref{m107}, this gives a definition of $W(\gamma_n)$. Then the winding number is additive with respect to the above splitting,
\begin{align*}
&w(\gamma,z)=w(\gamma_{\ge l},z)+\sum_{n=1}^Nw(\gamma_n,z),
\end{align*}
which directly yields additivity of the field $W(\cdot)$,
\begin{align}\label{m167}
W(\gamma)=W(\gamma_{\ge l})+\sum_{n=1}^NW(\gamma_n).
\end{align}

\medskip

Let us come to the core formula \eqref{m169} of the paper, which connects the ratios $W(\gamma)/\len(\gamma)$ and $W(\gamma_{\ge l})/\len(\gamma_{\ge l})$. To this end, we denote by $l_n$ the length of the edge $e_n$ of $\gamma_{\ge l}$ and write the total length of $\gamma$ as
\begin{align}\label{m168}
\len(\gamma)=\len(\gamma_{\ge l})+\sum_{n=1}^N\len(\gamma_n)-l_n.
\end{align}
Combining \eqref{m167} and \eqref{m168}, by purely algebraic steps, we get the iteration
\begin{align}\label{m169}
\frac{W(\gamma)}{\len(\gamma)}-\frac{W(\gamma_{\ge l})}{\len(\gamma_{\ge l})}=\frac{1}{\len(\gamma_{\ge l})}\sum_{n=1}^N W(\gamma_n)-\frac{W(\gamma)}{\len(\gamma)}\big(\len(\gamma_n)-l_n\big).
\end{align}

\medskip

In order to analyze the r.~h.~s.~, we perform a geometric linearization of the perimeter increment $\len(\gamma_n)-l_n$. Hence, we assume that the maximization problem can be solved within the restricted class of 
\begin{align}\label{m71}
\mbox{curves $\gamma$ which are flat up to scale $l$.}
\end{align}
More precisely, we assume that $\gamma_n$ are graphs of functions $h_n$ over $e_n$ with
\begin{align*}
\mbox{$h_n$ piecewise linear over $l$ intervals of length $\gtrapprox1$ and $\text{Lip}(h_n)\ll1$.}
\end{align*}
In this regime, we replace the perimeter increment with the Dirichlet energy\footnote{with the expression "$A_1\lessapprox A_2$ if $B_1\ll B_2$" we mean that for every $\epsilon>0$ there exists $\delta>0$ such that if $B_1\le\delta B_2$ then $A_1\le(1+\epsilon)A_2$. We write $A_1\approx A_2$ if $A_1\lessapprox A_2$ and $A_2\lessapprox A_1$.},
\begin{align}\label{m72}
\len(\gamma_n)-l_n=\int_0^{l_n}dx\big(\sqrt{1+(\frac{dh_n}{dx})^2}-1\big)\approx\int_0^{l_n}\frac{1}{2}(\frac{dh_n}{dx})^2=:D(h_n).
\end{align}
This is a common step in the context of minimal surfaces and goes under the name of harmonic approximation. 

\medskip

After this substitution, we can rewrite the r.~h.~s.~terms in \eqref{m169} as
\begin{align}
&W_n(h_n)-\frac{W(\gamma)}{\len(\gamma)}D(h_n)\quad\mbox{with}\quad W_n(h):=\int_0^{l_n}dx\int_0^{h(x)}dy~\xi_n(x,y),\label{m65}
\end{align}
where $\xi_n$ is obtained from $\xi$ through a rigid change of coordinates (mapping $e_n$ to $[0,l_n]\times\{0\}$), hence still has the law of white noises. Note that the definitions are chosen so that $W_n(h_n)=W(\gamma_n)$. The passage from the length to the Dirichlet energy allows for the homogeneities under vertical rescalings,
\begin{align*}
h=t\hat h\quad\mbox{implies}\quad D(h)=t^2 D(\hat h)\quad\mbox{and}\quad W_n(h)=t^{1/2}\hat W_n(\hat h),
\end{align*}
where $W_n=_{\text{law}}\hat W_n$ in view of the behavior of white noise under the anisotropic rescalings $(x,y)=(\hat x,t\hat y)$. 
Applying them with $t=\big(W(\gamma)/\len(\gamma)\big)^{-2/3}$, naively thinking of it as deterministic, gives
\begin{align}\label{m170}
\frac{W(\gamma)}{\len(\gamma)}-\frac{W(\gamma_{\ge l})}{\len(\gamma_{\ge l})}\approx\frac{\big(W(\gamma)/\len(\gamma)\big)^{-1/3}}{\len(\gamma_{\ge l})}\sum_{n=1}^N \hat W_n(\hat h_n)-D(\hat h_n).
\end{align}

\medskip

The appearance of the actions $\hat W_n-D$ motivates us to consider the following $(1+1)$-dimensional maximization problem: Given $l\ge1$, consider the configuration space
\begin{align*}
\mathcal{H}_{l,1}:=&\{h:[0,l]\to\mathbb{R}~\mbox{piecewise linear on intervals $[x_n,x_{n+1}]$ where}\\ &\mbox{$0=x_0<x_1<\ldots<x_N=l$ with $x_{n+1}-x_{n}\ge1$}\\&\mbox{and such that $h(0)=h(l)=0$}\}.
\end{align*}
We are therefore interested in the one-dimensional action
\begin{align}\label{m163}
A_l:=\max_{h\in\mathcal{H}_{l,1}}(W-D)(h)/l\quad\mbox{and}\quad h^*\in\arg\max_{h\in\mathcal{H}_{l,1}}(W-D)(h)/l
\end{align}
where the Dirichlet energy $D$ and the field term $W$ are defined in \eqref{m72} and \eqref{m65}, respectively. The above action has been shown in \cite{op} to diverge logarithmically in the ratio $l/1$, with a deterministic, leading-order constant. 
In fact, the result in \cite{op} has been proven with a slightly different implementation of the small-scale cutoff defining the configuration space $\mathcal{H}_{l,1}$. The details on how to deduce this precise statement from \cite{op} are given in the appendix. In order to quantify the homogenization result, we need to introduce the following Orlicz norms on the probability space:
\begin{align}\label{m14}
\mbox{For}~s\in[1,\infty), \quad\|X\|_s:=\inf\{N>0:\mathbb{E}\exp(\frac{|X|}{N})^s\le e\}.
\end{align}

\begin{theorem}[{\cite{opw,op}}]\label{T:3}
There exists a deterministic $a^*\in(0,\infty)$ such that\footnote{By $A\lesssim B$ we mean that there is universal constant $C$ such that $A\le CB$. We write $A\sim B$ if $A\lesssim B$ and $B\lesssim A$.}
\begin{align}\label{m75}
\|A_l-a^*\ln l\|_{3/2}\lesssim1\quad\mbox{and}\quad\big\|\max_{x\in[0,l]}|h^*(x)|/l\big\|_3\lesssim 1.
\end{align}
\end{theorem}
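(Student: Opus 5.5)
This result is imported from \cite{opw,op}, where it is proven for a slightly different small-scale cutoff. So the plan is a comparison argument rather than a new proof. In \cite{op} the configuration space consists of functions that are, roughly, piecewise linear on a fixed dyadic-type grid of mesh $1$, or have their roughness cut off through a mollified noise. There one has a deterministic $a^*$ with $\|A^{\rm op}_l-a^*\ln l\|_{3/2}\lesssim1$ and a Lipschitz-type bound on the optimizer. I will show that our $A_l$ and theirs differ by $O(1)$ in $\|\cdot\|_{3/2}$ (in fact in a stronger Orlicz norm), and then transfer the optimizer bound.

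\emph{Step 1 (lower bound).} The grid-adapted competitors of \cite{op} belong to $\mathcal{H}_{l,1}$, possibly after merging boundary cells so that every interval has length $\ge1$. Hence $A_l\ge A^{\rm op}_l-O(1)$, where the $O(1)$ comes from the modification near the endpoints. Its size is controlled by a supremum of white-noise integrals over $O(1)$-sized regions, which has Gaussian tails.

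\emph{Step 2 (upper bound).} Given $h\in\mathcal{H}_{l,1}$ with nodes $x_n$ and $x_{n+1}-x_n\ge1$, I will project it onto the \cite{op} class, for instance by linearly interpolating $h$ at the integer points, or at the nearest grid points. Linear interpolation does not increase the Dirichlet energy. For the field term, $|W(h)-W(\tilde h)|$ is a white-noise integral over the region between the graphs. I need to bound this error by $\tfrac12 D(h)+O(l)$ uniformly in $h$, i.e.\ by a random variable $C_\xi\, l$ with $\|C_\xi\|_{3/2}\lesssim1$; then the loss in $A_l$ is $O(1)$. This needs a chaining/union bound over the local shapes of $h$ on each unit cell. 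On a cell where $h$ has slope $s$, the area between $h$ and its interpolant is $\lesssim |s|$ and the local energy is $\sim s^2$. The Gaussian fluctuation $\sim|s|^{1/2}$ is absorbed by $s^2$ up to $O(1)$ per cell, and the supremum over cells and slopes is handled by a multiscale union bound in $|s|$ and position. The stretched-exponent $3/2$ comes from exactly this balance between the energy and the noise under the anisotropic scaling $t$.

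\emph{Step 3.} Comparing maximizers then gives $\|A_l-a^*\ln l\|_{3/2}\lesssim1$. For the optimizer, I will reuse the \cite{op} argument directly rather than transfer it. Since $(W-D)(h^*)\ge0$, we have $D(h^*)\le W(h^*)$. If $\max|h^*|=Ml$ then, with fixed endpoints, $D(h^*)\gtrsim M^2 l$, whereas $\sup\{W(h):\max|h|\le Ml\}\lesssim C_\xi M^{1/2}l$ with Gaussian-type tails, via the same chaining. This forces $M^{3/2}\lesssim C_\xi$, giving the $\|\cdot\|_3$ bound. The main obstacle is the uniform chaining estimate in Step 2, because $\mathcal{H}_{l,1}$ allows arbitrary non-grid node positions and large slopes. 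I would handle it by first discretizing node positions to a fine mesh, at a cost controlled by the same Gaussian estimates, and then applying a union bound.
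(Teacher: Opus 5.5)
Step 1 is fine and matches the paper: the class of \cite{op}, piecewise linear on the cells $[n-1,n]$, is literally contained in $\mathcal{H}_{l,1}$. Steps 2 and 3, however, have genuine gaps.

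\emph{Step 2 does not give an $O(1)$ error.} There are two problems.

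First, absorbing the field error into $\tfrac12 D(h)$, or into the local energy $s^2$, changes the leading order. Near-maximizers have $D(h)\sim l\ln l$, so you only get $\max_{\mathcal{H}_{l,1}}(W-D)\le\max_{\tilde{\mathcal{H}}_{l,1}}(W-\tfrac12 D)+C_\xi l$. The vertical rescaling $h=2^{2/3}\hat h$ shows that this right-hand side has leading constant $2^{1/3}a^*$, not $a^*$. Using $\epsilon D$ instead costs $\sim\epsilon^{-1/3}$ per cell, and the best balance still leaves an error $\sim\ln^{1/4}l$. The area between $h$ and its chord on a cell is governed by the slope jump, not the slope: a steep linear piece coincides with its interpolant. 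So the only energy you may spend is the drop $D(h)-D(\tilde h)$. For a kink at relative position $a$ with jump $\Delta s$, this drop is $\tfrac12 a(1-a)(\Delta s)^2$, against an area $\tfrac12 a(1-a)|\Delta s|$.

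Second, and more seriously, even with this fix the union bound you describe cannot give $O(l)$. The triangle in cell $n$ sits at height $h(n)$, which for relevant $h$ ranges over $\sim l$ unit positions. Since steep pieces survive interpolation, the drop does not pay for reaching favourable heights. Whether you union-bound cell by cell or over all discretized $h$ (entropy $\sim l\ln l$), you lose a power of $\ln l$.

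The missing idea is to run the union bound over a class of entropy $O(l)$, as the paper does following \cite{op}:
\begin{itemize}
\item Fix the integer net function $\bar h=\Pi(h)$. For fixed $\bar h$ the cell errors $E_n(\bar h)$ are independent, with $\|E_n(\bar h)\|_2\lesssim|\bar h(n)-\tfrac12(\bar h(n-1)+\bar h(n+1))|^{1/2}+1$.
\item The paper works at microscale $2$, which is equivalent by scaling, precisely so that $h$ is linear on the neighbouring cells and the kink can be read off from the net.
\item Take the union bound over the regular classes $\mathcal{N}_\nu$, whose log-cardinality is $\lesssim l\ln\nu$ by \cite[Lemma 6]{op}.
\item This requires $\Pi(h^*)\in\mathcal{N}_\nu$ with $\nu=O(1)$, i.e.\ the multiscale regularity $\|\max_\rho D(h^*_\rho)/l\|_{3/2}\lesssim1$. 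The paper obtains it by showing that $\tilde\Pi(h^*)$ is an almost-maximizer in $\tilde{\mathcal{H}}_{l,1}$, invoking \cite[Lemma 14]{op}, and closing the circular dependence by a coercivity argument.
\end{itemize}
None of this appears in your plan.

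\emph{Step 3 has a similar gap.} The bound $\sup\{W(h):\max|h|\le Ml\}\lesssim C_\xi M^{1/2}l$ is false. $M^{1/2}l$ is the standard deviation of $W(h)$ for one fixed $h$. Choosing the most favourable height for teeth of width $2$ already gives a supremum of order $M^{1/2}l^{3/2}$. Combined with an upper bound on the maximal action, $(W-D)(h^*)\ge0$ only yields $D(h^*)\lesssim l\ln l$, hence $\max|h^*|\lesssim l\ln^{1/2}l$. The $O(l)$ bound needs energy control scale by scale. The paper uses the same per-scale bound $\|\max_\rho D(h^*_\rho)/l\|_{3/2}\lesssim1$, together with a Poincar\'e inequality on each dyadic component $h_\rho$; these contributions to $\max|h|/l$ sum geometrically. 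It then transfers the estimate from $\tilde\Pi(h^*)$ to $h^*$.
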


\medskip

We can now conclude. Let us pick a maximizer
\begin{align}\label{m56}
\gamma^*\in\arg\max_{\gamma\in\Gamma_{L,1}}\frac{W(\gamma)}{\len(\gamma)}
\end{align}
and suppose that $l$ is small enough so that $\gamma^*$ satisfies \eqref{m71}. Then $\gamma^*_{\ge l}$ has side-length $\gtrapprox l$ hence almost belongs to $\Gamma_{L,l}$. Assuming that the coarse-grained maximizer has almost optimal ratio in the coarse-grained problem, one deduces that 
\begin{align}\label{m173}
\frac{W(\gamma^*_{\ge l})}{\len(\gamma^*_{\ge l})}\approx_{\text{law}} I_{L/l}
\end{align}
Similarly, one may suppose that the rescaled graphs $\hat h^*_n$ are almost maximizer of the local $(1+1)$-dimensional problems, 
\begin{align}
(\hat W_n-\hat D)(\hat h_n^*)/l_n\approx_{\text{law}}A_{l_n}\overset{\eqref{m71}}{\approx}_{\text{law}}A_l.\label{m106}
\end{align}
Substituting into \eqref{m170}, replacing the random variables with their expectations (see the concentration in~\eqref{m130}), and using $\len(\gamma_{\ge l}^*)=\sum_n l_n$, we obtain
\begin{align}\label{m125}
\mathbb{E}I_L-\mathbb{E}I_{L/l}\approx (\mathbb{E}I_L)^{-1/3}\mathbb{E}A_l.
\end{align}
To solve this recurrence, we rearrange the terms and raise to the power $\textstyle{\frac{4}{3}}$,
\begin{align*}
(\mathbb{E}I_{L/l})^{4/3}\approx (\mathbb{E}I_L)^{4/3}\big(1-(\mathbb{E}I_L)^{-4/3}\mathbb{E}A_l\big)^{4/3}\approx (\mathbb{E}I_L)^{4/3}-{\textstyle{\frac{4}{3}}}\mathbb{E}A_l,
\end{align*}
where the last approximation uses $(1+x)^{4/3}\approx1+\textstyle{\frac{4}{3}}x$. Together with Theorem~\ref{T:3}, this suggests that $(\mathbb{E}I_L)^{4/3}$ scales linearly in $\ln L$ and, more precisely, we have discovered the following relation between the asymptotics of the problems in \eqref{m79} and \eqref{m163}.

\begin{theorem}\label{T:1'}
\begin{align*}
\lim_{L\uparrow\infty}\frac{\mathbb{E}I_L}{\ln^{3/4}L}=({\textstyle{\frac{4}{3}}}\lim_{l\uparrow\infty}\frac{\E A_l}{\ln l} )^{3/4}.
\end{align*}
\end{theorem}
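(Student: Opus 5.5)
The plan is to make the heuristic recurrence \eqref{m125} rigorous as a pair of one-step inequalities with controlled errors, and then to sum them. Fix a large coarse-graining scale $l$ and an accuracy $\epsilon>0$, and set $a^*:=\lim_{l}\mathbb{E}A_l/\ln l$, which exists by Theorem~\ref{T:3}. The target is: for all $L\ge L_0(l,\epsilon)$,
\begin{align*}
\big|(\mathbb{E}I_L)^{4/3}-(\mathbb{E}I_{L/l})^{4/3}-{\textstyle\frac43}\mathbb{E}A_l\big|\lesssim \epsilon\ln l+C_\epsilon .
\end{align*}
Iterating this along $L_k=l^k$ and interpolating between the $L_k$ (by monotonicity of $L\mapsto I_L$) gives
\begin{align*}
(\mathbb{E}I_L)^{4/3}=\frac{\ln L}{\ln l}\Big({\textstyle\frac43}\mathbb{E}A_l+O(\epsilon\ln l+C_\epsilon)\Big)+O_l(1).
\end{align*}
Dividing by $\ln L$ and letting first $L\uparrow\infty$, then $l\uparrow\infty$, then $\epsilon\downarrow0$, Theorem~\ref{T:3} yields the claimed limit $({\textstyle\frac43}a^*)^{3/4}$.

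\textbf{Lower bound.} I would argue by construction. Take a maximizer $\gamma^*_{L/l}$ of the coarse problem, rescaled by $l$, so that its edges have length $\ge l$. Replace each edge $e_n$ by the graph of $t\hat h^*_n$, where $\hat h^*_n$ is the $(1+1)$-dimensional maximizer of Theorem~\ref{T:3} for the rotated noise $\xi_n$, and $t=\lambda^{-2/3}$ with the deterministic choice $\lambda=\mathbb{E}I_L$. The bound $\|\max|h^*|/l\|_3\lesssim1$ from \eqref{m75}, combined with the fact that $\lambda\to\infty$, gives $\mathrm{Lip}\ll1$ with high probability. Hence the linearization \eqref{m72} holds with factor $1+\epsilon$, and the new curve lies in $\Gamma_{L,1}$ after a harmless trimming near $\partial B_L$ and near edge junctions. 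The identity \eqref{m169} then gives the one-step inequality. Two points need care:
\begin{itemize}
\item \emph{Dependence of the edges on the noise.} The edges $e_n$ depend on the noise at scales $\ge l$, while $\hat h^*_n$ uses the same noise. I would decouple this by discretizing edge endpoints to a lattice of spacing $\epsilon l$. I would then either take a union bound over the polynomially many edges, using the Orlicz tail $\|\cdot\|_{3/2}$, or note that the fine-scale field increment is essentially independent of the coarse field.
\item \emph{Random versus deterministic quantities.} Random ratios are replaced by expectations using concentration: $I_L$ fluctuates on scale $O(1)$ or better, whereas $\mathbb{E}I_L\sim\ln^{3/4}L$.
\end{itemize}

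\textbf{Upper bound.} This is the main obstacle. Starting from the maximizer $\gamma^*$ of $I_L$, I need it to be flat up to scale $l$ in the sense of \eqref{m71}: after coarse-graining, each piece $\gamma_n$ is a graph over $e_n$ with small slope, outside an exceptional set of small total length. For this I would invoke the Ried--Wagner large-scale regularity theory: a maximizer of $W-\lambda\,\mathrm{Len}$ with $\lambda$ large is an almost-minimizer of perimeter above a random scale, and is therefore Lipschitz-flat on scales between $1$ and $l$ with high probability. Given flatness, the argument runs as follows:
\begin{itemize}
\item Use $\mathrm{Len}(\gamma_n)-l_n\ge(1-\epsilon)D(h_n)$.
\item Bound $\sum_n\big(W(\gamma_n)-\lambda(1-\epsilon)D(h_n)\big)$ from above by $\lambda^{-1/3}\sum_n l_n A^{(n)}_{l_n}$. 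Here the supremum over all edges is controlled by a union bound over discretized edges together with the $3/2$-Orlicz tail of $A_l-a^*\ln l$, which costs only an additive $O(\ln\ln L)$-type error per unit length. That error is absorbed after multiplication by $\lambda^{-1/3}$, or controlled by choosing the discretization appropriately.
\item Note that $\gamma^*_{\ge l}$ is (up to small edges, removed by merging) a competitor at scale $L/l$, so its ratio is $\le I_{L/l}$.
\end{itemize}
The non-flat exceptional pieces are handled by the crude bound $W(\gamma_n)\lesssim \lambda\,\mathrm{Len}(\gamma_n)$ weighted by their small proportion. Finally, the power-$\frac43$ expansion converts the additive recurrence into the stated one-step inequality, with errors uniform in $L$.
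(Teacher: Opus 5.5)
There is a genuine gap, and it sits where you yourself place the difficulty. The $(1+1)$-dimensional problems are posed on coarse edges selected by the noise. Your recursion at \emph{fixed} $l$ needs the resulting statistical error to be $o(\ln l)$ per unit length, uniformly in $L$. Neither of your two mechanisms delivers this.

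First, the endpoints cannot be discretized at a fixed relative spacing $\epsilon$. The $(1+1)$-dimensional optimizers have transversal size $\lambda^{-2/3}l\ll l$. Displacing a Dirichlet endpoint by $\epsilon l$ changes $\lambda(\len(\gamma_n)-l_n)$ by order $\lambda\epsilon l$, and still by $\gtrsim\lambda\epsilon^2 l$ with an optimal shear. The gain per edge is only $\lambda^{-1/3}l\ln l$. So the relative grid size must shrink with $L$: Lemma~\ref{L:30} takes $\kappa\le(\mathbb{E}I_L)^{-4/3}\sim\ln^{-1}L$, and each coarse edge then carries entropy $\ln\kappa^{-1}\gtrsim\ln\ln L$.

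Second, a union bound over the $\sim(L/\kappa l)^4$ discretized edges gives, by \eqref{m83}, $\sup_{\bar e}\delta^\pm(\bar e)\lesssim\ln^{2/3}L$, not $O(\ln\ln L)$. The prefactor $\lambda^{-1/3}$ does not absorb this, because the main term carries the same prefactor. The relative error $\ln^{2/3}L/\ln l$ therefore diverges at fixed $l$. Your one-step bound picks up an additive $\sim\ln^{2/3}L$ instead of $\epsilon\ln l+C_\epsilon$, and summing over $\ln L/\ln l$ steps destroys the leading order. The paper explicitly discards this supremum bound as too crude.

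You also cannot invoke independence of fine and coarse noise. The edges of $\gamma^*_{\ge l}$, and of the coarse maximizer in your lower bound, come from a global optimization that sees the fine-scale noise along them.

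The missing idea is to average instead of taking suprema (Subsections~\ref{SS:statistics} and \ref{SS:Simple}). The errors $\delta^\pm(\bar e)$ depend only on the noise near $\bar e$, by \eqref{m181}. Hence they are independent, up to finitely many groups, along a \emph{simple} coarse curve that is regular at scale $l$. This is why the paper builds the simple almost-maximizing loop of Lemma~\ref{L:9} and uses Lemma~\ref{L:27}. The large-deviation bound $e^{-N\nu^{3/2}+N}$ for the average then beats the entropy $\lesssim N\ln\kappa^{-1}$ of coarse curves with $N$ vertices. This yields the relative error $(\ln^{-1}l+\theta^2)\ln^{2/3}\kappa^{-1}$ of Lemma~\ref{L:35}.

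Even this error is small only if $\ln l\gg\ln\ln L$, which the paper also imposes for Lemma~\ref{L:18}. So the coarse-graining scale must grow with $L$, inside the window $\ln\ln L\ll\ln l\ll\ln^{1/8}L$ of \eqref{m184}. The paper sums along $L_n=l_nL_{n-1}$ with $\ln l_n=[\ln^\alpha L_n]$, $\alpha<\frac18$, where the relative errors $\delta_{L_n,l_n}$ tend to zero. Your scheme of sending $L\uparrow\infty$ first and then $l\uparrow\infty$ must be replaced by this.

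Two smaller points:
\begin{enumerate}
\item The regularity input $\sup_zI_l(z)/I_L\ll1$ uses the a priori bound $\mathbb{E}I_L\lesssim\ln^{3/4}L$. You take this as known, but the paper obtains it only through a contradiction argument that reruns the whole scheme.
\item Treating non-flat pieces by the crude bound $W(\gamma_n)\lesssim\lambda\len(\gamma_n)$ would require their length proportion to be $\ll\lambda^{-4/3}\ln l$, which you do not establish. The paper avoids exceptional sets altogether by proving flatness everywhere outside an event of probability $\le L^{-1}$.
\end{enumerate}
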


\subsection{Size of the fluctuations}

Let us comment on quantifying the convergence rate in Theorem~\ref{T:1}. It aims at describing $\mathbb{E}I_L$ as a function $f(\ln L)$ of the number of scales $\ln L$, where $f$ should be universal, namely independent of the way the small-scale cutoff is implemented. Due to the arbitrariness of the latter, we only know the number of scales up to an $O(1)$ error. Hence, we expect the optimal result to be of the form
\begin{align*}
\mathbb{E}I_L=f(\ln L)+O\big(f'(\ln L)\big),
\end{align*}
where we think of  $O(f'(\ln L))$ as a non-universal error. From Theorem~\ref{T:1} we have that $f(s)\approx i^*s^{3/4}$ for $s\gg1$. Moreover, from the finer \eqref{m125}, 
\begin{align}\label{m129}
&f'(s)\approx f(s)^{-1/3}\approx (i^*)^{-1/3}s^{-1/4}\nonumber\\&\mbox{so that}\quad O(f'(\ln L))=O(\ln^{-1/4}L).
\end{align}
By contrast, we get the quantitative, but not optimal, convergence rate
\begin{align}\label{m131}
\mathbb{E}I_L=i^*\ln^{3/4}L+O(\ln^{3/4-\eta}L)\quad\mbox{for all $\eta\in[0,\textstyle{\frac{1}{12}})$.}
\end{align}
Note that if $f$ grows polynomially, the above heuristic predicts a relative error always of size $O(\ln^{-1}L)$. The asymptotics of the expected value have indeed been computed up to this precision in other critical problems: the maximum of the Gaussian free field in dimension 2 \cite[Theorem 1.2]{bz}; $L^2$ semi-discrete optimal matching \cite[Theorem 1]{gho}, up to a double logarithm; the maximal action of curves in a Brownian environment \cite[Theorem 1]{op}.

\medskip

Let us now discuss the size of the fluctuations of $I_L-\mathbb{E}I_L$. 
For a fixed $\gamma$,
\begin{align}\label{m124}
&\frac{W(\gamma)}{\len(\gamma)}\quad\mbox{is a Gaussian with variance}\nonumber\\&\frac{1}{\len^2(\gamma)}\int dz~w^2(\gamma,z)\le\frac{1}{|Dw|^2(\mathbb{R}^2)}\int dz~w^2(\gamma,z)\lesssim1,
\end{align}
where $|Dw|(\mathbb{R}^2)$ denotes the $BV$ seminorm of $w(\gamma,\cdot)$ and the second inequality follows from the continuous embedding $BV(\mathbb{R}^2)\hookrightarrow L^2(\mathbb{R}^2)$ (see for example~\cite{bp}). Since $I_L$ is a supremum of Gaussians with bounded variance, Borell's inequality gives (cf. \cite[Theorem 4.5.7]{b}) 
\begin{align}\label{m130}
I_L-\mathbb{E}I_L=O(1)\quad\mbox{with Gaussian tails}.
\end{align}
This is, however, not optimal. Indeed, as already noted in \cite{d}, it is possible to infer the size of fluctuations from the asymptotics of $f'$ (see~\eqref{m129}). Adapting those ideas, we establish the following

\begin{theorem}\label{T:5} There exists a universal constant $c>0$ such that
\begin{align}
&\PP (I_L-\E I_L\geq\nu\ln^{-1/4}L)\leq e^{-c\nu}&& \text{for all}\quad\nu\gg \ln\ln L,\label{c88}\\
&\PP (I_L-\E I_L\leq-\nu\ln^{-1/4}L)\leq e^{-e^{c\nu}}&&\text{for all}\quad\ln L\geq \nu\gg \ln\ln L.\label{c89}
\end{align}
\end{theorem}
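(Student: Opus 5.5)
We adapt Chatterjee's argument (\cite{d}), which links superconcentration to the slope $f'$ of the mean. The input is a quantitative two-scale comparison between $I_L$ and $I_{L/l}$ that holds with high probability, not merely in expectation. Independent coarse copies are then combined through a max (for the upper tail) or a concatenation (for the lower tail).

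\emph{Step 1 (one-step comparison with tails).} From the coarse-graining heuristic \eqref{m169}--\eqref{m125}, together with Theorem~\ref{T:3} and the rate \eqref{m131}, we expect the increment $\E I_L-\E I_{L/l}$ to be of order $\ln l\,\ln^{-1/4}L$ for $1\ll l\le L$. We would state this rigorously as a pair of one-sided inequalities. The coarse-graining lower bound says that from a maximizer $\gamma^*$ of $I_{L/l}$, rescaled to side-length $l$, one obtains a competitor for $I_L$ by replacing each edge by a near-maximizer $h^*$ of the $(1+1)$-dimensional action. Since the $A_{l_n}$ are nearly independent and concentrate in $\|\cdot\|_{3/2}$, this gives $I_L\ge I_{L/l}+c\,\ln l\,(I_L)^{-1/3}$ outside an event whose probability is superexponentially small in the number of edges. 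The matching upper bound would rely on the linearization \eqref{m72}, which is justified by the Ried--Wagner regularity.

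\emph{Step 2 (lower tail, \eqref{c89}).} Split $B_L$ into $M\approx l^2$ disjoint balls of radius $L/l$. The corresponding copies $I^{(k)}_{L/l}$ are independent with the law of $I_{L/l}$. Borell's inequality \eqref{m130} and a union bound then give that, with probability at least $1-(1-p)^{M}$, at least one copy exceeds $\E I_{L/l}-O(1)$. But $I_L\ge\max_k I^{(k)}_{L/l}$ only yields an $O(1)$ bound, so the gain has to come from many small pieces at once. We would instead use Step 1 with the $e^{c\nu}$ independent subproblems in the coarse-grained curve. Each edge-replacement contributes a nonnegative increment of $A_l$, and the total falls below its mean only if a fraction of roughly $e^{c\nu}$ independent local actions fail. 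Choosing $\ln l\sim\nu$, this failure probability is $\exp(-e^{c\nu})$. The restriction $\nu\le\ln L$ ensures $l\le L$, and $\nu\gg\ln\ln L$ absorbs the lower-order errors in \eqref{m131}.

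\emph{Step 3 (upper tail, \eqref{c88}).} Take $l$ with $\ln l\sim\nu$. A large value of $I_L$ means that some curve has large ratio. Conditionally on this, grafting local maximizers onto its coarse-graining at a further scale produces $I_{Ll}$, which is larger by the same drift $\sim\ln l\,\ln^{-1/4}L$ with overwhelming probability. Comparing with $\E I_{Ll}-\E I_L\sim\nu\ln^{-1/4}L$ and applying Gaussian concentration \eqref{m130} at the larger scale then yields the exponential tail $e^{-c\nu}$. We expect the weaker, exponential rather than doubly exponential, bound here because only one curve is involved and there is no independence to exploit.

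\emph{Main obstacle.} The difficulty lies in Step 1: turning the heuristic \eqref{m170} into a bound with tails uniform over all curves. This requires controlling the random prefactor $(W/\len)^{-1/3}$ and the errors from linearization. The lower-bound direction is constructive and should be manageable. For the upper direction I would first try to avoid it, getting both tails from lower-bound constructions combined with the matching of means in \eqref{m131}, as Chatterjee does.
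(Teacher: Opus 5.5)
There is a genuine gap: neither tail follows from the mechanisms you describe. The ingredient you single out as the main obstacle, a pathwise high-probability version of the comparison in Step~1, is not needed at all. The paper's proof uses only expectations, monotonicity in the domain, and independence over disjoint balls. Its single quantitative input is the mean increment
\begin{align*}
\E I_{lL}-\E I_L\lesssim(\ln l)\ln^{-1/4}L\qquad\mbox{for}\quad\ln l\gg\ln\ln L.
\end{align*}
This follows from Lemma~\ref{L:34} by chaining through intermediate scales $L=L_0<\dots<L_K=lL$, each pair satisfying \eqref{m184}, and using $\E I_{L_k}\ge\E I_L\gtrsim\ln^{3/4}L$. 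It cannot be extracted from \eqref{m131}, whose error $O(\ln^{3/4-\eta}L)$ is far larger than the scale $\nu\ln^{-1/4}L$ you must resolve. So using $\nu\gg\ln\ln L$ to absorb the errors in \eqref{m131} does not work.

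For \eqref{c88}, your Step~3 only transfers an upper deviation of size $\nu\ln^{-1/4}L$ from scale $L$ to scale $lL$. There Borell's inequality \eqref{m130} yields merely $e^{-c\nu^2\ln^{-1/2}L}$, which is useless for $\nu\ll\ln^{1/4}L$. Instead, set $\nu=\ln l$ and place $N\sim l^2$ disjoint balls of radius $L$ in $B_{lL}$. Then $I_{lL}\ge\max_{n\le N}I_{L,n}$ with $I_{L,n}$ i.i.d.\ copies of $I_L$, so $\E\max_n(I_{L,n}-\E I_L)\le\E I_{lL}-\E I_L\lesssim\nu\ln^{-1/4}L$. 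The missing elementary fact is that i.i.d.\ centered $X_n$ satisfy $\PP(X\ge2\E\max_{n\le N}X_n)\le N^{-1}$ for $N\gg1$. This follows from $\E\max_{n\le N}X_n\ge\E(\max_{n<N}X_n)_+$ and Markov, and gives $\PP(I_L-\E I_L\ge C\nu\ln^{-1/4}L)\le N^{-1}\le e^{-c\nu}$. So, contrary to your remark, independence is exploited for the right tail as well.

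For \eqref{c89}, the construction you opened Step~2 with is the right one; you dropped it only because you fed it Borell's $O(1)$ concentration. Your replacement fails. In $I_L\ge\bar I_{L/l}+(\mbox{local gain})$ the independent local actions only produce the drift. A lower deviation of the coarse term $\bar I_{L/l}$ is the same problem one scale down, with no independence gained.

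What is needed is to prove \eqref{c88} first and bootstrap. For the centered variable $X:=I_{L/l}-\E I_{L/l}$, \eqref{c88} gives $\E X_-=\E X_+\lesssim(\ln\ln L)\ln^{-1/4}L$. By Markov, $\PP(X\le-C_0\nu\ln^{-1/4}L)\le e^{-1}$ once $\nu\gg\ln\ln L$. Now take $N\sim l^2$ disjoint balls of radius $L/l$ in $B_L$ and use $I_L\ge\max_nI_{L/l,n}$. Combine this with $\E I_L-\E I_{L/l}\le C_0\nu\ln^{-1/4}L$, valid since $\ln\ln L\ll\nu=\ln l\ll\ln L$ gives $\ln(L/l)\approx\ln L$. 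Independence then yields
\begin{align*}
\PP(I_L-\E I_L\le-2C_0\nu\ln^{-1/4}L)\le\PP(X\le-C_0\nu\ln^{-1/4}L)^N\le e^{-N}\le e^{-e^{c\nu}}.
\end{align*}
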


The double logarithm lower bound on $\nu$ is an artifact of the proof, and is a consequence of the fact that we can only prove \eqref{m125} provided $\ln l\gg\ln\ln L$ (see Lemma~\ref{L:34}). Together with the classical tails in \eqref{m130}, it can be postprocessed to 
\begin{align*}
\text{Var}^{1/2}I_L\lesssim(\ln\ln L)\ln^{-1/4}L.
\end{align*}
Since it is stronger than \eqref{m130}, it can be viewed as a superconcentration result, which is a common phenomenon for the supremum of Gaussian processes, for which we refer to the work of Chatterjee~\cite{c}. We do not know whether the estimates in Theorem~\ref{T:5} are sharp. If so, the right exponential and left double-exponential tails would suggest a limiting Gumbel-type behavior of the fluctuations $I_L-\mathbb{E}I_L$, as established for example for the supremum of the two-dimensional Gaussian free field \cite{bdz}.

\subsection{A variant with relative perimeter}\label{ss:relative}

We note that the variational problem in \eqref{m79} is monotone with respect to inclusion of the underlying domain containing the curves ($B_L$ above). Hence, since every fixed bounded and open set $\Omega\subset\mathbb{R}^2$ can be sandwiched between two balls, one directly deduces that Theorem~\ref{T:1} remains valid when $B_L$ is replaced by the dilation $L\Omega$ of a fixed set $\Omega$. 

\medskip

This blindness with respect to the macroscopic shape of the ambient domain persists also when we consider the following ``relative'' version of the problem, which appears as an approximate dual problem for min-max optimal matching (see Section~\ref{S:context}): Given the open cube $Q_L$ of side $L$ and center 0, we replace $\Gamma_{L,1}$ with the configuration space
\begin{align*}
\Gamma'_{L,1}:=\{&\mbox{closed, piecewise-linear curves $\gamma$ with image $\subset\bar Q_L$}\\&\mbox{and such that $\gamma\cap Q_L$ has sides $\ge1$}\}.
\end{align*}
Let us denote by $\len(\gamma,Q_L)$ the relative length, namely the length of the part of $\gamma$ contained in $Q_L$. Similarly, we introduce the relative version of the field term in \eqref{m107},
\begin{align*}
W(\gamma,Q_L):=\int_{\mathbb{R}^2}w(\gamma,z)\xi(z)-\frac{\int_{\mathbb{R}^2}dz~w(\gamma,z)}{\lambda(Q_L)}\int_{Q_L}\xi,
\end{align*}
where $\lambda$ is the Lebesgue measure, and study the variational problem
\begin{align}\label{m195}
J_L:=\max_{\gamma\in\Gamma'_{L,1}}\frac{W(\gamma,Q_L)}{\len(\gamma,Q_L)}.
\end{align}
If one thinks of $I_L$ as an isoperimetric problem with a random volume term, then $J_L$ would be the corresponding relative isoperimetric problem. The following result states that, unlike its deterministic counterpart, the two problems have the same leading-order asymptotics.

\begin{theorem}\label{T:6}
\begin{align*}
\lim_{L\uparrow\infty}\frac{\mathbb{E}J_L}{\ln^{3/4}L}=\lim_{L\uparrow\infty}\frac{\mathbb{E}I_L}{\ln^{3/4}L}.
\end{align*}
\end{theorem}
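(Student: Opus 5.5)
We prove the two inequalities between $\liminf/\limsup$ of $\mathbb{E}J_L/\ln^{3/4}L$ and $i^*:=\lim_L \mathbb{E}I_L/\ln^{3/4}L$, which exists by Theorem~\ref{T:1}.

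\emph{Lower bound.} We compare with a slightly smaller interior domain. A curve $\gamma\in\Gamma_{L/2,1}$ (ball of radius $L/2$, contained in $Q_L$) lies in $\Gamma'_{L,1}$ and has $\len(\gamma,Q_L)=\len(\gamma)$. The only difference is the correction term
\begin{align*}
\frac{\int dz\,w(\gamma,z)}{\lambda(Q_L)}\int_{Q_L}\xi .
\end{align*}
By the isoperimetric inequality $|\int w|\le\int w^2\lesssim \len^2(\gamma)$; we use that $|w|\le |w|^2$ since $w$ is integer valued, together with the embedding already used in \eqref{m124}. Combined with $\len(\gamma)\lesssim$ perimeter, hence area, one would like $\int |w|\lesssim L\,\len(\gamma)$. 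A cleaner route is $\int|w|\le \|w\|_{L^1}\le |B_L|^{1/2}\|w\|_{L^2}\lesssim L\,\len(\gamma)$. Hence the correction divided by $\len(\gamma)$ is bounded by
\begin{align*}
L\,\lambda(Q_L)^{-1}\Big|\int_{Q_L}\xi\Big|=O(1)\cdot N(0,1),
\end{align*}
uniformly in $\gamma$. Therefore $J_L\ge I_{L/2}-O(|N(0,1)|)$, and taking expectations gives $\mathbb{E}J_L\ge \mathbb{E}I_{L/2}-O(1)$. Dividing by $\ln^{3/4}L$ yields $\liminf\ge i^*$.

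\emph{Upper bound.} The same bound on the correction term gives $J_L\le \tilde J_L+O(|N(0,1)|)$, where $\tilde J_L$ is the maximum of $W(\gamma)/\len(\gamma,Q_L)$ over $\Gamma'_{L,1}$. The issue is the boundary: portions of $\gamma$ on $\partial Q_L$ are free, so the ratio is not directly controlled by $I$. We reflect. Given $\gamma\in\Gamma'_{L,1}$, reflect it across the sides of $Q_L$ to obtain a closed curve $\bar\gamma$ in $Q_{3L}\subset B_{3L}$; its free boundary pieces cancel in pairs. Then
\begin{align*}
\len(\bar\gamma)\le 4\,\len(\gamma,Q_L)+O(\text{corner pieces}),
\end{align*}
and
\begin{align*}
W(\bar\gamma)=\sum_{g}W_{\xi\circ g}(\gamma),
\end{align*}
a sum over the reflection group elements $g$ of fields driven by the reflected noises. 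This does not directly bound $W(\gamma)$ by $W(\bar\gamma)$ because the noises differ. Instead we run the argument with the noise $\bar\xi$ obtained by reflecting $\xi|_{Q_L}$ evenly. This noise is not white, but it is white locally, with correlations only across the reflected copies.

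We use the even-reflected field to write $W(\gamma)=\tfrac14 W_{\bar\xi}(\bar\gamma)$ exactly, and $\len(\gamma,Q_L)=\tfrac14\len(\bar\gamma)$ up to negligible terms. Then $\tilde J_L\le \bar I_{3L}$, the analogue of $I$ for the periodic-type noise $\bar\xi$ on a domain of size $\sim L$.

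\emph{Main obstacle.} It remains to show that $\mathbb{E}\bar I_{3L}\le (i^*+o(1))\ln^{3/4}L$. This requires re-running the coarse-graining iteration \eqref{m125} for $\bar\xi$. The iteration only uses, at each scale $l$, that the local noises $\xi_n$ near edges are white with the law used in Theorem~\ref{T:3}, plus concentration. For $\bar\xi$ these properties hold except for edges within distance $\sim l$ of the reflection lines. Such edges contribute boundedly, since there the noise is at most a sum of $O(1)$ correlated white noises, costing at most a constant factor on a fraction of the length. We expect this boundary-fraction control to be the hardest step, because curves may concentrate near $\partial Q_L$. If it is too delicate, the fallback is a union bound. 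We cover $Q_L$ by $O(1)$ overlapping boxes, each of which, after a single reflection, sits inside a ball of radius $O(L)$ with white noise of at most doubled variance. We then use that the leading constant depends only on $\mathbb{E}A_l$ through Theorem~\ref{T:1'}, and localize via the concentration Theorem~\ref{T:5}.
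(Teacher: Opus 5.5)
Your lower bound is correct and is the paper's argument (Lemma~\ref{L:36}). You compare with $\Gamma_{L/2,1}\subset\Gamma'_{L,1}$ and use $\int|w|\lesssim L\len(\gamma)$ together with $\int_{Q_L}\xi\sim L$; your Cauchy--Schwarz route is the right one, and the detour through $\int w^2\lesssim\len^2(\gamma)$ is not needed.

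The upper bound has a genuine gap, starting with the first reduction. The estimate $\int|w|\lesssim L\len(\gamma)$ uses the full length, but the denominator of $J_L$ is the relative length $\len(\gamma,Q_L)$, and for curves running along $\partial Q_L$ the two are unrelated. Take $\gamma$ to be $\partial Q_L$ with an inward unit-square dent $D$:
\begin{itemize}
\item $\len(\gamma,Q_L)=3$;
\item the volume correction divided by $\len(\gamma,Q_L)$ is of order $L|N(0,1)|$, not $O(|N(0,1)|)$;
\item $W(\gamma)=\pm(\int_{Q_L}\xi-\int_D\xi)$, so $\tilde J_L\gtrsim|\int_{Q_L}\xi|$ is of order $L$, and comparing with $\tilde J_L$ gives nothing.
\end{itemize}
The volume correction cannot be split off. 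It is what makes $W(\gamma,Q_L)$ invariant under $w\mapsto w-c$ on $Q_L$, so that the dented boundary has the same ratio as the dent itself.

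The reflection step has further problems.
\begin{itemize}
\item For an arc joining opposite sides of $Q_L$, the reflections generate translations. No closed $\bar\gamma\subset Q_{3L}$ is produced, and the factor $\frac14$ is not uniform.
\item $\bar\xi$ is not white near the mirror lines. A curve crossing a mirror $R$ integrates $\xi$ against $w+w\circ R$, whose variance can be up to twice that of white noise. Whether the maximizer for $\bar\xi$ exploits this over a non-negligible fraction of its length is exactly the boundary question you leave open.
\item Your fallback, even if curves could be confined to boxes, would give at best $\limsup_L\mathbb{E}J_L/\ln^{3/4}L\le\sqrt2\,i^*$, since the constant is linear in the noise amplitude. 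Neither a union bound nor the concentration in Theorem~\ref{T:5} can remove a constant factor.
\end{itemize}

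What is missing is a boundary regularity theory for the maximizer of $J_L$ itself, which is how the paper proceeds.
\begin{enumerate}
\item Take a maximizer $\gamma^*$ entering and exiting $\partial Q_L$ at most once, using $(\sum_k w_k)/(\sum_k l_k)\le\max_k w_k/l_k$.
\item Note that $\gamma^*$ minimizes $\len(\gamma,Q_L)-J_L^{-1}W(\gamma,Q_L)$.
\item By competitor constructions (Lemma~\ref{L:28}), $\gamma^*$ meets $\partial Q_L$ almost perpendicularly, at points $\gtrsim l$ apart and $\gtrsim l$ from the corners. Outside the two branches at entry and exit, it stays $\gtrsim l$ away from $\partial Q_L$.
\item This yields an almost-maximizing simple loop (Lemma~\ref{L:23}), which is used to control the statistical errors.
\item Coarse-grain only $\gamma^*\cap Q_L$, use the identity \eqref{m188}, and check that the new term involving $\int w(\gamma_n,\cdot)$ contributes $\ll l/L$. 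This is negligible against $(\ln^{-1/4}L)\ln l$.
\end{enumerate}
The boundary thus affects only $O(1)$ branches, and the local problems remain the white-noise $(1+1)$-dimensional ones. Hence $\mathbb{E}J_L-\mathbb{E}J_{L/l}\lessapprox a^*(\mathbb{E}J_L)^{-1/3}\ln l$, and the same recursion as for $I_L$ gives the upper bound.
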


The proof of this theorem is essentially the same as that of Theorem~\ref{T:1}, hence we only highlight the differences in Subsections~\ref{SS:rel} and \ref{SS:bound-reg}. Let us remark that the convenient choice of the domain $Q_L$ should not be fundamental and could be replaced by any dilation $L\Omega$ of a fixed smooth $\Omega$, up to some small modification of the proof.

\section{Motivations and context}\label{S:context}

\subsection{Min-max optimal matching}

Our interest in the variational problem \eqref{m79} arises from the following connection with optimal matchings of random point clouds. Let $\lambda$ denote the Lebesgue measure in $\mathbb{R}^2$ restricted to $Q_L$ and let $X_1,\ldots,X_N$ be independent and uniformly distributed in $Q_L$ with $N:=L^2$ an integer. The goal is to find a partition $\{F_i\}_i$ of $Q_L$, with tiles $F_i$ of unit volume,  which minimizes
 $\max_i\max_{z\in F_i}|z-X_i|$. Equivalently, this can be thought of as an optimal transportation problem and defines the $W_\infty$-distance between the purely atomic $\mu:=\sum_{i=1}^N\delta_{X_i} $ and $\lambda$,
 \begin{align*}
 W_\infty(\mu,\lambda):=\inf\{ \sup_{x,y\in\text{spt}\,\pi}|x-y|\mid \pi\text{ coupling of $\mu$ and $\lambda$} \}.
 \end{align*}
The major contribution to understanding its scaling in $L$ has been made by Leighton and Shor \cite{ls}; they replaced $\lambda$ with the atomic measure $\sum_i\delta_{Y_i}$ with $Y_i$ in a uniform grid, thus looking at 
\begin{align*}
\min\{\max_{i=1}^N|X_i-Y_{\pi(i)}|~\big|~\mbox{permutation $\pi$ of $\{1,\ldots,N\}$}\}.
\end{align*}
For this reason, it is called the min-max matching problem. They have discovered the following scaling of $W_\infty(\mu,\lambda)$ in dimension 2,

\begin{theorem}[Leighton--Shor]\label{T:LS}
For $L\gg1$,
\begin{align*}
\E W_\infty(\mu,\lambda)\sim\ln^{3/4}L.
\end{align*}
\end{theorem}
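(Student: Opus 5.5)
The statement is Leighton and Shor's classical theorem. I would not re-derive their combinatorial argument. Instead I would prove it through the duality with the relative problem $J_L$ in \eqref{m195}, using Theorem~\ref{T:6} together with Theorem~\ref{T:1}.

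\textbf{Step 1: reduce to an isoperimetric inequality.} By Strassen's theorem (equivalently, Hall's marriage theorem for the discrete version), $W_\infty(\mu,\lambda)\le r$ holds if and only if $\mu(A)\le\lambda(A_r\cap Q_L)$ for every Borel set $A\subset Q_L$. Here $A_r$ is the $r$-neighbourhood of $A$. For sets that are regular on scale $r$, the Steiner formula gives
\begin{align*}
\lambda(A_r\cap Q_L)-\lambda(A)=r\,\len(\partial A,Q_L)+O(r^2\,\#\mbox{corners}).
\end{align*}
Hence, up to lower-order terms, $W_\infty(\mu,\lambda)$ equals
\begin{align*}
\sup_A\frac{\mu(A)-\lambda(A)}{\len(\partial A,Q_L)}.
\end{align*}

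\textbf{Step 2: identify the ratio with $J_L$.} The centered empirical measure $\mu-\lambda$, tested against indicator functions, is a centered Poisson-type field. Conditioning on $N=L^2$ points produces exactly the subtraction of the mean that appears in $W(\gamma,Q_L)$. I would couple $\mu-\lambda$ with a white noise $\xi$, via a KMT/dyadic coupling on cells of side $1$, so that the discrepancy on every union of unit cells equals $\int_A\xi$ minus the mean correction, up to an error of order $\len(\partial A)\cdot O(\sqrt{\ln L})$ at worst. Refining the coupling gives an error $o(\ln^{3/4}L)\,\len(\partial A)$ uniformly in $A$. With this coupling, the ratio in Step 1 coincides with $J_L$ up to $o(\ln^{3/4}L)$. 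Theorems~\ref{T:6} and~\ref{T:1} then give $\E J_L\sim\ln^{3/4}L$.

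\textbf{Step 3: lower bound.} Take a maximizer $\gamma^*$ of $J_L$ and let $A$ be its interior. Its ratio is $\gtrsim\ln^{3/4}L$. I would coarsen $A$ to scale $r=c\ln^{3/4}L$, for instance by using the coarse-grained curve $\gamma^*_{\ge l}$ of Figure~\ref{f6} with $l\sim r$; the regularity theory behind \eqref{m71} indicates that this changes the ratio only slightly. The coarsening makes the correction term $r^2\#\mbox{corners}$ of order $r\cdot\len/l\cdot r\ll \mbox{discrepancy}$ when $c$ is small. Then $\mu(A)>\lambda(A_r)$, so $W_\infty\ge r$.

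\textbf{Step 4: upper bound, the main obstacle.} Here one has to control \emph{all} Borel sets $A$, not just polygons with sides $\ge1$. I would first replace $A$ by its union of unit cells, which costs $O(\len)$ in both volume and discrepancy. I would then smooth it at scale $r$: replacing $A$ by $(A_{r/2})$ does not increase the perimeter-type quantity $\lambda(A_r)-\lambda(A)$ beyond $r\,\mbox{Per}$, by the Brunn--Minkowski/Steiner inequality, which holds for general sets in the plane. This produces a polygonal set admissible in $\Gamma'_{L,1}$. Finally, the Gaussian tails in \eqref{m130} turn the bound on $\E J_L$ into a bound on $\E W_\infty$.

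The delicate points are uniformity of the Poisson-to-white-noise coupling over all sets simultaneously, and sets touching $\partial Q_L$. I expect to handle both with a union bound over polygonal classes, using the tail estimates of Theorem~\ref{T:5}.
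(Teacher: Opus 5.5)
\textbf{Context.} The paper does not prove this statement. It is quoted from \cite{ls}, and the passage from $W_\infty$ to the Gaussian ratio is only heuristic in the paper: the discussion around \eqref{c:108}--\eqref{m196} explicitly assumes the relevant sets are regular up to scale $\gg R_L$. The rigorous version is deferred to \cite{copw}. Your route through Theorem~\ref{T:6} is admissible in principle, since Lemma~\ref{T:7} is proved without \cite{ls} and only the order of magnitude of $\E J_L$ is needed.

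\textbf{First gap: the upper bound (Step~4, and the equivalence claimed in Step~1).} To get $W_\infty\le r$ you must verify Hall's condition $\mu(A)\le\lambda(A_r\cap Q_L)$, whose worst cases are finite sets $A$ of sample points. From a bound $\mu(B)-\lambda(B)\le J\,\len(\partial B,Q_L)$ this follows only if you produce $B\supset A$ with $B\subset A_r$ and a \emph{lower} bound
\begin{align*}
\lambda\big((A_r\cap Q_L)\setminus B\big)\ge J\,\len(\partial B,Q_L).
\end{align*}
Neither tool you cite delivers this. Steiner-type inequalities bound parallel volumes from above. Brunn--Minkowski bounds them from below only in terms of $\lambda(B)^{1/2}$, not the perimeter. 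The bound is also false for the sets you use. A checkerboard-like union of unit cells has $\lambda(A_r\setminus A)\ll r\,\mathrm{Per}(A)$. The set $A_{r/2}$ is a union of discs, not a polygon with sides $\ge1$, and it can still contain long gaps of width $\ll r$ with the same defect.

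The missing idea is the Leighton--Shor/Talagrand discretization at the transport scale. Let $C$ be the union of the squares of side $\epsilon$, from a grid aligned with $Q_L$, that meet $A$. Let $C'$ be $C$ together with its edge-neighbours in $Q_L$. Each edge of $\partial C$ inside $Q_L$ borders a square of $C'\setminus C$, and each such square borders at most four of these edges. Hence $\lambda(C'\setminus C)\ge\frac{\epsilon}{4}\len(\partial C,Q_L)$, while $C'\subset A_{\sqrt5\epsilon}\cap Q_L$. Take $\epsilon=K\ln^{3/4}L$ with $K$ large. Then Hall's condition with $r=\sqrt5\epsilon$ follows once the discrepancy ratio over unions of grid squares is $\le\epsilon/4$, and the boundaries of such unions decompose into curves admissible for $J_L$.

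\textbf{Second gap: the coupling in Step~2, which is asserted rather than proved.} You need the binomial process with exactly $L^2$ points to satisfy
\begin{align*}
\big|(\mu-\lambda)(A)-W(A,Q_L)\big|\ll\ln^{3/4}L\,\len(\partial A,Q_L)
\end{align*}
simultaneously for all unions of cells $A$. A dyadic Tusn\'ady-type splitting does localize the errors on dyadic cells meeting $\partial A$. You still have to count boundary configurations at each dyadic level and run a union bound; ``refining the coupling'' is not an argument. Theorem~\ref{T:5} is of no use here, since it concerns the white-noise quantity $I_L$.

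\textbf{Step~3.} The lower bound needs the same uniform coupling, now for the random near-optimal set, plus a simple almost-maximizer as in Lemma~\ref{L:23}. There the crude tube bound $\lambda\big((A_r\cap Q_L)\setminus A\big)\le 2r\,\len(\partial A,Q_L)+\pi r^2$ already suffices, without any coarsening.
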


Let us explain how the Gaussian discrepancy problem \eqref{m195} arises from the $W_\infty$-distance between $\mu$ and $\lambda$. For a more detailed explanation and a rigorous proof, we refer to \cite{copw}. Given any set $\Sigma\subset Q_L$, it is necessary that, in order to compensate for the excess of points $(\mu-\lambda)(\Sigma)$ of $\{X\}$ relative to $\lambda(\Sigma)$, there is enough volume at distance $R_L:=W_\infty(\mu,\lambda)$ from $\Sigma$:
\begin{align}\label{c:108}
(\mu-\lambda)(\Sigma)
\leq\lambda(\Sigma^{R_L})-\lambda(\Sigma)
\end{align}
where we let $\Sigma^R:=\{ x\in\mathbb{R}^2\mid \mbox{dist}(x,\Sigma)\leq R \}$.
If $\Sigma$ is regular up to scale $\gg R_L$, we may approximate the r.~h.~s.~by\footnote{For a smooth set $\Sigma$, we denote by $P(\Sigma,Q_L)$ the length of the part of the boundary contained in $Q_L$.} $R_LP(\Sigma,Q_L)$; while if $\Sigma$ is flat on scales $\gg 1$,  we may approximate the centered shot noise in the l.~h.~s.~by a white noise with a volume correction: 
\begin{align*}
(\mu-\lambda)(\Sigma)\approx\int_\Sigma\xi-\frac{\lambda(\Sigma)}{\lambda(Q_L)}\int_{Q_L}\xi=:W(\Sigma,Q_L).
\end{align*}
This suggests that 
\begin{align}\label{m196}
&R_L\approx \sup_{\Sigma\subset \mathcal{P}_{L,1}}\frac{W(\Sigma, Q_L)}{P(\Sigma,Q_L)}
\end{align}
where the microscopic scale $1$ given by the typical point distance is retained by the lower bound of the side lengths of $\Sigma$
\begin{align*}
\mathcal{P}_{L,1}:=\{\mbox{$\Sigma\subset Q_L$ is a polygon and the edge lengths of $\partial\Sigma\cap Q_L$ are $\ge1$}\}.
\end{align*}
Note that the problem appearing on the r.~h.~s.~of \eqref{m196} would coincide with the definition of $J_L$, if one restricts to the class of simple curves $\gamma$. In Lemma~\ref{L:23}, it is shown that this restriction does not alter the asymptotic behavior of the problem and indeed we have the following

\begin{corollary}\label{Cor:3}
\begin{align*}
\lim_{L\uparrow\infty}\frac{1}{\ln^{3/4}L}\mathbb{E}\sup_{\Sigma\in\mathcal{P}_{L,1}}\frac{W(\Sigma,Q_L)}{P(\Sigma,Q_L)}=({\textstyle{\frac{4}{3}}}\lim_{l\uparrow\infty}\frac{\E A_l}{\ln l} )^{3/4}.
\end{align*}
\end{corollary}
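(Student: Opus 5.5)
The corollary should follow by sandwiching the set-based supremum between $J_L$ and a version of $J_L$ restricted to simple curves, and then invoking Theorem~\ref{T:6} together with Theorem~\ref{T:1'}.

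For the first inequality, take $\Sigma\in\mathcal{P}_{L,1}$. Its boundary is a finite union of closed polygonal curves, and we orient them positively. Then for a.e.\ $z$ the sum of their winding numbers equals the indicator $1_\Sigma(z)$. Hence $W(\Sigma,Q_L)=\sum_j W(\gamma_j,Q_L)$ and $P(\Sigma,Q_L)=\sum_j\len(\gamma_j,Q_L)$. Each $\gamma_j$ lies in $\Gamma'_{L,1}$, since its pieces inside $Q_L$ are edges of $\partial\Sigma\cap Q_L$ and so have length $\ge1$. We then use the elementary bound
\begin{align*}
\frac{\sum_j a_j}{\sum_j b_j}\le\max_j\frac{a_j}{b_j}\qquad (b_j>0),
\end{align*}
which gives
\begin{align*}
\sup_{\Sigma\in\mathcal{P}_{L,1}}\frac{W(\Sigma,Q_L)}{P(\Sigma,Q_L)}\le J_L
\end{align*}
pathwise. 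Boundary components with zero relative length need separate handling: they are either irrelevant or can be absorbed by a trivial modification. Taking expectations, dividing by $\ln^{3/4}L$, and applying Theorem~\ref{T:6} and Theorem~\ref{T:1'} yields the upper bound, with the limit identified as $(\frac43\lim_l \E A_l/\ln l)^{3/4}$.

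For the lower bound, we appeal to Lemma~\ref{L:23}. As announced in Section~\ref{S:context}, it states that restricting $\Gamma'_{L,1}$ to simple curves does not change the leading-order asymptotics. To see why this is plausible, suppose Lemma~\ref{L:23} were not directly usable. One would then decompose a general closed polygonal $\gamma$ at its self-intersections into finitely many simple closed loops $\gamma^{(k)}$ with $w(\gamma,\cdot)=\sum_k w(\gamma^{(k)},\cdot)$ and $\sum_k\len(\gamma^{(k)},Q_L)=\len(\gamma,Q_L)$. The same ratio inequality then shows that some simple loop $\gamma^{(k)}$ does at least as well. The difficulty is that the splitting creates new vertices at the crossing points, so the sub-edges can be shorter than $1$. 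This is exactly the small-scale defect that Lemma~\ref{L:23} must control. I would expect it to handle this by comparing with a problem at cutoff $1$ versus cutoff $C$, or at $L$ versus $L\pm O(1)$, which changes the number of scales only by $O(1)$ and so is invisible at order $\ln^{3/4}L$.

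Given a simple $\gamma\in\Gamma'_{L,1}$, its winding number is $\pm1_\Sigma$ for the bounded region $\Sigma$ it encloses, and $\Sigma\in\mathcal{P}_{L,1}$. If the orientation is negative, we replace $\Sigma$ by the complement $Q_L\setminus\Sigma$. This is legitimate because of the relative volume correction: $W(Q_L\setminus\Sigma,Q_L)=-W(\Sigma,Q_L)$ while the relative perimeters agree. Note that a full-space version without the correction would not have this symmetry. We therefore get
\begin{align*}
\sup_{\Sigma\in\mathcal{P}_{L,1}}\frac{W(\Sigma,Q_L)}{P(\Sigma,Q_L)}\ge\max_{\gamma\ \text{simple}}\frac{W(\gamma,Q_L)}{\len(\gamma,Q_L)},
\end{align*}
and Lemma~\ref{L:23} together with Theorem~\ref{T:6} gives the matching lower bound.

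The main obstacle is the simple-curve reduction: controlling the short edges created at self-intersections, and hence the change of cutoff. This is outsourced to Lemma~\ref{L:23}. Everything else is the pathwise algebra above combined with Theorems~\ref{T:6} and \ref{T:1'}.
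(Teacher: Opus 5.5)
Your proposal is correct and follows the paper's route. You sandwich the set problem between $J_L$ and $J_L$ restricted to simple curves, close the gap with Lemma~\ref{L:23} (whose $O(l^{-1})$ deficit and bad event of probability $\lesssim L^{-1}$ are negligible at order $\ln^{3/4}L$), and identify the constant via Theorems~\ref{T:6} and \ref{T:1'}; your boundary-component decomposition and complement trick for negatively oriented loops spell out what the paper compresses into ``would coincide with $J_L$ restricted to simple curves''. Your guess that Lemma~\ref{L:23} works by shifting the cutoff or $L$ by $O(1)$ is not the paper's mechanism, which uses the regularity of the maximizer to rewire its first self-intersection locally, but since you use the lemma as a black box this does not affect your argument.
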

In this sense, the optimization problem $J_L$ considered in \eqref{m195} is a Gaussian isoperimetric approximation of the  min-max matching problem. This approximation is made rigorous in the work \cite{copw} and together with the previous corollary, it refines the asymptotics given by Leighton-Shor's theorem.

\subsection{Large-scale regularity of random curves on the plane}
The passage from the area enlargement to the perimeter in \eqref{c:108}, as well as the geometric linearization  \eqref{m72} in this paper, requires regularity of the optimal set $\Sigma^*$. This is fully justified in this work, following the strategy in Ried and Wagner \cite{rw} by recognizing the optimal set as a perimeter almost-minimizer. We explain the connection with \cite{rw} starting from the  problem 
\begin{align}\label{c:109}
\Sigma^*\in\arg\max\{ \frac{W(\Sigma)}{P(\Sigma)}\mid \Sigma \subset B_L\text{ polygon with side-length}\geq 1 \},
\end{align}
where $P(\Sigma)$ is the perimeter of $\Sigma$ and $W(\Sigma):=\int_\Sigma\xi$.
Note that this corresponds to the problem $I_L$ when one restricts to simple curves $\gamma=\partial\Sigma$; see Section \ref{SS:Simple} for the connection between the two.
The fundamental observation is that $\Sigma^*$ minimizes the  additive functional 
\begin{align}\label{m197}
\Sigma\mapsto P(\Sigma)-\epsilon W(\Sigma)\quad\text{where}\quad \epsilon:=(\frac{W(\Sigma^*)}{P(\Sigma^*)})^{-1}.
\end{align}
By Theorem~\ref{T:LS}, the relevant regime is $\epsilon\sim\ln^{-3/4}L\ll1$; It suggests that the field term $W(\Sigma)$ can be seen as a small perturbation of the perimeter and consequently that $\partial\Sigma^*$ is regular up to some mesoscopic scale $l\gg1$. The above additive functional has been studied in \cite{rw} for a fixed $\epsilon\ll1$, albeit with a slightly different small-scale cutoff.  We refer to Section \ref{SS:regularity} for much more detail about how this strategy is adjusted and implemented in our setting. We point out that we choose to work with curves $\gamma$ instead of sets, because it simplifies the regularity theory, without affecting the leading-order asymptotics of the problem (cf.~Corollary~\ref{Cor:3}).

\subsection{Correlation length in random-field Ising model}

A discrete counterpart of \eqref{m197} is the  random-field Ising model in two dimensions. One replaces $\mathbb{R}^2$ by $\mathbb{Z}^2 $ and the characteristic function of $\Sigma\subset\mathbb{R}^2$ by spin configurations  $\sigma:\mathbb{Z}^2\to\{0,1\}$. Then mimicking the perimeter by attractive nearest-neighbor spin interaction and white noise on $\mathbb{R}^2$ by  $\{\xi(z)\}_{z\in\mathbb{Z}^2}$ independent standard Gaussians, we are led to consider the Hamiltonian,
\begin{align*}
P(\sigma)-\epsilon W(\sigma):=\sum_{|z-z'|=1}|\sigma(z)-\sigma(z')|
-\epsilon\sum_z\xi(z)\sigma(z).
\end{align*}
Aizenman and Wehr \cite{aw} proved that in two dimensions at zero temperature, almost surely, there exists a unique local\footnote{namely, with respect to finite-volume perturbations} minimizer of the Hamiltonian above  in the whole space. Ding and Wirth \cite{dw} gave a more quantitative description of this phenomenon: Considering the minimizer $\sigma^*$ of the Hamiltonian on a finite sublattice $Q_L\cap \mathbb{Z}^2$ with zero boundary conditions, one expects that the influence of the boundary fades away, to the effect that $\lim_{L\to\infty}\E \sigma^*(0)=1/2$. They showed that $\E\sigma^*(0)$ becomes larger than $1/3$ when the system size $L$ is linked to the tuning parameter $\epsilon$ by 
\begin{align*}
\ln^{3/4} L\sim\epsilon^{-1}.
\end{align*}
Their argument is closely related to the maximal ratio $I_L$ defined in \eqref{m79}, with the curves $\gamma$ constrained to lie on the grid of spacing 1, hence named ``lattice animals''. Indeed, $\E\sigma^*(0)\geq 1/3$ implies that with a sizable probability, there is a non-trivial $\{\sigma^*=1\}$ phase; on the other hand, such a configuration  can improve on the constant configuration only if the random-field gain of $ \{\sigma^*=1\}$ is comparable to its boundary cost. Hence, we see that   $\epsilon^{-1}$ should be comparable to the ratio $\max_{\sigma}W(\sigma)/P(\sigma)$. In \cite[Proposition 2.2]{dwarxiv}, they also introduced the isotropic version of this problem as in \eqref{c:109}, with the appropriate extension to non-simple curves using the winding number; As explained in Section~\ref{S:introduction}, we find this to be a more natural formulation and thus we will take this point of view.

\subsection{Maximum of two-dimensional Gaussian free field}
We may compare our main results with other critical extremal-value  statistics of Gaussian fields. The by far better-understood is the maximum of the two-dimensional discrete Gaussian free field. Let $\{\eta_z\}_{z\in Q_L\cap\mathbb{Z}^2}$ be the GFF in a square of side length $L$ with zero boundary conditions, and let
\begin{align*}
M_L:=\max_{z\in Q_L\cap \mathbb{Z}^2}\eta_z.
\end{align*}
Bramson and Zeitouni \cite{bz} proved that
\begin{align}\label{m198}
\mathbb{E}M_L
=\textstyle{2\sqrt{2/\pi}\ln L-\frac{3}{4}\sqrt{2/\pi}\ln\ln L+O(1),}
\end{align}
which was then improved to a convergence in law of the centered maximum to a randomly shifted Gumbel distribution (see~\cite{bdz}). Indeed, Ding \cite{d} was the first to  show that the fluctuations have exponential right tail and double exponential left tail. For the problem $I_L$, which is a supremum of a Gaussian process indexed by the infinite-dimensional set of planar polygonal curves, we obtain analogous results. Although we are far from having an expansion of $\E I_L$ with the same precision as \eqref{m198}, we expect that Theorem~\ref{T:5} gives almost-optimal tails estimates for the fluctuations $I_L-\E I_L$.

\medskip

The similarity of the two problems is not surprising since they both rely on a multiscale mechanism and the leading-order asymptotic comes from contribution of logarithmically many scales. We remark that our proof of Theorem \ref{T:5}, drawing on the techniques developed in \cite{d}, takes advantage of the optimal expansion of the energy increment between two close scales $\E I_L-\E I_{L/l}$. In particular, the double-exponential estimates come from the fact that an unusually small maximum requires simultaneous suppression of exponentially many independent almost-maximizers. 

\section{Strategy of the proof}

\subsection{Large-scale regularity of the optimal curve}\label{SS:regularity}

In order to run the argument given in the introduction, in particular ensuring the assumption \eqref{m71}, we need to establish a regularity theory for the maximizer $\gamma^*$. This is done by showing that, with high probability, the oscillations of the normal $\nu^*$ of $\gamma^*$ are small up to scales $l$ satisfying $\ln l\ll\ln^{1/8}L$. The precise statement, which is the goal of this subsection, is the following

\begin{lemma}\label{L:18}
If $\ln l\gg \ln\ln L $, then outside an event of probability $\leq L^{-1}$,
\begin{align*}
\max_{|z-z'|\leq l}|\nu^*(z)-\nu^*(z')|\lesssim \ln l/\ln^{1/8}L.
\end{align*}
\end{lemma}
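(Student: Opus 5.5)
We prove the lemma by reading the maximizer $\gamma^*$ as an almost-minimizer of a perimeter functional and then running the Ried--Wagner large-scale regularity scheme. The key observation is \eqref{m197}, adapted to curves. Set $\epsilon:=(W(\gamma^*)/\len(\gamma^*))^{-1}=I_L^{-1}$. Then $\gamma^*$ minimizes $\gamma\mapsto\len(\gamma)-\epsilon W(\gamma)$ over $\Gamma_{L,1}$, and the minimal value is $0$.

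\emph{Step 1: size of $\epsilon$.} We first show that, outside an event of probability $\le L^{-1}/3$, one has $\epsilon\lesssim\ln^{-3/4}L$. By Borell's inequality \eqref{m130}, $I_L$ is within $O(\sqrt{\ln L})$ of $\mathbb{E}I_L$ with probability $\ge1-L^{-2}$. That window is too crude, so instead we use a direct lower bound $I_L\gtrsim\ln^{3/4}L$ with high probability. This comes from the multiscale construction suggested by \eqref{m125}, or from a Ding--Wirth-type lower bound with lattice animals, combined with Gaussian concentration. Concentration applies because the variance of each ratio is $\lesssim1$ by \eqref{m124}.

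\emph{Step 2: local almost-minimality at scale $r$.} Fix a ball $B_r(z)$ with $r\le l$. Compare $\gamma^*$ with competitors $\gamma'$ that agree with $\gamma^*$ outside $B_r(z)$. Minimality gives
\begin{align*}
\len(\gamma^*\llcorner B_r)\le\len(\gamma'\llcorner B_r)+\epsilon\big(W(\gamma^*)-W(\gamma')\big).
\end{align*}
The difference $W(\gamma^*)-W(\gamma')$ is the noise integrated against $w(\gamma^*)-w(\gamma')$, which is supported in $B_r$. We control it uniformly over all competitors by a chaining or union bound over polygonal configurations with side-length $\ge1$ at dyadic scales. This should yield, outside probability $L^{-1}/3$, the bound
\begin{align*}
|W(\gamma^*)-W(\gamma')|\lesssim\sqrt{\ln L}\; r^{1/2}\,\big(\text{length/area of the symmetric difference}\big)^{1/2}.
\end{align*}
The $\sqrt{\ln L}$ factor pays for the union over $\sim L^2$ centers and over scales. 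Inserting this, $\gamma^*$ becomes a $(\Lambda,r)$-almost-minimizer of length, with excess of relative size $\lesssim\epsilon\sqrt{\ln L}\,\cdot\,(\text{scale factor})$. Since $\epsilon\sim\ln^{-3/4}L$, this relative size is at most $\ln^{-1/4}L$ times a power of $r$. I expect this localization estimate, uniform over all non-simple curves via winding numbers, to be the main obstacle. The first attempt will be a Dudley-type chaining adapted from \cite{rw}, using the BV bound in \eqref{m124} to control variances.

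\emph{Step 3: excess decay.} Define the excess $\Exc(z,r)$ as the tilt-excess of the normal $\nu^*$ in $B_r(z)$. We prove a one-step improvement: if $\Exc(z,r)\ll1$, then
\begin{align*}
\Exc(z,\theta r)\le\tfrac12\Exc(z,r)+C\,\delta(r),
\end{align*}
where $\delta(r)$ is the noise error from Step 2. The proof uses harmonic approximation of the graph, which is the linearization in \eqref{m72}. Because everything is planar, the harmonic functions involved are simply affine. Iterating from scale $r=l$ down to scale $1$ is where $\ln l$ steps accumulate. Summing the errors over these $\sim\ln l$ dyadic scales gives a total normal oscillation of at most $\ln l\cdot\max_r\delta(r)^{1/2}$, and we aim to show $\delta^{1/2}\lesssim\ln^{-1/8}L$. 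The square root is the source of the exponent $1/8=\tfrac12\cdot\tfrac14$.

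\emph{Step 4: starting the iteration.} The iteration requires smallness of the excess at the top scale $l$. We obtain it from a density bound: the sub-linear error allows the curve to be compared with a straight segment. The hypothesis $\ln l\gg\ln\ln L$ is used to absorb the $\ln\ln L$ losses from the union bound over positions. Finally, we collect the three exceptional events from Steps 1, 2 and 4; each has probability at most $L^{-1}/3$, so their union has probability at most $L^{-1}$.
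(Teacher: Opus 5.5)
Your overall architecture matches the paper's. It uses almost-minimality of $\gamma^*$ for $\len-\epsilon W$ with $\epsilon=I_L^{-1}$, small excess at every dyadic scale, a $\sqrt{\eta}$ change of the averaged normal per scale summed over $\ln l$ scales, and $\eta\sim\ln^{-1/4}L$. However, Step 2, which carries all the probabilistic content, is a genuine gap.

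\emph{Step 2 fails as stated.} The noise increment is $W(\bar\gamma)$ for a \emph{closed} curve $\bar\gamma\subset B_r(z)$, for example the branch $\gamma^*_{t,r}$ closed up by its reversed chord. Before any regularity is known, this curve may be long or even multiply wound. Traversing a good loop $k$ times shows that no bound sublinear in $\len(\bar\gamma)$, such as $\sqrt{\ln L}\,(r\len)^{1/2}$, can hold uniformly. With the area version you yourself arrive at a relative error of $\ln^{-1/4}L$ times a power of $r$. Any positive power of $r$ exceeds $\ln^{1/4}L$ once $\ln r\gtrsim\ln\ln L$, which is exactly the range of scales the lemma needs, so your iteration would not close.

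\emph{The missing idea} is the self-similar bound behind Lemma~\ref{L:12}. Since $\bar\gamma\in\gamma^*(t)+\Gamma_{r,1}$, one has trivially $W(\bar\gamma)\le I_r(\gamma^*(t))\len(\bar\gamma)$, which is linear in length. Comparing with the chord then gives $\len(\gamma^*_{t,r})\le(1+\eta)|\gamma^*(t^{in}_{t,r})-\gamma^*(t^{out}_{t,r})|$ with $\eta\lesssim\sup_{z\in B_L}I_r(z)/I_L$. Lemma~\ref{L:7} bounds this ratio by $\ln^{-1/4}L$ uniformly for $\ln r\ll\ln^{2/3}L$, via stationarity, Gaussian concentration and a union bound over $(L/r)^2$ centers.

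This requires an \emph{upper} bound $\E I_r\lesssim\ln^{3/4}r$ (Lemma~\ref{T:7}, closed in the paper by a continuity argument), which your Step 1 does not provide. Conversely, your worry in Step 1 is unfounded: Borell's $O(\ln^{1/2}L)$ window together with $\E I_L\gtrsim\ln^{3/4}L$ is exactly what gives $\epsilon\lesssim\ln^{-3/4}L$. Once scale-uniform almost-minimality is known, no excess decay or harmonic approximation is needed. The excess of a branch equals $2(\len-\text{chord})/l'\lesssim\eta$ at every scale directly, which also makes your Step 4 unnecessary.

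\emph{Second gap: distinct strands.} The lemma concerns all pairs of image points at spatial distance $\le l$. These may lie on different strands of $\gamma^*$: far apart in arc length, and possibly crossing, since $\gamma^*$ need not be simple. Your tilt-excess in balls and harmonic approximation of ``the graph'' tacitly assume one strand per ball. A crossing at a nontrivial angle has large excess relative to every direction, so your Step 3 hypothesis fails there, and nothing in your proposal rules such crossings out.

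The paper handles this separately. If the chords of the two branches through $z$ and $z'$ inside $B_{l'}(\frac12(z+z'))$ intersect, swapping their endpoints produces a competitor. Almost-minimality and elementary geometry then give $|\nu_{s,l'}-\nu_{s',l'}|\lesssim\sqrt\eta+|z-z'|/l'$, hence $|\nu^*(z)-\nu^*(z')|\lesssim\ln l'/\ln^{1/8}L+|z-z'|/l'$. Choosing $l'=l\ln^{1/8}L/\ln l$ yields the claim. This is where $\ln l\gg\ln\ln L$ actually enters, to guarantee $\ln l'\lesssim\ln l$, not in absorbing union-bound losses as you suggest.
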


Let us explain the main ideas of the proof. We first observe that
\begin{align}\label{m57}
\mbox{$\gamma^*$ minimizes $\len(\gamma)-\epsilon W(\gamma)$}\quad\mbox{with}\quad\epsilon:=I_L^{-1}.
\end{align}
It is deduced through the following algebraic steps. For any $\gamma\in\Gamma_{L,1}$,
\begin{align*}
\frac{ W(\gamma^*)}{\len(\gamma^*)}\overset{\eqref{m56}}{\ge}\frac{ W(\gamma)}{\len(\gamma)}=\frac{ W(\gamma^*)+\big( W(\gamma)- W(\gamma^*)\big)}{\len(\gamma^*)+\big(\len(\gamma)-\len(\gamma^*)\big)}.
\end{align*}
Multiplying by the denominator of the r.~h.~s.~and subtracting $ W(\gamma^*)$,
\begin{align*}
I_L\big(\len(\gamma)-\len(\gamma^*)\big)\overset{\eqref{m56}}{=}\frac{ W(\gamma^*)}{\len(\gamma^*)}\big(\len(\gamma)-\len(\gamma^*)\big)\ge W(\gamma)- W(\gamma^*).
\end{align*}
Rearranging the terms, one obtains \eqref{m57}.

\medskip

The regularity of minimizer of the additive problem \eqref{m57} has already been studied in \cite{rw} for a deterministic $\epsilon\ll1$ and we will closely follow their strategy by recognizing $\gamma^*$ as a quasi-minimizer of $\len(\cdot)$.  To ensure the quasi-minimality of $\gamma^*$, we need some a priori bounds on $I_L$, which are provided in the following lemma. Even though it would follow by adapting the ideas of Leighton-Shor \cite{ls} and Talagrand \cite{t22}, we will give a self-contained argument and show that it is one of the outcomes of our argument. In the appendix, we display an efficient proof of the lower bound which only uses the $(1+1)$-dimensional problem in \eqref{m163}. The concentration estimate has already been proven in~\eqref{m130}. We postpone the upper bound until the end of the proof and establish it via a continuity argument, together with the convergence in Theorem~\ref{T:1}.

\begin{lemma}\label{T:7}
\begin{align*}
\mathbb{E}I_L\sim\ln^{3/4}L\quad\mbox{and}\quad\|I_L-\mathbb{E} I_L\|_2\lesssim1.
\end{align*}
\end{lemma}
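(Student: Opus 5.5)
The lemma has three parts: concentration, a lower bound $\mathbb{E}I_L\gtrsim\ln^{3/4}L$, and an upper bound $\mathbb{E}I_L\lesssim\ln^{3/4}L$. I treat them in increasing order of difficulty.

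\emph{Concentration.} The bound $\|I_L-\mathbb{E}I_L\|_2\lesssim1$ is just a restatement of \eqref{m130}. By \eqref{m124}, each ratio $W(\gamma)/\len(\gamma)$ is a centered Gaussian with variance $\lesssim1$. Borell's inequality then gives Gaussian tails at scale $O(1)$ for the supremum. Gaussian tails at unit scale are exactly the Orlicz bound with $s=2$ in \eqref{m14}, so nothing more is needed here.

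\emph{Lower bound.} I build an explicit competitor from the $(1+1)$-dimensional problem, using Theorem~\ref{T:3} scale by scale.

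1. Fix a deterministic $\epsilon\sim\ln^{-3/4}L$ to be tuned. Start with a square of side $\sim L$ inside $B_L$ and apply a dyadic-type hierarchy of scales $l_k=l^k$, with $l$ a large constant.
2. At each level, replace every edge of length $l_k$ by the graph of a rescaled maximizer of the action $W_n-\epsilon^{-1}\cdot(\text{Dirichlet})$ over that edge.
3. By the vertical rescaling with $t=\epsilon^{2/3}$, each edge gains field term $\approx\epsilon^{1/3}\,l_k\,a^*\ln l$ in expectation. The length increment is only $\epsilon^{-1}$ times smaller than this, up to the linearization \eqref{m72}.
4. Summing over the $\sim\ln L/\ln l$ levels gives $\mathbb{E}W\gtrsim\epsilon^{1/3}L\ln L$ while $\len\lesssim L(1+\epsilon^{-1}\cdot\epsilon^{4/3}\ln L)$.
5. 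Choosing $\epsilon\sim\ln^{-3/4}L$ balances the two and yields $\mathbb{E}I_L\ge\mathbb{E}\,W/\len\gtrsim\ln^{3/4}L$.

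Two technical points must be handled along the way. First, slopes must stay small, which the bound $\|\max|h^*|/l\|_3\lesssim1$ times $\epsilon^{2/3}$ ensures. Second, $\len$ is random, so I would bound $\mathbb{E}[W/\len]$ from below using the concentration of the lengths.

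\emph{Upper bound.} I expect this to be the main obstacle. The recursion \eqref{m125} needs the regularity Lemma~\ref{L:18}, and that lemma in turn needs a priori bounds on $\epsilon=I_L^{-1}$, i.e.\ on $I_L$ itself, which makes the argument circular. I would break the circularity with a continuity (bootstrap) argument in $L$.

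1. \emph{Hypothesis.} Assume $\mathbb{E}I_{L'}\le C\ln^{3/4}L'$ for all $L'\le L$, with a large constant $C$.
2. \emph{Regularity.} With concentration, this bound makes $\gamma^*$ a quasi-minimizer of length in the sense of \eqref{m57}. The Ried--Wagner strategy then yields flatness up to scale $l$ with $\ln l\gg\ln\ln L$.
3. \emph{Recursion.} Run the coarse-graining identity \eqref{m169}, with \eqref{m72} and Theorem~\ref{T:3}, as an upper bound: $\mathbb{E}I_L\le\mathbb{E}I_{L/l}+(1+o(1))(\mathbb{E}I_L)^{-1/3}\mathbb{E}A_l+\text{errors}$.
4. \emph{Improvement.} Raising to the power $4/3$ as in the heuristics, the bound for $L$ follows from the one for $L/l$ with a constant $\le C$, provided $C^{4/3}$ exceeds $\tfrac43 a^*$ plus error terms. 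This closes the bootstrap.
5. \emph{Continuity.} $\mathbb{E}I_L$ is monotone in $L$, and the base case is a trivial bound for $L$ of order one. Together these allow passing from $L/l$ to $L$.

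Within this step, the delicate points are two. One is replacing the random $W(\gamma)/\len(\gamma)$ by its expectation inside the rescaling factor. The other is controlling the bad events of Lemma~\ref{L:18}, whose probability is only $L^{-1}$; there I would use the crude polynomial bound $I_L\lesssim L$ together with Cauchy--Schwarz.
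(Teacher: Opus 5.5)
Your concentration step is the paper's (Borell's inequality with the variance bound \eqref{m124}). Both bounds on $\mathbb{E}I_L$, however, have gaps.

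\emph{Lower bound.} The multiscale iteration is unnecessary and, as written, unjustified. Below the top level, the edges you refine are pieces of the graph of a maximizer, hence functions of $\xi$. Claiming that each gains $\approx\epsilon^{1/3}l_k a^*\ln l$ in expectation treats the noise in the frame of a random edge as a fresh white noise. This is exactly the statistical error the paper controls only through the net $\kappa l\mathbb{Z}^2$, Lemma~\ref{L:30} and the union bound of Lemma~\ref{L:35}, and you do not address it. The point you miss is that Theorem~\ref{T:3} already contains all $\ln L$ scales, so one application on a deterministic segment suffices. Let $h^*$ maximize $\epsilon W-D$ over $\mathcal{H}_{L,1}$ on $[-L/2,L/2]\times\{0\}$ with $\epsilon=\ln^{-3/4}L$. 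The vertical scaling $h=\epsilon^{2/3}\hat h$ turns $\epsilon W-D$ into $\epsilon^{4/3}(\hat W-D)$ in law, so $(\epsilon W-D)(h^*)\gtrsim L$ with high probability. The $L^\infty$ bound in \eqref{m75}, which controls heights and not slopes, keeps the closed curve inside $B_L$. Since $\sqrt{1+p^2}\le1+\frac12p^2$ for all $p$, one has $\len(\gamma)\le2L+D(h^*)$ with no linearization. Writing $W=\epsilon^{-1}\big((\epsilon W-D)+D\big)$ then gives $W(\gamma)/\len(\gamma)\gtrsim\epsilon^{-1}$ pathwise on that event, so no concentration of lengths is needed. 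Separately, your bound $\len\lesssim L(1+\epsilon^{1/3}\ln L)$ would only yield an $O(1)$ ratio; the length increment is of order $\epsilon^{4/3}L\ln L$.

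\emph{Upper bound.} You locate the circularity correctly, but your bootstrap does not break it. A hypothesis on all $L'\le L$ already contains the scale being proved, and monotonicity of $L\mapsto\mathbb{E}I_L$ only bounds $\mathbb{E}I_{L/l}$ by $\mathbb{E}I_L$, which is the wrong direction. The missing idea is to identify where a bound at the current scale is genuinely needed, and how strongly.
\begin{itemize}
\item The regularity theory does not need it: Lemma~\ref{L:7} uses an upper bound on $\mathbb{E}I_{2l}$ at a much smaller scale, together with the lower bound on $I_L$.
\item It enters in your step~3, where Theorem~\ref{T:3} is applied on the random edges of $\gamma^*_{\ge l}$. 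Doing this rigorously requires Lemma~\ref{L:30} with $\kappa\lesssim(\mathbb{E}I_L)^{-4/3}$. The union bound then produces the relative error $(\ln^{-1}l+\theta^2)\ln^{2/3}\kappa^{-1}$, which depends on $\Lambda:=\mathbb{E}^{4/3}I_L/\ln L$ only through $\ln^{2/3}\Lambda$.
\end{itemize}
This sublinear dependence is what closes the argument. The paper argues by contradiction along record scales, where $\Lambda\ge\mathbb{E}^{4/3}I_{l'}/\ln l'$ for all $l'\in[2,L-1]$. This controls $\mathbb{E}I_{2l}$ and $\mathbb{E}I_{L/l}$ by $\Lambda$, and the recursion becomes $\Lambda(1-o(1))\le\frac43a^*(1+\ln^{2/3}\Lambda)$, which bounds $\Lambda$ universally.

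Finally, your bad-event estimate fails as stated. Cauchy--Schwarz with $\mathbb{E}^{1/2}I_L^2\lesssim L$ and $\mathbb{P}(A^c)\le L^{-1}$ gives $L^{1/2}\gg\ln^{3/4}L$. Use the concentration instead: $\mathbb{E}I_L1_{A^c}\lesssim(\mathbb{E}I_L+1)\mathbb{P}^{1/2}(A^c)$ is a harmless relative error and requires no a priori upper bound.
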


Let us now recall the main ideas in \cite{rw}. The first step is to exploit the fact that $\epsilon\ll1$ to treat the field term in \eqref{m57} as a perturbation and hence deduce that $\gamma^*$ is a local quasi-minimizer of the length $\len(\cdot)$. Let $B_l(z)$ be the ball of radius $l$ and center $z\in\mathbb{R}^2$. Pick a curve $\gamma$ which differs from $\gamma^*$ in $B_l(z)$, and more precisely such that one may decompose the latter as
\begin{align*}
\gamma^*=\gamma+\tilde\gamma\quad\mbox{with $\tilde\gamma\in\Gamma_{L,1}$ having image in $B_l(z).$}
\end{align*}
This provides the splitting of the field term and the control on the length: 
\begin{align*}
W(\gamma^*)=W(\gamma)+W(\tilde\gamma)\quad\mbox{and}\quad|\len(\gamma^*)-\len(\gamma)|\le\len(\tilde\gamma).
\end{align*}
Let us denote by $\len(\gamma, B_l(z))$ the length of the part of $\gamma$ contained in $B_l(z)$. From the minimality in \eqref{m57} we have
\begin{align*}
\len(\gamma^*,B_l(z))-\len(\gamma,B_l(z))=\len(\gamma^*)-\len(\gamma)\le\epsilon\big( W(\gamma^*)- W(\gamma)\big).
\end{align*}
Extending the notation in \eqref{m79} to
\begin{align*}
I_l(z):=\mbox{the maximal ratio in the ball $B_l(z)$ with small-scale 1,}
\end{align*}
the difference of the noises satisfies
\begin{align*}
 W(\gamma^*)- W(\gamma)=W(\tilde\gamma)&\le I_l(z)\len(\tilde\gamma)\\&\le I_l(z)\big(\len(\gamma^*,B_l(z))+\len(\gamma,B_l(z))\big).
\end{align*}
Putting the last two inequalities together one obtains the quasi-minimality
\begin{align}
&\len(\gamma^*,B_l(z))\le(1+\eta_l)\len(\gamma,B_l(z))\nonumber\\&\mbox{with}\quad\eta_l\lesssim{I}_l(z)/{I}_L\quad\mbox{provided}\quad{I}_l(z)/{I}_L\ll1.\label{m165}
\end{align}

\medskip

A rigorous version of \eqref{m165} is given in Lemma~\ref{L:12}, where the competitor $\gamma$ is given by replacing a branch of $\gamma^*$ with a simple straight line. To describe it, it is convenient to express a curve in terms of its arc-length parametrization. Given $\gamma\in \Gamma_{L,1}$ with vertices $\{z_n\}_{n=1}^N$, we let $\mathbb{S}$ be the circle with length $\len(\gamma)$, so that $\gamma:\mathbb{S}\to \mathbb{R}^2$ gives the arc-length parametrization of $\gamma$. We also define the vertex times of $\gamma$,
\begin{align*}
\mathbb{S}\supset V(\gamma):=\{ t_1,\ldots, t_N\}\quad\mbox{such that}\quad\gamma(t_n)=z_n~\mbox{for}~n=1,\ldots,N.
\end{align*}

\medskip

Given $\gamma\in\Gamma_{L,1}$, a time $t\in\mathbb{S}$ and a length $l>0$, define the branch
\begin{align}\label{c:90}
\gamma_{t,l}:=\mbox{the restriction of $\gamma$ to $[t^{in}_{t,l},t^{out}_{t,l}]$}\quad\mbox{(see Figure~\ref{f1})}
\end{align}
where $[t^{in}_{t,l},t^{out}_{t,l}]$ is the largest connected subset of $\mathbb{S}$ such that
\begin{align*}
&\mbox{$\gamma_{t,l}\subset B_l(\gamma(t))$,}\quad\mbox{$t_{t,l}^{in}, t, t_{t,l}^{out}$ lie in this order on $\mathbb{S}$}\quad\mbox{and}\quad
t_{t,l}^{in},t_{t,l}^{out}\in V(\gamma).
\end{align*}
We will call $t_{t,l}^{in}$ and $t_{t,l}^{out}$ entry and exit times, respectively. Note that they are well-defined as long as $l\ge3$ and $\gamma$ is not completely contained inside $B_l(\gamma(t))$. In the following Lemma~\ref{L:12}, this is ensured by giving a lower bound on the diameter of $\gamma$. 

\medskip

Given two points $ z_1,z_2\in\mathbb{R}^2 $, we will denote by $[z_1,z_2]$ the line segment between $z_1$ and $z_2$. Replacing the branch $\gamma_{t,l}$ by the segment $[\gamma^*(t_{t,l}^{in}),\gamma^*(t_{t,l}^{out})]$ (or an  appropriate modification of it in the case $|\gamma^*(t_{t,l}^{in})-\gamma^*(t_{t,l}^{out})|<1$) provides us with a competitor in the class $\Gamma_{L,1}$.

\begin{figure}
\centering
\resizebox{0.5\textwidth}{!}{%
\begin{circuitikz}[scale=0.8]
\tikzstyle{every node}=[font=\LARGE]
\draw [ line width=0.7pt ] (4.25,12.25) circle (5cm);
\draw [line width=1pt, short] (-1.25,11) -- (0.25,12);
\draw [line width=0.8pt, short] (0.25,12) -- (1.25,11.5);
\draw [line width=0.8pt, short] (1.25,11.5) -- (2.75,12.75);
\draw [line width=0.8pt, short] (2.75,12.75) -- (3.75,12);
\draw [line width=0.8pt, short] (3.75,12) -- (5.25,12.75);
\draw [line width=0.8pt, short] (5.25,12.75) -- (6.5,12);
\draw [line width=0.8pt, short] (6.5,12) -- (7.5,13);
\draw [line width=0.7pt, short] (7.5,13) -- (8.5,13.5);
\draw [line width=0.8pt, short] (8.5,13.5) -- (10.25,12.25);
\draw [short] (7.5,13) -- (7.5,13);
\draw [short] (7.5,13) -- (7.5,13);
\draw [short] (7.75,13) -- (7.75,13);
\draw [short] (7.75,13.25) -- (7.75,13.25);
\draw [line width=0.8pt, ->, >=Stealth] (-2.25,11) -- (-1.25,11);
\draw [line width=0.8pt, ->, >=Stealth] (10.25,12.25) -- (11.5,12.5);
\node [font=\Large] at (7.6,14.1) {$\gamma^*(t^{out}_{t,l})$};
\node at (0.25,12) [circ] {};
\node at (8.5,13.5) [circ] {};
\node [font=\Large] at (0.38,12.65) {$\gamma^*(t^{in}_{t,l})$};
\node at (4.25,12.25) [circ] {};
\node [font=\Large] at (4.5,11.75) {$\gamma^*(t)$};
\draw [line width=0.8pt, dashed] (0.25,12) -- (8.5,13.5);

\node [font=\Large] at (4,16.5) {$B_l(\gamma^*(t))$};
\end{circuitikz}
}%
\caption{Construction of the branch $\gamma^*_{t,l}$ and of the (dotted) segment $[\gamma^*(t^{in}_{t,l}),\gamma^*(t^{out}_{t,l})]$.}
\label{f1}
\end{figure}


\begin{lemma}\label{L:12}
Let $l\ge3$ be such that 
\begin{align}\label{c:24}
\sup_{z\in B_L}I_l(z)/I_L\ll 1.
\end{align}
Then the diameter of $\gamma^*$ is $\ge2l$ and for any time $t\in\mathbb{S}$, we have
\begin{align*}
\len(\gamma_{t,l}^*)\leq (1+\eta_l)|\gamma^*(t^{in}_{t,l})-\gamma^*(t^{out}_{t,l})|\quad\text{with}\quad \eta_l\lesssim\sup_{z\in B_L} I_l(z)/ I_L.
\end{align*}
\end{lemma}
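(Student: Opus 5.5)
The argument has two parts: a lower bound on the diameter of $\gamma^*$ by comparing $I_L$ with a local ratio, and a comparison of $\gamma^*$ with the competitor obtained by replacing the branch $\gamma^*_{t,l}$ with a straight segment, using the minimality \eqref{m57}.

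\textbf{Diameter bound.} Suppose the diameter of $\gamma^*$ is $<2l$. Then $\gamma^*$ lies in a ball $B_{2l}(z)$ with $z=\gamma^*(t)$ for any $t$. Cover $B_{2l}(z)$ by a bounded number of balls of radius $l$; alternatively, note that $I_{2l}(z)$ is comparable to $I_l$ in the sense needed. Then $I_L = W(\gamma^*)/\len(\gamma^*) \le I_{2l}(z)$. This contradicts \eqref{c:24} once that assumption is read with a suitable universal constant absorbing the factor 2. A cleaner option is to prove the statement with $B_l$ replaced by $B_{l/2}$ in the definition of branches. I will instead argue directly: any $\gamma\in\Gamma_{L,1}$ inside $B_{2l}(z)$ splits, after cutting by a grid of lines at spacing $\sim l$, into boundedly many closed curves (additivity of $W$ as in \eqref{m167}, with the cut segments adding length $\lesssim l\le\len(\gamma)$, and with the side-length $\ge1$ preserved up to a harmless modification). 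Hence the ratio is $\lesssim \sup_{z'}I_l(z')$, giving $I_L\lesssim\sup I_l$, which is a contradiction. So the entry and exit times are well-defined.

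\textbf{Competitor.} Let $z_{in}=\gamma^*(t^{in}_{t,l})$ and $z_{out}=\gamma^*(t^{out}_{t,l})$, and let $\gamma$ be $\gamma^*$ with the branch replaced by $[z_{in},z_{out}]$. If $|z_{in}-z_{out}|<1$, I modify the segment by a short detour of length $\lesssim1$. This case is trivial anyway, because $\len(\gamma^*_{t,l})\ge 1$ (vertices are consecutive at distance $\ge1$) and the bound tolerates an $O(1)$ error only after an extra argument; this is the delicate point discussed below. Then $\gamma\in\Gamma_{L,1}$, and $\gamma^*=\gamma+\tilde\gamma$, where $\tilde\gamma$ is the closed curve formed by the branch followed by the reversed segment. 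Since $\tilde\gamma$ lies in $B_l(\gamma^*(t))$ (by convexity of the ball) and has sides $\ge1$ (up to the same modification), we get $W(\tilde\gamma)\le I_l(\gamma^*(t))\len(\tilde\gamma)$, where $\len(\tilde\gamma)=\len(\gamma^*_{t,l})+|z_{in}-z_{out}|$. Minimality \eqref{m57} then gives
\begin{align*}
\len(\gamma^*_{t,l})-|z_{in}-z_{out}|=\len(\gamma^*)-\len(\gamma)\le \frac{I_l}{I_L}\big(\len(\gamma^*_{t,l})+|z_{in}-z_{out}|\big).
\end{align*}
Rearranging with $\eta:=I_l/I_L\ll1$ yields $\len(\gamma^*_{t,l})\le\frac{1+\eta}{1-\eta}|z_{in}-z_{out}|$, which is the claim with $\eta_l\lesssim\sup I_l/I_L$.

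\textbf{Main obstacle.} The hard step is the degenerate case $|z_{in}-z_{out}|<1$. There the pure inequality would force $\len(\gamma^*_{t,l})\lesssim 1$. But the branch connects $\gamma^*(t)$ region to the boundary of the ball: since it is maximal and the diameter is $\ge2l$, the vertices adjacent to it lie outside $B_l$, so the branch has length $\gtrsim l-O(1)$. I would handle this case as follows. Use a competitor in which the branch is replaced by a segment of length $\le 2$ (e.g.\ going to a nearby point and back, or merging $z_{in}$ and $z_{out}$ by deleting a vertex). This gives $\len(\gamma^*_{t,l})\le 2+\eta(\len(\gamma^*_{t,l})+2)$, hence $\len(\gamma^*_{t,l})\lesssim1$, contradicting length $\gtrsim l\ge3$ for suitable constants. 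So the case cannot occur, or occurs with the stated bound vacuously. Checking that these modified competitors stay in $\Gamma_{L,1}$ (side-lengths $\ge1$ after the surgery) is the fiddly part.
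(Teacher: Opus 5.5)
Your proof of the main inequality is the paper's argument. You replace the branch by $[z_{in},z_{out}]$, or by a two-segment detour of length $\le 2$ when $|z_{in}-z_{out}|<1$. You then use \eqref{m57} together with $W(\tilde\gamma)\le I_l(\gamma^*(t))\len(\tilde\gamma)$, rearrange to get the factor $\frac{1+\eta}{1-\eta}$, and exclude the degenerate case because the branch has length $\gtrsim l$. One caveat applies equally to the paper's ``$\len(\gamma^*_{t,l})\ge 2l-2$'': this lower bound tacitly needs edges of length $O(1)$. Insert collinear vertices so that all sides lie in $[1,2)$. This changes neither the curve nor its membership in $\Gamma_{L,1}$, and it is needed anyway for the branch to be defined when $\gamma^*(t)$ sits on a long edge.

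The gap is in your diameter step. The grid-cutting decomposition does not work as stated. $\gamma^*$ may cross the grid lines arbitrarily often, so the pieces are not boundedly many. More seriously, the pieces are built from fragments of edges and from segments on the grid lines that are typically shorter than $1$, so they are not competitors for $I_l$. There is no general ``harmless modification'' that restores the cutoff; this is exactly the obstruction the paper has to work around in Lemma~\ref{L:9}. Your first variant fails as well, since \eqref{c:24} gives no pathwise control of $I_{2l}(z)$.

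You do not need any of this. What the rest of the proof (and Lemma~\ref{L:8}) actually uses is only that $\gamma^*\not\subset B_l(\gamma^*(t))$ for every $t$, so that entry and exit times exist. This is immediate from \eqref{c:24}: otherwise $\gamma^*$ would be a competitor for $I_l(\gamma^*(t))$, giving $I_l(\gamma^*(t))\ge I_L$. The paper's own one-liner (``diameter $<2l$ implies $\gamma^*\subset B_l(z)$'') is itself imprecise. Jung's theorem only gives containment in a ball of radius $2l/\sqrt3$, centred in the convex hull and hence in $B_L$, so the same argument yields diameter $\ge\sqrt3\,l$. The paper's one-liner should therefore be read as this non-containment statement rather than as a literal bound $\ge 2l$.
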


In view of the previous lemma, we now give a stochastic estimate of the ratio in \eqref{c:24}. The denominator $ I_L$ is controlled from below by the lower bound  and the concentration in Lemma~\ref{T:7}. For the numerator, we use the stationarity of $z\mapsto I_l(z)$ to write
\begin{align*}
\sup_{z\in B_L} I_l(z)=\mathbb{E} I_l+\sup_{z\in B_L}\big( I_l(z)-\mathbb{E} I_l(z)\big).
\end{align*}
Then both terms are estimated by Lemma~\ref{T:7}.

\begin{lemma}\label{L:7}
Let $l\ge 3$. Then outside an event with probability $\leq  L^{-1}$, we have
\begin{align*}
\sup_{z\in B_L} I_l(z)/ I_L\lesssim\frac{\ln^{3/4}l+\ln^{1/2}(L/l)}{\ln^{3/4}L}
\end{align*}
\end{lemma}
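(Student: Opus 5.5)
Following the decomposition indicated right before the statement, we bound numerator and denominator separately, each outside an event of probability $\le\frac12L^{-1}$.

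\emph{Denominator.} By Lemma~\ref{T:7}, $\mathbb{E}I_L\gtrsim\ln^{3/4}L$ and $\|I_L-\mathbb{E}I_L\|_2\lesssim1$. The Orlicz bound gives $\PP(|I_L-\mathbb{E}I_L|\ge s)\le e\exp(-(s/C)^2)$. Choosing $s=C'\ln^{1/2}L$, the failure probability is at most $\frac12L^{-1}$. On the complement we have $I_L\ge\mathbb{E}I_L-C'\ln^{1/2}L\gtrsim\ln^{3/4}L$ once $L\gg1$ (for bounded $L$ the statement is trivial after adjusting constants).

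\emph{Numerator.} We use stationarity of $z\mapsto I_l(z)$ and write
\[
\sup_{z\in B_L}I_l(z)=\mathbb{E}I_l+\sup_{z\in B_L}\big(I_l(z)-\mathbb{E}I_l\big).
\]
The first term is $\lesssim\ln^{3/4}l$ by Lemma~\ref{T:7}. For the fluctuation term we cannot take a supremum over a continuum directly, so we discretize.

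We cover $B_L$ by a grid $\{z_k\}$ of spacing $l$, which has $K\lesssim(L/l)^2$ points. Every ball $B_l(z)$ with $z\in B_L$ is contained in some $B_{2l}(z_k)$. Since $\Gamma$ is monotone under inclusion of the domain, this gives $I_l(z)\le I_{2l}(z_k)$. Each $I_{2l}(z_k)$ has the law of $I_{2l}$, so by Lemma~\ref{T:7} it is within $O(1)$ of its mean with Gaussian tails. A union bound then yields
\[
\PP\Big(\max_k I_{2l}(z_k)-\mathbb{E}I_{2l}\ge s\Big)\le K\,e\,e^{-(s/C)^2}.
\]
Choosing $s=C''\big(\ln^{1/2}(L/l)+\ln^{1/2}L\big)$ makes this at most $\frac12L^{-1}$. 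Together with $\mathbb{E}I_{2l}\lesssim\ln^{3/4}(2l)\lesssim\ln^{3/4}l$ for $l\ge3$, this bounds the numerator.

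\emph{Main obstacle.} The bound just obtained is $\ln^{3/4}l+\ln^{1/2}(L/l)+\ln^{1/2}L$. The extra $\ln^{1/2}L$ comes from demanding failure probability $L^{-1}$ rather than $(L/l)^{-2}$, and it must be absorbed into the stated right-hand side.

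When $l\le\sqrt L$ this is harmless, since $\ln(L/l)\ge\frac12\ln L$, so $\ln^{1/2}L\lesssim\ln^{1/2}(L/l)$.

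When $l>\sqrt L$ we have $\ln l\ge\frac12\ln L$. Then $\ln^{1/2}L\lesssim\ln^{1/2}l\le\ln^{3/4}l$, which is again absorbed. (In that regime one could also bypass the estimate entirely by noting $\sup_zI_l(z)\le I_{L+l}$-type monotonicity, but absorption suffices.)

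Dividing the numerator bound by the denominator lower bound $I_L\gtrsim\ln^{3/4}L$ gives the claim, outside an event of total probability $\le L^{-1}$.
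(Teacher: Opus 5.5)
Your proof is correct and follows essentially the same route as the paper. Both arguments cover $B_L$ by an $l$-grid, dominate $I_l(z)$ by $I_{2l}$ at the grid points, and apply a union bound with the Gaussian concentration of Lemma~\ref{T:7}, together with the lower bound on $I_L$ from that lemma. The differences are cosmetic: you do a direct union bound instead of invoking \eqref{m83} and Lemma~\ref{app:4}, and you use $I_L\ge\mathbb{E}I_L-C\ln^{1/2}L$ rather than $I_L\ge\frac12\mathbb{E}I_L$. Your explicit absorption of the extra $\ln^{1/2}L$ into $\ln^{3/4}l+\ln^{1/2}(L/l)$ is a step the paper leaves implicit.
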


\begin{remark}\label{rk:3}The above estimate is reduced to 
\begin{align}\label{c:2}
\sup_{z\in B_L} I_l(z)/ I_L\lesssim\ln^{-1/4}L.
\end{align}
in the regime $\ln l\ll \ln^{2/3}L$. This will always be the case, since in order to exploit the regularity result in Lemma~\ref{L:18}, we will assume $\ln l\ll\ln^{1/8}L$.  

\end{remark}

In the next lemma, we derive a regularity result for curves $\gamma$ that are almost minimizers of the perimeter (up to a multiplicative factor $\eta$ and for all scales $l'$ less than a fixed mesoscopic scale $l$, cf. \eqref{m54}). As in \cite{rw}, we exploit some ideas from the theory of minimal surfaces in this simpler two-dimensional setting.

\begin{lemma}\label{L:8}
Let $0<\eta\ll 1$, $l\ge3$. Suppose that a curve $\gamma\in\Gamma_{L,1}$ has diameter $\ge 2l$ and 
\begin{align}\label{m54}
\len(\gamma_{t,l'})\le(1+\eta)|\gamma(t^{in}_{t,l'})-\gamma(t^{out}_{t,l'})|\quad \text{for any $t\in\mathbb{S}$ and $3\le l'\le l$}.
\end{align}
Then for any\footnote{We recall that we are assuming $\gamma:\mathbb{S}\to\mathbb{R}^2$ being parametrized by arc-length.} $s,s'\in \mathbb{S}$
\begin{align*}
|\nu(s)-\nu(s')|\lesssim \sqrt{\eta}\ln (e+|s-s'|).
\end{align*}
\end{lemma}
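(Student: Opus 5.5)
The plan is to turn the quasi-minimality \eqref{m54}, which holds at every scale $3\le l'\le l$, into a scale-by-scale control of the direction of the chord. Roughly, at each scale the normal rotates by $O(\sqrt\eta)$, and summing over dyadic scales gives the logarithm.

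\emph{Step 1 (excess controls chord tilt).} For a branch $\gamma_{t,l'}$ with chord vector $c=\gamma(t^{out}_{t,l'})-\gamma(t^{in}_{t,l'})$, the estimate \eqref{m54} gives
\begin{align*}
\int_{t^{in}}^{t^{out}}\big(1-\dot\gamma\cdot\tfrac{c}{|c|}\big)=\len(\gamma_{t,l'})-|c|\le\eta|c|.
\end{align*}
Since $1-\cos\theta\sim\theta^2$, this says that $\dot\gamma$ (equivalently $\nu$) deviates from the chord direction by $O(\sqrt\eta)$ in an $L^2$-averaged sense along the branch. In particular, every sub-chord of length comparable to $l'$ inside the branch has direction within $O(\sqrt\eta)$ of $c/|c|$. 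To see this, project onto $c^\perp$: the vertical displacement over a subarc of length $m$ is $\le\int|\dot\gamma\cdot c^\perp|\le\sqrt{m}\,(2\eta|c|)^{1/2}$, which is $\lesssim\sqrt\eta\, m$ when $m\gtrsim l'$. We also need the branch length to be comparable to $l'$. This uses that the diameter is $\ge2l$, so the branch exits $B_{l'}(\gamma(t))$ and its length is at least $\sim l'$; the exit vertex is within one side-length, but sides can be long, so $|c|\sim l'$ should come from the fact that the branch reaches distance $\approx l'$ from the center on both ends.

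\emph{Step 2 (comparing consecutive dyadic scales).} Fix $s$ and set $l_k=3\cdot2^k$. The branch at scale $l_k$ around $s$ contains a subarc of length $\sim l_k$ that is also a subarc of the branch at scale $l_{k+1}$. By Step 1, the chord directions $e_k$ and $e_{k+1}$ differ by $O(\sqrt\eta)$. A sub-chord of length $\sim l_k$ is close to both: to $e_k$ with constant relative to its length $\sim l_k$, and to $e_{k+1}$ with relative length $\sim l_{k+1}/2$, which is still of order one. Telescoping over $k$, the chord directions at scales $l'$ and $l''$ around $s$ differ by $\lesssim\sqrt\eta\,(1+\ln(l''/l'))$.

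\emph{Step 3 (conclusion).} At the smallest scale $3$, the branch consists of $O(1)$ segments of length $\ge1$ that are nearly aligned. Indeed, \eqref{m54} at scale $3$ forces every vertex angle to be $O(\sqrt\eta)$, because a corner of angle $\theta$ within a ball of radius $3$ costs excess $\gtrsim\theta^2$. So $\nu(s)$ is within $O(\sqrt\eta)$ of the scale-$3$ chord normal. For $s,s'$ with $|s-s'|\le l$, take the scale $l'\sim|s-s'|+3$: a single branch of that scale around $s$ contains $s'$. Chain from $\nu(s)$ down and up the dyadic scales to the common branch, and back to $\nu(s')$. This gives $\lesssim\sqrt\eta\ln(e+|s-s'|)$. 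For $|s-s'|>l$, chain along intermediate points spaced by $l$; that would give linear growth, so I would instead read the statement as meant for $|s-s'|\lesssim l$, which is the regime needed for Lemma~\ref{L:18}.

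\emph{Main obstacle.} The main difficulty I expect is the geometric bookkeeping in Step 1. We must ensure that the branch at scale $l'$ has chord length $\sim l'$, and that the branches at consecutive scales share a subarc of length $\sim l'$ on \emph{both} sides of $s$, despite vertices being sparse (sides possibly much longer than $l'$) and the curve being non-simple. If a side longer than $l'$ passes through the ball, I would handle it separately: the direction is then constant on that side, which trivially gives the comparison.
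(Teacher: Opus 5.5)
Your plan is essentially the paper's proof. The ingredients match one for one: the excess bound $\int_{\gamma_{t,l'}}|\nu-\nu_{t,l'}|^2=2(\len(\gamma_{t,l'})-|c|)\lesssim\eta l'$ at every scale, an $O(\sqrt\eta)$ change of chord normal between consecutive dyadic scales, an anchor at scale $O(1)$ from side-length $\ge1$, telescoping, and a final scale $\sim|s-s'|+1$ at which the branch around $s'$ is nested in the branch around $s$. Your sub-chord displacement estimate plays the role of the paper's $L^2$-averaging of normals.

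The caveats you raise are also tacit in the paper's proof:
\begin{itemize}
\item Its last step applies \eqref{m54} at scale $C(|s-s'|+1)$, so it too only covers $|s-s'|\lesssim l$. Beyond that the claim genuinely fails, e.g.\ for a polygon with sides of length in $[1,2]$ inscribed in a circle of radius $R\gg l\ln R$, with antipodal $s,s'$.
\item It uses $\len(\gamma_{t,l'/2})\gtrsim l'$ without comment. This is justified because requiring $\gamma_{t,3}$ to exist for every $t$ forces all sides to have length $\le3$. Then $\gamma(t^{in}_{t,l'})$ and $\gamma(t^{out}_{t,l'})$ lie at distance $\ge l'-3$ from $\gamma(t)$, so the branch has length $\ge 2(l'-3)$.
\end{itemize}
So your separate treatment of long sides is unnecessary. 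It would also not have been trivial: a unit side between two parallel sides of length $\ell$ can be tilted by $\sim\sqrt{\eta\ell}$ without violating \eqref{m54} on any branch that exists.
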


\begin{remark}\label{rk:1}
Note that the following partial converse is also true. Let $T=\len(\gamma)$ and $ \gamma:[0,T]\to\mathbb{R}^2 $ be a (possibly open) Lipschitz curve parametrized by arc length  satisfying 
\begin{align*}
|\nu(s)-\nu(s')|\leq\sqrt{\eta}\ll1\quad\text{for all}\quad s,s'\in[0,T].
\end{align*}
Then
\begin{align}\label{c:45}
&\len(\gamma)-|\gamma(0)-\gamma(T)|\lesssim \sqrt{\eta}|\gamma(0)-\gamma(T)|,\\
\label{c:47}
&|\nu(s)-\nu^{line}|\leq 2\sqrt{\eta},\quad\text{for all}\quad s\in[0,T]
\end{align}
where $ \nu^{line}:=$unit normal of the segment $ [\gamma(0),\gamma(T)] $.
\end{remark}

Notice that the modulus of continuity of the normal $\nu$ in Lemma~\ref{L:8} is given with respect to the arc length $|s-s'|$. Using \eqref{c:45}, we have that it is comparable to the (a priori smaller) chord length $|
\gamma(s)-\gamma(s')|$. This is true as long as one picks $s,s'$ on the same branch $\gamma_{t,l}$, with the length $l$ satisfying
\begin{align*}
\sqrt{\eta}\ln l\ll1\quad\mbox{so that by Lemma~\ref{L:8}}\quad|\nu(s)-\nu(s')|\ll1.
\end{align*}

\begin{corollary}\label{Cor:2}
Let $0<\eta\ll1,l\gg1$ be such that
\begin{align*}
\sqrt{\eta}\ln l\ll1.
\end{align*}
Suppose that $\gamma\in\Gamma_{L,1}$ has diameter $\ge 2l$ and satisfies \eqref{m54}, then for any $t\in\mathbb{S}$
\begin{align*}
|\nu(s)-\nu(s')|\lesssim \sqrt{\eta}\ln (e+|\gamma(s)-\gamma(s')|)\quad\mbox{for all}~ s,s'\in[t^{in}_{t,l},t^{out}_{t,l}].
\end{align*}
\end{corollary}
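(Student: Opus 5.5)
The plan is to derive the corollary from Lemma~\ref{L:8} and Remark~\ref{rk:1}, by showing that on a single branch the arc length $|s-s'|$ and the chord length $|\gamma(s)-\gamma(s')|$ are comparable. Fix $t\in\mathbb{S}$ and $s,s'\in[t^{in}_{t,l},t^{out}_{t,l}]$. Lemma~\ref{L:8} gives
\[
|\nu(s)-\nu(s')|\lesssim\sqrt{\eta}\ln(e+|s-s'|).
\]
It therefore suffices to prove $|s-s'|\lesssim e+|\gamma(s)-\gamma(s')|$. Indeed, then $\ln(e+|s-s'|)\lesssim \ln(e+|\gamma(s)-\gamma(s')|)$, because $\ln(C(e+x))\le \ln C+\ln(e+x)\lesssim\ln(e+x)$ for $x\ge0$.

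To compare arc and chord length, I use flatness of the whole branch. The branch $\gamma_{t,l}$ lies in $B_l(\gamma(t))$ by construction. By \eqref{m54} with $l'=l$, its length is at most $(1+\eta)\cdot 2l\lesssim l$. Hence any two parameters on the branch satisfy $|s-s'|\lesssim l$, and Lemma~\ref{L:8} yields
\[
\sup_{s,s'}|\nu(s)-\nu(s')|\lesssim\sqrt{\eta}\ln(e+Cl)\lesssim\sqrt\eta\ln l\ll1 .
\]
So the restriction of $\gamma$ to $[s,s']$, an open Lipschitz arc-length curve, has normal oscillation $\delta:=C\sqrt\eta\ln l\ll1$. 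Remark~\ref{rk:1} (with $\sqrt\eta$ replaced by $\delta$) then gives
\[
|s-s'|=\len(\gamma|_{[s,s']})\le(1+C\delta)|\gamma(s)-\gamma(s')|\le 2|\gamma(s)-\gamma(s')|.
\]
If one prefers not to invoke the remark, the elementary argument is the following. The tangent $\gamma'$ stays within angle $\delta$ of $\gamma'(s)$. Then $\gamma'\cdot\gamma'(s)\ge\cos\delta\ge 1/2$, and integrating gives $|\gamma(s)-\gamma(s')|\ge\frac12|s-s'|$. This is the required comparability, even without the additive $e$.

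The main point to check is that $[s,s']$ is really a sub-arc of the branch. The branch is the connected interval $[t^{in}_{t,l},t^{out}_{t,l}]\subset\mathbb{S}$, and we take $[s,s']$ to be the portion between $s$ and $s'$ inside it, not the complementary arc of $\mathbb{S}$. One must also read $|s-s'|$ in Lemma~\ref{L:8} as the intrinsic distance on $\mathbb{S}$. That distance is at most the length of this sub-arc, so the bound only improves. The hypothesis that the diameter is $\ge2l$ is used only so that the entry and exit times are well defined, and so that Lemma~\ref{L:8} applies. Combining the two displayed estimates concludes the proof.
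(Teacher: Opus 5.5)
Your proposal is correct and is essentially the paper's argument. The paper also uses Lemma~\ref{L:8} together with $\sqrt{\eta}\ln l\ll1$ to get small normal oscillation on a single branch. You make explicit that this rests on $\len(\gamma_{t,l})\le 2(1+\eta)l$, which follows from \eqref{m54}. Then \eqref{c:45} of Remark~\ref{rk:1} shows that arc length and chord length are comparable there. Your elementary alternative via $\tau(u)\cdot\tau(s)\ge\tfrac12$ is also valid.
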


As a direct consequence of Lemmas~\ref{L:12} and~\ref{L:7} (in the simplified form of Remark~\ref{rk:3}) and Corollary~\ref{Cor:2}, we obtain a regularity result for $\gamma^*$ on single branches $\gamma_{t,l}$.

\begin{corollary}\label{Cor:1}
Let   $l\gg 1$  be such that 
\begin{align}\label{m154}
\ln l\ll \ln ^{1/8}L.
\end{align}
Then outside an event with probability $ \leq L^{-1} $, for any $ t\in\mathbb{S}$ and $s,s'\in [t^{in}_{t,l},t^{out}_{t,l}]$ 
\begin{align*}
|\nu^*(s)-\nu^*(s')|\lesssim\frac{\ln(e+|\gamma^*(s)-\gamma^*(s')|)}{\ln^{1/8}L}.
\end{align*}
\end{corollary}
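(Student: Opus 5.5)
The plan is to chain Lemma~\ref{L:12}, Lemma~\ref{L:7} (in the form of Remark~\ref{rk:3}) and Corollary~\ref{Cor:2}, applied with the quasi-minimality parameter $\eta\sim\ln^{-1/4}L$. Then $\sqrt{\eta}\sim\ln^{-1/8}L$, which is exactly the prefactor in the claimed modulus of continuity.

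\emph{Step 1: choosing the good event.} We fix $l\gg1$ satisfying \eqref{m154}. In particular $\ln l\ll\ln^{2/3}L$, so Remark~\ref{rk:3} applies. Hence, outside an event of probability $\le L^{-1}$, we have $\sup_{z\in B_L}I_l(z)/I_L\lesssim\ln^{-1/4}L$.

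To apply Lemma~\ref{L:12} at every intermediate scale $3\le l'\le l$, we use monotonicity: $I_{l'}(z)\le I_l(z)$ for $l'\le l$, since a curve in $B_{l'}(z)$ also lies in $B_l(z)$. So the same event controls all scales $l'\in[3,l]$ simultaneously, and no union bound over scales is needed. The factor $\ln^{-1/4}L$ tends to zero, so \eqref{c:24} holds at all these scales once $L$ is large.

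\emph{Step 2: quasi-minimality.} On this event, Lemma~\ref{L:12} gives that $\gamma^*$ has diameter $\ge2l$. It also gives, for every $t\in\mathbb{S}$ and every $3\le l'\le l$,
\begin{align*}
\len(\gamma^*_{t,l'})\le(1+\eta)|\gamma^*(t^{in}_{t,l'})-\gamma^*(t^{out}_{t,l'})|\quad\mbox{with}\quad\eta:=C\ln^{-1/4}L.
\end{align*}
Strictly, Lemma~\ref{L:12} yields $\eta_{l'}\lesssim\sup_z I_{l'}(z)/I_L$, which we bound uniformly by the same $C\ln^{-1/4}L$. This is precisely hypothesis \eqref{m54} with $\eta\ll1$.

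\emph{Step 3: regularity.} We check the condition of Corollary~\ref{Cor:2}: $\sqrt{\eta}\ln l\sim\ln l/\ln^{1/8}L\ll1$, which is exactly the assumption \eqref{m154}. Corollary~\ref{Cor:2} then gives, for all $s,s'\in[t^{in}_{t,l},t^{out}_{t,l}]$,
\begin{align*}
|\nu^*(s)-\nu^*(s')|\lesssim\sqrt{\eta}\ln\big(e+|\gamma^*(s)-\gamma^*(s')|\big)\lesssim\frac{\ln(e+|\gamma^*(s)-\gamma^*(s')|)}{\ln^{1/8}L},
\end{align*}
which is the claim.

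\emph{Main obstacle.} The one delicate point is making sure the probabilistic event is uniform over the intermediate scales $l'$. Monotonicity of $I_{l'}(z)$ in $l'$ handles this cleanly. One should also check that the implicit "$\ll$" constants in Lemma~\ref{L:12} and Corollary~\ref{Cor:2} are met; this follows by taking the implicit constant in \eqref{m154} small and $L$ large. Everything else is direct substitution.
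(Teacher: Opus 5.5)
Your proposal is correct and follows the paper's own route: the paper derives Corollary~\ref{Cor:1} directly from Lemma~\ref{L:12}, Lemma~\ref{L:7} (in the form of Remark~\ref{rk:3}) and Corollary~\ref{Cor:2}, with $\eta\sim\ln^{-1/4}L$. Your monotonicity observation, $I_{l'}(z)\le I_l(z)$ for $l'\le l$, usefully makes explicit a point the paper leaves unstated: a single good event verifies hypothesis \eqref{m54} at all scales $3\le l'\le l$ at once.
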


\medskip

Let $l$ be a length scale satisfying \eqref{m154}. 
To conclude the proof of Lemma \ref{L:18},  we still need to show closeness of the normals $\nu^*(s)$ and $\nu^*(s')$ when $|\gamma^*(s)-\gamma^*(s')|\ll l$,  even if they lie on different branches. To this end, we construct a competitor by rewiring $\gamma^*$ inside the ball $B_l(z)$ with $z=\frac{1}{2}(\gamma^*(s)+\gamma^*(s'))$, as shown in Figure~\ref{f2}. As in Lemma~\ref{L:12}, $\gamma^*$ has almost minimal length, which implies that the two branches have to be almost parallel. More details can be found in Section~\ref{s:details}. 

\begin{figure}[!ht]
\centering
\resizebox{0.9\textwidth}{!}{%
\begin{circuitikz}[scale=0.8]
\tikzstyle{every node}=[font=\large]
\draw [ line width=0.9pt ] (12.5,10.5) circle (6.25cm);
\node [font=\LARGE] at (-2.5,12.3) {$\gamma^*(s')$};
\node [font=\LARGE] at (-1.75,17.25) {$B_{l}(z)$};
\draw [line width=0.9pt, short] (-4.5,12.5) -- (-4.5,12.5);
\draw [line width=0.9pt, short] (-4.75,12.5) -- (-4.75,12.5);
\draw [line width=0.9pt, short] (-8.75,13.5) -- (-7,12.5);
\draw [line width=0.9pt, short] (-7,12.5) -- (-5.5,13);
\draw [line width=0.9pt, short] (-5.5,13) -- (-4.75,12.25);
\draw [line width=0.9pt, short] (-4,11.5) -- (-4,11.5);
\draw [line width=0.9pt, short] (-4.25,11.75) -- (-4.25,11.75);
\draw [line width=0.9pt, short] (-4.75,12.25) -- (-4.25,11);
\draw [line width=0.9pt, short] (-4.25,11) -- (-2.75,11);
\draw [line width=0.9pt, short] (-2.75,11) -- (-2,11.75);
\draw [line width=0.9pt, short] (-2,11.75) -- (-0.5,11.25);
\draw [line width=0.9pt, short] (-0.5,11.25) -- (0.5,12);
\draw [line width=0.9pt, short] (0.5,12) -- (2,11.75);
\draw [line width=0.9pt, short] (2,11.75) -- (4.25,12.75);
\draw [line width=0.9pt, short] (5.5,18.75) -- (5.5,18.75);
\node at (-2.25,11.5) [circ] {};
\draw [ line width=0.9pt](-9.25,8.25) to[short] (-7,8.25);
\draw [line width=0.9pt, short] (-6.5,7.75) -- (-6.5,7.75);
\draw [line width=0.9pt, short] (-6.75,8) -- (-6.75,8);
\draw [line width=0.9pt, short] (-7,8.25) -- (-5.75,8.5);
\draw [line width=0.9pt, short] (-5.75,8.5) -- (-4.75,8);
\draw [line width=0.9pt, short] (-4.75,8) -- (-3.5,8.5);
\draw [line width=0.9pt, short] (-3,9.25) -- (-3.5,8.5);
\draw [line width=0.9pt, short] (-3,9.25) -- (-1.25,9.25);
\draw [line width=0.9pt, short] (-1.25,9.25) -- (0,8.75);
\draw [line width=0.9pt, short] (1,9.5) -- (1.75,10.25);
\draw [line width=0.9pt, short] (1.75,10.25) -- (3.25,9.75);
\draw [line width=0.9pt, short] (3.25,9.75) -- (4.75,8);
\draw [line width=0.9pt, short] (-1.75,8.75) -- (-1.75,8.75);
\node at (-2,9.25) [circ] {};
\draw [line width=0.9pt, short] (0,8.75) -- (1,9.5);
\node [font=\LARGE] at (-2,8.5) {$\gamma^*(s)$};
\draw [ line width=0.9pt ] (-2.25,10.5) circle (6.25cm);
\node [font=\LARGE] at (-7,11.8) {$\gamma^*(s'_{in})$};
\node [font=\LARGE] at (1.75,12.7) {$\gamma^*(s'_{out})$};
\node [font=\LARGE] at (2.3,9) {$\gamma^*(s_{out})$};
\node [font=\LARGE] at (-6.45,7.7) {$\gamma^*(s_{in})$};
\node at (-7,12.5) [circ] {};
\node at (2,11.75) [circ] {};
\node at (-7,8.25) [circ] {};
\node [font=\LARGE] at (12.25,12.25) {$\gamma^*(s')$};
\node [font=\LARGE] at (13,17.25) {$B_{l}(z)$};
\draw [line width=0.9pt, short] (10.25,12.5) -- (10.25,12.5);
\draw [line width=0.9pt, short] (10,12.5) -- (10,12.5);
\draw [ color={rgb,255:red,250; green,0; blue,0}, line width=0.9pt, short] (6,13.5) -- (7.75,12.5);
\draw [line width=0.9pt, short] (10.75,11.5) -- (10.75,11.5);
\draw [line width=0.9pt, short] (10.5,11.75) -- (10.5,11.75);
\draw [ color={rgb,255:red,0; green,76; blue,255}, line width=0.9pt, short] (16.75,11.75) -- (19,12.75);
\draw [line width=0.9pt, short] (20.25,18.75) -- (20.25,18.75);
\node at (12.5,11.5) [circ] {};
\draw [ color={rgb,255:red,0; green,76; blue,255}, , line width=0.9pt](5.5,8.25) to[short] (7.75,8.25);
\draw [line width=0.9pt, short] (8.25,7.75) -- (8.25,7.75);
\draw [line width=0.9pt, short] (8,8) -- (8,8);
\draw [ color={rgb,255:red,245; green,0; blue,0}, line width=0.9pt, short] (18,9.75) -- (19.5,8);
\draw [line width=0.9pt, short] (13,8.75) -- (13,8.75);
\node at (12.75,9.25) [circ] {};
\node [font=\LARGE] at (12.75,8.65) {$\gamma^*(s)$};
\node [font=\LARGE] at (7.8,11.7) {$\gamma^*(s'_{in})$};
\node [font=\LARGE] at (16,12.5) {$\gamma^*(s'_{out})$};
\node [font=\LARGE] at (16.5,9.25) {$\gamma^*(s_{out})$};
\node [font=\LARGE] at (8.3,7.75) {$\gamma^*(s_{in})$};
\node at (7.75,12.5) [circ] {};
\node at (16.75,11.75) [circ] {};
\node at (7.75,8.25) [circ] {};
\node at (3.25,9.75) [circ] {};
\node at (18,9.75) [circ] {};
\draw [ color={rgb,255:red,250; green,0; blue,0}, line width=0.9pt, dashed] (7.75,12.5) -- (18,9.75);
\draw [ color={rgb,255:red,0; green,76; blue,255}, line width=0.9pt, dashed] (7.75,8.25) -- (16.75,11.75);
\end{circuitikz}
}%
\caption{On the left, the optimal curve $\gamma^*$. On the right, the two competitors $\gamma_1$ (in red) and $\gamma_2$ (in blue) used to prove Lemma~\ref{L:18}. The dotted lines are $\tilde\gamma_1$ and $\tilde\gamma_2$.}
\label{f2}
\end{figure}

\subsection{Discretization of the local reference frames}

From the regularity theory of the previous subsection, we are now able to justify the geometric linearization in \eqref{m72}. Starting from this subsection, we deal with the following statistical errors, which arise from the local $(1+1)$-dimensional problems: In the definition \eqref{m65}, we naively thought of $\xi_n$ as new instances of the white noise $\xi$, since they are given by rigid transformations, which depend on $e_n$. Nonetheless, since the latter is random when we evaluate \eqref{m170} at $\gamma=\gamma^*$, the previous argument is not rigorous.

\medskip

The first step to treat these statistical errors is to discretize the possible changes of coordinates, namely the possible realizations of the sides $e_n$. To this end, we pick a non-dimensional parameter $\kappa$, which will be fixed in the sequel, and introduce the projection\footnote{here $[x]$ denotes the integer part of $x$.} onto the grid $\kappa l\mathbb{Z}^2$
\begin{align}\label{e7}
\mathbb{R}^2\ni z=(x,y)\mapsto\bar z=(\bar x,\bar y)\in\kappa l\mathbb{Z}^2\quad\mbox{with}\quad\bar x:=\kappa l[\frac{x}{\kappa l}],\bar y:=\kappa l[\frac{y}{\kappa l}].
\end{align}
By acting on the two endpoints, this extends to a projection of segments $e\mapsto\bar e$.  In view of this discretization, we need the following technical lemma, which shows that the results of Theorem~\ref{T:3} still hold when we allow for an error in the choice of the Dirichlet boundary conditions. More precisely, we pick a segment $\bar e$ and show that all the infinitely many $(1+1)$-dimensional problems, which are built upon segments $e$ with $e\mapsto\bar e$, satisfy uniform error estimates. 

\medskip

For convenience, we state separately upper and lower bounds and we stick to the geometrically nonlinear setting, but assuming, for the upper bound, that the normal of the curves has small oscillations (measured by $\theta$). The harmonic approximation is performed inside the proof, as reflected by the relative error $\theta^2$ in the upper bound. Since we cannot rely on the homogeneity of the Dirichlet energy, we need an additional parameter $I$, measuring the relative strength of the perimeter and the field term. Even though in the introduction we chose the random $I=W(\gamma)/\len(\gamma)$ (cf.~\eqref{m65}), we will assume for the time being that the latter is deterministic and address this problem in the next subsection. A sketch of the proof is given in Figure~\ref{f5}.

\begin{lemma}\label{L:30}
Let us fix a segment $\bar e=[\bar z_1,\bar z_2]$ of length $\bar l:=|\bar z_1-\bar z_2|\gg1$ and non-dimensional parameters $0\le\theta,I^{-1}\ll1$,  $0\le\kappa\le\min\{\ln^{-1}\bar l,I^{-4/3}\}$. Then there exist two non-negative random variables $\delta^\pm$ with $\|\delta^\pm\|_{3/2}\lesssim1$ such that for every $z_1,z_2$ with $|z_i-\bar z_i|\le\kappa\bar l$ the following hold:
\begin{itemize}
\item[$(i)$] For every curve $\gamma$ connecting $z_1$ and $z_2$ with side-length $\ge 1$ and normal satisfying $\sup_{s,s'}|\nu(s)-\nu(s')|\le\theta$, we have
\begin{align*}
&W(\gamma)-I\big(\len(\gamma)-|z_1-z_2|\big)\\&\le\big(1+\delta^+(\ln^{-1}\bar l+\theta^2)\big)a^*I^{-1/3}|z_1-z_2|\ln\bar l.
\end{align*}
\item[($ii)$] There exists a $\gamma$ connecting $z_1$ and $z_2$ with side-length $\ge 1$ such that
\begin{align*}
&W(\gamma)-I\big(\len(\gamma)-|z_1-z_2|\big)\\&\ge\big(1-\delta^-\ln^{-1}\bar l\big)a^*I^{-1/3}|z_1-z_2|\ln\bar l.
\end{align*}
\end{itemize}
\end{lemma}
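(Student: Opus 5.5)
The plan is to reduce both statements to Theorem~\ref{T:3}, applied in the rotated frame of $\bar e$, after three steps: anisotropic rescaling, geometric linearization, and transfer of the boundary conditions.

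\emph{Frame and rescaling.} Rotate and translate so that $\bar e=[0,\bar l]\times\{0\}$. Any $z_1,z_2$ with $|z_i-\bar z_i|\le\kappa\bar l$ then sit at heights $O(\kappa\bar l)$ above the endpoints of $\bar e$. Under the hypothesis of (i), a curve $\gamma$ whose normal oscillates by at most $\theta\ll1$ is a graph $y=g(x)$ over the $x$-axis. Its slope relative to $\bar e$ is $O(\theta+\kappa)$, and its field term is $W(\gamma)=\int dx\int_{\ell(x)}^{g(x)}\xi$ up to a triangle term, where $\ell$ is the affine interpolant of $z_1,z_2$. For the length we use $\sqrt{1+p^2}-1\ge\frac12p^2(1-O(p^2))$ with $p$ the slope relative to $[z_1,z_2]$, which gives
\begin{align*}
\len(\gamma)-|z_1-z_2|\ge(1-O(\theta^2))D(g-\ell).
\end{align*}
This produces the relative error $\theta^2$ in (i). Next apply the vertical rescaling $y=t\hat y$ with $t=I^{-2/3}$ and horizontal unit scale. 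This turns $W-ID$ into $I^{-1/3}(\hat W-\hat D)$, where the rescaled cutoff now has side-length $\gtrsim1$ horizontally, which is the class $\mathcal H$ up to harmless constants. The maximum over graphs vanishing at both ends is then $I^{-1/3}\bar l A_{\bar l}$, and Theorem~\ref{T:3} gives $a^*\ln\bar l$ with an $O(1)$ Orlicz error. This is the source of the $\delta^\pm\ln^{-1}\bar l$ term. The length $|z_1-z_2|$ replaces $\bar l$ at relative cost $O(\kappa)\le\ln^{-1}\bar l$.

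\emph{Uniformity in $z_1,z_2$.} This is the main obstacle, because $\delta^\pm$ must not depend on the endpoints. Writing $h=g-\ell$, the affine shift $\ell$ changes the field term by the random linear quantity
\begin{align*}
\int dx\int_0^{\ell(x)}\xi
\end{align*}
and changes the Dirichlet energy only by a constant in the affine slope, since the cross term $\int h'\ell'$ vanishes. I would write $\ell=\ell_0+\text{(affine of size }\kappa\bar l)$. For the upper bound, I would dominate the sup over all admissible $\ell$ of $\int\int_0^{\ell}\xi$, measured relative to $I^{-1/3}\bar l\ln\bar l$, by a single random variable. Its Gaussian size is about $\sqrt{\bar l\cdot\kappa\bar l}$, and the rescaled ratio $\sqrt{\kappa}\,\bar l\,I^{1/3}/(\bar l\ln\bar l)$ is at most $1$ precisely because $\kappa\le I^{-4/3}$ (this is where that hypothesis enters, possibly up to chaining over a net of endpoints plus a Borell bound). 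A cleaner alternative is to extend every graph on $[0,\bar l]$ by short linear pieces to a common frame, that is, to glue a boundary layer of length $\sim\kappa\bar l$ on each side. The problem with endpoints $z_i$ is then embedded into the problem on a slightly larger fixed interval with zero boundary data, at cost controlled by the same Gaussian sup estimate. The $O(1)$-Orlicz control of $\max|h^*|/l$ from Theorem~\ref{T:3} ensures that maximizers are not destroyed by this gluing.

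\emph{Lower bound (ii).} Take the maximizer $h^*$ of the fixed problem on $\bar e$ (after rescaling) and define $\gamma$ as the graph of $\ell+t\hat h^*$, with its corners kept at distance $\ge1$. Then $(W-ID)(\gamma)$ equals $I^{-1/3}\bar lA_{\bar l}$ minus the affine cross term and the nonlinear length correction. The length correction is $O(\max|h'|^4)$, which is small on average because the typical slopes $I^{-2/3}$ are small. Both errors are bounded uniformly in $z_1,z_2$ by the same random variable, so $\delta^-$ is $O(1)$ in $\|\cdot\|_{3/2}$. Finally, combine the estimates by the triangle inequality for $\|\cdot\|_{3/2}$, up to constants.
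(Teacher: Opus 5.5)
Your proof of (ii), and the first of your two ``uniformity'' routes, has a genuine gap: both treat the field term as if it were linear in the height function. It is not. $W$ is additive in the subgraph region, not in $h$, so $W(h+\ell)-W(h)=\int dx\int_{h(x)}^{h(x)+\ell(x)}\xi$, and this depends on $h$.

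Take your competitor in (ii), the graph of $\ell+h^*$ closed by the chord $[z_2,z_1]$. Set aside that a graph over $[0,\bar l]$ does not end at $z_i$ when $z_i$ is horizontally displaced. One then gets exactly
\[
W(\gamma)-W(h^*)=\int dx\int_{h^*(x)}^{h^*(x)+\ell(x)}\xi-\int dx\int_0^{\ell(x)}\xi .
\]
The first term is the noise in a strip of width up to $\kappa\bar l$ along the graph of the random maximizer $h^*$. It is not an ``affine cross term''. It is also not a Gaussian process indexed by the finite-dimensional family of $\ell$'s, so Lemma~\ref{app:1} does not control its supremum over $z_1,z_2$, and nothing else in your proposal does. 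Equivalently, you are evaluating $h^*$ against the sheared noise $\xi(x,y+\ell(x))$, which is a different white noise for each pair of endpoints. The natural repair is to apply Theorem~\ref{T:3} shear by shear and union-bound over a net of endpoints. That loses a factor $\ln^{2/3}$ of the net's cardinality (cf.~\eqref{m83}), so you do not obtain $\|\delta^-\|_{3/2}\lesssim1$.

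The missing idea is never to move the maximizer, only to glue straight segments onto it.
\begin{itemize}
\item Place $z_1^-,z_2^-$ on the line of $\bar e$, at distance $2\kappa\bar l$ inside $\bar z_1,\bar z_2$.
\item Take the maximizer $h^-$ of $W-ID$ over $[z_1^-,z_2^-]$ with zero boundary data. This is one fixed random object, handled by Theorem~\ref{T:3} after the rescaling $y=I^{-2/3}\hat y$.
\item Connect it to the endpoints by the straight segments $[z_1,z_1^-]$ and $[z_2^-,z_2]$.
\end{itemize}
Then $W(\gamma)-W(h^-)$ is the noise over the quadrilateral $z_1z_1^-z_2^-z_2$, which is independent of $h^-$. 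Its supremum over $z_1,z_2$ is $\lesssim\kappa^{1/2}\bar l$ in $\|\cdot\|_2$, by anisotropic scaling and Lemma~\ref{app:1}.

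On the length side, $\sqrt{1+p^2}-1\le\frac12p^2$ gives $\len(\gamma)-|z_1-z_2|\le D(h^-)+O(\kappa\bar l)$ exactly, with no slope information on $h^-$. Your $O(\max|h'|^4)$ correction is therefore unnecessary, and Theorem~\ref{T:3} does not control slopes anyway. The length error contributes $O(I\kappa\bar l)\le I^{-1/3}\bar l$. This term, not the Gaussian one (which only needs $\kappa\lesssim I^{-2/3}$), is where $\kappa\le I^{-4/3}$ is used.

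For (i), your ``cleaner alternative'' is precisely the paper's proof and should be the main line. Glue $[z_1^+,z_1]$ and $[z_2,z_2^+]$, with $z_i^+$ at distance $2\kappa\bar l$ outside $\bar z_i$. The glued curve is simply a competitor for the fixed problem on $[z_1^+,z_2^+]$. The only errors are the noise over $z_1^+z_1z_2z_2^+$ and $O(I\kappa\bar l)$, so no statement about maximizers surviving the gluing is needed.

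Your first route for (i) is valid only if you compare $W(\gamma)$ with the fixed-frame field of the same graph $g$. But then $g$ has endpoint-dependent boundary values and domain, so Theorem~\ref{T:3} still does not apply until you glue.
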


\begin{figure}
\centering
\resizebox{1\textwidth}{!}{%
\begin{circuitikz}
\tikzstyle{every node}=[font=\LARGE]
\draw [ line width=1pt , dashed] (-25,24) circle (2.5cm);
\draw [ line width=1pt , dashed] (-8.75,24) circle (2.5cm);
\draw [ line width=1pt](-25,24) to[short] (-8.75,24);
\node [font=\LARGE] at (-25.25,23.5) {$\bar z_1$};
\node [font=\LARGE] at (-8.75,23.5) {$\bar z_2$};
\node [font=\LARGE] at (-17,23) {$\gamma$};
\node at (-25,24) [circ] {};
\node at (-8.75,24) [circ] {};
\node at (-30,24) [circ] {};
\node at (-3.75,24) [circ] {};
\node [font=\LARGE] at (-30.25,23.5) {$\tilde z_1$};
\node [font=\LARGE] at (-3.75,23.5) {$\tilde z_2$};

\node [font=\LARGE] at (-26,25.75) {$z_1$};
\node [font=\LARGE] at (-7.5,25.25) {$z_2$};
\node at (-26.25,25.5) [circ] {};
\node at (-7.5,24.75) [circ] {};
\node [font=\LARGE] at (-25,21) {$B_{\kappa \bar l}(\bar z_1)$};
\node [font=\LARGE] at (-8.5,21) {$B_{\kappa \bar l}(\bar z_2)$};
\draw [ line width=1pt](-25.5,24.75) to[short] (-24.25,24.75);

\draw [ line width=1pt](-24.25,24.75) to[short] (-23.25,24.5);
\draw [ line width=1pt](-23.25,24.5) to[short] (-22.25,25);
\draw [ line width=1pt](-22.25,25) to[short] (-21,24.75);
\draw [ line width=1pt](-21,24.75) to[short] (-20.25,24.25);
\draw [ line width=1pt](-20.25,24.25) to[short] (-19.5,24.5);
\draw [ line width=1pt](-19.5,24.5) to[short] (-18.25,24.25);
\draw [ line width=1pt](-18.25,24.25) to[short] (-17,23.5);
\draw [ line width=1pt](-17,23.5) to[short] (-16,23.25);
\draw [ line width=1pt](-16,23.25) to[short] (-15,23.5);
\draw [ line width=1pt](-15,23.5) to[short] (-13.75,23);
\draw [ line width=1pt](-13.75,23) to[short] (-12.25,23.5);
\draw [ line width=1pt](-12.25,23.5) to[short] (-11.5,22.75);
\draw [ line width=1pt](-11.5,22.75) to[short] (-10.5,23.5);
\draw [ line width=1pt](-10.5,23.5) to[short] (-10,24.25);
\draw [ line width=1pt](-10,24.25) to[short] (-8.75,25);
\draw [ line width=1pt](-26.25,25.5) to[short] (-25.5,24.75);
\draw [ line width=1pt](-7.5,24.75) to[short] (-8.75,25);
\draw [ color={rgb,255:red,255; green,0; blue,0}, , line width=1pt](-30,24) to[short] (-26.25,25.5);
\draw [ color={rgb,255:red,255; green,0; blue,0}, , line width=1pt](-7.5,24.75) to[short] (-3.75,24);
\end{circuitikz}
}%
\caption{Illustration of proof the upper bound. Given a curve $\gamma$ with varying boundary conditions $z_1,z_2$, by adding the two red segments one constructs a competitor for the problem with fixed boundary conditions $\tilde z_1,\tilde z_2$.}
\label{f5}
\end{figure}

\subsection{Upper and lower bounds}\label{SS:upperlower-bounds}

The goal of this subsection is to obtain the recursive formula \eqref{m125} up to some stochastic errors, which we control in the next subsections. To this end, we follow the strategy outlined in the introduction by combining the algebraic formula~\eqref{m169}, the regularity theory as in Lemma~\ref{L:18} and the estimates of the $(1+1)$-dimensional problem from Lemma~\ref{L:30}. In view of the uncontrolled assumption that coarse-graining and maximization commute (cf.~\eqref{m173}), we proceed slightly differently for the upper and the lower bound. 

\medskip

In view of Lemma~\ref{L:18} and the concentration in Lemma~\ref{T:7}, we know that there exists a good event $A$ with
\begin{align}\label{m183}
\mathbb{P}(A^c)\lesssim L^{-1},
\end{align}
such that the following regularity and concentration estimates hold:
\begin{align}
&|\nu^*(z)-\nu^*(z')|\lesssim\theta:=\ln l/\ln^{1/8}L\quad\mbox{for all}~|z-z'|\le l;\label{m166}\\
&|I_L-\E I_L|\lesssim\ln^{1/2}L. \label{c:101}
\end{align}
In order to apply Lemma \ref{L:30}, we shall choose $l$ and $\kappa$ such that
\begin{align}\label{c:100}
\theta\ll1\quad\mbox{and}\quad \kappa\ll\ln^{-1}L.
\end{align}

\medskip

Let us come to the upper bound, in the form of Lemma~\ref{L:31} below. Evaluating \eqref{m169} at $\gamma=\gamma^*$ gives
\begin{align}\label{c:105}
I_L-\frac{W(\gamma^*_{\ge l})}{\len(\gamma^*_{\ge l})} \leq \frac{1}{\len(\gamma^*_{\geq l})}\sum_{n=1}^N W(\gamma_n^*)-I_L(\len(\gamma_n^*)-l_n).
\end{align}
In view of the regularity in \eqref{m166}, $\gamma^*_{\ge l}$ has lower-bounded side-length, 
\begin{align*}
\gamma^*_{\geq l}\in \Gamma_{L,l/2}\quad\mbox{so that}\quad \frac{W(\gamma^*_{\ge l})}{\len({\gamma^*_{\ge l}})}\le\bar I_{2L/l}=_{\text{law}}I_{2L/l}.
\end{align*}
This allows us to simplify the l.~h.~s.~. Let us come to the r.~h.~s.~. We denote the edges of $\gamma^*_{\ge l}$ by $e_n^*$ and their projections by $\bar e_n^*$. We now rely on the concentration in \eqref{c:101} and on the monotonicity of the r.~h.~s.~with respect to $I_L$, to replace it with the deterministic $I:=\mathbb{E}I_L-C\ln^{1/2}L$ for some universal $C$. Together with the regularity \eqref{m166}, this enables us to apply Lemma \ref{L:30} $(i)$ and obtain
\begin{align*}
I_L-\bar I_{2L/l} \leq \frac{1}{\len(\gamma^*_{\geq l})}\sum_{n=1}^N &\big(1+\delta^+(\bar e_n^*) (\ln^{-1} l+\theta^2) \big)\times \\&\times a^*\big(\E I_L+O(\ln^{1/2}L)\big)^{-1/3}l_n \ln \len(\bar e_n^*),
\end{align*}
where the collection of random variables $\{\delta^+(\bar e)\}$ satisfies $\|\delta^+(\bar e)\|_{3/2}\lesssim1$ for every deterministic $\bar e$. Now, up to multiplying $\delta^+(\bar e^*_n)$ by an $O(1)$ factor, we can replace on the r.~h.~s.~$\len(\bar e_n^*)$ by $l$ and neglect $O(\ln^{1/2}L)$, since
\begin{align}
&\len(\bar e^*_n)\sim l\quad\mbox{so that}\quad\ln \len(\bar e_n^*)=\big(1+O(\ln^{-1}l)\big)\ln l;\notag\\
&\big(\E I_L+O(\ln^{1/2}L)\big)^{-1/3}\overset{\eqref{c:100},\text{Lemma~\ref{T:7}}}{=}(\E I_L)^{-1/3}\big(1+O(\ln^{-1}l)\big).\label{e6}
\end{align}
Recalling $\len(\gamma_{\geq l}^*)=\sum_{n=1}^N l_n$, we have deduced
\begin{align*}
I_L- \bar I_{2L/l} \leq \Big(1+(\ln^{-1}l+\theta^2) \big(\sum_{n=1}^N l_n\big)^{-1}\sum_{n=1}^N l_n\delta^+(\bar e_n^*) \Big)a^*(\E I_L)^{-1/3}\ln l.
\end{align*}
Using that $l_n\sim l$, one can replace the weighted mean of $\delta^+$ by an arithmetic mean. We may also replace $l$ by $2l$ and absorb the $O(1)$ error from $\ln (2l)=\ln l+O(1)$ into $\delta^+$.  Thus  we recover the following result.

\begin{lemma}\label{L:31}
If \eqref{m166}, \eqref{c:101} and \eqref{c:100} hold, then
\begin{align*}
&I_L- \bar I_{L/l} \leq \Big(1+(\ln^{-1}l+\theta^2) N^{-1}\sum_{n=1}^N\delta^+(\bar e_n^*) \Big)a^*(\E I_L)^{-1/3}\ln l\\
&\mbox{where}\quad \bar I_{L/l}=_{\text{law}}I_{L/l}\mbox{ and $\|\delta^+(\bar e)\|_{3/2}\lesssim 1$ for every fixed $\bar e$.}
\end{align*}
\end{lemma}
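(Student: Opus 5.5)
We follow the heuristic derivation preceding the statement and make each step rigorous on the good event where \eqref{m166}, \eqref{c:101} and \eqref{c:100} hold.

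\textbf{Step 1: the basic inequality.} Evaluate the algebraic identity \eqref{m169} at $\gamma=\gamma^*$, where $W(\gamma^*)/\len(\gamma^*)=I_L$. This yields \eqref{c:105} with equality.

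\textbf{Step 2: the left-hand side.} We need to compare $W(\gamma^*_{\ge l})/\len(\gamma^*_{\ge l})$ with a coarse problem. By \eqref{m166} with $\theta\ll1$, every branch of $\gamma^*$ of arc length about $l$ is almost straight. Remark~\ref{rk:1}, \eqref{c:45}, then shows that consecutive retained vertices are at chord distance $\ge l/2$. Hence $\gamma^*_{\ge l}\in\Gamma_{L,l/2}$, since its vertices lie on $\gamma^*\subset B_L$.

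By the scaling \eqref{m104}, rescaling by $2/l$ maps $\Gamma_{L,l/2}$ to $\Gamma_{2L/l,1}$ and preserves the law of the ratio. So the supremum $\bar I_{2L/l}$ over $\Gamma_{L,l/2}$ has the law of $I_{2L/l}$.

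\textbf{Step 3: the right-hand side.} The coefficient $I_L$ multiplies $-(\len(\gamma_n^*)-l_n)\le0$, so the right-hand side of \eqref{c:105} is nonincreasing in that coefficient. By \eqref{c:101} we may replace $I_L$ by the deterministic $I:=\E I_L-C\ln^{1/2}L$, which satisfies $I^{-1}\ll1$ by Lemma~\ref{T:7}.

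Each $\gamma_n^*$ is a curve joining the endpoints of $e_n^*$, with side-length $\ge1$ and normal oscillation $\lesssim\theta$ by \eqref{m166}. Its endpoints lie within $\kappa l\lesssim\kappa\len(\bar e_n^*)$ of those of $\bar e_n^*$. The hypothesis $\kappa\le\min\{\ln^{-1}\bar l,I^{-4/3}\}$ follows from \eqref{c:100}, because $\ln^{-1}L\ll I^{-4/3}\sim\ln^{-1}L$ up to constants; one should choose the implicit constant in $\kappa$ accordingly.

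Lemma~\ref{L:30}$(i)$ therefore applies to each $n$ with $\delta^+=\delta^+(\bar e_n^*)$. The key point is that the random variables are attached to the discrete segments $\bar e$, not to the random $e_n^*$, so $\|\delta^+(\bar e)\|_{3/2}\lesssim1$ holds for each fixed $\bar e$.

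\textbf{Step 4: simplifications.} First, $\len(\bar e_n^*)\sim l$ gives $\ln\len(\bar e_n^*)=(1+O(\ln^{-1}l))\ln l$. Second, $(\E I_L+O(\ln^{1/2}L))^{-1/3}=(\E I_L)^{-1/3}(1+O(\ln^{-1/4}L))$, and $\ln^{-1/4}L\ll\ln^{-1}l$ since $\ln l\ll\ln^{1/8}L$. Both $O(\ln^{-1}l)$ factors are absorbed into $\delta^+$ by adding a constant, which keeps $\|\cdot\|_{3/2}\lesssim1$.

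Divide by $\len(\gamma^*_{\ge l})=\sum l_n$. Since $l_n\sim l$, the weighted mean of the $\delta^+$ is bounded by a constant times the arithmetic mean $N^{-1}\sum\delta^+$. Finally, relabel $2l$ as $l$, which changes $\ln l$ by $O(1)$ and is again absorbed into $\delta^+$.

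\textbf{Main obstacle.} The delicate point is Step 2: showing that the coarse-grained curve has all side-lengths $\gtrsim l$. The regularity \eqref{m166} is stated in terms of chord distance, and it has to be converted into arc-length control along a branch via Corollary~\ref{Cor:2} and Remark~\ref{rk:1}. A second point to handle is the last edge, which closes the curve and may be shorter; we would merge it with its neighbour.
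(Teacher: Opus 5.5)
Your proposal is correct and follows the paper's own derivation essentially step by step. It evaluates \eqref{c:105} at $\gamma^*$ and uses the inclusion $\gamma^*_{\ge l}\in\Gamma_{L,l/2}$ to get $\bar I_{2L/l}$. It then uses monotonicity in $I_L$ together with \eqref{c:101} to pass to the deterministic $I=\mathbb{E}I_L-C\ln^{1/2}L$, and applies Lemma~\ref{L:30}$(i)$ with $\delta^+$ indexed by the projected segments $\bar e_n^*$. Finally it absorbs the $O(\ln^{-1}l)$ corrections, passes from the weighted to the arithmetic mean, and relabels $2l$ as $l$.

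Two small remarks. First, your chain ``$\ln^{-1}L\ll I^{-4/3}\sim\ln^{-1}L$'' should read $\kappa\ll\ln^{-1}L\lesssim I^{-4/3}$, which uses the a priori upper bound of Lemma~\ref{T:7}. Second, the step you flag as the main obstacle is actually the easy direction: arc length dominates chord length, so \eqref{m166} directly controls the normal along any arc of length $O(l)$, and Remark~\ref{rk:1} then gives chord $\ge$ arc$/2$ without needing Corollary~\ref{Cor:2}.
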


Similarly, let us treat the lower bound, which is deduced from the construction of an appropriate competitor for the variational problem $I_L$. Let $\gamma^*_l$ be the optimizer within the class $\Gamma_{L,l}$ and we denote now by $e_n^*$ its edges. Let us introduce
\begin{align*}
\gamma:=\gamma^*_l+\sum_{n=1}^N\gamma_n\in\Gamma_{L,1}
\end{align*}
where for every $n$, $\gamma_n$ is the curve connecting the endpoints of $e_n^*$ given by Lemma \ref{L:30} with the choice $I:=\E I_L+C\ln^{1/2}L$. Now applying \eqref{m169} to $\gamma$, 
\begin{align}
&I_L-\bar I_{L/l}\geq \frac{1}{\len(\gamma_l^*)}\sum_{n=1}^NW(\gamma_n)-\frac{W(\gamma)}{\len(\gamma)}(\len(\gamma_n)-l_n)\nonumber\\
&\mbox{where}\quad\bar I_{L/l}:=\frac{W(\gamma^*_l)}{\len(\gamma^*_l)}=_{\text{law}}I_{L/l}.\label{c:106}
\end{align}
Again by monotonicity, one may replace $W(\gamma)/\len(\gamma)$ on the r.~h.~s.~by $I_L\le\mathbb{E}I_L+O(\ln^{1/2}L)$ so that  Lemma \ref{L:30} $(ii)$ with $I:=\mathbb{E}I_L+C\ln^{1/2}L$ yields
\begin{align*}
I_L-\bar I_{L/l}\geq \frac{1}{\len(\gamma_l^*)}\sum_{n=1}^N&\big(1-\delta^-(\bar e_n^*)\ln^{-1}l \big)\times\\&\times a^*\big(\E I_L+O(\ln^{1/2}L)\big)^{-1/3}l_n \ln \len(\bar e_n^*).
\end{align*}
By the same reasoning as in \eqref{e6}, we may drop $O(\ln^{1/2}L)$ and replace $\ln \len(\bar e_n^*) $ by $\ln l$  up to multiplying $\delta^-(e_n^*)$ by an $O(1)$ factor, obtaining

\begin{lemma}\label{L:32}
If \eqref{c:101} holds, then
\begin{align}\label{c:104}
&I_L-\bar I_{L/l}\geq \big(1-(\ln l)^{-1}N^{-1}\sum_{n=1}^N\delta^-(\bar e_n^*) \big)a^*(\E I_L)^{-1/3} \ln l\nonumber\\
&\mbox{where}\quad\bar I_{L/l}=_{\mbox{law}}I_{L/l}\mbox{ and $\|\delta^-(\bar e)\|_{3/2}\lesssim1 $ for every fixed $\bar e$.}
\end{align}
\end{lemma}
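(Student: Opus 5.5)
The plan is to make rigorous the heuristic already sketched before the statement of Lemma~\ref{L:32}. The starting point is the competitor $\gamma:=\gamma^*_l+\sum_n\gamma_n$, where $\gamma^*_l$ maximizes over $\Gamma_{L,l}$ with edges $e_n^*$ of length $l_n\ge l$, and each $\gamma_n$ is the curve given by Lemma~\ref{L:30}~$(ii)$. That lemma is applied with the deterministic parameter $I:=\mathbb{E}I_L+C\ln^{1/2}L$ and with the grid segment $\bar e_n^*$ (projection onto $\kappa l\mathbb{Z}^2$, with $\kappa\ll\ln^{-1}L$ as in \eqref{c:100}). I will first check that $\gamma\in\Gamma_{L,1}$: the side lengths of $\gamma$ are those of the $\gamma_n$, hence $\ge1$. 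Containment in $B_L$ may fail near the boundary. To avoid this I will take $\gamma^*_l$ in a slightly smaller ball $B_{L-l}$ and absorb the change $L\to L-l$ into the law identification $\bar I_{L/l}=_{\text{law}}I_{L/l}$, since $I_{(L-l)/l}$ and $I_{L/l}$ are monotonically comparable. I also expect the Lipschitz control from Theorem~\ref{T:3} to keep $\gamma_n$ within $O(l)$ of $e_n^*$. Next I apply the identity \eqref{m169} to $\gamma$ with coarse curve $\gamma^*_l$. Since $I_L\ge W(\gamma)/\len(\gamma)$, the left-hand side is bounded below by $I_L-\bar I_{L/l}$ after adding $I_L-W(\gamma)/\len(\gamma)\ge0$. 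This gives \eqref{c:106}.

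The key algebraic point is the monotonicity in the prefactor of the length increments. On the event \eqref{c:101}, $W(\gamma)/\len(\gamma)\le I_L\le I$. Lemma~\ref{L:30} produces $\gamma_n$ with $\len(\gamma_n)-l_n\ge0$, because a curve is at least as long as its chord. Hence $-\frac{W(\gamma)}{\len(\gamma)}(\len(\gamma_n)-l_n)\ge -I(\len(\gamma_n)-l_n)$. This holds provided $W(\gamma)/\len(\gamma)\ge0$; otherwise the bound is trivial, since the term is then itself nonnegative. Each summand is therefore at least $(1-\delta^-(\bar e_n^*)\ln^{-1}\bar l_n)a^*I^{-1/3}l_n\ln\bar l_n$, with $\bar l_n=\len(\bar e_n^*)$. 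One subtlety is that $\bar e_n^*$ is random. I will define $\delta^-(\bar e)$ for every deterministic grid segment $\bar e$ as in Lemma~\ref{L:30}, so that it is uniform over endpoints within $\kappa\bar l$, and then simply evaluate it at $\bar e=\bar e_n^*$. This is exactly why the statement only claims $\|\delta^-(\bar e)\|_{3/2}\lesssim1$ for fixed $\bar e$.

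Then comes the cleanup. First, $I^{-1/3}=(\mathbb{E}I_L)^{-1/3}(1+O(\ln^{-1/2}L\,\mathbb{E}I_L^{-1}))=(\mathbb{E}I_L)^{-1/3}(1+O(\ln^{-1}l))$ by Lemma~\ref{T:7} and $\ln l\le\ln L$; this uses the lower bound on $\mathbb{E}I_L$ from Lemma~\ref{T:7}. Second, $\ln\bar l_n\ge\ln l_n-O(1)\ge\ln l-O(1)$. Here a difficulty appears: the upper side is fine as a lower bound, but $l_n$ could be much larger than $l$. In that case $\ln\bar l_n\ge\ln l$ only helps the main term, while the error term $\delta^-\ln^{-1}\bar l_n\cdot\ln\bar l_n=\delta^-$ is independent of $l_n$. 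So each summand is at least $a^*(\mathbb{E}I_L)^{-1/3}l_n(\ln l-C\delta^-(\bar e_n^*)-C)$. The constants are absorbed by redefining $\delta^-\to C(1+\delta^-)$, which still has bounded $\|\cdot\|_{3/2}$ norm. Finally, dividing by $\len(\gamma^*_l)=\sum_nl_n$ yields a weighted mean of the $\delta^-(\bar e_n^*)$ with weights $l_n$.

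The main obstacle is converting this weighted mean into the arithmetic mean $N^{-1}\sum_n\delta^-$ of \eqref{c:104}. This is harmless if $l_n\sim l$, but edges of $\gamma^*_l$ may be long. I would handle it by subdividing each long edge $e_n^*$ into $\sim l_n/l$ collinear subsegments of length in $[l,2l]$. The curve stays in $\Gamma_{L,l}$ with the same $W$ and length, since inserting vertices on straight edges changes nothing. After relabeling, all $l_n\in[l,2l]$, so the weights are comparable to uniform up to a factor of $2$, which can be absorbed into $\delta^-$. This gives the claimed inequality with $N$ the number of edges after subdivision.
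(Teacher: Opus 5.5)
Your argument follows the paper's own proof of this lemma. You build the competitor $\gamma=\gamma^*_l+\sum_n\gamma_n$, with $\gamma_n$ taken from Lemma~\ref{L:30}\,(ii) at the deterministic level $I=\E I_L+C\ln^{1/2}L$. You apply the identity \eqref{m169}, freeze the prefactor using $W(\gamma)/\len(\gamma)\le I_L\le I$ together with $\len(\gamma_n)\ge l_n$, and then clean up $I^{-1/3}$ and $\ln\len(\bar e_n^*)$. Subdividing the edges of $\gamma^*_l$ into collinear pieces of length in $[l,2l)$ is a real improvement. The paper converts the $l_n$-weighted mean into $N^{-1}\sum_n\delta^-(\bar e_n^*)$ using $l_n\sim l$. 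That is justified for $\gamma^*_{\ge l}$, but not for the maximizer over $\Gamma_{L,l}$, and the same side-length bound is needed later for \eqref{m177}.

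The step that does not work as written is your containment patch. If $\gamma^*_l$ maximizes over $\Gamma_{L-l,l}$, then $\bar I$ has the law of $I_{L/l-1}$ and is pathwise no larger than the maximum over $\Gamma_{L,l}$. A lower bound on $I_L-\bar I$ is therefore weaker than the claimed one with $\bar I_{L/l}=_{\text{law}}I_{L/l}$, and monotonicity goes the wrong way to upgrade it. What you actually obtain is a lower bound on $\E I_L-\E I_{L/l-1}$. To reach the form used in Lemma~\ref{L:33}, you need an extra continuity estimate, $\E I_{L/l}-\E I_{L/l-1}\ll(\ln l)\ln^{-1/4}L$. This follows from monotonicity of $L\mapsto\E I_L$ together with the expectation of the upper bound in Lemma~\ref{L:31}, applied between $L/l$ and $L/(ll')$ with $\ln\ln L\ll\ln l'\ll\ln l$. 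The paper simply asserts $\gamma\in\Gamma_{L,1}$, so it leaves the same point open. Relatedly, Theorem~\ref{T:3} gives a sup-norm bound (not a Lipschitz bound) and only with an Orlicz tail. Keeping the $\gamma_n$ within $o(l)$ of $e^*_n$ therefore requires a good event as in Subsection~\ref{SS:Simple}, not just \eqref{c:101}.

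There is also a smaller slip in the cleanup. The correct expansion is $I^{-1/3}=(\E I_L)^{-1/3}\big(1+O(\ln^{1/2}L/\E I_L)\big)=(\E I_L)^{-1/3}\big(1+O(\ln^{-1/4}L)\big)$. Absorbing this into $1+O(\ln^{-1}l)$ needs $\ln l\lesssim\ln^{1/4}L$, which comes from $\theta\ll1$ in \eqref{c:100}, not from $\ln l\le\ln L$.
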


Starting from Lemmas~\ref{L:31} and \ref{L:32}, we would like to take expectations on both sides. To this end, we apply Lemma~\ref{T:7} to estimate the contribution of the bad event $A^c$,
\begin{align}\label{m194}
\mathbb{E}I_L I(A^c)\lesssim(\mathbb{E}^{1/2}I_L^2)\mathbb{P}^{1/2}(A^c)\lesssim (\ln^{3/4}L)L^{-1},
\end{align}
which can be absorbed into the other errors. 

\begin{lemma}\label{L:33}
Assume that
\begin{align}\label{m184}
\theta=\ln l/\ln^{1/8}L\ll1\quad\mbox{and}\quad\ln l\gg\ln\ln L.
\end{align}
Then
\begin{align*}
\mathbb{E}I_L-\mathbb{E}I_{L/l}=\Big(1+O\big((\ln^{-1}l+\theta^2)\mathbb{E}N^{-1}\sum_{n=1}^N\delta^\pm(\bar e_n^*)\big)\Big)a^*(\mathbb{E}I_L)^{-1/3}\ln l.
\end{align*}
\end{lemma}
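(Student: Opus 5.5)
The plan is to take expectations in Lemmas~\ref{L:31} and \ref{L:32}, splitting each expectation according to the good event $A$ from \eqref{m183} and its complement. First we check that the hypotheses of those lemmas hold on $A$ under \eqref{m184}. The regularity \eqref{m166} and the concentration \eqref{c:101} are the defining properties of $A$. Lemma~\ref{L:18} requires $\ln l\gg\ln\ln L$, which is part of \eqref{m184}; the same condition also places $l$ in the regime of Remark~\ref{rk:3}, since $\ln l\ll\ln^{1/8}L$ follows from $\theta\ll1$. For \eqref{c:100}, $\theta\ll1$ is given, and we fix $\kappa$ of order $\ln^{-2}L$. This choice also meets the constraint $\kappa\le\min\{\ln^{-1}\bar l,I^{-4/3}\}$ of Lemma~\ref{L:30}, because $I\sim\ln^{3/4}L$ by Lemma~\ref{T:7} and therefore $I^{-4/3}\sim\ln^{-1}L$.

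For the upper bound, write $\mathbb{E}I_L=\mathbb{E}I_L I(A)+\mathbb{E}I_L I(A^c)$. On $A$ we apply Lemma~\ref{L:31}. The term $\mathbb{E}\bar I_{L/l}I(A)$ equals $\mathbb{E}I_{L/l}-\mathbb{E}\bar I_{L/l}I(A^c)$, and the correction is bounded as in \eqref{m194} by $O(\ln^{3/4}L\,L^{-1/2})$ using Cauchy--Schwarz and the second moment from Lemma~\ref{T:7}. The right-hand side of Lemma~\ref{L:31} is nonnegative, so its expectation over $A$ is at most its full expectation, namely $\big(1+(\ln^{-1}l+\theta^2)\mathbb{E}N^{-1}\sum_n\delta^+(\bar e_n^*)\big)a^*(\mathbb{E}I_L)^{-1/3}\ln l$. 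It remains to absorb the bad-event errors $O(\ln^{3/4}L\,L^{-1/2})$ into the relative error. Since $(\mathbb{E}I_L)^{-1/3}\ln l\gtrsim\ln^{-1/4}L$, this absorption is legitimate as soon as $L^{-1/2}\ln L\ll\ln^{-1}l$, which clearly holds; we may assume $\mathbb{E}N^{-1}\sum\delta^\pm\gtrsim 1$, or else add a constant to $\delta^\pm$. The lower bound from Lemma~\ref{L:32} is handled the same way. On $A^c$ we only have the trivial bounds, and they are absorbed identically; the term $(\ln l)^{-1}N^{-1}\sum\delta^-$ is already of the stated form, because $\ln^{-1}l\le\ln^{-1}l+\theta^2$.

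The main obstacle is that the sum in Lemma~\ref{L:32} with a minus sign is random, and on $A^c$ it is not even defined by the lemma. We handle this by defining $\delta^\pm(\bar e_n^*)$ through the fixed collection $\{\delta^\pm(\bar e)\}$ indexed by grid segments, so that the averages $N^{-1}\sum_n\delta^\pm(\bar e_n^*)$ are well-defined random variables on the whole probability space. We then bound $\mathbb{E}\big[N^{-1}\sum\delta^-(\bar e_n^*)I(A)\big]$ by the full expectation, which is allowed since $\delta^-\ge0$.

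The same mechanism shows that the expectation $\mathbb{E}N^{-1}\sum\delta^\pm(\bar e^*_n)$ appearing in the statement is finite and meaningful. The uniform bound $\|\delta^\pm(\bar e)\|_{3/2}\lesssim1$ only holds for deterministic $\bar e$; controlling this expectation for the random, optimizer-dependent segments $\bar e_n^*$ is not part of this lemma and is deferred to the next subsection, where the paper treats the statistical errors. Both bounds then combine into the two-sided $O(\cdot)$ statement.
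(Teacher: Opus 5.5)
Your proposal is correct and follows the paper's route. Like the paper, you take expectations in Lemmas~\ref{L:31} and \ref{L:32}, control the bad event $A^c$ by Cauchy--Schwarz together with the second-moment bound from Lemma~\ref{T:7} as in \eqref{m194}, and absorb the resulting error into the relative error term. Your additional details only make explicit what the paper leaves implicit: using $\delta^\pm\ge0$ to drop the indicator $I(A)$, adding a constant to $\delta^\pm$ if needed, and obtaining $L^{-1/2}$ rather than the $L^{-1}$ printed in \eqref{m194}.
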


\subsection{Control of the statistical error}\label{SS:statistics}

Note that, if we want to successfully convert Lemma~\ref{L:33} into
\begin{align*}
\mathbb{E}I_L-\mathbb{E}I_{L/l}\approx a^*(\mathbb{E}I_L)^{-1/3}\ln l,
\end{align*}
we need to show that the relative error is negligible,
\begin{align}\label{m180}
\big(\ln^{-1}l+\theta^2\big)\mathbb{E} N^{-1}\sum_{n=1}^N\delta^\pm(\bar e_n^*)\ll1.
\end{align}
A simple strategy might be to apply a sup bound on the l.~h.~s.
\begin{align*}
\mathbb{E}N^{-1}\sum_{n=1}^N\delta^\pm(\bar e_n^*)\le\mathbb{E}\sup_{\bar e}\delta^\pm(\bar e)\lesssim\ln^{2/3}L,
\end{align*}
where in the last step we relied on \eqref{m83} and we used that there are $\lesssim (L/\kappa l)^4$ edges $\bar e$ with endpoints on the grid $\kappa l\mathbb{Z}^2$. This estimate is unfortunately too crude, since in view of the regularity \eqref{m166} we are in the regime $\ln l\ll\ln^{1/8}L$.

\medskip

Let us now give a modified strategy, which avoids the supremum bound and exploits the fact that averages are better-behaved, in terms of concentration properties, than suprema. We introduce the net in the configuration space 
\begin{align*}
\bar\Gamma:=\{\mbox{curves $\bar\gamma$ with image in $B_L$ and vertices in $\kappa l\mathbb{Z}^2$}\},
\end{align*}
which comes with a projection map $\gamma\mapsto\bar\gamma$ obtained by projecting each vertex. This net is the natural index set for the above errors: Given a curve $\bar\gamma$ with edges $\bar e_n$ we set $\delta_n(\bar\gamma):=\delta^\pm(\bar e_n)$ and
\begin{align*}
\delta(\bar\gamma):=N^{-1}\sum_{n=1}^N\delta_n(\bar\gamma).
\end{align*}

\medskip

The aim is now to estimate $\delta(\bar\gamma)$ uniformly over $\bar\gamma\in\bar \Gamma$. To this end, we work under the additional hypothesis,
\begin{align}\label{m174}
\mbox{for every fixed curve $\bar\gamma$, the variables $\{\delta_n(\bar\gamma)\}_n$ are independent},
\end{align}
the justification of which will require the next subsection. Let us assume, up to scaling, that $\|\delta_n(\bar\gamma)\|_{3/2}\le1$. Combining \eqref{m174} with H\"older and Chebyshev inequalities, we get the large deviation bound for a single $\bar\gamma$,
\begin{align}\label{m179}
\mathbb{P}\big(N^{-1}\sum_{n=1}^N\delta_n(\bar\gamma)\ge\nu\big)\le\mathbb{P}\big(N^{-1}\sum_{n=1}^N\delta_n(\bar\gamma)^{3/2}\ge \nu^{3/2}\big)\le e^{-N\nu^{3/2}+N}.
\end{align}

\medskip

Before taking the supremum over $\bar\gamma$, let us make two observations, which help in reducing the number of elements in $\bar\Gamma$ that we have to consider.
The first observation is that we can assume a lower bound on the number of vertices on $\bar\gamma$. Indeed, because of the previous subsection, we are only interested in the case where
\begin{align}\label{m178}
\mbox{$\bar\gamma$ is the projection of either $\gamma^*_{\ge l}$ or $\gamma^*_l$}.
\end{align}
From Lemma~\ref{L:12} and the estimate in Lemma~\ref{L:7}, we have that
$\gamma^*_{\ge l}$ and $\gamma^*_l$ have diameter $\gtrsim l'$ provided $\ln l'\ll \ln L$, namely
\begin{align*}
\mbox{the diameters of $\gamma^*_{\ge l}$ and $\gamma^*_l$ are $\gtrsim L^\alpha$ for some $\alpha\in(0,1)$.}
\end{align*}
Since $\ln l\ll\ln^{1/8}L$, up to slightly decreasing $\alpha$, we can assume
\begin{align}\label{m176}
\mbox{$\bar\gamma$ has $N \gtrsim L^\alpha$ vertices for some universal $\alpha\in(0,1)$.}
\end{align}
The second observation is that \eqref{m178} implies the upper bound
\begin{align}\label{m177}
\mbox{$\bar\gamma$ has side-length $\lesssim l$.}
\end{align}

\medskip

Combining \eqref{m176} and \eqref{m177}, we have the counting estimate
\begin{align*}
\ln \#\{\bar\gamma\in\bar\Gamma~\mbox{with $N$ vertices and satisfying \eqref{m176} $\&$ \eqref{m177}}\}\lesssim N\ln\kappa^{-1},
\end{align*}
which follows from noting that we have $\lesssim(L/\kappa l)^2$ many choices for the first vertex of $\bar\gamma$ and $\lesssim(l/\kappa l)^2$ many for each of the remaining $N-1$ vertices. Together with \eqref{m179} and the union bound, it yields
\begin{align*}
\ln\mathbb{P}\Big(\sup_{\bar\gamma\in\bar\Gamma}\big\{&N^{-1}\sum_{n=1}^N\delta_n(\bar\gamma)~|~\mbox{$\bar\gamma$ with $N$ vertices, satisfying \eqref{m176} $\&$ \eqref{m177}}\big\}\ge\nu\Big)\\&\le-N\nu^{3/2}+N+CN\ln\kappa^{-1}\lesssim-N\nu^{3/2}\quad\mbox{provided $\nu\gg\ln^{2/3}\kappa^{-1}$}.
\end{align*}
Since the probabilities are summable in $N$, we have obtained the following.

\begin{lemma}\label{L:35}
Assume that \eqref{m174} holds. Then
\begin{align*}
\mathbb{E}\sup\{N^{-1}\sum_{n=1}^N\delta_n(\bar\gamma)~|~\mbox{$\bar\gamma\in\bar\Gamma$ satisfying \eqref{m176} $\&$ \eqref{m177}}\}\lesssim\ln^{2/3}\kappa^{-1}.
\end{align*}
\end{lemma}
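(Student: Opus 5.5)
The plan is the standard layer-cake argument: integrate the tail bound, with the union bound organised according to the number of vertices $N$. Write
\begin{align*}
S_N:=\sup\{N^{-1}\textstyle\sum_{n=1}^N\delta_n(\bar\gamma)~|~\bar\gamma\in\bar\Gamma\ \mbox{with $N$ vertices, satisfying \eqref{m176} $\&$ \eqref{m177}}\}
\end{align*}
and $S:=\sup_N S_N$, where by \eqref{m176} the supremum runs only over $N\ge cL^\alpha$. After rescaling so that $\|\delta_n(\bar\gamma)\|_{3/2}\le1$, the single-curve estimate \eqref{m179} applies. That estimate rests on the independence \eqref{m174}, which gives $\mathbb{E}\exp(\sum_n\delta_n^{3/2})=\prod_n\mathbb{E}\exp(\delta_n^{3/2})\le e^N$, followed by Chebyshev. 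Here I would note that $N^{-1}\sum\delta_n\ge\nu$ implies $N^{-1}\sum\delta_n^{3/2}\ge\nu^{3/2}$ by Jensen, which is the H\"older step.

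Next comes the counting. A curve in $\bar\Gamma$ with $N$ vertices satisfying \eqref{m177} is determined by its first vertex, for which there are $\lesssim(L/\kappa l)^2$ choices, and by $N-1$ increments, each with $\lesssim\kappa^{-2}$ choices. Hence the logarithm of the count is $\le 2\ln(L/\kappa l)+CN\ln\kappa^{-1}$. The term $\ln L$ coming from the first vertex is the only possible nuisance, and it is absorbed by $N$ because $N\gtrsim L^\alpha\gg\ln L$; this is exactly where \eqref{m176} enters. By the union bound, $\ln\mathbb{P}(S_N\ge\nu)\le -N\nu^{3/2}+N+CN\ln\kappa^{-1}$. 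Choosing $\nu\ge\nu_0:=C'\ln^{2/3}\kappa^{-1}$ with $C'$ large, and using $\kappa\le 1/e$ so that $\ln\kappa^{-1}\ge 1$, gives $\mathbb{P}(S_N\ge\nu)\le e^{-N\nu^{3/2}/2}$.

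Summing over $N\ge cL^\alpha$ (in fact over $N\ge1$) yields $\mathbb{P}(S\ge\nu)\le\sum_{N\ge1}e^{-N\nu^{3/2}/2}\lesssim e^{-\nu^{3/2}/2}$ for $\nu\ge\nu_0\ge1$. Then
\begin{align*}
\mathbb{E}S\le\nu_0+\int_{\nu_0}^\infty\mathbb{P}(S\ge\nu)\,d\nu\lesssim\nu_0+1\lesssim\ln^{2/3}\kappa^{-1}.
\end{align*}
Undoing the rescaling costs only a universal factor, since $\|\delta^\pm(\bar e)\|_{3/2}\lesssim1$ uniformly in $\bar e$. One should also check measurability of the supremum; the index set is countable, so this is fine.

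The only step needing care is the entropy count, specifically making sure that the $\ln(L/\kappa l)$ contribution of the first vertex does not spoil the bound. The lower bound \eqref{m176} handles it. Alternatively, if one preferred not to use \eqref{m176}, one would get an extra additive $\ln^{2/3}L$-type term, which is too large. So the order of steps is: single-curve tail via independence, entropy count using \eqref{m176}–\eqref{m177}, union bound, summation in $N$, and finally integration of the tail.
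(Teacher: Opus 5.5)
Your proposal is correct and follows the paper's argument step by step: the single-curve tail \eqref{m179} via independence and Jensen/H\"older, the entropy count of $\lesssim(L/\kappa l)^2\kappa^{-2(N-1)}$ curves with the first-vertex $\ln L$ absorbed thanks to $N\gtrsim L^\alpha$ from \eqref{m176}, the union bound for $\nu\gg\ln^{2/3}\kappa^{-1}$, and summation in $N$. You also spell out the tail integration and the absorption of the $\ln L$ term, which the paper leaves implicit.
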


We observe that this estimate suffices: Substituting it in \eqref{m180} yields the lower bound for $\kappa$,
\begin{align}\label{m193}
(\ln^{-1}l+\theta^2)\ln^{2/3}\kappa^{-1}\ll1.
\end{align}
which is compatible with the upper bound \eqref{c:100}. In fact, any choice of the form $\kappa=\ln^{-C}L$ for some large constant $C$ would work, provided $\ln l\gg\ln \ln L$ (condition already required for the regularity in Lemma~\ref{L:18} to hold).

\subsection{An almost-maximizing simple curve}\label{SS:Simple} The goal of this subsection is to justify the assumption \eqref{m174}. It will be a consequence of the following two additional conditions, which we will then ensure: For every segment $\bar e$, the two variables
\begin{align}\label{m181}
\mbox{$\delta^\pm(\bar e)$ depend only on $\xi$ restricted to $B_{4l}(z)$ with $z:=$midpoint of $\bar e$;}
\end{align}
For every possible $\bar\gamma$ and $z$ in its image,
\begin{align}\label{m182}
\mbox{$\bar\gamma\cap B_{8l}(z)$ consists of one curve.}
\end{align}
Using the spatial independence of the white noise $\xi$, the previous two conditions and the regularity of $\bar\gamma$ at scale $l$, one deduces a relaxed version of \eqref{m174}, where the random variables $\{\delta_n(\bar\gamma)\}_n$ are independent after having been divided into a finite, universally controlled, number of groups. This is anyway sufficient to conclude.

\medskip

Here comes the justification of \eqref{m181}. For the variables $\delta^+(\bar e)$ this is immediate from Lemma~\ref{L:30} $(i)$, since $\kappa,\theta\ll1$ and the fact that $\len(\bar e)\le(4+O(\kappa))l$. For the random variables $\delta^-(\bar e)$, we need an additional argument. By inspecting the proof of Lemma~\ref{L:30} $(ii)$ and applying the $L^\infty$-estimate from Theorem~\ref{T:3}, one deduces the additional property of the competitors $\gamma$,
\begin{align*}
\big\|\sup_{z_1,z_2}\mathrm{dist}(\gamma,\bar e)\big\|_3\lesssim I^{-2/3}\len(\bar e)\lesssim I^{-2/3}l.
\end{align*}
Using that $I=\mathbb{E}I_L+O(\ln^{1/2}L)\gtrsim\ln^{3/4}L$, the estimate \eqref{m83} and the fact that there are $\lesssim (L/(\kappa l))^4$ many edges $\bar e$, we deduce the uniform estimate
\begin{align*}
&\sup_{\bar e}\sup_{z_1,z_2}\mathrm{dist}(\gamma,\bar e)\lesssim I^{-2/3}l(\ln^{1/3}L+\delta)\quad\mbox{with}\quad\|\delta\|_3\le1,\\
&\mbox{and therefore}\quad\sup_{\bar e}\sup_{z_1,z_2}\mathrm{dist}(\gamma,\bar e)\lesssim l\ln^{-1/6}L
\end{align*}
up to slightly changing the event $A$ from Subsection~\ref{SS:upperlower-bounds} by assuming that $\delta\lesssim\ln^{1/3}L$, but retaining the estimate \eqref{m183}. Hence \eqref{m181} holds also for $\delta^-$.

\medskip

Let us conclude with the property in \eqref{m182}, which we consider for $\gamma^*$ and transfer to $\bar\gamma$ by using regularity and $\kappa\ll1$. Note that it would imply that $\gamma^*$ is simple (it cannot self-intersect) and in fact it is equivalent for regular curves, as shown in the following lemma.

\begin{lemma}\label{L:27}
If $\gamma$ is a simple, closed curve satisfying for some $l$,
\begin{align*}
|\nu(s)-\nu(s')|\ll1\quad\mbox{for every $s,s'$ with}\quad|\gamma(s)-\gamma(s')|\le l,
\end{align*}
then $\gamma\cap B_l(z)$ consists of only one curve for every $z$ in the image of $\gamma$.
\end{lemma}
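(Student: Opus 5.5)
We argue by contradiction: suppose that for some $z=\gamma(s_0)$ the set $\gamma\cap B_l(z)$ contains at least two distinct connected components (maximal arcs of $\gamma$ inside $B_l(z)$). One of them, call it $\sigma_0$, contains $z$; let $\sigma_1$ be another one, and let $\gamma(s_1)\in\sigma_1$ be a point of $\sigma_1$ closest to $z$, so $|\gamma(s_1)-z|<l$. The plan is to show that the smallness of the oscillation of $\nu$ forces both arcs to be almost straight chords crossing the ball, and then that the portion of $\gamma$ joining them cannot close up without either violating simplicity or the normal condition.

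\emph{Step 1: local flatness.} Since $\gamma(s)\in B_l(z)$ for $s$ on $\sigma_0\cup\sigma_1$, any two points of the same component are at distance $<2l$. Applying the hypothesis on $B_l$ balls (after halving $l$ if needed, which is harmless since the statement is scale-robust in the $\ll1$ sense), the normal along $\sigma_0$ is within $o(1)$ of $\nu(s_0)$. By Remark~\ref{rk:1} (\eqref{c:45}, \eqref{c:47}), $\sigma_0$ is a Lipschitz graph with small slope over the line through $z$ with normal $\nu(s_0)$, and its length is comparable to its chord. Since $\gamma$ is closed and not contained in $B_l(z)$ (otherwise there is only one component trivially), $\sigma_0$ enters and exits through $\partial B_l(z)$, hence is an almost-diameter. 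The same applies to $\sigma_1$ with normal $\nu(s_1)$; moreover $|\gamma(s_0)-\gamma(s_1)|<l$ gives $|\nu(s_0)-\nu(s_1)|\ll1$, so $\sigma_1$ is an almost-straight chord almost parallel to $\sigma_0$, at some distance $d\in(0,l)$ from it (disjoint by simplicity).

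\emph{Step 2: orientation.} Two nearly parallel chords: the orientations (tangents) are either nearly equal or nearly opposite. But the hypothesis gives $|\nu(s_0)-\nu(s_1)|\ll1$ for oriented normals, so tangents are nearly equal: the two strands traverse the ball in the same direction. Now consider the simple closed curve $\gamma$ and the Jordan domain $\Omega$ it bounds. The region between the two parallel strands inside $B_l(z)$ is a thin strip; with consistent orientation, the interior of $\Omega$ lies on the same side (say, left) of each strand. Hence a point just to the left of the lower strand is inside, and, crossing the strip, just below the upper strand we are to its right, i.e. outside; so the strip must be crossed by another piece of $\gamma$, or there is a local configuration where near the midpoint of the segment joining $\gamma(s_0)$ and $\gamma(s_1)$ the winding number jumps without crossing $\gamma$. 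Concretely, the segment between the two closest points meets $\gamma$ only at endpoints (choose the two components with closest points, replacing $\sigma_1$ by the nearest strand), so the winding number is constant along its interior, contradicting the left/right computation from the orientations.

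The main obstacle is Step 2: making rigorous that with \emph{same} orientation the two strands give incompatible inside/outside information. I would do this via the winding number $w(\gamma,\cdot)\in\{0,\pm1\}$ for simple curves: it jumps by $+1$ when crossing $\gamma$ from right to left. Along the short segment from $\gamma(s_0)$ to $\gamma(s_1)$ (chosen among nearest pairs so it meets no other strand), crossing both strands in the same direction relative to their common normal changes $w$ by $\pm2$ overall, impossible for a simple curve. This settles the contradiction.
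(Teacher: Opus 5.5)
Your proof is correct, and its core mechanism is the same as the paper's. Every point of $\gamma\cap B_l(z)$ lies within distance $l$ of $z$, so all oriented normals in the ball are close to $\nu(s_0)$. Hence all strands are almost straight, almost parallel and traversed in the same direction. On the other hand, the Jordan curve theorem forces two adjacent strands of a simple closed curve to be traversed in opposite directions.

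The difference is in how this second fact is implemented.
\begin{itemize}
\item The paper works on $\partial B_l(z)$. It uses Lemma~\ref{app:3} (entry and exit points alternate along the circle) to select the strand $z_3\to z_4$ adjacent to the diameter-like strand $z_1\to z_2$. The circular ordering $z_1,z_2,z_3,z_4$ then shows that $z_4-z_3$ cannot point in the direction of $z_2-z_1$, contradicting \eqref{c:47}. This uses only the chord endpoints and needs no local analysis of crossings.
\item You work inside the ball, along the segment from $z$ to the nearest point of the nearest other component, and count winding-number jumps. This avoids the circle ordering and the symmetry reduction.
\end{itemize}

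In exchange, your route requires two checks that you should state explicitly. First, the open segment must miss $\gamma$: for the other components this follows from the nearest choice, and for $\sigma_0$ from its small-slope graph property through $z$. Second, both crossings must be transversal and in the same direction. This holds because the segment is nearly parallel to $\pm\nu(s_0)$, even when $\gamma(s_1)$ is a vertex.

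Both routes rest on the same fact: the interior of a Jordan curve lies on a fixed side of the oriented curve. This is also how Lemma~\ref{app:3} is proved.

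One minor point: the remark about halving $l$ is unnecessary, and it is not harmless, since it would change the ball in the conclusion. The hypothesis applied with $s'=s_0$ already covers every point of $\gamma\cap B_l(z)$, and it directly gives $|\nu(s_0)-\nu(s_1)|\ll1$.
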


We observe that, if it were not for the small-scale cutoff, we could restrict the maximization problem for $I_L$ to simple curves from the very beginning. Indeed, one may decompose any curve $\gamma$ as
\begin{align*}
&\gamma=\gamma_{\text{loop}}+\gamma'
\end{align*}
where $\gamma_{\text{loop}}$ is a simple, closed, subloop of $\gamma$ and
\begin{align*}
\len(\gamma)=\len(\gamma_{\text{loop}})+\len(\gamma')\quad\mbox{and}\quad W(\gamma)= W(\gamma_{\text{loop}})+W(\gamma').
\end{align*}
Since $\gamma'$ is a competitor for $I_L$ we have that
\begin{align*}
\frac{W(\gamma')}{\len(\gamma')}\le\frac{W(\gamma)}{\len(\gamma)}=\frac{W(\gamma_{\text{loop}})+W(\gamma')}{\len(\gamma_{\text{loop}})+\len(\gamma')}\quad\mbox{so that}\quad\frac{W(\gamma)}{\len(\gamma)}\le\frac{W(\gamma_{\text{loop}})}{\len(\gamma_{\text{loop}})},
\end{align*}
where in the last implication we used $\frac{a}{b}\le\frac{a+c}{b+d}\Longrightarrow\frac{a+c}{b+d}\le\frac{c}{d}$. The last inequality implies that the supremum over simple curves coincides with the overall supremum.

\medskip

In our case, the previous argument unfortunately does not work, since simple subloops of $\gamma^*$ may not satisfy the small-scale cutoff, hence may not be competitors. This difficulty can be overcome by slightly modifying one of the subloops of $\gamma^*$ to a simple, closed curve $\gamma^*_{\text{loop}}$, which is regular and an almost maximizer of $I_L$. In Figure~\ref{f3} there is a picture of such a modification.

\begin{lemma}\label{L:9} Assume that $l,L\gg1$ satisfy  \eqref{m184}. Then, outside of an event of probability $\lesssim L^{-1}$, there exists a simple curve $\gamma^*_{\text{loop}}\in\Gamma_{L,1}$ with normal $\nu$ such that
\begin{align*}
\max_{|z-z'|\leq l}|\nu(z)-\nu(z')|\lesssim\ln l/\ln^{1/8}L\quad\mbox{and}\quad I_L-\frac{W(\gamma^*_{\text{loop}})}{\len(\gamma^*_{\text{loop}})}\lesssim\frac{1}{l}.
\end{align*}
\end{lemma}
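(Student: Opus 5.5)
We work on the good event on which Lemma~\ref{L:18} holds for $\gamma^*$, the concentration \eqref{c:101} holds, and Lemma~\ref{L:12} applies with $\eta_l\lesssim\ln^{-1/4}L$ (via Lemma~\ref{L:7} and Remark~\ref{rk:3}). By the union bound this event has probability $\ge1-CL^{-1}$. On it, $\gamma^*$ has normal oscillation $\theta:=\ln l/\ln^{1/8}L\ll1$ on every ball of radius $l$, and every branch is almost straight by Remark~\ref{rk:1}.

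\textbf{Step 1: local structure of self-intersections.} Suppose $\gamma^*$ is not simple, and let $z=\gamma^*(s)=\gamma^*(s')$ with $s\neq s'$. Both arcs through $z$ are $\theta$-flat in $B_l(z)$. The rewiring argument of Lemma~\ref{L:18} (Figure~\ref{f2}) shows that two branches meeting in $B_l(z)$ must be almost parallel. Two almost parallel, almost straight branches that actually cross can be uncrossed inside $B_{l}(z)$. We replace the two branches by the two rewired arcs $\gamma_1,\gamma_2$, each consisting of the outer parts of the branches joined by straight segments between the entry and exit points. This splits $\gamma^*$ into two closed curves $\gamma^{(1)},\gamma^{(2)}$, or reconnects it differently, which we handle by choosing the orientation-consistent rewiring.

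The point of using straight segments of length $\gtrsim l$ is that each new piece has side-length $\ge1$, so the $\gamma^{(i)}$ lie in $\Gamma_{L,1}$. The cost is controlled as in Lemma~\ref{L:12}. Since $w$ is additive, the winding number changes only by the signed areas of thin regions between the old branches and the new segments. These regions have height $\lesssim\theta l$ and length $\lesssim l$, so the total change in $W$ is at most $I_l(z)\cdot O(l)$. Meanwhile $\len$ changes by $O(\eta_l l)$, and in fact the length decreases by a definite amount once the crossing angle is nonzero.

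\textbf{Step 2: iterate and pick a loop.} We repeat Step 1 at each crossing. By regularity, crossings in any ball of radius $l$ form a bounded number of clusters, and there are at most $\len(\gamma^*)/l$ disjoint balls. The result is a decomposition $\sum_i\gamma^{(i)}$ into closed curves in $\Gamma_{L,1}$ that are simple and $\theta$-regular at scale $l$. The total errors are
\begin{align*}
\Big|\sum_iW(\gamma^{(i)})-W(\gamma^*)\Big|+I_L\Big|\sum_i\len(\gamma^{(i)})-\len(\gamma^*)\Big|\lesssim \frac{\len(\gamma^*)}{l}\,E,
\end{align*}
where $E$ is the per-crossing error. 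By the mediant argument $\frac{a+c}{b+d}\le\max(\frac ab,\frac cd)$ used before Lemma~\ref{L:9}, some $\gamma^{(i)}$, which we take as $\gamma^*_{\text{loop}}$, has ratio $\ge I_L-O(E/l)$.

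Getting the error down to $1/l$ requires $E=O(1)$ per crossing. We aim for this by rewiring only in a ball of radius $O(1)$ around each crossing, where the branches are straight up to the unit cutoff scale, and by taking straight segments between vertices. The loss in $W$ is then the noise integrated over a region of area $O(1)$ with bounded winding, and its supremum over $\lesssim L^{C}$ discretized locations is $O(\ln^{1/2}L)$. We absorb this using $\len\gtrsim L^\alpha$ from \eqref{m176}, since $\ln^{1/2}L\cdot\#\text{crossings}/\len\lesssim\ln^{1/2}L/l$, and we then need a better count. If the $O(1)$ local rewiring fails to respect side-length $\ge1$, we fall back on an $O(l)$ rewiring and accept a worse but still $o(1)$ error.

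\textbf{Main obstacle.} The delicate step is the combinatorics of the uncrossing. We must ensure the surgery produces closed curves in $\Gamma_{L,1}$, simultaneously preserving the side-length cutoff and the scale-$l$ regularity of Lemma~\ref{L:18} for the chosen loop, and we must control the number of crossings per unit length so that the accumulated error is $\lesssim1/l$. The regularity claim for $\gamma^*_{\text{loop}}$ should follow because it is made of arcs of $\gamma^*$ and short straight pieces connecting almost parallel branches. Simplicity then follows, via Lemma~\ref{L:27}, from the fact that no crossings remain.
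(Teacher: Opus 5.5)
There is a genuine gap: the bound $I_L-W(\gamma^*_{\text{loop}})/\len(\gamma^*_{\text{loop}})\lesssim 1/l$ is never established.

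Because you uncross \emph{every} self-intersection and then select the best piece via $\frac{a+c}{b+d}\le\max(\frac ab,\frac cd)$, the per-crossing errors add up. You therefore need control on the number of crossings per unit length, and nothing available provides it. Lemma~\ref{L:18} only bounds oscillations of the normal. Two almost parallel, almost straight branches can cross each other many times inside one ball of radius $l$ without violating it, so your assertion that crossings form boundedly many clusters per ball is unjustified (as you concede, you ``need a better count'').

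The fallback does not rescue the statement either. With your own bound $I_l(z)\,O(l)$ per crossing for the $O(l)$ rewiring, even granting $\lesssim\len(\gamma^*)/l$ crossings, the ratio error is only $O(I_l)=O(\ln^{3/4}l)$. That is not $o(1)$, let alone $\lesssim 1/l$. You also leave open whether the surgery terminates, whether the inserted segments create new crossings, and hence whether the pieces are simple. Lemma~\ref{L:27} presupposes simplicity rather than producing it.

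The missing idea is that only one crossing needs to be repaired. Let $t_1$ be the first time at which $\gamma^*$ ceases to be injective, and $t_0<t_1$ with $\gamma^*(t_0)=\gamma^*(t_1)$. Then $\gamma^*|_{[t_0,t_1]}$ is simple by construction, and its only defect is the side-length cutoff at the crossing point. This is fixed by precisely your $O(1)$ surgery. By \eqref{e3} at scale $5$, the two branches there are almost parallel, and one joins the first vertices at distance $\ge1$ from the crossing ($t_0^\pm,t_1^\pm$) by segments. This yields $\gamma^*=\gamma^*_{\text{loop}}+\gamma'+\hat\gamma$, where $\gamma'\in\Gamma_{L,1}$, $\hat\gamma$ has at most $10$ vertices in a ball of radius $5$, and $\len(\gamma^*)\ge\len(\gamma^*_{\text{loop}})+\len(\gamma')$.

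The key point is that the complementary curve $\gamma'$ is itself a competitor, so $W(\gamma')/\len(\gamma')\le I_L$. Applying the mediant inequality in the reverse direction, $\frac ac\le\frac{a+b}{c+d}\Rightarrow\frac{a+b}{c+d}\le\frac bd$, gives
\[
I_L\le\frac{W(\gamma^*_{\text{loop}})+W(\hat\gamma)}{\len(\gamma^*_{\text{loop}})}.
\]
Only a single local error remains. By Lemma~\ref{app:1} and a union bound over $\sim L^2$ unit balls, $W(\hat\gamma)\lesssim\ln^{1/2}L$ with probability $\ge1-L^{-1}$. Meanwhile $\len(\gamma^*_{\text{loop}})$ is at least of the order of the regularity scale. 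Since $\ln l\gg\ln\ln L$, the regularity can be run at scale $l\ln^{1/2}L$, which absorbs the logarithm. No crossing count and no global combinatorics are required.
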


\begin{figure}[!ht]
\centering
\resizebox{0.95\textwidth}{!}{%
\begin{circuitikz}
\tikzstyle{every node}=[font=\LARGE]
\draw [ line width=0.9pt , dashed] (-3,10.5) circle (5.25cm);
\draw [ dashed] (-15.75,8) circle (2.75cm);
\draw [->, >=Stealth] (-22.25,10.25) -- (-20.5,9.5);
\draw [short] (-12.75,9.25) -- (-11.75,10);
\draw [short] (-16.5,8.25) -- (-17.75,8);
\draw [short] (-16.5,8.25) -- (-15,7.25);
\draw [short] (-13.5,8.25) -- (-12.75,9.25);
\draw [short] (-11.5,11.25) -- (-12,12.5);
\draw [short] (-16.25,14.75) -- (-17.5,15.25);
\draw [short] (-12,12.5) -- (-12,13.5);
\draw [short] (-11.5,11.25) -- (-11.75,10);
\draw [short] (-12.75,14.5) -- (-12,13.5);
\draw [short] (-16.25,14.75) -- (-15.25,14.5);
\draw [->, >=Stealth] (-13.75,15) -- (-15.25,14.5);
\draw [short] (-12.75,14.5) -- (-13.75,15);
\draw [short] (-20.5,9.5) -- (-20,8.75);
\draw [short] (-20,8.75) -- (-19,8.25);
\draw [short] (-19,8.25) -- (-17.75,8);
\draw [->, >=Stealth] (-15,7.25) -- (-13.5,8.25);
\draw [short] (-17.5,15.25) -- (-19,15);
\draw [short] (-19,15) -- (-20.5,14.25);
\draw [short] (-20.5,14.25) -- (-21,13);
\draw [short] (-21,13) -- (-21,11.75);
\draw [->, >=Stealth] (-21,11.75) -- (-20.25,10.5);
\draw [short] (-20.25,10.5) -- (-19.5,9.25);
\draw [short] (-19.5,9.25) -- (-18.25,8.75);
\draw [short] (-18.25,8.75) -- (-17,8.75);
\draw [short] (-17,8.75) -- (-16,8.25);
\draw [short] (-16,8.25) -- (-15.25,6.75);
\draw [short] (-15.25,6.75) -- (-14.5,6.5);
\draw [->, >=Stealth] (-14.5,6.5) -- (-12.75,6.25);
\draw [short] (-12.75,6.25) -- (-11.75,5);
\draw [short] (-14,6.5) -- (-14,6.5);
\draw [short] (-20.75,11.25) -- (-20.75,11.25);
\draw [short] (-20.5,11) -- (-20.5,11);
\node [font=\LARGE] at (-22,9.75) {$\gamma^*$};
\draw [dashed] (-16,10.75) -- (-4.75,15.5);
\draw [dashed] (-16,5.25) -- (-2.75,5.25);
\node at (-15.75,7.75) [circ] {};
\draw [line width=0.9pt, short] (-4.75,10.75) -- (-7.5,10.5);
\draw [line width=0.9pt, short] (-4.75,10.75) -- (-1.5,10.25);
\draw [line width=0.9pt, short] (-10,11.5) -- (-7.5,10.5);
\draw [line width=0.9pt, short] (-1.5,10.25) -- (0.25,11.75);
\draw [line width=0.9pt, short] (-9.25,13.25) -- (-6.5,13.25);
\draw [line width=0.9pt, short] (-6.5,13.25) -- (-4.25,12);
\draw [line width=0.9pt, short] (-4.25,12) -- (-1.75,9);
\draw [line width=0.9pt, short] (-1.75,9) -- (0,7.75);
\draw [line width=0.9pt, ->, >=Stealth] (0,7.75) -- (3,7.25);
\draw [line width=0.9pt, short] (-1.75,9) -- (-1.75,9);
\draw [line width=0.9pt, ->, >=Stealth] (0.25,11.75) -- (2.75,12.5);
\node at (-3,10.5) [circ] {};
\node [font=\LARGE] at (-3.5,10) {$\gamma^*(t_0)$};
\node [font=\LARGE] at (-6,13.75) {$\gamma^*(t_1^-)$};
\node [font=\LARGE] at (0.25,12.25) {$\gamma^*(t_0^+)$};
\node [font=\LARGE] at (-7.25,11) {$\gamma^*(t_0^-)$};
\node [font=\LARGE] at (0,7.25) {$\gamma^*(t_1^+)$};
\node at (-7.5,10.5) [circ] {};
\node at (-6.5,13.25) [circ] {};
\node at (0.25,11.75) [circ] {};
\node at (0,7.75) [circ] {};
\draw [ color={rgb,255:red,255; green,0; blue,0}, line width=0.9pt, dashed] (-6.5,13.25) -- (0.25,11.75);
\draw [ color={rgb,255:red,17; green,24; blue,228}, line width=0.9pt, dashed] (-7.5,10.5) -- (0,7.75);
\draw [dashed] (-5.75,7.5) -- (-5.75,7.5);
\end{circuitikz}
}%
\caption{Construction of $\gamma^*_{\text{loop}}$ (in red) and $\gamma'$ (in blue).}
\label{f3}
\end{figure}

\medskip

Now we show how to combine the results from this and the previous subsections to obtain 
\begin{lemma}\label{L:34}
Assume that \eqref{m184} holds. Then
\begin{align*}
\mathbb{E}I_L-\mathbb{E}I_{L/l}\approx a^*(\mathbb{E}I_L)^{-1/3}\ln l.
\end{align*}
\end{lemma}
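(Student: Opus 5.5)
Starting from Lemma~\ref{L:33}, it suffices to verify the smallness of the relative error \eqref{m180}, that is,
\begin{align*}
(\ln^{-1}l+\theta^2)\,\mathbb{E}N^{-1}\sum_{n=1}^N\delta^\pm(\bar e_n^*)\ll1 .
\end{align*}
We choose $\kappa:=\ln^{-C}L$ with $C$ large. Then \eqref{c:100} holds, since $\theta\ll1$ by \eqref{m184} and $\kappa\ll\ln^{-1}L$. Moreover $\ln^{2/3}\kappa^{-1}\sim(\ln\ln L)^{2/3}$, so \eqref{m193} follows: $\ln^{-1}l\,(\ln\ln L)^{2/3}\ll1$ by $\ln l\gg\ln\ln L$, and $\theta^2(\ln\ln L)^{2/3}\ll1$ because $\theta=\ln l/\ln^{1/8}L$ while $\ln\ln L\ll\ln^{3/16}L$. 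The proof thus reduces to bounding $\mathbb{E}N^{-1}\sum_n\delta^\pm(\bar e_n^*)$ by $\lesssim\ln^{2/3}\kappa^{-1}$ through Lemma~\ref{L:35}. This requires the independence hypothesis \eqref{m174} and the structural constraints \eqref{m176}, \eqref{m177} for the relevant projected curves.

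The key steps, in order, are as follows. First, restrict to the good event $A$ of \eqref{m183}. Enlarge it so that the conclusion of Lemma~\ref{L:9} holds and the uniform distance bound for the competitors of Lemma~\ref{L:30}$(ii)$ holds; this keeps $\mathbb{P}(A^c)\lesssim L^{-1}$. The contribution of $A^c$ is handled by \eqref{m194}. For the $\delta$-terms on $A^c$ we use $\mathbb{E}\sup_{\bar e}\delta^\pm(\bar e)\lesssim\ln^{2/3}L$ together with Cauchy--Schwarz, which gives a negligible $O(L^{-1/2}\ln^{2/3}L)$ contribution.

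Second, for the upper bound we replace $\gamma^*$ by the simple, regular almost-maximizer $\gamma^*_{\rm loop}$ of Lemma~\ref{L:9}. This costs $O(1/l)\ll(\mathbb{E}I_L)^{-1/3}\ln l$ in the left-hand side of \eqref{c:105}. We then rerun the argument of Lemma~\ref{L:31} with $\gamma^*_{\rm loop}$ in place of $\gamma^*$, using \eqref{m57} in its almost-minimality form, i.e.\ with ratio $I_L-O(1/l)$. Being simple and regular, $\gamma^*_{\rm loop}$ satisfies \eqref{m182} by Lemma~\ref{L:27}. Since $\kappa\ll1$, the projection of its coarse-graining inherits \eqref{m182} with a slightly smaller radius. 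For the lower bound we argue analogously with $\gamma^*_l$: Lemma~\ref{L:9} applied at the scale-$l$ problem lets us take it simple and regular at scales $\gg l$, at a cost of $O(1/l)$. Third, we combine \eqref{m181} with \eqref{m182} and the side-length bound \eqref{m177}. Consequently, for fixed $\bar\gamma$, the edges $\bar e_n$ split into $O(1)$ classes, and within each class the balls $B_{4l}$ around the midpoints are disjoint. Spatial independence of white noise then gives independence within each class. Applying Lemma~\ref{L:35} to each class and summing yields the required bound. The vertex lower bound \eqref{m176} comes from Lemmas~\ref{L:12} and~\ref{L:7}, as explained before Lemma~\ref{L:35}.

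The main obstacle is the second step. The quantities $\delta^\pm(\bar e_n^*)$ are evaluated along random edges selected by the maximizer itself, and independence is only available along curves of the net satisfying \eqref{m182}. It therefore has to be checked carefully that passing to $\gamma^*_{\rm loop}$, with its $O(1/l)$ loss in the ratio, is compatible with the monotonicity arguments behind Lemmas~\ref{L:31} and~\ref{L:32}. In particular, the deterministic replacement $I=\mathbb{E}I_L\mp C\ln^{1/2}L$ must remain valid when the ratio $I_L$ is replaced by $W(\gamma^*_{\rm loop})/\len(\gamma^*_{\rm loop})$. Once this is checked, the conclusion $\mathbb{E}I_L-\mathbb{E}I_{L/l}=(1+o(1))a^*(\mathbb{E}I_L)^{-1/3}\ln l$ follows directly from Lemma~\ref{L:33}.
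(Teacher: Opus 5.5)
Your outline follows the paper's argument for this lemma. You combine Lemma~\ref{L:33} with Lemma~\ref{L:35} for $\kappa=\ln^{-C}L$, and obtain the independence hypothesis \eqref{m174} from \eqref{m181}--\eqref{m182} by rerunning Lemmas~\ref{L:31} and~\ref{L:32} on the simple almost-maximizers $\gamma^*_{\text{loop}}$, $\gamma^*_{\text{loop},l}$ of Lemma~\ref{L:9}, via Lemma~\ref{L:27}. The bookkeeping you call the main obstacle is exactly the paper's two additional errors $(i)$--$(ii)$, and it goes through:
the ratio of $\gamma^*_{\text{loop}}$ lies in $[I_L-O(l^{-1}),I_L]$; the right-hand side of \eqref{m169} is monotone in the ratio, so \eqref{m169} suffices and no almost-minimality version of \eqref{m57} is needed; the deterministic parameter becomes $I=\mathbb{E}I_L(1+O(\ln^{-1}l))$; and $l^{-1}\ll(\mathbb{E}I_L)^{-1/3}\ln l$ since $\ln l\gg\ln\ln L$.

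There is a genuine gap in your check of \eqref{m193} for the $\theta^2$ term. The claim $\theta^2(\ln\ln L)^{2/3}\ll1$ does not follow from \eqref{m184}. That hypothesis allows $\theta$ to be a fixed small constant, and then $\theta^2(\ln\ln L)^{2/3}\to\infty$. Your justification via $\ln\ln L\ll\ln^{3/16}L$ only yields $\theta^2(\ln\ln L)^{2/3}\ll\theta\ln l$, which is not small. Changing $\kappa$ does not help, because Lemma~\ref{L:30} forces $\kappa\lesssim I^{-4/3}\sim\ln^{-1}L$, so $\ln^{2/3}\kappa^{-1}\gtrsim(\ln\ln L)^{2/3}$.

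If you use Lemma~\ref{L:30}$(i)$ verbatim, with $\theta^2$ multiplying $\delta^+$, you need the extra condition $\theta\ll(\ln\ln L)^{-1/3}$. This holds in the later applications (e.g.\ $\ln l=\ln^\alpha L$ with $\alpha<\frac18$ in the proof of Theorem~\ref{T:1}), but \eqref{m184} does not imply it.

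The cleaner repair is to note that the $\theta^2$ in Lemma~\ref{L:30}$(i)$ is deterministic. It enters only through $\sqrt{1+p^2}-1\ge(1-C\theta^2)\frac{p^2}{2}$ for $|p|\lesssim\theta$. That amounts to replacing $I$ by the deterministic $I(1-C\theta^2)$ before rescaling and applying Theorem~\ref{T:3}, which produces the factor $(1+O(\theta^2))(1+\delta^+\ln^{-1}\bar l)$. The random average in \eqref{m180} is then multiplied only by $\ln^{-1}l$. The bound $\ln^{-1}l\,\ln^{2/3}\kappa^{-1}\sim\ln^{-1}l\,(\ln\ln L)^{2/3}\ll1$ does follow from $\ln l\gg\ln\ln L$. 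The paper's remark after Lemma~\ref{L:35} is equally terse here, so this point should be made explicit.
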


Indeed, if we can show Lemma \ref{L:33} with random errors $\{\delta^\pm(\bar e_n)\}$ satisfying the estimate in Lemma \ref{L:35}, then the conclusion follows immediately. In view of Lemmas \ref{L:27} and \ref{L:9}, the assumption of Lemma \ref{L:35} is satisfied if we could rerun the argument for deriving Lemma \ref{L:31} and \ref{L:32} with $\gamma^*$ replaced with $\gamma^*_{\text{loop}}$ and $\gamma^*_{l}$ replaced with $\gamma^*_{\text{loop},l}$, the almost maximizing simple loop in the class $\Gamma_{L,l}$. In doing so, there are two additional errors, both of which can be absorbed by multiplying $\{\delta^\pm_n\}$ by an $O(1)$ factor in the regime \eqref{m184}:
\begin{itemize}
\item[$(i)$] On the l.~h.~s.~in Lemma~\ref{L:33}, replacing $\E I_L$ with $\E W(\gamma^*_{\text{loop}})/\len(\gamma^*_{\text{loop}})$ or $\E I_{L/l}$ with $\E W(\gamma^*_{\text{loop},l})/\len(\gamma^*_{\text{loop},l}) $ results in an additional error $\lesssim l^{-1}$ by Lemma \ref{L:9}; Here $\gamma^*_{\text{loop},l}$ is the almost optimal loop in $B_L$ with side-length $\ge l$.
\item[$(ii)$] On the r.~h.~s. of \eqref{c:105} we need to replace $I_L$ with $W(\gamma^*_{\text{loop}})/\len(\gamma^*_{\text{loop}})$. Consequently, in \eqref{c:105} we can apply Lemma \ref{L:30}(i) with 
\begin{align*}
I:=\E I_L+O(\ln^{1/2}L+l^{-1})\overset{\eqref{m184}}{=}\E I_L(1+O(\ln^{-1}l))
\end{align*}
from which we recover Lemma \ref{L:31}. Similar considerations also apply to $\gamma_{\text{loop},l}^*$ in the proof of Lemma \ref{L:32}.
\end{itemize}

\subsection{Proof of Theorem~\ref{T:1}} We will now show that the convergence in Theorem~\ref{T:1} holds. We remark that we are working under the additional hypothesis that the a-priori upper bound in Lemma~\ref{T:7} holds. For the latter, we refer the reader to Section~\ref{s:details}, where it is shown via a continuity argument. 

\medskip

As a first step, let us show the following post-processing of Lemma~\ref{L:34}:
\begin{align}
\mathbb{E}^{4/3}I_L-\mathbb{E}^{4/3}I_{L/l}\approx\textstyle\frac{4}{3}a^*\ln l\quad\mbox{provided \eqref{m184} holds.}\label{m185}
\end{align}
We may rewrite Lemma~\ref{L:34} as
\begin{align*}
\mathbb{E}I_L\big(1-(1+\delta_{L,l})a^*(\mathbb{E}I_L)^{-4/3}\ln l\big)=\mathbb{E}I_{L/l},
\end{align*}
where $\delta_{L,l}$ is some error which vanishes in the regime \eqref{m184}, and may change from line to line. Raising both sides to the power $4/3$ and Taylor expanding, we get
\begin{align*}
(\mathbb{E}I_L)^{4/3}\big(1-\textstyle{\frac{4}{3}}(1+\delta_{L,l})a^*(\mathbb{E}I_L)^{-4/3}\ln l\big)=(\mathbb{E}I_{L/l})^{4/3},
\end{align*}
which is exactly \eqref{m185}.

\medskip

Consider a sequence $\{L_n\}_n$ defined inductively from $l_0=L_0\gg1$ by
\begin{align*}
L_n=l_nL_{n-1}\quad\mbox{with}\quad\ln l_n=[\ln^\alpha L_n]\quad\mbox{for $n\ge1$},
\end{align*}
for some $\alpha\in(0,\frac{1}{8})$, so that \eqref{m184} holds for the couple $(L_n,l_n)$. It is enough to prove the convergence along this sequence, since $L\mapsto\mathbb{E}I_L$ is increasing and $\ln L_{n+1}/(\ln L_n)\to1$. Repeatedly applying \eqref{m185}, and using $\ln L_N=\sum_{n=0}^N\ln l_n$ we get
\begin{align*}
\frac{\mathbb{E}^{4/3}I_{L_N}}{\ln L_N}=\frac{\mathbb{E}^{4/3}I_{L_0}}{\ln L_N}+{\textstyle \frac{4}{3}}a^*\frac{\sum_{n=1}^N(1+\delta_{L_n,l_n})\ln l_n}{\sum_{n=0}^N\ln l_n}.
\end{align*}
Sending $N\to\infty$ finishes the proof.

\subsection{Discrepancy with relative perimeter}\label{SS:rel}

In this subsection, we show how to adjust the previous strategy to prove Theorem~\ref{T:6}. The concentration of $J_L$ is similar to the one of $I_L$ and follows from the Poincar\'e-Sobolev inequality for $BV$ functions\footnote{recall that $\lambda$ is the Lebesgue measure in $\mathbb{R}^2$.}:
\begin{align*}
\mathbb{E}^{1/2}(\frac{W(\gamma,Q_L)}{\len(\gamma,Q_L)})^2\le\frac{1}{\int_{Q_L}|Dw|}\big(\int_{Q_L}\big(w(\gamma,z)-\frac{1}{\lambda(Q_L)}\int_{Q_L}w\big)^2dz\big)^{1/2}\lesssim1.
\end{align*}
Moreover, we note that the lower bound on the expectation $\mathbb{E}J_L$, with the same asymptotic constant as $I_L$, is essentially a direct consequence of Theorem~\ref{T:1}. Indeed, since $\Gamma_{L/2,1}\subset\Gamma'_{L,1}$, we obtain
\begin{align}\label{m186}
J_L\ge\max_{\gamma\in\Gamma_{L/2,1}}\frac{W(\gamma,Q_L)}{\len(\gamma,Q_L)}\ge I_{L/2}-\frac{1}{\lambda(Q_L)}\big|\int_{Q_L}\xi|\max_{\gamma\subset Q_L}\frac{\int dz~w(\gamma,z)}{\len(\gamma)}.
\end{align}
Note that $\int_{Q_L}\xi$ is a Gaussian of variance $L^2$. Together with the $BV$ embedding in $L^1$ applied to $w(\gamma,\cdot)$,
\begin{align*}
\int_{Q_L}dz~w(\gamma,z)\lesssim L\int_{Q_L}|Dw|=L\len(\gamma),
\end{align*}
we obtain that the expectation of the second r.~h.~s.~term in \eqref{m186} is $\lesssim1$. With these observations, we have obtained the following

\begin{lemma}\label{L:36}
\begin{align*}
\liminf_{L\uparrow\infty}\frac{\mathbb{E}J_L}{\ln^{3/4}L}\ge\lim_{L\uparrow\infty}\frac{\mathbb{E}I_L}{\ln^{3/4}L}\quad\mbox{and}\quad\|J_L-\mathbb{E}J_L\|_2\lesssim1.
\end{align*}
\end{lemma}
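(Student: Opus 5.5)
The lower bound in Lemma~\ref{L:36} is derived in the text right before the statement; the plan is to turn that derivation into a proof, treating the two claims separately.

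\textbf{Concentration.} For each fixed $\gamma\in\Gamma'_{L,1}$, the ratio $W(\gamma,Q_L)/\len(\gamma,Q_L)$ is a centered Gaussian, being linear in $\xi$. Its variance is $\len(\gamma,Q_L)^{-2}\int(w-\frac{1}{\lambda(Q_L)}\int_{Q_L}w)^2$, where the mean subtraction comes from the correction term in $W(\gamma,Q_L)$. Two facts then bound this variance by $O(1)$ uniformly in $\gamma$ and $L$:
\begin{itemize}
\item the relative length satisfies $\len(\gamma,Q_L)\ge\int_{Q_L}|Dw|$, since the jump set of $w(\gamma,\cdot)$ inside $Q_L$ lies on $\gamma\cap Q_L$ with jumps counted with multiplicity;
\item the Poincar\'e--Sobolev inequality for $BV(Q_L)$ holds with a scale-invariant constant, as displayed above.
\end{itemize}
Since $J_L$ is a supremum of such Gaussians, Borell's inequality (exactly as in \eqref{m130}) gives Gaussian tails at scale $O(1)$. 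Hence $\|J_L-\mathbb{E}J_L\|_2\lesssim1$.

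The only point to check here is that $J_L$ is a.s.\ finite, or that we may restrict to a countable dense subfamily; this follows as for $I_L$.

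\textbf{Lower bound.} I would use the inclusion $\Gamma_{L/2,1}\subset\Gamma'_{L,1}$, valid because $B_{L/2}\subset Q_L$ for the cube of side $L$. For such curves the relative quantities reduce to the absolute ones:
\begin{itemize}
\item $\len(\gamma,Q_L)=\len(\gamma)$, because $\gamma$ lies in the open cube;
\item $W(\gamma,Q_L)=W(\gamma)-\frac{\int w}{\lambda(Q_L)}\int_{Q_L}\xi$.
\end{itemize}
Taking the maximum gives \eqref{m186}. To bound the error term I would combine two estimates:
\begin{itemize}
\item the BV embedding $\int_{Q_L}|w|\lesssim L\int|Dw|=L\len(\gamma)$; here $w$ is compactly supported in $Q_L$, and the estimate follows from isoperimetry/Sobolev in $L^1$ at scale $L$;
\item $\mathbb{E}|\int_{Q_L}\xi|=O(L)$.
\end{itemize}
Together these show that the expectation of the second term is $\lesssim L^{-2}\cdot L\cdot L=1$, uniformly in $L$, so $\mathbb{E}J_L\ge\mathbb{E}I_{L/2}-C$.

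Dividing by $\ln^{3/4}L$ and using $\ln(L/2)/\ln L\to1$ together with the existence of the limit in Theorem~\ref{T:1} yields the liminf claim.

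\textbf{Expected difficulty.} None of these steps is a real obstacle. The main care needed is in the BV facts: the multiplicity-counting bound $|Dw|(Q_L)\le\len(\gamma,Q_L)$ for non-simple curves with integer-valued winding number, and the scale-invariance of the Poincar\'e constant on cubes, both of which follow by scaling from the unit cube.
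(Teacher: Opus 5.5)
Your proposal is correct and follows essentially the same route as the paper. Concentration comes from the uniform $O(1)$ variance bound (via $|Dw|(Q_L)\le\len(\gamma,Q_L)$ and the scale-invariant Poincar\'e--Sobolev inequality on $Q_L$) combined with Borell's inequality, and the lower bound comes from $\Gamma_{L/2,1}\subset\Gamma'_{L,1}$, the estimate $\int_{Q_L}|w|\lesssim L\,\len(\gamma)$ and $\mathbb{E}|\int_{Q_L}\xi|\lesssim L$, which together yield $\mathbb{E}J_L\ge\mathbb{E}I_{L/2}-C$.
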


The remaining part of the proof is devoted to the upper bound, for which we repeat the proof of Theorem~\ref{T:1} with appropriate adjustment. The coarse-graining is now defined as follows: Given a curve $\gamma\in\Gamma'_{L,1}$, one keeps the part of $\gamma$ running over the boundary $\partial Q_L$ unchanged and coarse-grains only $\gamma\cap Q_L$,  again by keeping  one vertex every $l$. This defines $\gamma_{\ge l}$ and a decomposition of the form
\begin{align*}
\gamma=\gamma_{\ge l}+\sum_{n=1}^N\gamma_n.
\end{align*}
Similarly to \eqref{m169}, one gets
\begin{align}\label{m188}
\frac{W(\gamma,Q_L)}{\len(\gamma,Q_L)}&-\frac{W(\gamma_{\ge l},Q_L)}{\len(\gamma_{\ge l},Q_L)}=\frac{1}{\len(\gamma_{\ge l},Q_L)}\times\nonumber\\&\times\sum_{n=1}^NW(\gamma_n)-\frac{W(\gamma,Q_L)}{\len(\gamma,Q_L)}\big(\len(\gamma_n)-l_n\big)-\frac{\int w(\gamma_n,z)}{\lambda(Q_L)}\int_{Q_L}\xi.
\end{align}

\medskip

Similarly to Section \ref{SS:Simple}, in order to control statistical errors, we need to choose $\gamma$ to be an almost maximizer of $J_L$ with additional good topological properties. This is provided by the following analogue of Lemma~\ref{L:9}, the proof of which relies on the regularity theory for the problem $J_L$, and is postponed to the next subsection.

\begin{lemma}\label{L:23}
Assume that $l,L\gg1$ satisfy \eqref{m184}. Then, outside of an event of probability $\lesssim L^{-1}$, there exists a simple curve $\gamma^*_{\text{loop}}\in\Gamma'_{L,1}$ with normal $\nu$ such that
\begin{align*}
\max_{|z-z'|\le l}|\nu(z)-\nu(z')|\lesssim\ln l/\ln^{1/8}L\quad\mbox{and}\quad J_L-\frac{W(\gamma^*_{\text{loop}},Q_L)}{\len(\gamma^*_{\text{loop}},Q_L)}\lesssim l^{-1}.
\end{align*}
Moreover $\gamma^*_{\text{loop}}\cap B_l(z)\cap Q_L$ is a single curve for every $z\in B_L$.
\end{lemma}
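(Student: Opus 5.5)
The plan is to transfer the argument for Lemma~\ref{L:9} to the relative problem, in three stages: first a regularity theory for a maximizer $\gamma^*$ of $J_L$ in the interior of $Q_L$, then a rewiring of $\gamma^*$ into a simple almost-maximizing loop, and finally a check that this loop meets $B_l(z)\cap Q_L$ in a single curve.

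\emph{Regularity.} Let $\gamma^*\in\Gamma'_{L,1}$ maximize \eqref{m195}. The same algebra as in \eqref{m57} shows that $\gamma^*$ minimizes
\begin{align*}
\gamma\mapsto\len(\gamma,Q_L)-\epsilon W(\gamma,Q_L),\qquad \epsilon:=J_L^{-1}.
\end{align*}
Consider a local modification $\gamma^*=\gamma+\tilde\gamma$ with $\tilde\gamma$ supported in $B_l(z)$. The field increment then carries an additional volume correction $\frac{\int w(\tilde\gamma,\cdot)}{\lambda(Q_L)}\int_{Q_L}\xi$. By the $BV$ embedding, $\int|w(\tilde\gamma,\cdot)|\lesssim l\len(\tilde\gamma)$, and $|\int_{Q_L}\xi|\lesssim L\ln^{1/2}L$ outside an event of probability $\le L^{-1}$. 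Hence this correction is $\lesssim (l/L)\ln^{1/2}L\,\len(\tilde\gamma)$, which is negligible against the $I_l(z)$ term. Lemmas~\ref{L:12} and~\ref{L:7}, with the lower bound and concentration from Lemma~\ref{L:36}, therefore give quasi-minimality with $\eta_l\lesssim\ln^{-1/4}L$ for branches $\gamma^*_{t,l}$ contained in $Q_L$.

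Near $\partial Q_L$ I would work with relative branches, namely pieces of $\gamma^*\cap Q_L$ whose endpoints lie either on $\partial B_l(\gamma^*(t))$ or on $\partial Q_L$. The competitor may slide its endpoints along $\partial Q_L$ at no cost, since length on $\partial Q_L$ is not counted. The resulting quasi-minimality is that of a free-boundary problem. Since $\partial Q_L$ is flat away from the corners, a reflection argument across the side reduces this to Lemma~\ref{L:8} and Corollary~\ref{Cor:2}. The corners affect only $O(1)$ balls of radius $l$, and there I expect to argue directly that an almost-minimal arc simply cuts the corner. Rerunning the two-branch rewiring of Lemma~\ref{L:18} then gives $\max_{|z-z'|\le l}|\nu^*(z)-\nu^*(z')|\lesssim\ln l/\ln^{1/8}L$ on the part of $\gamma^*$ in $Q_L$.

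\emph{Simple loop.} Next I follow the construction behind Lemma~\ref{L:9} and Figure~\ref{f3}. If $\gamma^*\cap Q_L$ has a self-intersection or two nearby, nearly parallel strands within distance $\ll l$, I rewire inside a ball of radius $l$ using two chords $[\gamma^*(t_1^-),\gamma^*(t_0^+)]$ and $[\gamma^*(t_0^-),\gamma^*(t_1^+)]$. This splits $\gamma^*=\gamma_{\text{loop}}+\gamma'$ up to a length error. Regularity makes the two strands almost parallel, so the chords have side-length $\ge1$ and the added length is $O(1)$ per rewiring.

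The inequality $\frac{a}{b}\le\frac{a+c}{b+d}\Rightarrow\frac{a+c}{b+d}\le\frac{c}{d}$ must now be applied to the relative quantities $W(\cdot,Q_L)$ and $\len(\cdot,Q_L)$. Both remain additive under the splitting, the volume term being linear in $w$. I will iterate over the finitely many rewirings that are needed and select the piece with the best ratio. The total error is $O(1)/\len(\gamma^*,Q_L)$, and $\len(\gamma^*,Q_L)\gtrsim l$ by the diameter lower bound in Lemma~\ref{L:12}. This gives the $l^{-1}$ loss in the statement. Each rewiring only straightens the curve, so regularity is inherited up to a constant.

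\emph{Single-curve property.} Lemma~\ref{L:27} gives the final claim for balls away from $\partial Q_L$. For a ball meeting $\partial Q_L$, two distinct arcs of $\gamma_{\text{loop}}\cap B_l(z)\cap Q_L$ would each be nearly straight and end on $\partial Q_L$. Replacing the region between them by the boundary segment then strictly lowers relative length, up to a small error, contradicting almost-minimality.

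The main obstacle I expect is the boundary regularity: adapting Lemma~\ref{L:8} to arcs with free endpoints on $\partial Q_L$, and in particular handling the corners of the cube.
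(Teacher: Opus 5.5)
\textbf{There is a genuine gap at $\partial Q_L$.} Your interior regularity step, including the bound on the volume correction, is fine. What is missing is the first step of the paper's argument.

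\emph{The missing reduction.} The paper decomposes any $\gamma\in\Gamma'_{L,1}$ into curves $\gamma_k\in\Gamma'_{L,1}$, each entering and exiting $\partial Q_L$ at most once. Both $W(\cdot,Q_L)$ and $\len(\cdot,Q_L)$ are additive under this splitting: runs along $\partial Q_L$ cost no length, and $W(\partial Q_L,Q_L)=0$, so it does not matter how the pieces are closed along the boundary. Since $(\sum_k w_k)/(\sum_k l_k)\le\max_k w_k/l_k$, one may take a maximizer $\gamma^*$ that is a single interior arc closed by one boundary run.

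\emph{What goes wrong without it.} Your $\gamma^*$ may have many arcs ending on $\partial Q_L$. It may also fail to be simple because its boundary runs overlap. Your rewiring acts only on self-intersections of $\gamma^*\cap Q_L$, so in that case it never produces a simple curve.

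\emph{What the reduction buys.} With it, no free-boundary or reflection version of Lemma~\ref{L:8} is needed. Four elementary competitor arguments (Lemma~\ref{L:28}) show that:
\begin{itemize}
\item entry and exit are almost perpendicular to $\partial Q_L$;
\item they lie at distance $\gtrsim l$ from the vertices of $Q_L$ and from each other;
\item nothing else of $\gamma^*$ comes within $\ll l$ of $\partial Q_L$.
\end{itemize}
Your corner heuristic points the wrong way. An arc cutting a corner is never almost minimal: it can be replaced by the free run along $\partial Q_L$, gaining its whole length, while, as in Lemma~\ref{L:12}, the change in $\epsilon W$ is only a small fraction of that length. The correct statement is that $\gamma^*$ stays $\gtrsim l$ away from the corners.

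\emph{The single-curve argument near $\partial Q_L$ is incomplete.} You assume that any two arcs of $B_l(z)\cap Q_L$ both end on $\partial Q_L$. An arc may instead run alongside $\partial Q_L$ at distance $\ll l$ without touching it. Excluding this needs the separate competitor of Lemma~\ref{L:28}(iv): replace the branch by two segments down to $\partial Q_L$ plus a free boundary run.

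\emph{These arguments must be run for $\gamma^*$, not for $\gamma^*_{\text{loop}}$.} $\gamma^*$ exactly minimizes $\len(\cdot,Q_L)-\epsilon W(\cdot,Q_L)$. A ratio deficit $l^{-1}$ for $\gamma^*_{\text{loop}}$ gives only a deficit of order $l^{-1}\len(\gamma^*_{\text{loop}},Q_L)$ in the additive functional, which is useless against local competitors. In the paper the loop simply inherits everything. Since $\gamma^*$ has a single interior arc and no self-intersection near $\partial Q_L$, the subloop at the first self-intersection lies at distance $\gtrsim l$ from $\partial Q_L$. On it, relative and absolute quantities coincide and Lemma~\ref{L:27} applies.

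\emph{The loop step needs no iteration.} One rewiring at the first self-intersection suffices, because the complementary curve $\gamma'$ is itself a competitor and so has ratio $\le J_L$. This forces $J_L\le \bigl(W(\gamma^*_{\text{loop}},Q_L)+W(\hat\gamma,Q_L)\bigr)/\len(\gamma^*_{\text{loop}},Q_L)$. The denominator is $\gtrsim l$ by the regularity of the loop. The numerator error $W(\hat\gamma,Q_L)$ is $\lesssim\ln^{1/2}L$, being a supremum over $\sim L^2$ unit balls, not $O(1)$; this factor is absorbed using $\ln l\gg\ln\ln L$.

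Your iterated rewiring charges an $O(1)$ error per step and gives no bound on the number of steps. The extra rewiring of nearby parallel strands is unnecessary: for a simple curve that is regular at scale $l$, such strands are already excluded by regularity together with Lemma~\ref{L:27}.
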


With this at hand, we can repeat the proof of the upper bound in Subsection~\ref{SS:upperlower-bounds}. Notice that the last property in the previous lemma is exactly the one needed to obtain independence and control the statistical errors as in Subsection~\ref{SS:statistics}. 

\medskip

The only new error comes from the last term in \eqref{m188}, which will be absorbed into the other errors. Indeed, by Lemma \ref{L:23} and working with $\gamma^*_{\text{loop}}$, we have that $\gamma_n$ are the graphs of functions $h_n$ with $\text{Lip}(h_n)\le\theta\ll1$. Then
\begin{align*}
\frac{1}{\len(\gamma_{\ge l})}\sum_{n=1}^N\int dz~w(\gamma_n,z)\le\frac{1}{\len(\gamma_{\ge l})}\sum_{n=1}^N\int dx|h_n(x)|\lesssim\theta l.
\end{align*}
When multiplied by the Gaussian $\lambda(Q_L)^{-1}\int_{Q_L}\xi$ of order $L^{-1}$, we get an error $\ll l/L$. This is negligible, since the r.~h.~s.~of \eqref{m188} has size $(\mathbb{E}J_L)^{-1/3}\ln l\sim(\ln^{-1/4}L)\ln l$. Hence, we have obtained the following.

\begin{lemma}
Assume that \eqref{m184} holds. Then
\begin{align*}
\mathbb{E}J_L-\mathbb{E}J_{L/l}\lessapprox(\mathbb{E}J_L)^{-1/3}a^*\ln l.
\end{align*}
\end{lemma}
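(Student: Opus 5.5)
The plan is to rerun the upper-bound half of Subsection~\ref{SS:upperlower-bounds}, but starting from the relative identity~\eqref{m188} instead of~\eqref{m169}. We apply it to the almost maximizer $\gamma^*_{\text{loop}}\in\Gamma'_{L,1}$ from Lemma~\ref{L:23}. On the good event of probability $\ge 1-CL^{-1}$, Lemma~\ref{L:23} provides three things: the regularity $\max_{|z-z'|\le l}|\nu(z)-\nu(z')|\lesssim\theta$; the near-optimality $J_L\le W(\gamma^*_{\text{loop}},Q_L)/\len(\gamma^*_{\text{loop}},Q_L)+O(l^{-1})$; and the single-curve property in $B_l(z)\cap Q_L$. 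On the complement, the Cauchy--Schwarz bound~\eqref{m194} combined with $\|J_L-\mathbb{E}J_L\|_2\lesssim1$ from Lemma~\ref{L:36} makes the contribution negligible.

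\emph{Left-hand side of~\eqref{m188}.} The coarse-grained curve $\gamma^*_{\text{loop},\ge l}$ keeps the boundary portion on $\partial Q_L$ and has interior sides $\gtrsim l$ by regularity. After rescaling by $l$, it is a competitor for $J_{L/l}$ up to the usual factor $2$ in side length, so that $W(\gamma_{\ge l},Q_L)/\len(\gamma_{\ge l},Q_L)\le\bar J_{2L/l}=_{\text{law}}J_{2L/l}$.

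\emph{Right-hand side of~\eqref{m188}.} By monotonicity in the prefactor and concentration, we replace $W(\gamma,Q_L)/\len(\gamma,Q_L)$ with the deterministic $I:=\mathbb{E}J_L-C\ln^{1/2}L$, up to the $O(l^{-1})$ error, which is harmless under~\eqref{m184}. Each interior piece $\gamma_n$ is Lipschitz-flat over an edge $e_n$ in $Q_L$, so Lemma~\ref{L:30}$(i)$ applies exactly as before. This gives
\[
\sum_n\big(1+\delta^+(\bar e_n)(\ln^{-1}l+\theta^2)\big)\,a^*I^{-1/3}l_n\ln l .
\]
One caveat: edges adjacent to $\partial Q_L$ would need $B_{4l}$ around their midpoint inside the domain. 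I would handle them by noting that white noise outside $Q_L$ can be extended, since $\xi$ is defined on all of $\mathbb{R}^2$ and Lemma~\ref{L:30} is local. The extra term $-\lambda(Q_L)^{-1}\int w(\gamma_n)\int_{Q_L}\xi$ is bounded, as already shown, by $\theta l\cdot|\int_{Q_L}\xi|/L^2$. Its expectation is $\lesssim l/L$, which is $\ll(\ln^{-1/4}L)\ln l$.

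\emph{Statistical error.} Using the single-curve property of Lemma~\ref{L:23}, the variables $\delta^+(\bar e_n)$ split into boundedly many independent groups, so Lemma~\ref{L:35} applies with $\kappa=\ln^{-C}L$. The expected average of the $\delta^+$ is then $\lesssim\ln^{2/3}\kappa^{-1}$, and its product with $(\ln^{-1}l+\theta^2)$ is $o(1)$ by~\eqref{m193}. The counting estimate needs the lower bound~\eqref{m176} on the number of vertices. For this I would use the regularity theory for $J_L$ to guarantee diameter $\gtrsim L^\alpha$; otherwise I would pass to $\sum_n$ weighted by $l_n$ directly. Taking expectations and using $\mathbb{E}(\mathbb{E}J_L-C\ln^{1/2}L)^{-1/3}=(\mathbb{E}J_L)^{-1/3}(1+o(1))$ yields
\[
\mathbb{E}J_L-\mathbb{E}J_{2L/l}\lessapprox(\mathbb{E}J_L)^{-1/3}a^*\ln l .
\]
Finally we replace $2l$ by $l$, with error $O(1)$ in $\ln l$ absorbed.

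The main obstacle I expect is the boundary. The $(1+1)$-dimensional comparison and the independence argument are set up for edges well inside the domain, so I will need to check that the coarse-graining near $\partial Q_L$ produces only $O(1)$ proportion of boundary-adjacent edges per unit length, or at least that these obey the same bounds. I would first try to treat these edges crudely with the sup bound, $\mathbb{E}\sup\delta\lesssim\ln^{2/3}L$. The hope is that their total relative length, divided by $\len(\gamma_{\ge l},Q_L)$, is small enough; if it is not, I would fall back on the extension-of-noise argument.
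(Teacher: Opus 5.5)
Your proposal is correct and is essentially the paper's argument. You rerun the upper bound of Subsection~\ref{SS:upperlower-bounds} starting from~\eqref{m188}, applied to the simple almost maximizer $\gamma^*_{\text{loop}}$ of Lemma~\ref{L:23}. Its single-curve property gives the independence needed for Lemma~\ref{L:35}, and the new volume-correction term is absorbed through the bound $\theta l\,|\int_{Q_L}\xi|/\lambda(Q_L)$, whose expectation is $\ll l/L$. Your extension-of-noise remedy for boundary-adjacent edges is the right one: Lemma~\ref{L:30}$(i)$ and the locality~\eqref{m181} only involve $\xi$ on $\mathbb{R}^2$, so these edges need no special treatment. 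Note also that the choice $\kappa=\ln^{-C}L\le I^{-4/3}$ tacitly uses an a priori upper bound on $\mathbb{E}J_L$, exactly as in the paper; there it is removed by the same continuity argument as for $I_L$.
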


By the same real analysis and continuity argument as before, we finally deduce the upper bound
\begin{align*}
\limsup_{L\uparrow\infty}\frac{\mathbb{E}J_L}{\ln^{3/4}L}\le({\textstyle\frac{4}{3}}a^*)^{3/4}=\lim_{L\uparrow\infty}\frac{\mathbb{E}I_L}{\ln^{3/4}L}.
\end{align*}
Together with the lower bound in Lemma~\ref{L:36}, this concludes the proof of Theorem~\ref{T:6}.

\subsection{Boundary regularity of the optimal curve}\label{SS:bound-reg}

The goal of this subsection is to prove Lemma~\ref{L:23}. We start with the following observation: Any curve $\gamma\in\Gamma'_{L,1}$ can be formally decomposed as
\begin{align*}
\gamma=\sum_{k=1}^N\gamma_k\quad\mbox{with $\gamma_k\in\Gamma'_{L,1}$ entering and exiting $\partial Q_L$ at most once},
\end{align*}
in the sense that the two functionals are additive 
\begin{align*}
W(\gamma,Q_L)=\sum_{k=1}^NW(\gamma_k,Q_L)\quad\mbox{and}\quad\len(\gamma,Q_L)=\sum_{k=1}^N\len(\gamma_k,Q_L).
\end{align*}
Since $(\sum w_k)/(\sum l_k)\le\max{w_k/l_k}$ for every two sequences $\{w_k\}$ and $\{l_k\}$ with $l_k>0$, we deduce that
\begin{align*}
\mbox{there exists a maximizer $\gamma^*$ entering and exiting $\partial Q_L$ at most once.}
\end{align*}

\medskip

The starting point for the regularity theory is again the following fact, which follows as \eqref{m57}: If $\gamma^*$ is optimal for $J_L$, then
\begin{align*}
\gamma^*\text{ minimizes }\len(\gamma,Q_L)-\epsilon W(\gamma,Q_L)\quad \mbox{where}\quad \epsilon:=(J_L)^{-1}
\end{align*}
which, by the concentration and preliminary bound for $J_L$ established in the previous section, it suffices to show regularity of perimeter almost minimizers. The strategy is the same as before: We first show regularity on one single branch, and then upgrade it to a global regularity statement.

\medskip

For $\gamma\in\Gamma'_{L,1}$, let $\mathbb{S}_{in}$ be the connected subset of times that $\gamma$ spends in $Q_L$ and let $V(\gamma)\subset\bar{\mathbb{S}}_{in}$ be the set of vertex-times including the entering and exiting times $t^{ent},t^{ext}$, if $\gamma$ is not entirely contained in $Q_L$. Branches $\gamma_{t,l}$ are defined for $t\in\bar{\mathbb{S}}_{in}$ as in \eqref{c:90}, with the additional requirement that $[t^{in}_{t,l},t^{out}_{t,l}]\subset\bar{\mathbb{S}}_{in}$, so that they are now sub-curves of $\gamma|_{\bar{\mathbb{S}}_{in}}$. With this notation, the proofs of analogues of Lemma~\ref{L:7} (which simply relies on a priori bounds on $J_L$) and of Lemma~\ref{L:12} are the same. Repeating then the same steps up to Corollary~\ref{Cor:1}, we obtain the following: Let  $l\gg 1$  be such that \eqref{m154} holds. Then, outside of an event of probability $ \le L^{-1}$,
\begin{align*}
\mbox{$\gamma^*$ has diameter $\ge l$}
\end{align*} 
and for any $ t\in\mathbb{S}_{in}$ and $s,s'\in [t^{in}_{t,l},t^{out}_{t,l}]$ 
\begin{align*}
|\nu^*(s)-\nu^*(s')|\lesssim\frac{\ln(e+|\gamma^*(s)-\gamma^*(s')|)}{\ln^{1/8}L}.
\end{align*}

\medskip

In the following lemma, we give a description of $\gamma^*$ close to the boundary. To this end, we denote by 
\begin{align*}
\nu^{ext}:=\nu^*(s)\quad\mbox{for any time $s$ with $0<t^{ext}-s<1$,}
\end{align*}
and analogously for $\nu^{ent}$. Moreover, given $z\in\partial Q_L$ let $\nu_{\partial Q_L}(z)$ be the normal to the boundary $\partial Q_L$. 

\begin{lemma}\label{L:28}
If $l$ satisfies \eqref{m154}, then outside of an event of probability $\le L^{-1}$,
\begin{align*}
&(i)&&|\nu^{ent}\cdot\nu_{\partial Q_L}\big(\gamma^*(t^{ent})\big)|,|\nu^{ext}\cdot\nu_{\partial Q_L}\big(\gamma^*(t^{ext})\big)|\lesssim\ln^{-1/8}L\\
&(ii)&&|\gamma^*(t^{ent})-z|,|\gamma^*(t^{ext})-z|\gtrsim l\quad\mbox{for every vertex $z$ of $Q_L$;}\\
&(iii)&&|\gamma^*(t^{ext})-\gamma^*(t^{ent})|\gtrsim l;\\
&(iv)&&\mbox{If}\quad 0<d(\gamma^*(t),\partial Q_L)\ll l,\quad\mbox{then}\quad\mbox{$\gamma^*_{t,l}$ intersects $\partial Q_L.$}
\end{align*}
\end{lemma}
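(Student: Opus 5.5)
Each of the four items will be shown with the competitor-based quasi-minimality argument of Lemma~\ref{L:12}, now for the relative functional. We work on the good event where the analogues of Lemmas~\ref{L:7} and \ref{L:12} hold with $\eta\lesssim\ln^{-1/4}L$, and where the branchwise regularity just established holds. On this event the following holds for any competitor $\gamma$ obtained from $\gamma^*$ by modifying it inside a ball $B_{l'}(z)$ with $l'\le l$. The minimality of $\len(\cdot,Q_L)-\epsilon W(\cdot,Q_L)$, with $\epsilon=J_L^{-1}$, gives
\begin{align*}
\len(\gamma^*,B_{l'}(z)\cap Q_L)\le(1+\eta)\len(\gamma,B_{l'}(z)\cap Q_L).
\end{align*}
The volume correction in $W(\cdot,Q_L)$ only contributes an error of relative size $\lesssim l/L$, by the $BV$--$L^1$ embedding used for \eqref{m186}. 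The key additional freedom near the boundary is that length on $\partial Q_L$ is free.

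\textbf{Item (i).} Suppose $|\nu^{ext}\cdot\nu_{\partial Q_L}|\ge\beta$. The regularity on the last branch before $t^{ext}$ says that this branch is nearly straight over length scale $l'$, up to an angle error $\lesssim\ln l'/\ln^{1/8}L$. So it approaches $\partial Q_L$ at an angle $\gtrsim\beta$ from the tangent. We replace the final piece, from a vertex at distance $\sim l'$ before the exit, by a segment to the nearest point of $\partial Q_L$ and then run along $\partial Q_L$ at no cost. The length in $Q_L$ drops from $\sim l'$ to $\sim l'\sqrt{1-\beta^2}$, roughly speaking. If the branch instead hits at a small oblique angle, we use the symmetric construction that shortens the relative length by a factor $1-c\beta^2$. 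Quasi-minimality then forces $\beta^2\lesssim\eta+(\ln l'/\ln^{1/8}L)^2$. Choosing $l'$ of order a large constant gives $\beta\lesssim\ln^{-1/8}L$.

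The delicate point in (i) is the sign: a normal almost parallel to $\nu_{\partial Q_L}$ means the curve is tangent to the boundary, which is what we claim. I therefore expect the real statement to be that $\gamma^*$ hits $\partial Q_L$ orthogonally, in the sense that its tangent is parallel to $\nu_{\partial Q_L}$. The competitor that projects onto the boundary must then be replaced by the standard ``free boundary'' argument: a tilted segment landing on $\partial Q_L$ has length $\ge$ distance$/\cos$(angle). I would carry out this computation carefully to fix the sign convention.

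\textbf{Items (ii) and (iii).} For (ii), suppose the exit point is within distance $\ll l$ of a corner. By (i), the curve arrives orthogonally to one side. Near the corner, the second side gives a much shorter connection to the free boundary, which contradicts quasi-minimality at scale $l$. For (iii), suppose $|\gamma^*(t^{ent})-\gamma^*(t^{ext})|\ll l$. We join the two boundary points along $\partial Q_L$, which costs no relative length. This splits $\gamma^*$ into a piece lying inside $B_{l}$ together with the rest. The additivity trick from Subsection~\ref{SS:Simple} then shows that either the rest has a larger ratio, or the small piece has ratio $\gtrsim J_L$. The latter is excluded by the analogue of Lemma~\ref{L:7}, since $\sup_z I_l(z)/J_L\ll1$. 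The former contradicts optimality, after adjusting for the side-length cutoff as in Lemma~\ref{L:9}, at a cost $O(l^{-1})$.

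\textbf{Item (iv).} If $0<d(\gamma^*(t),\partial Q_L)\ll l$ but the branch $\gamma^*_{t,l}$ stays inside $Q_L$, then the branch is nearly a straight segment of length $\sim l$ passing within $\ll l$ of the boundary. Using (i)--(iii), we may assume we are away from corners. We build a competitor by detouring the branch to touch $\partial Q_L$: we go from $\gamma^*(t^{in}_{t,l})$ to the boundary, follow the boundary for free, and come back. This competitor is shorter in relative length unless the branch is almost parallel to and far from the boundary, so it contradicts quasi-minimality. I expect (i) and (iv) to be the main obstacles, because both need a quantitative free-boundary comparison of almost straight branches with boundary-touching competitors, with errors controlled by the $\ln l/\ln^{1/8}L$ regularity.
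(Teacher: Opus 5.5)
\textbf{Items (i), (ii), (iv).} These follow the paper's mechanism. You cut a nearly straight branch, send it to $\partial Q_L$ where length is free, and compare using the quasi-minimality inherited from the analogue of Lemma~\ref{L:12}.

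For (i), the paper works with the first two edges after $t^{ent}$ and the closest boundary point $z$ with $|z-\gamma^*(t_2)|\ge1$. Since $\len(\gamma^*|_{[t^{ent},t_2]})\ge2$, quasi-minimality forces $z$ to be the actual projection. This also handles the side-length cutoff, which your scale-$l'$ version leaves implicit.

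Your worry about the sign in (i) is unfounded. The bound $|\nu^{ent}\cdot\nu_{\partial Q_L}|\ll1$ says the two normals are nearly orthogonal, i.e.\ $\gamma^*$ crosses $\partial Q_L$ perpendicularly. Your own first computation (length $\sim l'$ against distance $\le l'\sqrt{1-\beta^2}$ to the side) proves exactly this with the projection competitor, so nothing has to be replaced.

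\textbf{Item (iii): the gap.} Joining $\gamma^*(t^{ent})$ and $\gamma^*(t^{ext})$ along $\partial Q_L$ changes neither $\len(\cdot,Q_L)$ nor $W(\cdot,Q_L)$. Indeed, a full loop around $\partial Q_L$ has zero relative length and zero relative field term. It also does not split off any piece contained in a ball of radius $\sim l$. The interior excursion of $\gamma^*$ from $t^{ent}$ to $t^{ext}$ is a single connected arc that in general goes far into $Q_L$. So there is no exact decomposition $\gamma^*=\hat\gamma+\gamma'$ with $\hat\gamma$ local and relative length additive, and the averaging trick of Subsection~\ref{SS:Simple} has nothing to act on. Your argument only covers the special case in which the whole excursion stays in a ball of radius $\sim l$.

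To create a local piece in general, you must cut the excursion at the far ends of the branches $\gamma^*_{t^{ent},l}$ and $\gamma^*_{t^{ext},l}$ and reconnect by a segment of length $s$. This segment appears in both pieces, so $\len(\hat\gamma,Q_L)+\len(\gamma',Q_L)=\len(\gamma^*,Q_L)+2s$. The averaging argument then only yields $(1-\rho)\len(\hat\gamma,Q_L)\le 2s$, where $\rho\ll1$ is the local-to-global ratio. Since $\len(\hat\gamma,Q_L)\gtrsim l$, this is a contradiction only once you know $s\ll l$, and that is the missing step.

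Here is how to get $s\ll l$:
\begin{itemize}
\item By (i) and (ii), the two branches start from points on the same side at mutual distance $\ll l$.
\item By the branchwise regularity, they are nearly straight and nearly perpendicular to that side, hence nearly parallel.
\item So their far endpoints are $\ll l$ apart.
\end{itemize}
Replacing both branches (relative length $\gtrsim 2l$) by the segment joining these endpoints is exactly the paper's competitor. The argument is therefore a quasi-minimality comparison, not an exact splitting, and the appeal to Lemma~\ref{L:9} is unnecessary.
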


From these statements we deduce that $\gamma^*$ exits and enters the boundary $\partial Q_L$ almost perpendicularly and at two points $\gamma^*(t^{ent}),\gamma^*(t^{ext})$ which are at distance $\gtrsim l$ between each other and from the vertices of $Q_L$. Moreover, we learn from $(iv)$ that $\gamma^*$ remains far from the boundary except for the two branches $\gamma^*_{t,l}$ with $t=t^{ext}$ or $t=t^{ent}$. Thus, we have deduced that
\begin{align*}
\mbox{$\gamma^*\cap Q_L\cap B_l(z)$ consists of a single curve for every $z\in\partial Q_L$.}
\end{align*}
We can now conclude the proof of Lemma~\ref{L:23}. We have to treat only the case when $\gamma^*$ is not simple. If that happens, we would like to pick $\gamma^*_{\text{loop}}$ as a slightly modified subloop of $\gamma^*$, with the same construction as in Lemma~\ref{L:9}. This can indeed be done in view of the last remark, which implies that $\gamma^*_{\text{loop}}$ is at distance $\gtrsim l$ from the boundary.

\section{Detailed proofs}\label{s:details}

In the following, we will need the following notation. Given two, possibly open, Lipschitz curves $\gamma_1:[0,T_1]\to\mathbb{R}^2$ and $\gamma_2:[0,T_2]\to\mathbb{R}^2$ such that $\gamma_1(T_1)=\gamma_2(0)$, we define their concatenation $\gamma_1\ast\gamma_2:[0,T_1+T_2]\to\mathbb{R}^2$ by
\begin{align}\label{m86}
\gamma_1\ast\gamma_2(t):=\begin{cases}
\gamma_1(t)&\text{if}~t\in[0,T_1],\\
\gamma_2(t-T_1)&\text{if}~t\in[T_1,T_1+T_2].
\end{cases}
\end{align}
It will also be convenient to write
\begin{align}\label{m85}
-\gamma:=\mbox{the curve obtained from $\gamma$ by changing the orientation.}
\end{align}

{\sc Proof of Lemma~\ref{L:12}.} 
We prove the lower bound on the diameter of $\gamma^*$ arguing by contradiction. Notice that if $\gamma^*$ had diameter $<2l$, then it would be contained in some ball $B_l(z)\subset B_L$ and therefore $ I_l(z)= I_L$. This is in contradiction with \eqref{c:24}. As a consequence we also see that $t^{in}_{t,l}$ and $t^{out}_{t,l}$ are well-defined.

\medskip

Let us introduce the open curve $\tilde\gamma_{t,l}$ defined by
\begin{align*}
\tilde\gamma_{t,l}\overset{\eqref{m86}}{:=}\begin{cases}
    [\gamma^*(t^{in}_{t,l}),\gamma^*(t^{out}_{t,l})] & \text{ if } |\gamma^*(t^{in}_{t,l})-\gamma^*(t^{out}_{t,l})|\geq 1,\\
    [\gamma^*(t^{in}_{t,l}),z]*[z,\gamma^*(t^{out}_{t,l})] & \text{ if }|\gamma^*(t^{in}_{t,l})-\gamma^*(t^{out}_{t,l})|<1,
\end{cases}
\end{align*}
where $z\in B_l(\gamma^*(t))$ is a point such that $|\gamma^*(t^{in}_{t,l})-z|=|\gamma^*(t^{out}_{t,l})-z|=1$. Such a point $z$ exists for $l\ge3$. For later purposes, notice that
\begin{align}\label{m21}
\len(\tilde\gamma_{t,l})\le\max\{|\gamma^*(t^{in}_{t,l})-\gamma^*(t^{out}_{t,l})|,2\}.
\end{align}
Moreover, let $\tilde\gamma\in\Gamma_{L,1}$ be the curve obtained from $\gamma^*$ by replacing $\gamma^*_{t,l}$ with $\tilde\gamma_{t,l}$.

\medskip

Let us show that 
\begin{align}\label{c:3}
\len(\gamma_{t,l}^*)(1-\sup_{z\in B_L} I_l(z)/ I_L)\leq \len(\tilde\gamma_{t,l})(1+\sup_{z\in B_L} I_l(z)/ I_L).
\end{align}
Indeed, using $\tilde\gamma$ as a competitor for the problem in \eqref{m57}, we have
\begin{align}\label{m84}
\len(\gamma^*)-\epsilon W(\gamma^*)\le\len(\tilde\gamma)-\epsilon W(\tilde\gamma)\quad\mbox{with}\quad\epsilon:= I_L^{-1}.
\end{align}
Notice that with respect to the difference of the lengths, we have
\begin{align*}
\len(\gamma^*)-\len(\tilde\gamma)=\len(\gamma^*_{t,l})-\len(\tilde\gamma_{t,l}),
\end{align*}
while the difference of the field terms can be written as
\begin{align*}
 W(\gamma^*)- W(\tilde\gamma)= W(\bar\gamma)\quad\mbox{with}\quad\bar\gamma\overset{\eqref{m86},\eqref{m85}}{:=}\gamma^*_{t,l}\ast(-\tilde\gamma_{t,l}).
\end{align*}
By construction\footnote{here $\Gamma_l(z)$ denotes the translated set of curves $z+\Gamma_{l,1}.$} $\bar\gamma\in\Gamma_{l,1}(\gamma^*(t))$, thus the r.~h.~s.~ above is estimated by
\begin{align*}
 W(\bar\gamma)\le I_l(\gamma^*(t))\len(\bar\gamma)\le\sup_{z\in B_L} I_l(z)\big(\len(\gamma^*_{t,l})+\len(\tilde\gamma_{t,l})\big).
\end{align*}
Substituting the last three equations into \eqref{m84} and rearranging the terms, one obtains \eqref{c:3}.
\medskip

By \eqref{c:3} and that $\frac{1+x}{1-x}\leq 1+3x$ for $0<x\ll 1$ and the assumption \eqref{c:24}, we deduce
\begin{align*}
&\len(\gamma^*_{t,l})\leq (1+\eta_l)\len(\tilde\gamma_{t,l
})\le(1+\eta_l)\max\{|\gamma^*(t^{in}_{t,l})-\gamma^*(t^{out}_{t,l})|,2\}\\
&\text{for}\quad\eta_l\lesssim \sup_{z\in B_L} I_l(z)/ I_L.
\end{align*}
Finally, since by definition $\len(\gamma^*_{t,l})\ge2l-2$ and $2l-2>2(1+\eta_l)$ for $l\ge3$, we can replace the maximum on the r.~h.~s.~with $|\gamma^*(t^{in}_{t,l})-\gamma^*(t^{out}_{t,l})|$ and conclude.
\qed

\medskip

{\sc Proof of Lemma \ref{L:7}. }
We define the event
\begin{align*}
A_M&:=\{ \sup_{z\in B_L} I_l(z)\leq C\ln^{3/4}l+C\ln^{1/2}L+M,\quad  I_L\geq {\textstyle{\frac{1}{2}}}\E I_L \}
\end{align*}
where $C$ is the hidden universal constant in the upper bound of Lemma~\ref{T:7}, and show that
\begin{align}\label{c:23}
\ln\PP(A_M^c)\lesssim -M^2\quad\mbox{provided}~M\ll\ln^{3/4}L.
\end{align}
This finishes the proof by the choice $M=C'\ln^{1/2}L$ for some constant $C'$ such that $\PP(A_M^c)\leq L^{-1} $.
Here comes the proof. It follows from the union bound that 
\begin{align}\label{c:25}
\PP(A^c_M)\leq&\PP(\sup_{z\in B_L} I_{l}(z)\geq C\ln^{3/4}l+C\ln^{1/2}L+M)\nonumber\\&+\PP( I_L\leq {\textstyle{\frac{1}{2}}}\E I_L).
\end{align}

For the first term on r.~h.~s.~, we have 
\begin{align*}
\sup_{z\in B_L} I_l(z)\leq \sup_{\bar z\in l\mathbb{Z}^2\cap B_L}\sup_{z\in B_{l}(\bar z)} I_l(z)\leq \sup_{\bar z\in l\mathbb{Z}^2\cap B_L} I_{2l}(\bar z),
\end{align*}
which, since $\mathbb{E}I_{2l,1}(\bar z)$ does not depend on $\bar z$, can be decomposed as
\begin{align*}
\sup_{\bar z\in l\mathbb{Z}^2\cap B_L} I_{2l}(\bar z)=\mathbb{E}I_{2l}+\sup_{\bar z\in l\mathbb{Z}^2\cap B_L} I_{2l}(\bar z)-\E I_{2l}(\bar z).
\end{align*}
Using $\#(l\mathbb{Z}^2\cap B_L)\lesssim(L/l)^{2}$, \eqref{m83} and the concentration from Lemma~\ref{T:7}, we obtain
\begin{align*}
\sup_{\bar z\in l\mathbb{Z}^2\cap B_L} I_{2l}(\bar z)-\E I_{2l}(\bar z)\lesssim
\ln^{1/2} (L/l)+\bar I\quad\mbox{with}\quad\|\bar I\|_2\le1.
\end{align*}
Because of Lemma~\ref{app:4}(a), this implies
\begin{align*}
\ln\PP(\sup_{z\in B_L} I_{l}(z)\gtrsim\ln^{3/4}l+\ln^{1/2}L+M)\lesssim-M^2.
\end{align*}
Finally, for the second term in \eqref{c:25},
\begin{align*}
\ln\PP( I_L\leq{\textstyle{\frac{1}{2}}}\E I_L)\overset{\text{Lemma~\ref{T:7}}}&{\le}\ln\PP(| I_L-\E I_L|\gtrsim\ln^{3/4}L)\lesssim-\ln^{3/2}L.
\end{align*}
This finishes the proof of \eqref{c:23}.
\qed

\medskip

{\sc Proof of Lemma~\ref{L:8}.} 
We fix a length scale $ 3\leq l'\leq l $ and a time $t\in\mathbb{S}$ and consider  the branch $ \gamma_{t,l'} $. Let $ \gamma^{line}_{t,l'}:=[\gamma(t^{in}_{t,l'}),\gamma(t^{out}_{t,l'})] $, the normal of which we denote by $ \nu_{t,l'} $. They are well defined by the assumption that $\gamma$ has diameter $\geq 2l$ and that $l'\geq 3$. We define the excess at time $ t $ and scale $ l' $ to be 
\begin{align*}
\Exc(t,l'):=\frac{1}{l'}\int_{t^{in}_{t,l'}}^{t^{out}_{t,l'}}|\nu(s)-\nu_{t,l'}|^2 ds.
\end{align*}
We also denote by $\tau(\cdot)$ and $\tau_{t,l'}$  the tangent vectors of $\gamma$ and $\gamma_{t,l'}^{line}$, respectively.
\medskip

We first show the following property
\begin{align}\label{c:4}
\Exc(t,l') \lesssim \eta,\quad \text{for }3\leq l'\leq l\text{ and }t\in\mathbb{S}.
\end{align}
Starting from the equality
\begin{align*}
\int_{t^{in}_{t,l'}}^{t^{out}_{t,l'}}\nu(s)\cdot \nu_{t,l'}ds&=\int_{t^{in}_{t,l'}}^{t^{out}_{t,l'}}\tau(s)\cdot\tau_{t,l'}ds\\&=(\gamma(t^{out}_{t,l'})-\gamma(t^{in}_{t,l'}))\cdot\tau_{t,l'}=|\gamma(t^{in}_{t,l'})-\gamma(t^{out}_{t,l'})|
\end{align*}
and expanding the square in the definition of the excess, we have
\begin{align*}
\Exc(t,l')=2 (\len(\gamma_{t,l'})-\len(\gamma^{line}_{t,l'}))/l'\overset{\eqref{m54}}{\lesssim}\eta.
\end{align*}

\medskip

Next, we use the control on the excess to estimate the change in the average normal $\nu_{t,l'}-\nu_{t,l'/2}$ between two adjacent dyadic scales,
\begin{align}\label{c:26}
|\nu_{t,l'}-\nu_{t,l'/2}|&\lesssim \sqrt{\eta}\quad \text{for }1\ll l'\leq l.
\end{align}
The estimate follows from
\begin{align*}
|\nu_{t,l'}-\nu_{t,l'/2}|^2&=\frac{1}{\len(\gamma_{t,l'/2})}\int_{t^{in}_{t,l'/2}}^{t^{out}_{t,l'/2}}|\nu_{t,l'}-\nu_{t,l'/2}|^2ds\\&\lesssim \frac{1}{l'}\int_{t^{in}_{t,l'/2}}^{t^{out}_{t,l'/2}}|\nu(s)-\nu_{t,l'/2}|^2ds+\frac{1}{l'}\int_{t^{in}_{t,l'}}^{t^{out}_{t,l'}}|\nu(s)-\nu_{t,l'}|^2ds\\&\lesssim \Exc(t,l'/2)+\Exc(t,l')\lesssim\eta.
\end{align*}
Exploiting the small-scale cutoff in the configuration space, we complement the previous estimate with a second one, which will be used for $l'$ of order 1 
\begin{align}
\label{c:27}
|\nu(t)-\nu_{t,l'}|&\lesssim \sqrt{\eta l'}\quad \text{for }1\leq l'\leq l.
\end{align}
Indeed, let us denote by $t^-,t^+$ the two consecutive vertex-times such that $t^-<t\leq t^+$ (namely, $[\gamma(t^-),\gamma(t^+)]$ is the edge on which $\gamma(t)$ lies). Then 
\begin{align*}
|\nu(t)-\nu_{t,l'}|^2=\frac{1}{|t^+-t^-|}\int_{t^-}^{t^+}|\nu-\nu_{t,l'}|^2\lesssim \int_{t^{in}_{t,l'}}^{t^{out}_{t,l'}}|\nu(s)-\nu_{t,l'}|^2ds\overset{\eqref{c:4}}{\lesssim} \eta l'.
\end{align*}
Notice that with the same proof of \eqref{c:26}, one also obtains that for any $t,s\in\mathbb{S}$
\begin{align}\label{m15}
\mbox{if $\gamma_{s,l'/2}$ is contained in $\gamma_{t,l'}$,}\quad\mbox{then}\quad|\nu_{t,l'}-\nu_{s,l'/2}|\lesssim\sqrt{\eta}.
\end{align}

\medskip

As a consequence of \eqref{c:26} \& \eqref{c:27}, let us now show that
\begin{align}\label{c:28}
|\nu(t)-\nu_{t,l'}|\lesssim\sqrt{\eta}\ln(e+l')\quad\mbox{for}~1\le l'\le l.
\end{align}
We show the above for $l'\gg1$, since the case when $l'\lesssim 1$ is covered by \eqref{c:27}. Let $l_0$ be the universal constant such that \eqref{c:26} holds for $l_0\le l'\le l$ and let $\bar k$ be such that $l_0\in [2^{-\bar k-1}l',2^{-\bar k}l']$. Then
\begin{align*}
|\nu(t)-\nu_{t,l'}|&\le|\nu(t)-\nu_{t,2^{-\bar k}l'}|+\sum_{k=0}^{\bar k-1}|\nu_{t,2^{-k}l'}-\nu_{t,2^{-(k+1)}l'}|\\\overset{\eqref{c:26},\eqref{c:27}}&{\lesssim}\sqrt{\eta}(l_0+\bar k)\lesssim\sqrt{\eta}\ln(e+l').
\end{align*}

\medskip

Let us now conclude. Let $s,s'\in\mathbb{S}$ be such that
\begin{align*}
\sqrt{\eta}\ln(e+|s-s'|)\ll1,
\end{align*}
otherwise the claim is trivial. Consider the branch $\gamma_{s,l}$ with
\begin{align*}
l=C(|s-s'|+1)\quad\mbox{for some universal $C\gg1$ so that}\quad\gamma_{s',l/2}\subset\gamma_{s,l}.
\end{align*}
Then by the triangle inequality
\begin{align*}
|\nu(s)-\nu(s')|&\le|\nu(s)-\nu_{s,l}|+|\nu(s')-\nu_{s',l/2}|+|\nu_{s,l}-\nu_{s',l/2}|\\
\overset{\eqref{m15},\eqref{c:28}}&{\lesssim}\sqrt\eta\ln l\lesssim\sqrt\eta\ln(e+|s-s'|).
\end{align*}
\qed

\medskip

{\sc Proof of Remark~\ref{rk:1}.}
Let us first show \eqref{c:47}. The assumption implies that for any $s\in[0,T]$,
\begin{align*}
 |\nu(s)-\frac{|\gamma(0)-\gamma(T)|}{T}\nu^{line}|&=|\nu(s)-\frac{1}{T}\int_{0}^{T}\nu(t)dt|\\&\le\frac{1}{T}\int_0^T|\nu(s)-\nu(t)|dt\le\sqrt{\eta}
\end{align*}
where we used the fact that $\int^{T}_{0}\nu(t)dt=|\gamma(0)-\gamma(T)|\nu^{line}$. Thus 
\begin{align*}
|1-\frac{|\gamma(0)-\gamma(T)|}{T}|\leq |\nu(s)-\frac{|\gamma(0)-\gamma(T)|}{T}\nu^{line} |\leq\sqrt{\eta}.
\end{align*}
Now \eqref{c:47} follows by combining the above two inequalities,
\begin{align*}
|\nu(s)-\nu^{line}|\leq |\nu(s)-\frac{|\gamma(0)-\gamma(T)|}{T}\nu^{line}|+|1-\frac{|\gamma(0)-\gamma(T)|}{T}|\leq 2\sqrt{\eta}.
\end{align*}
\medskip

Finally, to show \eqref{c:45}, note that
\begin{align*}
2(\len(\gamma)-|\gamma(0)-\gamma(T)|)=\int_{0}^{T}|\nu(t)-\nu^{line}|^2dt\overset{\eqref{c:47}}{\leq} 4\eta\len(\gamma),
\end{align*}
where the first equality follows from expanding the square. Rearranging and using that $\frac{1}{1-x}\leq 1+2x$ for $0\leq x\ll 1$, it leads to \eqref{c:45}.
\qed

\medskip

{\sc Proof of Lemma~\ref{L:18}.}
We will show below that if $l'\gg1$, then with probability $\ge1-L^{-1}$ for any $z,z'$ in the image of $\gamma^*$,
\begin{align}\label{e3}
|\nu^*(z)-\nu^*(z')|\lesssim\frac{\ln l'}{\ln^{1/8}L}+\frac{|z-z'|}{l'}.
\end{align}
Now we fix $l$, assume that $|z-z'|\leq l$ and apply the above estimate with $l'=(l\ln^{1/8}L)/\ln l$,
\begin{align*}
|\nu^*(z)-\nu^*(z')|\lesssim\frac{\ln l'}{\ln^{1/8}L}+\frac{\ln l}{\ln^{1/8}L}\lesssim\frac{\ln l}{\ln^{1/8}L},
\end{align*}
where the last inequality holds provided $\ln l\gg\ln\ln L$ (which implies $\ln l'\lesssim\ln l$). This finishes the proof of \eqref{e3}.

\medskip

Here comes the proof of \eqref{e3}, where for simplicity we use $l$ in place of $l'$. Let us denote by 
\begin{align*}
\mbox{$\gamma^*_{s,l}:=$ the branch of $\gamma^*$ contained in $B_l(z)$ and starting from $s$.}
\end{align*}
Let $s_{in},s_{out}$ be its entering and exiting times and let $\nu_{s,l}$ be normal to the segment $[\gamma^*(s_{in}),\gamma^*(s_{out})]$. Let us define analogously $s'_{in},s'_{out}$ and $\nu_{s',l}$. 

\medskip

In view of Corollary~\ref{Cor:1}, we have that 
\begin{align*}
|\nu^*(s)-\nu_{s,l}|,|\nu^*(s')-\nu_{s',l}|\lesssim\ln l/\ln^{1/8}L,
\end{align*}
so that we have only to control $|\nu_{s,l}-\nu_{s',l}|$. If the segments $[\gamma^*(s_{in}),\gamma^*(s_{out})]$ and $[\gamma^*(s'_{in}),\gamma^*(s'_{out})]$ do not intersect, then we have
\begin{align*}
|\nu_{s,l}-\nu_{s',l}|\lesssim|\gamma^*(s)-\gamma^*(s')|/l.
\end{align*}
If otherwise they intersect, we construct a competitor in the following way: We replace $\gamma^*|_{[s_{in},s_{out}]}$ and $\gamma^*|_{[s'_{in},s'_{out}]}$ by the segments $[\gamma^*(s_{in}),\gamma^*(s'_{out})]$ and $[\gamma^*(s'_{in}),\gamma^*(s_{out})]$, respectively (see Figure~\ref{f2}). With the same reasoning given in Lemma~\ref{L:12}, $\gamma^*$ has almost minimal length in $B_l(z)$, to the effect that
\begin{align*}
&\len(\gamma^*_{s,l})+\len(\gamma^*_{s',l})\le(1+\eta)\big(|\gamma^*(s_{in})-\gamma^*(s'_{out})|+|\gamma^*(s'_{in})-\gamma^*(s_{out})|\big)\\
&\mbox{with}\quad\eta\lesssim\ln l/\ln^{1/8}L.
\end{align*}
Combining it with the trivial lower bound
\begin{align*}
|\gamma^*(s_{in})-\gamma^*(s_{out})|+|\gamma^*(s'_{in})-\gamma^*(s'_{out})|\le\len(\gamma^*_{s,l})+\len(\gamma^*_{s',l}),
\end{align*}
it yields the following constraint
\begin{align*}
&|\gamma^*(s_{in})-\gamma^*(s_{out})|+|\gamma^*(s'_{in})-\gamma^*(s'_{out})|\\&
\le(1+\eta)\big(|\gamma^*(s_{in})-\gamma^*(s'_{out})|+|\gamma^*(s'_{in})-\gamma^*(s_{out})|\big).
\end{align*}
We conclude by elementary geometry that
\begin{align*}
|\nu_{s,l}-\nu_{s',l}|\lesssim\sqrt{\eta}+|\gamma^*(s)-\gamma^*(s')|/l.
\end{align*}
\qed

\medskip

{\sc Proof of Lemma~\ref{L:30}.} The strategy is to reduce both estimates to problems with fixed boundary conditions and then apply Theorem~\ref{T:3}. To this end, we introduce the four additional points $z_i^\pm$ for $i=1,2$ such that (Figure~\ref{f5})
\begin{align*}
&\mbox{$z_1^+, \bar z_1, z_1^-, z_2^-, \bar z_2,z_2^+$ lie on the same line (in this order)}\\&\mbox{and}\quad|\bar z_1-z_1^\pm|=|\bar z_2-z_2^\pm|=2\kappa\bar l.
\end{align*}
We note that, since $\kappa\leq\ln^{-1}\bar l$, we can replace at any time $|z_1^\pm-z_2^\pm|$ and $|z_1-z_2|$ with the reference length $\bar l=|\bar z_1-\bar z_2|$.

\medskip

\emph{Proof of $(i)$.} Starting from a curve $\gamma$ with endpoints $z_1,z_2$, we concatenate it with the segments $[z_1^+,z_1]$ and $[z_2,z_2^+]$ to obtain a curve $\gamma^+$. In view of the oscillations $\le\theta$ of the normal, this is the graph of a function $h^+$ over the segment $[z_1^+,z_2^+]$ and we have the estimates
\begin{align*}
\len(\gamma)-|z_1-z_2|&=\len(\gamma^+)-|z_1^+-z_2^+|+O(\kappa\bar l)\\
&=\big(1+O(\theta^2)\big)D(h^+)+O(\kappa\bar l).
\end{align*}
Let us introduce the curve $\gamma'$ connecting the points $z_1^+,z_1,z_2,z_2^+$, so that
\begin{align*}
W(\gamma)=W(\gamma^+)-W(\gamma')=W(h^+)-W(\gamma').
\end{align*}
In order to estimate the error $W(\gamma')$, we note that $\gamma'$ is a curve through four points contained in a fixed rectangle of side-lengths $\lesssim \bar l$ and $\kappa\bar l$. Thus, by scaling $(x,y)=\big(\bar l\hat x,\kappa\bar l\hat y\big)$ and Lemma~\ref{app:1},
\begin{align*}
\big\|\sup_{z_1,z_2}W(\gamma')\big\|_2\lesssim\kappa^{1/2}\bar l.
\end{align*}

\medskip

Combining the above estimates and applying Theorem~\ref{T:3} to the function $h^+$, we obtain that for some $\|\delta^+\|_{3/2}\lesssim1$
\begin{align*}
&W(\gamma)-I\big(\len(\gamma)-|z_1-z_2|\big)\\&\le W(h^+)-I\big(1+O(\theta^2)\big)D(h^+)+O(I\kappa\bar l)+\delta^+\kappa^{1/2}\bar l\\
&\le\big(1+\delta^+(\ln^{-1}\bar l+\theta^2)\big)a^*I^{-1/3}\bar l\ln\bar l+O(I\kappa\bar l)+\delta^+\kappa^{1/2}\bar l.
\end{align*}
Using that $\kappa\leq I^{-4/3}$, we can combine all the errors together and conclude.

\medskip

\emph{Proof of $(ii)$.} Let us introduce the function $h^-$ over the segment $[z_1^-,z_2^-]$ maximizing the action $W(h)-ID(h)$ among function $h$ which are piecewise linear on intervals of length $\ge 1$ and have vanishing boundary conditions. Then, given any pair of points $z_1,z_2$, the curve $\gamma$ is constructed by concatenating the graph $\gamma^-$ of $h^-$ with the segments $[z_1,z_1^-]$ and $[z_2^-,z_2]$. 

\medskip

From Theorem~\ref{T:3}, we have the lower bound for some $\|\delta^-\|_{3/2}\lesssim1$,
\begin{align*}
W(h^-)-ID(h^-)\ge\big(1-\delta^-\ln^{-1}\bar l\big)a^*I^{-1/3}\bar l\ln\bar l.
\end{align*}
We can estimate again the increment of length and the field term as
\begin{align*}
&\len(\gamma)-|z_1-z_2|=\len(\gamma^-)-|z_1^--z_2^-|+O(\kappa\bar l)\le D(h^-)+O(\kappa\bar l),\\
&W(\gamma)-W(h^-)=W(\gamma')\quad\mbox{with}\quad\big\|\sup_{z_1,z_2}W(\gamma')\big\|_2\lesssim\kappa^{1/2}\bar l.
\end{align*}
Combining these estimates as in point $(i)$ suffices to conclude.\qed

\medskip

{\sc Proof of Lemma~\ref{L:27}.}
Up to translation, rotation and dilation, assume that $z=\gamma(s)=0,\tau(s)=(1,0),l=1$. As a consequence of the hypothesis, $\{\mbox{the image of $\gamma$}\}\cap B_1$ contains at least the graph of a (almost flat) function $h$, which is approximately the diameter $[-1,1]\times\{0\}$ of $B_1$. If $z_1,z_2$ are the entry and exit points of this portion of $\gamma$ into and from $B_1$,  we then have
\begin{align*}
|z_1-(-1,0)|,|z_2-(1,0)|\ll1.
\end{align*}
Assume by contradiction that $\gamma$ enters again $B_1$ and call $z_3$ and $z_4$ the entry and exit points, respectively. From the assumption and \eqref{c:47}, 
\begin{align}\label{m103}
|\frac{z_4-z_3}{|z_4-z_3|}-\frac{z_2-z_1}{|z_2-z_1|}|\ll1\quad\mbox{so that}\quad|\frac{z_4-z_3}{|z_4-z_3|}-(1,0)|\ll1.
\end{align}
By symmetry, we assume that $z_3,z_4$ are in the upper half of $B_1$ (with respect to the graph of $h$). Applying Lemma~\ref{app:3}, we can pick $z_3$ to be the first entry point after $z_2$, going counter-clockwise around $\partial B_1$. Hence the points $z_1,z_2,z_3,z_4$ lie on $\partial B_1$ in this order. Thus the first component $v_1$ of
\begin{align*}
v=(v_1,v_2):=\frac{z_4-z_3}{|z_4-z_3|}\quad\mbox{satisfies}\quad-1\le v_1\ll1
\end{align*}
which is in contradiction with the second inequality in \eqref{m103}.
\qed

\medskip

{\sc Proof of Lemma~\ref{L:9}.} We start by considering  the simple closed curve obtained by $ \gamma^* $  crossing itself for the first time. More precisely, denoting by $0\in\mathbb{S}$ an arbitrary point, let
\begin{align*}
t_1:=\sup\{t~|~\gamma^*:[0,t]\to B_L\text{ is injective} \},
\end{align*}
and let $ t_0\in[0,t_1)$ be the time where $ \gamma^*(t_0)=\gamma^*(t_1) $. By definition $ \gamma^*|_{[t_0,t_1]} $ is a simple, closed, piecewise-linear curve. Note that in general, $ \gamma^*|_{[t_0,t_1]}\notin \Gamma_{L,1} $,  since it might contain two edges with length less than 1, hence we need to slightly modify it.

\medskip

Applying the estimate \eqref{e3} with $l=5$, we learn that $\gamma^*$ self-intersects in two almost-parallel branches,
\begin{align}\label{m156}
|\nu^*(t_0)-\nu^*(t_1)|\lesssim\ln^{-1/8}L\quad\mbox{with probability $\ge1-L^{-1}$}.
\end{align}
Using this information, we construct $\gamma_{\text{loop}}^*\in\Gamma_{L,1}$ as follows: let
\begin{align*}
\mbox{$V(\gamma^*)\ni t_0^+:=$ first vertex-time after $t_0$ such that $|\gamma^*(t_0)-\gamma^*(t_0^+)|\ge1$}
\end{align*}
and analogously $t_0^-,t_1^+,t_1^-$ (see Figure~\ref{f3}). We define $\gamma^*_{\text{loop}}$ to be the concatenation of $ \gamma^*|_{[t_0^+,t_1^-]} $ and the segment $ [\gamma^*(t_1^-),\gamma^*(t_0^+)] $. With this construction, we have that $\gamma^*_{\text{loop}}\in\Gamma_{L,1}$, with the desired regularity. 

\medskip

Finally, we show that $\gamma^*_{\text{loop}}$ almost maximizes the ratio $ I_L$. To this end, we introduce two additional curves $\gamma',\hat\gamma$ satisfying the following properties:
\begin{align*}
&\gamma'\in\Gamma_{L,1}\quad\mbox{while}\quad\mbox{$\hat\gamma$ has $\le10$ vertices and is contained in $B_5(t_0)$};\\
&\gamma^*=\gamma^*_{\text{loop}}+\gamma'+\hat\gamma\quad\mbox{so that}\quad W(\gamma^*)= W(\gamma^*_{\text{loop}})+ W(\gamma')+W(\hat\gamma);\\
& \len(\gamma^*)\geq \len(\gamma_{\text{loop}})+\len(\gamma').
\end{align*}
The precise definition of the two curves is the following:
\begin{align*}
&\gamma':=\gamma^*|_{[t_1^+,t_0^-]}\ast [\gamma^*(t_0^-),\gamma^*(t_1^+)]\quad\mbox{and}\\
&\hat \gamma:=\gamma^*|_{[t_0^-,t_0^+]}*[\gamma^*(t_0^+),\gamma^*(t_1^-)]*\gamma^*|_{[t_1^-,t_1^+]}*[\gamma^*(t_1^+),\gamma^*(t_0^-)].
\end{align*}

\medskip

The previous properties and the optimality of $ \gamma^* $ yield
\begin{align*}
\frac{ W(\gamma')}{\len(\gamma')}\leq \frac{ W(\gamma^*)}{\len(\gamma^*)}\le\frac{ W(\gamma')+W(\gamma^{*}_{\text{loop}})+ W(\hat\gamma)}{\len(\gamma')+\len(\gamma^*_{\text{loop}})},
\end{align*}
which implies (using that $\frac{a}{c}\leq \frac{a+b}{c+d}\Rightarrow \frac{a+b}{c+d}\leq \frac{b}{d}$ with $a= W(\gamma'),c=\len(\gamma')$)
\begin{align*}
I_L=\frac{ W(\gamma^*)}{\len(\gamma^*)}\leq \frac{ W(\gamma^*_{\text{loop}})+W(\hat\gamma)}{\len(\gamma^*_{\text{loop}})}\le\frac{W(\gamma^*_{\text{loop}})}{\len(\gamma^*_{\text{loop}})}+\frac{W(\hat\gamma)}{l};
\end{align*}
the last estimate follows from the regularity of $\gamma^*_{\text{loop}}$ up to scale $l$ which holds  with probability $\ge1-L^{-1}$ provided (cf. Lemma \ref{L:18})
\begin{align*}
\ln l\gg\ln \ln L\quad\mbox{and}\quad\ln l\ll \ln ^{1/8}L.
\end{align*}
To conclude we have to estimate $W(\hat\gamma)$. Since $\hat\gamma$ has at most 10 vertices, applying Lemma~\ref{app:1} we have that $W(\hat\gamma)$ is bounded by an order 1 Gaussian, when $\hat\gamma$ is constrained to lie in a fixed ball with size of order 1. Covering $B_L$ by $L^2$ many balls and taking the supremum, yields (cf.~\eqref{m83})
\begin{align*}
W(\hat\gamma)\lesssim\ln^{1/2}L\quad\mbox{with probability $\ge1-L^{-1}$}.
\end{align*}
Since $\ln l\gg\ln\ln L$, one may absorb the factor $\ln^{1/2}L$ into the definition of $l$ and conclude.
\qed

\medskip

{\sc Proof of Lemma~\ref{T:7} (Upper bound).} We assume by contradiction that there exists a sequence $L_n\to\infty$ such that
\begin{align}\label{m192}
\lim_{n\to\infty}\frac{\mathbb{E}^{4/3}I_{L_n}}{\ln L_n}=\infty.
\end{align}
We can further assume that it satisfies the following property (with $L=L_n$)
\begin{align}\label{m190}
\frac{\E^{4/3}I_L}{\ln L}\ge\frac{\E^{4/3}I_l}{\ln l}\quad\mbox{for every}\quad l\in[2,L-1].
\end{align}
Indeed, one may define for $n\gg1$
\begin{align*}
\tilde L_n:=\sup\{L>0~|~\mathbb{E}^{4/3}I_L<n\ln L\}
\end{align*}
and pick any $L_n\in[\tilde L_n,\tilde L_n+1)$ satisfying $\mathbb{E}^{4/3}I_{L_n}\ge n\ln L_n$.

\medskip

The aim is to rerun the proof, without the a-priori estimate
\begin{align}\label{m191}
\Lambda:=(\mathbb{E}^{4/3}I_L)/\ln L\lesssim1,
\end{align}
but with the additional assumption \eqref{m190}. The upper bound \eqref{m191} affects the proof in three places, which we now analyse: the stochastic estimate in Lemma~\ref{L:7}, which is used as an input to deduce the regularity theory in Lemma~\ref{L:18}; the choice of $\kappa\le I^{-4/3}$ in \eqref{c:100}, which then enters the estimate of the statistical errors in \eqref{m193}; the contribution of the bad event in \eqref{m194}.

\medskip

Let us start by observing that under the assumption \eqref{m190}, the estimates in Lemma~\ref{L:7} and consequently in regularity theory, are free of a-priori upper bounds of $\E I_L$. Namely, with probability $\geq 1-L^{-1}$,
\begin{align*}
\sup_{z\in B_L}I_l(z)/I_L\leq C \frac{\ln^{3/4}l+\ln^{1/2}(L/l)}{\ln^{3/4}L} \mbox{ where $C$ is an absolute constant.}
\end{align*}
Indeed, the potentially uncontrolled term $\mathbb{E}I_l$ appearing in the estimate of the numerator satisfies
\begin{align*}
(\E^{4/3}I_l)/\ln l\le(\E^{4/3}I_L)/\ln L=\Lambda,
\end{align*}
and therefore it is compensated by the denominator. Hence, Lemma~\ref{L:18} still holds, with all the hidden constants independent of $\Lambda$. We note that, since the construction of the simple loop in Lemma~\ref{L:9} relies just on regularity, it is also not affected by $\Lambda$.

\medskip

Let us come to the choice of $\kappa$. In view of Lemma~\ref{L:30}, we are forced to choose $\kappa\sim(\mathbb{E}I_L)^{-4/3}=(\Lambda\ln L)^{-1}$. As an outcome of Subsection~\ref{SS:statistics}, the relative error in \eqref{m180} is estimated by the l.~h.~s.~of \eqref{m192},
\begin{align*}
(\ln^{-1}l+\theta^2)\ln^{2/3}\kappa^{-1}\sim(\ln^{-1}l+\theta^2)(\ln^{2/3}\ln L+\ln^{2/3}\Lambda).
\end{align*}

\medskip

Starting from Lemma~\ref{L:31}, we take the expected values. Note that now $\Lambda$ enters the estimate \eqref{m194} of the contribution of the bad event. Thus,
\begin{align*}
&\mathbb{E}I_L\big(1+O\big(\mathbb{P}^{1/2}(A^c)\big)\big)-\mathbb{E}I_{L/l}\\&\le\big(1+O\big((\ln^{-1}l+\theta^2)(\ln^{2/3}\ln L+\ln^{2/3}\Lambda)\big)\big)a^*(\mathbb{E}I_L)^{-1/3}\ln l+O(l^{-1}),
\end{align*}
where the hidden constants in the $O(\cdot)$ are independent of $\Lambda$ and $O(l^{-1})$ comes from the almost maximality of the simple loop $\gamma^*_{\text{loop}}$. Let us recall that $\theta=\ln l/\ln^{1/8}L$ and let us fix $\ln l=\ln^\alpha L$ for some $\alpha<\frac{1}{8}$, so that
\begin{align*}
(\ln^{-1}l+\theta^2)\ln^{2/3}\ln L\to0.
\end{align*}
Then putting only $\mathbb{E}I_{L/l}$ on the r.~h.~s.~, raising to the $\frac{4}{3}$ and using that $(1+x)^{4/3}\ge1+\frac{4}{3}x$, we get for $L\gg1$
\begin{align*}
&(\mathbb{E}I_L)^{4/3}\big(1+O(l^{-1})+\textstyle{\frac{4}{3}}(1+\ln^{2/3}\Lambda)a^*(\mathbb{E}I_L)^{-4/3}\ln l\big)\le(\mathbb{E}I_{L/l})^{4/3}\\
&\mbox{so that}\quad\Lambda\ln L\le\Lambda\ln(L/l)+\textstyle{\frac{4}{3}}(1+\ln^{2/3}\Lambda)a^*\ln l+O(\Lambda l^{-1}\ln L).
\end{align*}
Since the last inequality may be rewritten as $\Lambda\big(1-o(1)\big)\le\textstyle{\frac{4}{3}}(1+\ln^{2/3}\Lambda)a^*$, it implies a universal estimate on $\Lambda$, hence contradicting \eqref{m192}.
\qed

\medskip

{\sc Proof of Theorem \ref{T:5}. }
The main ingredient is the following consequence of Lemma~\ref{L:34}
\begin{align}\label{c:87}
\E I_{ l L}-\E I_L\lesssim(\ln l)\ln^{-1/4}L\quad\mbox{provided}~\ln l\gg\ln\ln L
\end{align}
Indeed, the assumption on $l$ implies the existence of $L=L_0<\ldots<L_N=lL$ with $L_n=l_nL_{n-1}$ and $(L_n,l_n)$ satisfying \eqref{m184}, so that from Lemma~\ref{L:34} and the monotonicity $\mathbb{E}I_{L_n}\ge\mathbb{E}I_L$, we have
\begin{align*}
\mathbb{E}I_{L_n}-\mathbb{E}I_{L_{n-1}}\lesssim(\mathbb{E}I_{L_n})^{-1/3}\ln l_n\le(\mathbb{E}I_L)^{-1/3}\ln l_n\lesssim(\ln l_n)/\ln^{1/4}L. 
\end{align*}
Summing over $n$, we deduce \eqref{c:87}.

\medskip

\emph{Proof of \eqref{c88}.}
Let $\nu=\ln l\gg\ln\ln L$. We can find $\{B_L(z_n)\}_{n=1}^N\subset B_{lL}$  disjoint balls of radius $L$ with $N\sim l^2$, so that
\begin{align*}
\mbox{let }	I_{L,n}:=\sup_{\gamma\subset z_n+\Gamma_{L,1}}\frac{W(\gamma)}{P(\gamma)},\mbox{ then $\{I_{L,n}\}_{n=1}^{N}$ are i.~i.~d.~with $I_{L,n}=_{\text{law}}I_L$}.
\end{align*}
Since $I_{lL}\ge\max_{n=1}^NI_{L,n}$, we deduce
\begin{align*}
\E\max_{n=1}^N(I_{L,n}-\E I_L)\le\mathbb{E}I_{lL}-\mathbb{E}I_L\overset{\eqref{c:87}}
{\lesssim}(\ln l)\ln^{-1/4}L=\nu\ln^{-1/4}L.
\end{align*}
In order to conclude, we need the following fact, which we prove below: If $\{X_n\}_{i=n}^N$ be i.~i.~d.~copies of a centered random variable $X$, then for  $N\gg1$
\begin{align}\label{m200}
\mathbb{P}(X\ge 2\mathbb{E}\max_{n=1}^NX_n)\le N^{-1}.
\end{align}

\medskip

Applying \eqref{m200} with $X_n:=I_{L,n}-\mathbb{E}I_L$, we deduce \eqref{c88}:
\begin{align*}
&\mathbb{P}(I_L-\mathbb{E}I_L\ge C\nu\ln^{-1/4}L)\\&\le\mathbb{P}\big(I_L-\mathbb{E}I_L\ge C\mathbb{E}\max_{n=1}^N(I_{L,n}-\mathbb{E}I_L)\big)\le N^{-1}\le e^{-c\nu}.
\end{align*}
\medskip

Here comes the proof of \eqref{m200}. We set $M_N:=\max_{n=1}^N X_n$, and observe that 
\begin{align*}
\E M_N&=\E \max\{M_{N-1},X_N \}\\
&\geq \E M_{N-1}I(M_{N-1}\geq 0 )+\E X_N I(M_{N-1}<0)=\E (M_{N-1})_+.
\end{align*}
Thus for any $\nu>0$, by Markov's inequality,
\begin{align*}
\nu^{-1}\E M_{N}&\geq\nu^{-1}\E (M_{N-1})_+\geq\PP(M_{N-1}\geq\nu)\\&=1-\PP(M_{N-1}<\nu)=1-(1-\PP(X\geq\nu))^{N-1}
\end{align*}
Now we take $\nu=2\E M_N$, then we obtain
\begin{align*}
1-(1-\PP(X\geq 2\E M_N))^{N-1}\leq \textstyle{}\frac{1}{2}.
\end{align*}
Consequently, \eqref{m200} must hold for $N\gg 1$. Indeed, if not, we obtain a contradiction from the above inequality:
\begin{align*}
\textstyle{\frac{1}{2}}\geq 1-(1-N^{-1})^{N-1}\to 1-e^{-1}\mbox{ as }N\to\infty.
\end{align*}

\medskip

\emph{Proof of \eqref{c89}.} 
Let $\ln\ln L\ll\nu=\ln l\ll\ln L$. Since this implies $\ln L\approx\ln(L/l)$, by replacing $L$ with $L/l$ in \eqref{c:87}, we have
\begin{align}
\mathbb{E}I_L-\mathbb{E}I_{L/l}\le C_0(\ln l)/\ln^{1/4}L.\label{c95}
\end{align}

\medskip

We start with the weaker statement
\begin{align}
\PP(I_L-\E I_L\leq-\nu\ln^{-1/4}L)&\leq e^{-1}\quad\mbox{provided}\quad\nu\gg\ln\ln L\label{c96}.
\end{align}
Indeed, by the right tail bound in \eqref{c88} and the fact that $I_L-\mathbb{E}I_L$ is centered,
\begin{align*}
\E (I_L-\E I_L)_-=\E (I_L-\E I_L)_+\lesssim (\ln\ln L)\ln^{-1/4}L,
\end{align*}
so that \eqref{c96} follows from Markov inequality. 

\medskip

Let us conclude the proof. As above, 
we can find $\{B_{L/l}(z_n) \}_{n=1}^{N}\subset B_L$ disjoint balls with $N\sim l^2$ such that 
\begin{align*}
\mbox{let }I_{L/l,n}:=\sup_{\gamma\subset z_n+\Gamma_{L/l,1}}\frac{W(\gamma)}{P(\gamma)},\mbox{ then $\{I_{L/l,n}\}_{n=1}^{N}$ are i.~i.~d.~with $I_{L/l,n}=_{\text{law}}I_{L/l}$}.
\end{align*}
Finally, since again $I_L\ge\max_{n=1}^NI_{L/l,n}$, using \eqref{c95} and \eqref{c96} (with $L$ replaced by $L/l$),
\begin{align*}
&\PP(I_L-\E I_L\leq-2C_0\nu\ln^{-1/4}L)\\&\leq \PP(\max_{n=1}^N I_{L/l,n}-\E I_{L/l}\leq-C_0\nu\ln^{-1/4}L)\\&=
\mathbb{P}(I_{L/l}-\E I_{L/l}\leq-C_0\nu\ln^{-1/4}L)^N\le e^{-N}\le e^{-e^{c\nu}}.
\end{align*}
\qed

\medskip

{\sc Proof of Lemma~\ref{L:28}.} \emph{Proof of (i).} Let $t^{ent}=t_0,t_1,t_2\in\bar{\mathbb{S}}_{in}$ be the first three vertices of $\gamma^*|_{\mathbb{S}_{in}}$ starting from $t^{ent}$ and let us pick the closest point $z\in\partial Q_L$ to $\gamma^*(t_2)$ such that $z$ is on the same side of $\partial Q_L$ of $\gamma^*(t^{ent})$ and $|z-\gamma^*(t_2)|\ge1$. A competitor is then given by replacing $\gamma^*|_{[t^{ent},t_2]}$ with the segment $[z,\gamma^*(t_2)]$ (and appropriately closing $\gamma^*$ along the boundary $\partial Q_L$). As in Lemma~\ref{L:12}, outside of an event of probability $\leq L^{-1}$,
\begin{align*}
2\le\len(\gamma^*|_{[t^{ent},t_2]})\le(1+\eta)|z-\gamma^*(t_2)|\quad\mbox{where}\quad\eta\lesssim\ln^{-1/4}L,
\end{align*}
which implies that $|z-\gamma^*(t_2)|>1$, namely $z$ is the projection of $\gamma^*(t_2)$ onto $\partial Q_L$, and by simple geometric considerations the conclusion on the normals in $(i)$. The same reasoning holds for $t^{ext}$.

\medskip

\emph{Proof of $(ii)$.} Assume by contradiction that $|\gamma^*(t^{ent})-z|\ll l$ for some vertex $z$ of $Q_L$. Let $S_1,S_2$ be the two sides of $\partial Q_L$ with intersection $z$, with $\gamma^*(t^{ent})\in S_1$. Consider the branch $\gamma^*_{t,l}$ with $t=t^{ent}$ and exit time $t^{out}_{t,l}$. Since this branch is almost flat and intersects $S_1$ almost perpendicularly, then $d(\gamma^*(t^{out}_{t,l}),S_2)\ll l$. As in $(i)$, let $z_2\in S_2$ be the closest point to $\gamma^*(t_{t,l}^{out})$ with the property that $|\gamma^*(t_{t,l}^{out})-z_2|\ge 1$. A competitor would then be given by replacing $\gamma^*_{t,l}$ with the segment $[\gamma^*(t^{out}_{t,l}),z_2]$. But this would lead to a contradiction since, with the same reasoning as in Lemma~\ref{L:12},
\begin{align*}
l-2\le\len(\gamma^*_{t,l})\le(1+\eta)|\gamma^*(t^{out}_{t,l})-z_2|\ll l.
\end{align*}

\medskip

\emph{Proof of $(iii)$.} Assume again by contradiction that $|\gamma^*(t^{ent})-\gamma^*(t^{ext})|\ll l$. A competitor is constructed by replacing the two branches $\gamma^*_{t^{ent},l},\gamma^*_{t^{ext},l}$ with a segment (or a small-scale modification of it) connecting the two exiting points $t^{out}_{t^{ent},l}$ and $t^{out}_{t^{ext},l}$. The conclusion follows as above.

\medskip

\emph{Proof of $(iv)$.} The strategy is the same as in $(ii)$ and $(iii)$, hence we just describe the construction of the competitor in the hypothesis that $\gamma^*_{t,l}$ does not intersect the boundary $\partial Q_L$. Let $S$ be the side of $Q_L$ such that $d(\gamma^*(t),S)=d(\gamma^*(t),Q_L)$ and pick $z_{in}\in S$ which minimizes $|z_{in}-\gamma^*(t^{in}_{t,l})|$ under the constraint $|z_{in}-\gamma^*(t^{in}_{t,l})|\ge 1$. Let $z_{out}$ be defined in the same way for $\gamma^*(t^{out}_{t,l})$. Then the competitor is given by replacing the branch $\gamma^*_{t,l}$ with the segments $[\gamma^*(t^{in}_{t,l}),z_{in}]$ and $[z_{out},\gamma^*(t^{out}_{t,l})]$.
\qed


\section{Appendix}

\subsection{Auxiliary lemmas}

\begin{lemma}[{\cite[Lemmas 5 and 14]{opw}}]\label{app:4}
For $s\in[1,\infty)$ and positive random variables $X, \{X_n\}_{n\ge1}$
\begin{align}
\label{c:85}& \mbox{if}\quad\forall \nu\geq \nu_0\:\:\ln\PP(X\geq \nu)\leq-\nu^s\quad\mbox{then}\quad\|X\|_s\lesssim \nu_0+1;\\
&\ln \PP(X\geq \nu)\lesssim -(\nu/\|X\|_s)^s\quad\text{for}~\nu\gg \|X\|_s;\label{c:86}
\end{align}
Moreover there exists $\bar X$ with $\|\bar X\|_s\le1$ such that
\begin{align}
&X_n/\|X_n\|_s\lesssim\ln^{1/s}n+\bar X\quad\mbox{for all $n\ge1$}\nonumber\\
&\mbox{which implies}\quad\|\max_{n=1}^NX_n\|_s\lesssim(\max_{n=1}^N\|X_n\|_s)\ln^{1/s}N.\label{m83}
\end{align}
\end{lemma}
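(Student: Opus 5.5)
The three claims are standard Orlicz-norm facts, and I will prove them in the order stated, using only Chebyshev's inequality and a union bound. By the definition in \eqref{m14}, $\|X\|_s\le N$ means exactly that $\mathbb{E}\exp((X/N)^s)\le e$.

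For \eqref{c:85}, I write the expectation as a tail integral. With $N:=C(\nu_0+1)$,
\begin{align*}
\mathbb{E}\exp((X/N)^s)=1+\int_0^\infty \tfrac{s\nu^{s-1}}{N^s}e^{(\nu/N)^s}\,\PP(X\ge\nu)\,d\nu .
\end{align*}
I split the integral at $\nu_0$.
\begin{itemize}
\item On $[0,\nu_0]$, bound the probability by $1$; the contribution is $e^{(\nu_0/N)^s}-1\le e^{C^{-s}}-1$.
\item On $[\nu_0,\infty)$, insert $\PP(X\ge\nu)\le e^{-\nu^s}$. Since $N\ge C$, the integrand is at most $\frac{s\nu^{s-1}}{N^s}e^{-\nu^s(1-C^{-s})}$, whose integral is $\lesssim N^{-s}\lesssim C^{-s}$.
\end{itemize}
Choosing $C$ large makes the total $\le e$.

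For \eqref{c:86}, Chebyshev's inequality applied to $\exp((X/\|X\|_s)^s)$ gives
\begin{align*}
\PP(X\ge\nu)\le e\cdot e^{-(\nu/\|X\|_s)^s}.
\end{align*}
For $\nu\gg\|X\|_s$ the factor $e$ is absorbed, since $1-(\nu/\|X\|_s)^s\le-\tfrac12(\nu/\|X\|_s)^s$.

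For the uniform bound I normalize $Y_n:=X_n/\|X_n\|_s$ and set
\begin{align*}
\bar X:=c\,\max_n\bigl(Y_n-C\ln^{1/s}(n+1)\bigr)_+ .
\end{align*}
By construction $Y_n\le C\ln^{1/s}(n+1)+\bar X/c$ for every $n$, which is the first claim once $\|\bar X\|_s\le1$ is established. I estimate the tail of $\bar X$ via the union bound and \eqref{c:86}, using $(a+b)^s\ge a^s+b^s$:
\begin{align*}
\PP\bigl(\max_n(Y_n-C\ln^{1/s}(n+1))_+\ge\nu\bigr)\le\sum_n e\,e^{-(\nu+C\ln^{1/s}(n+1))^s}\le e\,e^{-\nu^s}\sum_n(n+1)^{-C^s}\lesssim e^{-\nu^s}.
\end{align*}
This holds for all $\nu\ge0$ once $C^s>1$. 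Applying \eqref{c:85} with $\nu_0=O(1)$ (which absorbs the constant prefactor) shows $\bar X$ has $O(1)$ Orlicz norm, and choosing $c$ small gives $\|\bar X\|_s\le1$.

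Finally, \eqref{m83} follows by taking the maximum over $n\le N$ in $X_n\lesssim\|X_n\|_s(\ln^{1/s}n+\bar X)$, using the triangle inequality for $\|\cdot\|_s$ (it is a norm for $s\ge1$) and $\|1\|_s\sim1$.

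The only delicate point is the summability: $\sum_n(n+1)^{-C^s}$ converges only because $C$ is chosen large. Everything else is routine.
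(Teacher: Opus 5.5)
The paper gives no proof of this lemma; it imports it from \cite[Lemmas 5 and 14]{opw}. Your argument is correct and is the standard route: a tail integral for \eqref{c:85}, Chebyshev for \eqref{c:86}, a union bound with a logarithmic shift for the uniform bound, and then the triangle inequality for the norm $\|\cdot\|_s$. It can serve as a self-contained replacement for the citation. Three details need tidying; none is serious.

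First, when you pass from $\PP(Z\ge\nu)\le Ke^{-\nu^s}$ to an $O(1)$ bound on $\|Z\|_s$, the prefactor $K>1$ cannot be absorbed by choosing $\nu_0$. Hypothesis \eqref{c:85} demands $\ln\PP(Z\ge\nu)\le-\nu^s$ exactly, and $\ln K-\nu^s>-\nu^s$ for every $\nu$. The fix is easy: apply \eqref{c:85} to $Z/2$, since $\ln K-2\nu^s\le-\nu^s$ once $\nu^s\ge\ln K$. Alternatively, rerun your own tail-integral computation with the factor $K$ carried along.

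Second, the shift $\ln^{1/s}(n+1)$ does not give the case $n=1$ of the statement. That case reads $X_1/\|X_1\|_s\lesssim\bar X$, because $\ln 1=0$. For $n\ge2$ the two shifts are comparable, since $\ln(n+1)\le 2\ln n$. So it suffices to put $Y_1$ itself into the maximum defining $\bar X$ and use the shift $C\ln^{1/s}n$ for $n\ge2$. This only adds $e$ to the prefactor $K$, which is then handled as in the first point.

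Third, the final bound $\|\max_{n\le N}X_n\|_s\lesssim(\max_{n\le N}\|X_n\|_s)\ln^{1/s}N$ can only hold for $N\ge2$. The contribution $\|\bar X\|_s\le 1$ has to be absorbed into $\ln^{1/s}N\ge(\ln 2)^{1/s}\ge\ln 2$. This is an imprecision in the statement, not in your proof.
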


\begin{lemma}\label{app:1}
Let $\{W(x)\}_{x\in B_1}$ be a centered Gaussian process such that 
\begin{align}
W(0)=0\quad\mbox{and}\quad\E^{1/2}(W(x)-W(y))^2\leq |x-y|^{\alpha},\label{c:14}
\end{align}
for all $x,y\in B_1\subset\mathbb{R}^d$ and some $\alpha\in(0,1]$. Then
\begin{align*}
\|\sup_{x\in B_1}W(x)\|_2\lesssim_{d,\alpha}1.
\end{align*}
\end{lemma}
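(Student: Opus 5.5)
This is the standard Dudley/chaining bound combined with Gaussian concentration; the plan is to prove $\mathbb{E}\sup$ is bounded and then upgrade to the Orlicz norm. Let $\sigma:=\sup_{x\in B_1}\E^{1/2}W(x)^2$. Since $W(0)=0$, \eqref{c:14} with $y=0$ gives $\sigma\le 1$.

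\emph{Step 1: the expectation, by chaining.} For $k\ge0$ I fix a $2^{-k}$-net $N_k\subset B_1$ of cardinality $\lesssim_d 2^{dk}$, with $N_0=\{0\}$, and maps $\pi_k:B_1\to N_k$ satisfying $|\pi_k(x)-x|\le 2^{-k}$. For $x\in\bigcup_k N_k$ I telescope:
\begin{align*}
W(x)=\sum_{k\ge1}\big(W(\pi_k x)-W(\pi_{k-1}x)\big).
\end{align*}
At level $k$ there are at most $\lesssim 2^{2dk}$ distinct increments. Each is a centered Gaussian with standard deviation $\le(3\cdot2^{-k})^\alpha$ by \eqref{c:14}. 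By \eqref{m83} with $s=2$, the maximum at level $k$ has $\|\cdot\|_2\lesssim 2^{-\alpha k}\sqrt{k+1}$. Summing over $k$ gives a convergent series, so by the triangle inequality for $\|\cdot\|_2$ the supremum over the countable dense set $\bigcup_k N_k$ satisfies $\|\sup W\|_2\lesssim_{d,\alpha}1$.

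\emph{Step 2: passing to all of $B_1$.} The same chaining bound applied to increments also gives a modulus of continuity estimate. Hence the supremum over $B_1$ equals the supremum over the dense countable set, at least for a separable (continuous) version of the process. This is the implicit convention, and the hypothesis \eqref{c:14} supplies such a version via Kolmogorov's criterion.

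\emph{Step 3 (alternative route to the Orlicz bound).} Step 1 already controls $\mathbb{E}\sup_{B_1}W$. If one prefers, one can instead use Borell--TIS: the deviation $\sup W-\E\sup W$ has Gaussian tails with variance $\le\sigma^2\le1$. Combined with $0\le\E\sup W\lesssim1$ (nonnegative since $W(0)=0$), \eqref{c:85} then yields $\|\sup W\|_2\lesssim1$. Here $\sup W\ge W(0)=0$, so the variable is positive, as \eqref{c:85} requires.

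\emph{Main obstacle.} Only bookkeeping remains: the sum $\sum_k 2^{-\alpha k}\sqrt{k}$ converges for every $\alpha>0$, and the constants depend only on $d$ and $\alpha$ through the net cardinalities.
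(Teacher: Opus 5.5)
Your proof is correct, but it bounds the expectation by a different mechanism than the paper.

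The paper first uses Borell's inequality to reduce the claim to $\E\sup_{B_1}W\lesssim_{d,\alpha}1$; this is exactly your Step 3. It then bounds that expectation by a Kolmogorov/Garsia--Rodemich--Rumsey type argument:
\begin{itemize}
\item Gaussianity upgrades \eqref{c:14} to $p$-th moment bounds on the increments.
\item These control the expected $\dot W^{\alpha/2,p}$ (Slobodeckij) seminorm of $W$.
\item For $p>2d/\alpha$, the fractional Sobolev embedding turns this into a bound on $\E[W]^p_{C^{\alpha/2-d/p}}$.
\item Together with $W(0)=0$, the H\"older bound controls the supremum.
\end{itemize}

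You instead run Dudley-type chaining over dyadic nets, taking the union bound at each scale from \eqref{m83} (applied to the absolute values of the increments).

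Your route gives the Orlicz bound $\|\sup W\|_2\lesssim1$ directly, because \eqref{m83} already carries the Gaussian tails scale by scale. So Borell's inequality becomes optional, and you use only a tool already in the paper.

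The paper's route produces, in the same stroke, a H\"older-continuous version of $W$. The passage from a countable dense set to all of $B_1$ is therefore built in, whereas you handle it separately in Step 2 via Kolmogorov's criterion. Both arguments tacitly work with such a continuous version, which is the right convention since $W$ is only defined up to modification.
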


{\sc Proof.} By \eqref{c:14} and Borell's inequality, we have 
\begin{align*}
\|\sup_{x\in B_1}W(x)-\E\sup_{x\in B_1}W(x)\|_2\lesssim 1,
\end{align*}
it suffices to show that $\E\sup_{x\in B_1}W(x)\lesssim_{d,\alpha}1$. Note that by \eqref{c:14} and Gaussianity, 
\begin{align*}
\E^{1/p}|W(x)-W(y)|^p\lesssim_p |x-y|^{\alpha}\text{ for all }x,y\in B_1\text{ and for }p\geq 1.
\end{align*}
Therefore, for any $\alpha'<\alpha$, we control the $\dot{W}^{\alpha',p}$ norm of $W$ in expectation,
\begin{align*}
&\E \int_{B_1}\int_{B_1}\left(\frac{|W(x)-W(y)|}{|x-y|^{\alpha'}} \right)^p\frac{1}{|x-y|^d}dxdy\\&\lesssim_p \int_{B_1}\int_{B_1}\frac{1}{|x-y|^{p(\alpha'-\alpha)+d}}dxdy\lesssim _{p,d,\alpha',\alpha}1.
\end{align*}
Let us fix $\alpha'=\alpha/2$ and $p>2d/\alpha$, then by Sobolev embedding, we obtain
\begin{align*}
\E [W]_{C^{\alpha/2-d/p}}^p\lesssim_{d,\alpha} 1.
\end{align*}
Combined with $W(0)=0$, we see that $\E\sup_{x\in B_1}W(x)\lesssim_{d,\alpha}1$.
\qed

\begin{lemma}\label{L:37}
\begin{align*}
\mathbb{E}I_L<\infty
\end{align*}
\end{lemma}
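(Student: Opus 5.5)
We need $\mathbb{E}I_L<\infty$ for fixed $L$. This is a qualitative finiteness statement, so we do not use any multiscale argument. The plan is to bound $I_L$ by a supremum of a Gaussian process over a family of polygons with boundedly many sides, and then apply Lemma~\ref{app:1} together with a union bound.

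\emph{Step 1: reduction to polygons with boundedly many sides of bounded length.} By the subloop argument of Subsection~\ref{SS:Simple} (the decomposition $\gamma=\gamma_{\text{loop}}+\gamma'$), it suffices to bound ratios of simple pieces. Because of the cutoff, however, we argue more crudely. Fix $\gamma\in\Gamma_{L,1}$ with $N$ edges, so $\len(\gamma)\ge N$. We split $\gamma$ into closed curves $\gamma_1,\dots,\gamma_K$, each consisting of at most $M:=M(L)$ consecutive edges of $\gamma$ plus at most two closing segments inside $B_L$. Each closing segment is inserted twice with opposite orientations, so that $W(\gamma)=\sum_k W(\gamma_k)$ by additivity of winding numbers. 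Since every edge has length $\ge1$ and the closing chords have length $\le 2L$, we get
\begin{align*}
\sum_k\len(\gamma_k)\le\len(\gamma)+4LK\le \len(\gamma)+4L\,\lceil N/M\rceil\lesssim\len(\gamma)
\end{align*}
once $M\ge 4L$. Long edges (length $>2L$) cannot occur inside $B_L$. Hence
\begin{align*}
\frac{W(\gamma)}{\len(\gamma)}\lesssim \max_k\frac{W(\gamma_k)}{\len(\gamma_k)}\vee 0.
\end{align*}
Each $\gamma_k$ has at most $M+2$ vertices in $\bar B_L$ and length $\ge1$. A short closing chord could violate the side-length constraint, but this is irrelevant here, since we only need $\len(\gamma_k)\ge 1$ whenever $\gamma_k$ is nontrivial. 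A degenerate $\gamma_k$, with zero enclosed winding, contributes $W=0$.

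\emph{Step 2: Gaussian sup bound.} Let $\mathcal{C}$ be the set of closed polygons with at most $M+2$ vertices in $\bar B_L$. We parametrize $\mathcal{C}$ by the vertex tuple $x\in \bar B_L^{M+2}\subset\mathbb{R}^{2(M+2)}$, and set $W(x):=W(\gamma_x)$. Then
\begin{align*}
\mathbb{E}(W(x)-W(y))^2=\int (w(\gamma_x,z)-w(\gamma_y,z))^2dz .
\end{align*}
Moving one vertex by $\delta$ changes the winding function on a pair of thin triangles of area $\lesssim L\delta$, with winding values bounded by $M+2$. Summing over the vertices gives a H\"older modulus $\lesssim_{L,M}|x-y|^{1/2}$. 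After rescaling to the unit ball, and using $W(\text{degenerate tuple})=0$, Lemma~\ref{app:1} gives $\mathbb{E}\sup_{\mathcal{C}}|W|\lesssim_{L,M}1$. Since $\len(\gamma_k)\ge1$, we obtain $I_L\lesssim\sup_{\mathcal{C}}|W|$, and hence $\mathbb{E}I_L<\infty$.

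\emph{Main obstacle.} The delicate point is Step 1, namely making the splitting rigorous. We must check that the closing chords do not destroy the lower bound on the lengths, and that the winding-number additivity holds for non-simple curves. The cleanest way is to treat curves as $1$-currents, for which $w(\gamma_1+\gamma_2,\cdot)=w(\gamma_1,\cdot)+w(\gamma_2,\cdot)$ is linear. We also need to handle the case $N<M$, which is immediate because then $\gamma\in\mathcal{C}$ and $\len(\gamma)\ge 1$.
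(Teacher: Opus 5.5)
Your argument is correct and is essentially the paper's proof: there $\gamma$ is fanned from its first vertex $z_1$ into the $N-2$ triangles with vertices $(z_1,z_n,z_{n+1})$ (your construction with $M=1$), and the supremum of $|W|$ over triangles in $B_L$ is controlled by Lemma~\ref{app:1}. The length bookkeeping and the condition $M\ge4L$ are unnecessary, because the number $K$ of pieces satisfies $K\le N\le\len(\gamma)$, so directly $W(\gamma)=\sum_kW(\gamma_k)\le K\sup_{\mathcal{C}}|W|\le\len(\gamma)\sup_{\mathcal{C}}|W|$.
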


{\sc Proof.} Given a curve $\gamma\in\Gamma_{L,1}$ with $N$ vertices $z_1,\ldots,z_N$, one may decompose it into $N-2$ triangular curves $\gamma_n$ with vertices $(z_1,z_n,z_{n+1})$ for $n=2,\ldots,N-1$. Since $\len(\gamma)\ge N$,
\begin{align*}
\frac{W(\gamma)}{\len(\gamma)}\le\frac{1}{N}\sum_{n=2}^{N-1}W(\gamma_n)\le\sup_{\text{triangles}~T\subset B_L}|W(T)|.
\end{align*}
The set of triangles $T$ contained in $B_L$, parameterized by the coordinates of their 3 vertices, is finite dimensional and compact, so that the finiteness of the r.~h.~s.~follows from Lemma~\ref{app:1}. 
\qed

\medskip

{\sc Proof of Lemma~\ref{T:7} (Lower bound)} Let us show that
\begin{align}\label{m162}
\mathbb{E}I_L\gtrsim\ln^{3/4}L.
\end{align}
To this end, we construct a competitor by taking the maximizer $h^*\in\mathcal{H}_{L,1}$ of the rescaled action $\epsilon W-D$ over the interval $[-L/2,L/2]
\times\{0\}\subset B_L$ (cf.~\eqref{m163}). Note that the scaling
\begin{align}\label{m133}
h=\epsilon^{2/3}\hat h\quad\mbox{implies}\quad\epsilon W-D=_{\text{law}}\epsilon^{4/3}(\hat W-D),
\end{align}
so that from Theorem~\ref{T:3} and the choice $\epsilon:=\ln^{-3/4}L$, we have
\begin{align*}
(\epsilon W-D)(h^*)/L\approx a^*\quad\mbox{with overwhelming probability.}
\end{align*}
Consider the closed curve $\gamma$ given by the concatenation $[-L/2,L/2]\times\{0\}$ with the graph of $h^*$ (traveled backward). From \eqref{m65} and \eqref{m107}, one has
\begin{align}\label{m76}
W(\gamma)=W(h^*).
\end{align}
The uniform  bound of $|h^*|$ in \eqref{m75} and the fact that $\epsilon\ll1$ ensure that
\begin{align*}
\mbox{the image of $\gamma$ is contained in $B_L$ with overwhelming probability.}
\end{align*}
Together with the small-scale cutoff of $h^*$, the above implies that $\gamma\in\Gamma_{L,1}$. The identity \eqref{m76} is complemented by the estimate of the length
\begin{align*}
\len(\gamma)\le L+\int_{-L/2}^{L/2}dx\sqrt{1+(\frac{dh^*}{dx})^2}\le 2L+D(h^*),
\end{align*}
so that the ratio is lower bounded by
\begin{align}\label{m81}
\frac{W(\gamma)}{\len(\gamma)}\ge\frac{W(h^*)}{2L+D(h^*)}=\frac{1}{\epsilon}\frac{\big(\epsilon W(h^*)-D(h^*)\big)+D(h^*)}{2L+D(h^*)}\gtrsim\ln^{3/4}L.
\end{align}
with high probability. Taking expectations yields \eqref{m162}.\qed

\begin{lemma}\label{app:3}
Let $ \gamma :\mathbb{S}\to \mathbb{R}^2$  be a simple, piecewise-linear, oriented, closed curve and let $ B\subset \mathbb{R}^2 $ be an arbitrary ball.  Define 
\begin{align*}
&\mathcal{I}:=\{z\in\partial B\mid \mbox{$\gamma$ enters $B$ at $z$}\}\quad\mbox{and}\\&\mathcal{O}:=\{z\in\partial B\mid \mbox{$\gamma$ exits $B$ at $z$}\}.
\end{align*}
Then $\mathcal{I}$ and $\mathcal{O}$ have the same cardinality $k\in\mathbb{N}$ and their points alternate along $\partial B$.
\end{lemma}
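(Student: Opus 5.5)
The plan is to use a parity/crossing argument on the Jordan domain bounded by $\gamma$, together with a counting of the arcs into which $\gamma$ is cut by $\partial B$. Since $\gamma$ is piecewise linear and $\partial B$ is a circle, each edge of $\gamma$ meets $\partial B$ in at most two points unless it is tangent. So the set $\gamma^{-1}(\partial B)$ is a finite union of points and (possibly) degenerate tangential contacts. Here "enters at $z$" means that the curve passes from $\mathbb{R}^2\setminus\bar B$ to $B$ through $z$, and "exits" means the reverse. Tangential touchings, where the curve stays on one side, are not counted.

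\textbf{Step 1: $\mathcal{I}$ and $\mathcal{O}$ have the same cardinality.} Removing the finitely many crossing times from $\mathbb{S}$ leaves finitely many open arcs. Each arc lies entirely inside $B$ or entirely outside $\bar B$, up to tangential touchings, which do not change the side. Going once around $\mathbb{S}$, the side alternates at each crossing, and each crossing is an entry or an exit according to the transition. Entries and exits therefore alternate in time along $\mathbb{S}$, so $|\mathcal{I}|=|\mathcal{O}|=:k$. Simplicity of $\gamma$ guarantees that distinct crossing times give distinct points of $\partial B$, so these are genuine counts of points.

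\textbf{Step 2: alternation along $\partial B$.} By the Jordan curve theorem, $\gamma$ bounds a domain $\Omega$. Assume $\gamma$ is oriented counterclockwise, so that $\Omega$ lies to its left; the clockwise case is symmetric, with $\mathcal{I}$ and $\mathcal{O}$ swapped. Take a crossing point $z\in\partial B$. Near $z$, $\gamma$ is a segment (or two segments meeting at a vertex) passing transversally from one side of $\partial B$ to the other. Consider a small neighborhood of $z$ on $\partial B$. If $\gamma$ enters at $z$, then following $\partial B$ counterclockwise, the points just after $z$ lie on one fixed side of $\gamma$ (inside or outside $\Omega$), and the points just before $z$ lie on the opposite side. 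This is a local orientation computation: the inward direction of $B$ at $z$, the tangent of $\gamma$, and the counterclockwise tangent of $\partial B$ form a fixed orientation. At an exit the tangent of $\gamma$ is reversed, so the local pattern is reversed.

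Now partition $\partial B$ by the crossing points into open arcs; tangential touching points do not change membership in $\Omega$. Each such arc lies entirely in $\Omega$ or entirely in $\mathbb{R}^2\setminus\bar\Omega$. By the local rule, an entry point is followed, counterclockwise, by an arc of one type, say $\Omega$ after entries, and an exit point is followed by an arc of the other type. Consecutive arcs are of opposite type, since each crossing point switches membership in $\Omega$. Hence along $\partial B$ every arc after an entry is of type $\Omega$, the next crossing must switch to non-$\Omega$, and so it must be an exit. The crossings therefore alternate $\mathcal{I},\mathcal{O},\mathcal{I},\dots$ counterclockwise along $\partial B$.

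\textbf{Main obstacle.} The delicate part is Step 2's local orientation rule. One has to check carefully that the side of $\Omega$ seen just after $z$ along $\partial B$ is determined only by whether $z$ is an entry or an exit. This relies on $\gamma$ crossing $\partial B$ transversally at $z$, and on the fact that $\Omega$ locally lies to the left of $\gamma$. Non-transversal configurations need separate care: a vertex of $\gamma$ lying on $\partial B$, or an edge tangent to $\partial B$ at an actual crossing. I would handle these by a small perturbation of the radius of $B$, which preserves the combinatorics of the genuine crossings, or by directly noting that a vertex crossing still separates $\partial B$ locally into a left and a right side of $\gamma$.
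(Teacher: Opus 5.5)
Your proposal is correct and follows essentially the same route as the paper. By the Jordan curve theorem and a local orientation check, $\gamma$ entering (exiting) $B$ at $z$ corresponds, for a fixed orientation of $\partial B$, to $\partial B$ entering (exiting) the bounded component at $z$, and these points then alternate along $\partial B$; your Step 1 and your treatment of tangential touchings and vertex crossings spell out details the paper calls obvious (equal cardinality also follows directly from cyclic alternation along $\partial B$).
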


{\sc Proof of Lemma~\ref{app:3}.} Since $\gamma$ is piecewise linear and $\mathbb{S}$ is compact, then $\gamma^{-1}(\mathcal{I}\cup\mathcal{O})\subset\mathbb{S}$ consists of finitely many, isolated points.
By Jordan's theorem, $\mathbb{R}^2\backslash\{\mbox{image of $\gamma$}\}$ is made of two open connected components: a bounded one $U_1$ and an unbounded one $U_2$. Up to choosing the right orientation of $\partial B$, every intersection point $z\in\mathcal{I}\cup\mathcal{O}$ satisfies
\begin{align*}
&\mbox{$\gamma$ enters $B$ at $z$}\quad\Longleftrightarrow\quad\mbox{$\partial B$ enters $U_1$ at $z$}\\
&\mbox{$\gamma$ exits $B$ at $z$}\quad\Longleftrightarrow\quad\mbox{$\partial B$ exits $U_1$ at $z$}.
\end{align*}
Hence the claim is equivalent to proving that, along $\partial B$, the points where $\partial B$ enters and exits $U_1$ alternate. But this is obvious.
\qed

\subsection{Proof of Theorem~\ref{T:3}}

The main result in \cite{op} is a statement analogous to Theorem~\ref{T:3} where $\mathcal{H}_{l,1}$ is replaced by the smaller class
\begin{align*}
\tilde{\mathcal{H}}_{l,1}:=\{&h:[0,l]\to\mathbb{R}~\mbox{piecewise linear on intervals $[n-1,n]$}\\&\mbox{for $n=1,\ldots,l$ with}~h(0)=h(l)=0\}.
\end{align*}

\begin{theorem}[{\cite[Theorem 1]{op}}]\label{T:4}
\begin{align*}
\|\max_{h\in\tilde{\mathcal{H}}_{l,1}}(W-D)(h)/l-a^*\ln l\|_{3/2}\lesssim1.
\end{align*}
\end{theorem}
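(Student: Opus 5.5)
The plan is to prove the logarithmic asymptotics by a scale-by-scale additivity argument for the $(1+1)$-dimensional action. Write $\tilde A_l:=\max_{h\in\tilde{\mathcal{H}}_{l,1}}(W-D)(h)/l$.

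\emph{Exact splitting.} The starting point is an exact decomposition under coarse-graining. Given $h\in\tilde{\mathcal{H}}_{ml,1}$, let $h_c$ be its piecewise-linear interpolation on the grid $l\mathbb{Z}\cap[0,ml]$ and set $h_f:=h-h_c$. Then $h_f$ vanishes at the coarse nodes. On each coarse interval $h_c'$ is constant and $\int h_f'=0$, so the Dirichlet energy splits orthogonally:
\begin{align*}
D(h)=D(h_c)+D(h_f).
\end{align*}
The field term splits as $W(h)=W(h_c)+\sum_k W_k(h_f)$, where $W_k$ is the field term built on the noise between the graphs of $h_c$ and $h_c+h_f$. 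On the $k$-th interval this is the noise $\xi$ sheared by $(x,y)\mapsto(x,y-h_c(x))$. A shear preserves Lebesgue measure, so for fixed $h_c$ the field $W_k$ is again a white-noise field term, and fields on distinct intervals are independent.

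\emph{Approximate additivity of expectations.} Maximizing first over $h_f$ gives
\begin{align*}
m\,\tilde A_{ml}\approx\max_{h_c}\Big[(W-D)(h_c)/l+\sum_k\tilde A_l^{(k)}(h_c)\Big],
\end{align*}
where $\tilde A_l^{(k)}(h_c)$ is the local problem in the frame sheared by the slope of $h_c$. The coarse problem $(W-D)(h_c)$ over slopes on scale $l$ is, after the anisotropic rescaling $x=l\hat x$, $y=l^{1/2}\hat y$, a copy of the unit-scale problem over $m$ cells. From this I would derive
\begin{align*}
\mathbb{E}\tilde A_{ml}=\mathbb{E}\tilde A_m+\mathbb{E}\tilde A_l+O(1).
\end{align*}
Both directions are needed.
\begin{itemize}
\item[$(\ge)$] Take the coarse maximizer and glue the fixed-frame local maximizers onto it. The only error is the $O(1)$ interaction between the coarse noise and the local noise in the strip that both use.
\item[$(\le)$] The local maxima depend on the random coarse slopes. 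I would discretize the slopes and use independence across cells with a union bound, exactly as in Subsection~\ref{SS:statistics}. This reduces the slope dependence to an average over cells of local errors, each bounded in $\|\cdot\|_{3/2}$.
\end{itemize}
A Fekete-type argument applied to $\mathbb{E}\tilde A_{2^n}$ then gives $\mathbb{E}\tilde A_l=a^*\ln l+O(1)$. The $O(1)$ rate holds because the per-doubling errors are summable after a cancellation, namely that the local means are independent of the frame. Positivity of $a^*$ follows from the explicit single-scale gain, and finiteness from the upper splitting.

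\emph{Fluctuation bound.} Next I would show $\|\tilde A_l-\mathbb{E}\tilde A_l\|_{3/2}\lesssim1$. After one coarse-graining step at the top scale, $\tilde A_l$ is, up to errors, an average of independent local maxima. So it is enough to show that a single cell has an upper tail $\exp(-\nu^{3/2})$ and a lower tail at least as good. The exponent $3/2$ comes from optimizing a large deviation: a gain $\nu$ requires height $\sim\nu^{1/2}$ at Dirichlet cost $\sim\nu$, giving Gaussian probability $\exp(-\nu^{3}/\nu^{3/2})$. I would then pass from tail bounds to the Orlicz norm with Lemma~\ref{app:4}.

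The main obstacle is the upper bound $(\le)$ in the additivity step with only an $O(1)$ error. There the coarse slope, and hence the shear frame of every local problem, is chosen after seeing the noise. A crude union bound costs a factor $\ln^{2/3}$ of the number of frames, which is too large. I would first try to bound the slopes a priori by $\lesssim l^{-1/2}$ with stretched-exponential tails. Then only $O(1)$ frames per cell need to be discretized, and the remaining difference between nearby frames can be controlled by the Gaussian continuity estimate of Lemma~\ref{app:1}.
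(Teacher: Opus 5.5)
The paper does not prove this statement. It quotes it from \cite{op}, and the appendix only displays the tools used there: the dyadic decomposition with $D(h)=\sum_\rho D(h_\rho)$, the integer net $\mathcal{N}$, entropy bounds for the sets $\mathcal{N}_\nu$, concentration over independent cells, and regularity of almost maximizers. Your exact splitting and the shear invariance are correct. However, the step you flag as the main obstacle is resolved on a false premise. The action $W-D$ is invariant, up to the factor $l$, under the \emph{isotropic} rescaling $x=l\hat x$, $y=l\hat y$; this is precisely its criticality. Under your $y=l^{1/2}\hat y$ the Dirichlet energy is invariant but $W$ picks up $l^{3/4}$, so the coarse problem is a copy of the unit problem only in the isotropic scaling. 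Consequently the a priori bound $|h_c'|\lesssim l^{-1/2}$ is false. Slopes are scale invariant, the maximizer has slopes of order one on each dyadic scale, and these accumulate. For instance, $\mathbb{E}D(h^*)=\frac13\,l\,\mathbb{E}\tilde A_l$, because $-\mathbb{E}D(h^*)$ is a subgradient at $\lambda=1$ of $\lambda\mapsto\mathbb{E}\max(W-\lambda D)=\lambda^{-1/3}\,l\,\mathbb{E}\tilde A_l$. So slopes grow like the square root of the number of scales in mean square, and the frame seen by an $l$-cell is of order $\ln^{1/2}m$, not $l^{-1/2}$. Discretizing frames cell by cell then costs entropy $\sim\ln\ln m$ per cell, and the union bound returns an error $\sim\ln^{2/3}\ln m$ instead of $O(1)$. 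That destroys the uniform two-sided quasi-additivity your Fekete step needs. (With a uniform constant, Fekete applied to $g\pm C$ gives the $O(1)$ rate directly; no summation of per-doubling errors is involved.) Two ingredients are missing. The first is a scale-wise regularity estimate for (almost) maximizers, $\|\max_\rho D(h^*_\rho)/L\|_{3/2}\lesssim1$, which is a substantial part of \cite{op} (Lemma~14 there). The second is a scale-wise count of coarse frames: each coarse node carries the kink of a single dyadic scale, so configurations with scale-wise bounded energy, at fixed relative resolution, have entropy $O(1)$ per cell. This is the mechanism behind $\mathcal{N}_\nu$ and \cite[Lemma~6]{op}. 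The same issue affects your $(\ge)$ direction. The split of $W$ is exact, so the error there is not a strip interaction but a selection bias: $h_c^*$ is chosen using the very noise in which you glue, and the lower deviations need the same union bound over frames.

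Your fluctuation argument is circular. After coarse-graining, $\tilde A_l$ is the coarse action plus local maxima in noise-selected frames. The coarse action is a copy of $\tilde A_m$ with its own non-averaged $O(1)$ fluctuation. So $\tilde A_l$ is not an average of independent local maxima, and the single-cell tail you want is the tail of $\tilde A_{l/m}$, i.e.\ the statement itself. Closing this would need an induction over scales with non-accumulating constants, which you do not set up, and the lower tail is not addressed. A route that does close, given the scale-wise regularity, starts from $\mathrm{Var}(W(h)/l)=\int|h|/l^2\le\max|h|/l$. Borell--TIS then concentrates the maximum over $\{\max|h|\le Kl\}$ at scale $K^{1/2}$. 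This restricted maximum is $\le\tilde A_l$ and coincides with it unless $\max|h^*|>Kl$. Taking $K$ fixed gives the lower tail. Taking $K\sim\nu^{1/2}$, together with a cubic tail for $\max|h^*|/l$ (obtained via Poincar\'e as in \eqref{m118}), gives the upper tail $e^{-c\nu^{3/2}}$. This matches the pairing of the exponents $3/2$ and $3$ in Theorem~\ref{T:3}.
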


We will obtain Theorem~\ref{T:3} by approximation from this result. Note that the inclusion $\tilde{\mathcal{H}}_{l,1}\subset\mathcal{H}_{l,1}$ directly implies
\begin{align*}
\max_{h\in\tilde{\mathcal{H}}_{l,1}}(W-D)(h)\le\max_{h\in\mathcal{H}_{l,1}}(W-D)(h),
\end{align*}
so that we just need the other inequality. To this end, let us introduce the projection $\tilde\Pi:\mathcal{H}_{l,2}\to\tilde{\mathcal{H}}_{l,1}$ -- the change from 1 to 2 in the microscale will be convenient later -- 
\begin{align*}
\tilde\Pi(h)~\mbox{is the piecewise linear interpolation of $h$ at points $n=0,1,\ldots,l$.}
\end{align*}
Let $h^*$ be a maximizer in $\mathcal{H}_{l,2}$. Note that $D(\tilde\Pi(h))\le D(h)$, so that 
\begin{align}\label{m116}
(W-D)(\tilde\Pi(h^*))\ge(W-D)(h^*)+\big(W(\tilde\Pi(h^*))-W(h^*)\big).
\end{align}
Since $\tilde\Pi(h^*)$ is a competitor for the problem in $\tilde{\mathcal{H}}_{l,1}$, it implies
\begin{align*}
\max_{\tilde h\in\tilde{\mathcal{H}}_{l,1}}(W-D)(\tilde h)\ge\max_{h\in\mathcal{H}_{l,2}}(W-D)(h)+\big(W(\tilde\Pi(h^*))-W(h^*)\big).
\end{align*}
Together with Theorem~\ref{T:4}, we have deduced the following.

\begin{lemma}\label{L:22}
\begin{align*}
\|\max_{h\in\mathcal{H}_{l,2}}(W-D)(h)/l-a^*\ln l\|_{3/2}\lesssim1+\big\|W(\tilde\Pi(h^*))-W(h^*)\big\|_{3/2}/l.
\end{align*}
\end{lemma}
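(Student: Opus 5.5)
The plan is to prove the two one-sided bounds separately and combine them. Write $M_2:=\max_{h\in\mathcal{H}_{l,2}}(W-D)(h)$, $\tilde M_l:=\max_{\tilde h\in\tilde{\mathcal{H}}_{l,1}}(W-D)(\tilde h)$ and $\Delta:=W(\tilde\Pi(h^*))-W(h^*)$. For $s\ge1$ the Orlicz functional $\|\cdot\|_s$ in \eqref{m14} is a norm, and it is monotone: $|X|\le|Y|$ implies $\|X\|_s\le\|Y\|_s$. So it suffices to bound the positive part and the negative part of $X:=M_2/l-a^*\ln l$ separately in $\|\cdot\|_{3/2}$.

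\emph{Upper bound (positive part).} The inequality displayed just before the lemma gives $M_2\le\tilde M_l-\Delta$. Therefore
\begin{align*}
X_+\le\big|\tilde M_l/l-a^*\ln l\big|+|\Delta|/l.
\end{align*}
Theorem~\ref{T:4} controls the first term by $\lesssim1$ in $\|\cdot\|_{3/2}$, and the second term is exactly $\|\Delta\|_{3/2}/l$. So $\|X_+\|_{3/2}\lesssim1+\|\Delta\|_{3/2}/l$.

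\emph{Lower bound (negative part).} The inclusion $\tilde{\mathcal{H}}_{l,1}\subset\mathcal{H}_{l,2}$ fails, so a different competitor class is needed. I would use the class $\tilde{\mathcal{H}}^{(2)}$ of functions on $[0,l]$ that are piecewise linear on the intervals $[2(n-1),2n]$ and vanish at $0$ and $l$. If $l$ is odd (or non-integer), the last interval is enlarged to length in $[2,4)$; this costs only $O(1)$ after dividing by $l$, in the same way as in Theorem~\ref{T:4}. This class is contained in $\mathcal{H}_{l,2}$.

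Its maximum can be identified with the problem of Theorem~\ref{T:4} by an anisotropic rescaling. Set $h(x)=t\hat h(x/2)$, so that $(x,y)=(2\hat x,t\hat y)$. Then $D(h)=\frac{t^2}{2}D(\hat h)$ and $W(h)=_{\text{law}}(2t)^{1/2}\hat W(\hat h)$, jointly in $\hat h$. Choosing $t=2$ makes both factors equal to $2$. Hence
\begin{align*}
\max_{\tilde{\mathcal{H}}^{(2)}}(W-D)=_{\text{law}}2\,\tilde M_{l/2}.
\end{align*}
This gives
\begin{align*}
X\ge \frac{\tilde M'_{l/2}}{l/2}-a^*\ln(l/2)-a^*\ln2,\qquad\tilde M'_{l/2}=_{\text{law}}\tilde M_{l/2},
\end{align*}
and so, by Theorem~\ref{T:4} applied at scale $l/2$, $\|X_-\|_{3/2}\lesssim1$.

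Finally, adding the two parts with the triangle inequality, $\|X\|_{3/2}\le\|X_+\|_{3/2}+\|X_-\|_{3/2}\lesssim1+\|\Delta\|_{3/2}/l$, which is the claim. The only delicate step is the lower bound. The point to check there is that the scaling identity holds in law simultaneously for all competitors, so that the law of the maximum is preserved; this follows from the invariance of white noise under the map $(x,y)\mapsto(2\hat x,2\hat y)$ up to the factor $(2t)^{1/2}=2$. One also has to handle the boundary rounding when $l/2$ is not an integer.
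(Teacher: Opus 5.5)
Your proof is correct and follows the paper's route. The upper bound is exactly the projection inequality $\tilde M_l\ge M_2+\Delta$ combined with Theorem~\ref{T:4}, and the lower bound comes from comparing with a fixed-grid competitor class. Where you go further is in making the lower bound explicit through $\tilde{\mathcal{H}}_{l,2}\subset\mathcal{H}_{l,2}$ and the anisotropic rescaling to Theorem~\ref{T:4} at scale $l/2$: the paper's text only cites $\tilde{\mathcal{H}}_{l,1}\subset\mathcal{H}_{l,1}$, which does not directly cover $\mathcal{H}_{l,2}$, and uses the replacement by $\tilde{\mathcal{H}}_{l,2}$ only implicitly, as in the proof of Lemma~\ref{L:25}.
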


Let us describe the strategy to control the r.~h.~s.~. For a fixed $h\in\mathcal{H}_{l,2}$, let us decompose the error as $l$ summands
\begin{align*}
|W(\tilde\Pi(h))-W(h)|\le\sum_{n=1}^l\big|\int_{n-1}^ndx\int_{h(x)}^{\tilde\Pi(h)(x)}dy~\xi\big|.
\end{align*}
Let us pick $n$ such that $h$ and $\tilde\Pi(h)$ do not coincide in $[n,n+1]$. Since
\begin{align*}
h\in\mathcal{H}_{l,2}\quad\mbox{then}\quad\mbox{$h$ and $\tilde\Pi(h)$ coincide in $[n-1,n]\cup [n+1,n+2]$.}
\end{align*}
We notice that the red segment in Figure~\ref{f7} has length equal to the difference of two consecutive derivatives $\big|(h(n)-h(n-1))-(h(n+1)-h(n))\big|$ and therefore the $n$-th term on the r.~h.~s.~above is the integral of $\xi$ over a triangle of area $\le{\textstyle{\frac{1}{2}}}\big|(h(n)-h(n-1))-(h(n+1)-h(n))\big|$ so that
\begin{align}\label{m110}
\big\|\int_{n-1}^ndx\int_{h(x)}^{\tilde\Pi(h)(x)}dy~\xi\big\|_2\lesssim\big|h(n)-{\textstyle\frac{1}{2}}\big(h(n-1)+h(n+1)\big)\big|^{1/2}.
\end{align}
%
\begin{figure}[!ht]
\centering
\resizebox{0.6\textwidth}{!}{%
\begin{tikzpicture}[scale=0.6]
\tikzstyle{every node}=[font=\fontsize{18.2pt}{23.7pt}\selectfont]
\draw [dashed] (-2.5,9.875) -- (-2.5,2.5);
\draw [dashed] (7.5,9.875) -- (7.5,2.5);
\draw [dashed] (17.5,9.875) -- (17.5,2.5);
\draw [short] (13.75,8.75) -- (-3,3.125);
\draw [short] (13.75,8.75) -- (18.75,3.75);
\draw [dashed] (13.75,8.75) -- (17.5,10);
\draw [line width=1pt, short] (-2.5,3.25) -- (7.5,6.625);
\draw [line width=1pt, short] (7.5,6.625) -- (17.5,5);
\draw [line width=1pt, short] (17.5,5) -- (18.75,3.75);
\draw [ color={rgb,255:red,255; green,0; blue,0}, draw opacity=1, line width=1pt, short] (17.5,10) -- (17.5,5);
\node [font=\fontsize{18.2pt}{23.7pt}\selectfont, fill={rgb,255:red,255; green,255; blue,255}, fill opacity=1, text opacity=1, inner xsep=0.080cm, inner ysep=0.085cm, rounded corners=0.020cm] at (-2.5,2.75) {$n-1$};
\node [font=\fontsize{18.2pt}{23.7pt}\selectfont, fill={rgb,255:red,255; green,255; blue,255}, fill opacity=1, text opacity=1, inner xsep=0.080cm, inner ysep=0.085cm, rounded corners=0.020cm] at (7.5,2.75) {$n$};
\node [font=\fontsize{18.2pt}{23.7pt}\selectfont, fill={rgb,255:red,255; green,255; blue,255}, fill opacity=1, text opacity=1, inner xsep=0.080cm, inner ysep=0.085cm, rounded corners=0.020cm] at (17.5,2.75) {$n+1$};
\node [font=\fontsize{18.2pt}{23.7pt}\selectfont, fill={rgb,255:red,255; green,255; blue,255}, fill opacity=1, text opacity=1, inner xsep=0.080cm, inner ysep=0.085cm, rounded corners=0.020cm] at (12.75,5.375) {$\tilde\Pi(h)$};
\node [font=\fontsize{18.2pt}{23.7pt}\selectfont, fill={rgb,255:red,255; green,255; blue,255}, fill opacity=1, text opacity=1, inner xsep=0.080cm, inner ysep=0.085cm, rounded corners=0.020cm] at (12.125,8.25) {$h$};
\end{tikzpicture}
}%
\caption{Difference between $h$ and $\tilde\Pi(h)$.}
\label{f7}
\end{figure}

\medskip

We would like to show a similar bound when a deterministic $h$ is replaced by the random $h^*$. As in \cite{op} we introduce the net in the configuration space $\mathcal{H}_{l,2}$
\begin{align}\label{m122}
\mathcal{N}:=\{h\in\tilde{\mathcal{H}}_{l,1}~|~h(n)\in\mathbb{Z}~\mbox{for all}~n=0,\ldots,l\},
\end{align}
which comes with the projection operator $\Pi:\mathcal{H}_{l,2}\to\mathcal{N}$ characterized by
\begin{align}\label{m123}
h(n)\in[\Pi(h)(n)-\textstyle{\frac{1}{2}},\Pi(h)(n)+\textstyle{\frac{1}{2}})\quad\mbox{so that}\quad|h(n)-\Pi(h)(n)|\le{\textstyle{\frac{1}{2}}}.
\end{align}
We denote elements of the net $\mathcal{N}$ by $\bar h$.

\medskip

The first step is to fix some $\bar h\in\mathcal{N}$ and upgrade \eqref{m110} to a uniform estimate over all $h$ with the same projection $\Pi(h)=\bar h$. Since the triangular area defining the random variable on the l.~h.~s.~of \eqref{m110} depends only on the value $h$ at the four points $n-1,n,n+1,n+2$, this amounts to control the supremum of a Gaussian field over a compact set in a 4-dimensional space. Applying Lemma~\ref{app:1}, one upgrades \eqref{m110} to the following: For every $n$ and $\bar h$, 
\begin{align}
&E_n(\bar h):=\sup_{h:\Pi(h)=\bar h}\int_{n-1}^ndx\int_{h(x)}^{\tilde\Pi(h) (x)}dy~\xi\nonumber\\&\mbox{satisfies}\quad\|E_n(\bar h)\|_2\lesssim\big|\bar h(n)-{\textstyle\frac{1}{2}}\big(\bar h(n-1)+\bar h(n+1)\big)\big|^{1/2}+1.\label{m111}
\end{align}
With this notation, the r.~h.~s.~of Lemma~\ref{L:22} is controlled from above by
\begin{align}\label{m114}
|W(\tilde\Pi(h^*))-W(h^*)|/l\le l^{-1}\sum_{n=1}^lE_n(\Pi(h^*)).
\end{align}

\medskip

We recall the scale-by-scale decomposition for $h\in\tilde{\mathcal{H}}_{l,1}$. For $1\le \rho\in2^{-\mathbb{N}}l$
\begin{align*}
&h_{\ge\rho}:=\mbox{piecewise linear interpolation of $h$ at $n\rho$ for $n=0,\ldots,l/\rho$}\\
&\mbox{and}\quad h_\rho:=h_{\ge \rho}-h_{\ge 2\rho}\quad\mbox{so that}\quad h=\sum_{1\le \rho\le l} h_\rho.
\end{align*}
This sum is orthogonal with respect to the Dirichlet energy
\begin{align*}
D(h)=\sum_\rho D(h_\rho).
\end{align*}
Moreover, we make a connection with the estimate \eqref{m111} by noting that for odd $n$ the higher component $h_1$ can be written as
\begin{align}
&h_1(n)=h(n)-{\textstyle{\frac{1}{2}}}\big(h(n-1)+h(n+1)\big)\nonumber\\
&\mbox{so that}\quad D(h_1)/l=l^{-1}\sum_{n~\text{odd}}(h(n)-{\textstyle{\frac{1}{2}}}\big(h(n-1)+h(n+1))\big)^2.\label{m112}
\end{align}
Note that the estimate in \eqref{m111} could have been equivalently written in terms of the points $n,n+1,n+2$. Making this latter choice whenever $n$ is even and using \eqref{m112}, we obtain
\begin{align}\label{m115}
\big(l^{-1}\sum_{n=1}^l\|E_n(\bar h)\|_2^4\big)^{1/4}\lesssim(D(\bar h_1)/l)^{1/4}+1.
\end{align}
This motivates the following variant of Lemma 4 in \cite{op}.

\begin{lemma}
Suppose that for every deterministic $\bar h\in\mathcal{N}$ we are given a
family $\{E_n(\bar h)\}_{n=1,\ldots,l}$ of independent random variables.
Suppose that for every threshold $\nu\ge 2,\nu\in\mathbb{N}$ there exists a
subset $\mathcal{N}_\nu\subset\mathcal{N}$ with
\begin{align}
&\ln\#\mathcal{N}_\nu\le C_0l\ln\nu\quad\mbox{and such that},\label{m113}\\&
\big(l^{-1}\sum_{n=1}^l\|E_n(\bar h)\|_2^4\big)^{1/4}\le\nu^{1/4}\quad\mbox{for every $\bar h\in\mathcal{N}_\nu$.}\label{m119}
\end{align}
Then there exists $\bar E$ with $\|\bar E\|_{3/2}\le1$ such that for every $\nu$ and $\bar h\in\mathcal{N}_\nu$
\begin{align*}
l^{-1}\sum_{n=1}^lE_n(\bar h)\lesssim\nu^{1/4}(\ln^{1/2}\nu+l^{-1/2}\bar E).
\end{align*}
\end{lemma}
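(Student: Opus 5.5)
The plan is a concentration bound for a single $\bar h\in\mathcal{N}_\nu$, followed by a union bound over $\mathcal{N}_\nu$ and then over all thresholds $\nu$, mirroring the argument for Lemma~\ref{L:35}. Since $E_n(\bar h)$ may not be centered, I will bound the positive parts $E_n(\bar h)_+$, whose $\|\cdot\|_2$ norms are controlled by those of $E_n(\bar h)$. Set $a_n:=\|E_n(\bar h)\|_2$; hypothesis \eqref{m119} gives $l^{-1}\sum_n a_n^4\le\nu$. By the exponential Chebyshev inequality and independence, for a parameter $\lambda>0$,
\begin{align*}
\ln\mathbb{P}\Big(l^{-1}\sum_{n=1}^l E_n(\bar h)\ge t\Big)\le-\lambda l t+\sum_{n=1}^l\ln\mathbb{E}\exp(\lambda E_n(\bar h)_+).
\end{align*}
Since $\|E_n\|_2\le a_n$, we have $\mathbb{E}\exp((E_n/a_n)^2)\le e$. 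By Young's inequality $\lambda E\le (E/a)^2+\lambda^2a^2/4$, hence $\ln\mathbb{E}\exp(\lambda E_n)\le 1+\lambda^2a_n^2/4$. The Cauchy--Schwarz step $l^{-1}\sum_n a_n^2\le (l^{-1}\sum_n a_n^4)^{1/2}\le\nu^{1/2}$ then gives
\begin{align*}
\ln\mathbb{P}\Big(l^{-1}\sum_n E_n(\bar h)\ge t\Big)\le l\big(-\lambda t+1+\lambda^2\nu^{1/2}/4\big).
\end{align*}
Choosing $\lambda=2t\nu^{-1/2}$ yields the Gaussian-type bound $l(1-t^2\nu^{-1/2})$.

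Next I take the union bound over $\mathcal{N}_\nu$ using \eqref{m113}. I set $t=\nu^{1/4}(C\ln^{1/2}\nu+s\,l^{-1/2})$ with $C\gg\sqrt{C_0}+1$ and $s\ge0$. Then
\begin{align*}
t^2\nu^{-1/2}\ge C^2\ln\nu+s^2/l,
\end{align*}
and this dominates the entropy $C_0 l\ln\nu$ plus the constant $l$. The result is
\begin{align*}
\ln\mathbb{P}\Big(\exists\bar h\in\mathcal{N}_\nu:\ l^{-1}\sum_n E_n(\bar h)\ge t\Big)\le -\tfrac{C^2}{2}l\ln\nu-s^2.
\end{align*}
Summing over integers $\nu\ge2$ is harmless, since $\sum_\nu\nu^{-C^2l/2}\lesssim 2^{-C^2l/2}\le1$. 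Defining
\begin{align*}
\bar E:=\sup_{\nu\ge2}\ \sup_{\bar h\in\mathcal{N}_\nu}\ l^{1/2}\Big(\nu^{-1/4}\,l^{-1}\sum_n E_n(\bar h)-C\ln^{1/2}\nu\Big)_+,
\end{align*}
I get $\ln\mathbb{P}(\bar E\ge s)\lesssim -s^2$. That is a Gaussian tail, which is stronger than the $3/2$ tail required. The estimate \eqref{c:85} of Lemma~\ref{app:4} then gives $\|\bar E\|_{3/2}\lesssim1$, and after rescaling by a universal constant $\|\bar E\|_{3/2}\le 1$. Unwinding the definition of $\bar E$ gives exactly $l^{-1}\sum_nE_n(\bar h)\lesssim\nu^{1/4}(\ln^{1/2}\nu+l^{-1/2}\bar E)$.

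The main obstacle I expect is the moment-generating step. The independence hypothesis is only for fixed deterministic $\bar h$, and only second Orlicz norms are available, so the Young-inequality bound on $\mathbb{E}\exp(\lambda E_n)$ must be uniform in $\lambda$. It costs the additive constant $1$ per summand, which is exactly why the entropy term forces the $\ln^{1/2}\nu$ loss rather than something smaller. A second point to check is whether the $e^{l}$ factor from those constants is absorbed: it is, because $C^2\ln\nu\ge C^2\ln2\gg1$. If the fourth-moment hypothesis \eqref{m119} is meant to be used more sharply, one could replace Cauchy--Schwarz by a Bernstein-type bound, but the second-moment route above appears sufficient.
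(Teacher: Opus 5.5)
Your proof is correct and follows the same scheme as the paper: an exponential Chebyshev bound for a fixed $\bar h$ using independence and the definition of $\|\cdot\|_2$, a union bound over $\mathcal{N}_\nu$ via \eqref{m113} with the threshold $\nu^{1/4}(C\ln^{1/2}\nu+l^{-1/2}s)$, and then \eqref{c:85}. The differences are minor. For the single-$\bar h$ bound, the paper first reduces deterministically via H\"older and Jensen to $l^{-1}\sum_n(E_n/\|E_n\|_2)^2$, whose exponential moment is at most $e^l$ directly from the Orlicz norm, whereas you bound the linear moment generating function via Young's inequality. For uniformity in $\nu$, the paper removes the $\nu$-dependence of the random constant via \eqref{m83}, whereas you sum the tails over $\nu$ directly.
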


{\sc Proof.} Applying H\"older's and Jensen's inequality and \eqref{m119}, one obtains
\begin{align*}
l^{-1}\sum_{n=1}^{l}E_n(\bar h)&\le\big(l^{-1}\sum_{n=1}^l\|E_n(\bar h)\|_2^4\big)^{1/4}\big(l^{-1}\sum_{n=1}^l(\frac{E_n(\bar h)}{\|E_n(\bar h)\|_2})^{4/3}\big)^{3/4}\\&
\le\nu^{1/4}\big(l^{-1}\sum_{n=1}^l(\frac{E_n(\bar h)}{\|E_n(\bar h)\|_2})^2\big)^{1/2}.
\end{align*}
Together with a union bound, Chebyshev inequality and independence, we obtain that
\begin{align*}
&\mathbb{P}(\max_{\bar h\in\mathcal{N}_\nu}l^{-1}\sum_{n=1}^l E_n(\bar h)\ge\nu^{1/4}\mu)\\&\le\#\mathcal{N}_\nu\mathbb{P}(l^{-1}\sum_{n=1}^l(\frac{E_n(\bar h)}{\|E_n(\bar h)\|_2})^2\ge\mu^2)\le e^{-l\mu^2+l+C_0l\ln\nu}.
\end{align*}
Changing the variables to $\mu=C\ln\nu+l^{-1/2}\tilde\mu$ for some $C\gg C_0+1$ and using \eqref{c:85}, this can be equivalently written as: For all $\bar h\in\mathcal{N}_\nu$,
\begin{align*}
l^{-1}\sum_{n=1}^l E_n(\bar h)\lesssim\nu^{1/4}(\ln^{1/2}\nu+l^{-1/2}\bar E_\nu)\quad\mbox{where}\quad\|\bar E_\nu\|_{2}\lesssim1.
\end{align*}
We conclude by applying \eqref{m83} to replace $\bar E_\nu$ with $\bar E$ independent of $\nu$.
\qed

\medskip

The subsets $\mathcal{N}_\nu$ in the previous lemma can be defined by
\begin{align*}
\mathcal{N}_\nu:=\{\bar h\in\mathcal{N}~|~D(\bar h_\rho)/l\le \rho^2\nu~\mbox{for all dyadic scales}~1\le \rho\le l\}.
\end{align*}
They satisfy the requirement \eqref{m113} by applying \cite[Lemma 6]{op}. Since
\begin{align*}
\bar h\in\mathcal{N}_\nu\quad\mbox{for some}\quad\nu\lesssim1+\max_\rho D(h_\rho)/l,
\end{align*}
applying the previous lemma, we control the r.~h.~s.~of \eqref{m114} as
\begin{align*}
&l^{-1}\sum_{n=1}^lE_n(\Pi(h^*))\\&\lesssim(1+\max_\rho D(h^*_\rho)/l)^{1/4}\big(\ln(e+\max_\rho D(h^*_\rho)/l)+l^{-1/2}\bar E).
\end{align*}
Let us take the norm $\|\cdot\|_{3/2}$ on both sides. Using H\"older's inequality for Orlicz norms $\|X^{1/4}Y\|_{3/2}\lesssim\|X\|^{1/4}_{3/2}\|Y\|_{2}$ to control the product with $\bar E$, we deduce the following

\begin{lemma}\label{L:24}
\begin{align*}
&\big\|W(\tilde\Pi(h^*))-W(h^*)\big\|_{3/2}/l\\&\lesssim\big(1+\|\max_\rho D(h_\rho^*)\|_{3/2}/l\big)\ln\big(e+\|\max_{\rho}D(h^*_\rho)\|_{3/2}/l\big).
\end{align*}
\end{lemma}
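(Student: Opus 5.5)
The plan is to assemble the already-established pieces: \eqref{m114}, the variant of Lemma~4 of \cite{op} just proved, the sets $\mathcal{N}_\nu$ with the counting bound \eqref{m113} from \cite[Lemma 6]{op}, and the moment bound \eqref{m115}. The only real work is to turn the statement for deterministic $\bar h\in\mathcal{N}_\nu$ into one for the random $\Pi(h^*)$, and then to take Orlicz norms.

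\emph{Step 1: deterministic hypotheses.} Fix $\bar h\in\mathcal{N}$. The variables $E_n(\bar h)$ are not all independent, since consecutive ones share noise on overlapping strips. I split $n$ into a bounded number of residue classes modulo $4$. Within one class the strips $[n-1,n]\times\mathbb{R}$ are disjoint, so white-noise independence applies. Applying the lemma to each class separately costs only a universal factor.

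For $\bar h\in\mathcal{N}_\nu$, \eqref{m115} together with $D(\bar h_1)/l\le\nu$ (the case $\rho=1$) gives
\[
\big(l^{-1}\textstyle\sum_n\|E_n(\bar h)\|_2^4\big)^{1/4}\lesssim\nu^{1/4}.
\]
This is \eqref{m119} up to a constant, which I absorb by rescaling. The lemma therefore yields a single $\bar E$ with $\|\bar E\|_{3/2}\le1$ such that
\[
l^{-1}\textstyle\sum_nE_n(\bar h)\lesssim\nu^{1/4}\big(\ln^{1/2}\nu+l^{-1/2}\bar E\big)
\]
holds simultaneously for all $\nu$ and all $\bar h\in\mathcal{N}_\nu$.

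\emph{Step 2: locating the random $\Pi(h^*)$.} This uniformity is what allows the random substitution $\bar h=\Pi(h^*)$. I must check that $\Pi(h^*)\in\mathcal{N}_\nu$ for some
\[
\nu\lesssim1+\max_\rho D(h^*_\rho)/l.
\]
The approach is to compare $D((\Pi h^*)_\rho)$ with $D(h^*_\rho)$. The difference $\Pi h^*-\tilde\Pi h^*$ is bounded by $\tfrac12$ at integer points, so its $\rho$-component has slopes $\lesssim\rho^{-1}$ over length $l$. This gives $D\lesssim l\rho^{-2}\le l\rho^2$.

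The comparison between $\tilde\Pi h^*$ and $h^*$ is also needed. The scale decomposition of $\tilde\Pi(h^*)$ agrees with the one computed from the values $h^*(n)$, and interpolation only lowers the Dirichlet energy at each scale. This holds at least up to harmless constants; I will interpret $h^*_\rho$ as the components of $\tilde\Pi h^*$ if necessary. Choosing the smallest integer $\nu\ge2$ with this property, \eqref{m114} gives the pathwise bound
\[
l^{-1}\textstyle\sum_n E_n(\Pi h^*)\lesssim X^{1/4}\big(\ln(e+X)+l^{-1/2}\bar E\big),\qquad X:=1+\max_\rho D(h^*_\rho)/l.
\]

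\emph{Step 3: Orlicz norms.} I take $\|\cdot\|_{3/2}$ of the pathwise bound. For the first term, $X^{1/4}\ln(e+X)\lesssim X$ pointwise, so its norm is controlled by $1+\|\max_\rho D(h^*_\rho)\|_{3/2}/l$. For the second term, the Orlicz Hölder inequality $\|X^{1/4}\bar E\|_{3/2}\lesssim\|X\|^{1/4}_{3/2}\|\bar E\|_2$ applies. This requires $\bar E$ to be controlled in $\|\cdot\|_2$; the proof of the lemma indeed produces $\|\bar E_\nu\|_2$ bounds, and \eqref{m83} with $s=2$ keeps them after taking the union over $\nu$. The resulting term is dominated by the right-hand side of Lemma~\ref{L:24}.

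\emph{Main obstacle.} I expect the delicate step to be Step 2: the scale-by-scale energy comparison between $\Pi h^*$ and $h^*$ (a continuous element of $\mathcal{H}_{l,2}$). This is where the constraint of microscale $2$ and the coincidence of $h^*$ with $\tilde\Pi h^*$ on neighboring intervals must be used, to keep every error of the admissible form $\rho^2\nu$.
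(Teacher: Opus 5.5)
Your proposal is correct and follows the paper's own route. You bound via \eqref{m114}, apply the variant of \cite[Lemma 4]{op} on the sets $\mathcal{N}_\nu$ using $\Pi(h^*)\in\mathcal{N}_\nu$ for some $\nu\lesssim1+\max_\rho D(h^*_\rho)/l$, and finish with the Orlicz--H\"older bound $\|X^{1/4}\bar E\|_{3/2}\lesssim\|X\|_{3/2}^{1/4}\|\bar E\|_2$. Your explicit check of that membership, which the paper only asserts, is sound. So is your remark that $\bar E$ must be controlled in $\|\cdot\|_2$, which follows from \eqref{m83} with $s=2$.

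Two small corrections. First, each $E_n(\bar h)$ only involves $\xi$ on the strip $[n-1,n]\times\mathbb{R}$; these strips are essentially disjoint, so the $E_n(\bar h)$ are already independent and the residue-class splitting is unnecessary, though harmless. Second, Step 2 is not the delicate point you expect: $h^*_\rho=(\tilde\Pi h^*)_\rho$ exactly for dyadic $\rho\ge1$, and the microscale-$2$ constraint is used only earlier, in \eqref{m110}--\eqref{m111}.
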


We are led to study the regularity of $h^*$ and we would like to prove that
\begin{align*}
\|\max_\rho D(h^*_\rho)/l\|_{3/2}\lesssim1.
\end{align*}
Note that the l.~h.~s.~does not change if we write $\tilde\Pi(h^*)$ in place of $h^*$. The above regularity has been proven in \cite{op} when $h^*$ is replaced by the maximizer $\tilde h^*$ of $W-D$ in the class $\tilde{\mathcal{H}}_{l,1}$. We will take the following route:
\begin{itemize}
\item We show that $\tilde\Pi(h^*)$ is in fact an almost-maximizer of $W-D$ in $\mathcal{H}_{l,1}$;
\item We apply the regularity for almost maximizers in \cite[Lemma 14]{op}.
\end{itemize}

\medskip

Let us show the first point starting from \eqref{m116}. The gap between the optimal action in $\tilde{\mathcal{H}}_{l,1}$ and the one of $\tilde\Pi(h^*)$ is controlled by
\begin{align*}
\max_{h\in\tilde{\mathcal{H}}_{l,1}}&(W-D)(h)-(W-D)(\tilde\Pi(h^*))\\
&\le\max_{h\in\tilde{\mathcal{H}}_{l,1}}(W-D)(h)-\max_{h\in\mathcal{H}_{l,2}}(W-D)(h)+\big(W(h^*)-W(\tilde\Pi(h^*))\big).
\end{align*}
On the r.~h.~s.~we can replace $\mathcal{H}_{l,2}$ by $\tilde{\mathcal{H}}_{l,2}$ by monotonicity. Adding and subtracting $a^*\ln l$ and applying Theorem~\ref{T:4} yields the following result, which we interpret as a almost-optimality statement for $\tilde\Pi(h^*)$ in $\tilde{\mathcal{H}}_{l,1}$.

\begin{lemma}\label{L:25}
\begin{align*}
&\|\max_{h\in\tilde{\mathcal{H}}_{l,1}}(W-D)(h)/l-(W-D)(\tilde\Pi(h^*))/l\|_{3/2}\\&\lesssim 1+\big\|W(h^*)-W(\tilde\Pi(h^*))\big\|_{3/2}/l.
\end{align*}

\end{lemma}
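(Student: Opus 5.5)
The plan is to derive Lemma~\ref{L:25} directly from \eqref{m116}, the two monotonicity relations between the configuration classes, and Theorem~\ref{T:4} applied at two length scales. Write $M_1:=\max_{\tilde{\mathcal{H}}_{l,1}}(W-D)$, $M_2:=\max_{\mathcal{H}_{l,2}}(W-D)=(W-D)(h^*)$ and $\tilde M_2:=\max_{\tilde{\mathcal{H}}_{l,2}}(W-D)$.

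\textbf{Step 1: a pathwise bound on the gap.} By \eqref{m116},
\begin{align*}
M_1-(W-D)(\tilde\Pi(h^*))\le M_1-M_2+\big(W(h^*)-W(\tilde\Pi(h^*))\big).
\end{align*}
The left-hand side is nonnegative, because $\tilde\Pi(h^*)\in\tilde{\mathcal{H}}_{l,1}$. Hence it suffices to bound the $\|\cdot\|_{3/2}$ norm of the right-hand side divided by $l$.

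\textbf{Step 2: reduction to a comparison of two maxima.} Here $\tilde{\mathcal{H}}_{l,2}$ means functions that are piecewise linear on the intervals $[2(n-1),2n]$, so $\tilde{\mathcal{H}}_{l,2}\subset\mathcal{H}_{l,2}$ and therefore $M_2\ge\tilde M_2$. This gives
\begin{align*}
0\le M_1-(W-D)(\tilde\Pi(h^*))\le (M_1-\tilde M_2)+\big|W(h^*)-W(\tilde\Pi(h^*))\big|.
\end{align*}
Dividing by $l$ and using the triangle inequality for the Orlicz norm, only $\|(M_1-\tilde M_2)/l\|_{3/2}\lesssim1$ remains to be shown.

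\textbf{Step 3: the comparison via Theorem~\ref{T:4}.} Add and subtract $a^*\ln l$ inside $(M_1-\tilde M_2)/l$. Theorem~\ref{T:4} gives $\|M_1/l-a^*\ln l\|_{3/2}\lesssim1$ directly.

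For $\tilde M_2$, rescale the problem on $[0,l]$ with grid spacing $2$ to one on $[0,l/2]$ with grid spacing $1$. Set $x=2\hat x$ and $h(x)=t\hat h(\hat x)$. Then the Dirichlet energy becomes $D(h)=\tfrac{t^2}{2}\hat D(\hat h)$, while by the scaling of white noise $W(h)=_{\text{law}}(2t)^{1/2}\hat W(\hat h)$.

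Choosing $t=2^{1/3}$ makes the two prefactors equal, both being $2^{2/3}$. This yields
\begin{align*}
\tilde M_2=_{\text{law}}2^{2/3}\max_{\tilde{\mathcal{H}}_{l/2,1}}(W-D).
\end{align*}
Theorem~\ref{T:4} at scale $l/2$ then gives
\begin{align*}
\tilde M_2/l=2^{-1/3}a^*\ln(l/2)+O_{\|\cdot\|_{3/2}}(1).
\end{align*}
The prefactor $2^{-1/3}\ne1$ is a problem, because it would leave an error of order $\ln l$.

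\textbf{Main obstacle: the scale prefactor.} The deterministic $O(\ln l)$ mismatch from Step 3 must be avoided. The fix I would try first is to replace $\tilde{\mathcal{H}}_{l,2}$ by a class that contains $\tilde{\mathcal{H}}_{l,1}$-type functions at the same scale.

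A natural choice is the class of functions in $\tilde{\mathcal{H}}_{l,1}$ that are linear on the pairs $[2n-2,2n]$. This class is contained in $\mathcal{H}_{l,2}$, and it is exactly $\tilde{\mathcal{H}}_{l,2}$. The trouble is that its maximum genuinely carries the constant $2^{-1/3}a^*$, so the pathwise argument above is not enough.

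What should be used instead is a comparison at the same level of coarse-graining, via the regularity input of \cite{op}. The maximizer $\tilde h^*$ of $M_1$ has $\|\max_\rho D(\tilde h^*_\rho)/l\|_{3/2}\lesssim1$. In particular its finest component has $D(\tilde h^*_1)/l=O(1)$.

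Interpolating $\tilde h^*$ at even points then produces a competitor $\Pi_2(\tilde h^*)\in\mathcal{H}_{l,2}$. Its Dirichlet energy is lower, $D(\Pi_2(\tilde h^*))\le D(\tilde h^*)$. Its field term changes by a sum of triangle integrals, which is controlled as in \eqref{m110}--\eqref{m115} by $(D(\tilde h^*_1)/l)^{1/4}+1$ per unit length, up to a logarithmic factor from the net argument. Consequently $M_1-M_2\le(W(\tilde h^*)-W(\Pi_2(\tilde h^*)))$, and this has $\|\cdot\|_{3/2}/l\lesssim1$.

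The proof therefore uses this bound on $M_1-M_2$, which is where the real work lies, in place of the comparison with $\tilde M_2$ in Steps 2 and 3.
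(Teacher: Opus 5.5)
Your Steps 1--3 are exactly the paper's argument (start from \eqref{m116}, pass from $\mathcal{H}_{l,2}$ to $\tilde{\mathcal{H}}_{l,2}$ by monotonicity, add and subtract $a^*\ln l$, apply Theorem~\ref{T:4}). The paper leaves the rescaling to microscale $1$ implicit. These steps already prove the lemma: the ``main obstacle'' comes from an arithmetic slip.

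With $t=2^{1/3}$ the two prefactors are $t^2/2=2^{-1/3}$ and $(2t)^{1/2}=2^{2/3}$, which are not equal. Equating $t^2/2=(2t)^{1/2}$ gives $t^3=8$, i.e.\ $t=2$, with common prefactor $2$. Hence
\begin{align*}
\tilde M_2=_{\text{law}}2\max_{\tilde{\mathcal{H}}_{l/2,1}}(W-D),\qquad\mbox{so}\qquad\frac{\tilde M_2}{l}=_{\text{law}}\frac{1}{l/2}\max_{\tilde{\mathcal{H}}_{l/2,1}}(W-D).
\end{align*}
Since $\|\cdot\|_{3/2}$ only depends on the law, Theorem~\ref{T:4} at scale $l/2$ gives $\|\tilde M_2/l-a^*\ln(l/2)\|_{3/2}\lesssim1$. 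Because $a^*\ln2=O(1)$, this yields $\|\tilde M_2/l-a^*\ln l\|_{3/2}\lesssim1$. Together with $\|M_1/l-a^*\ln l\|_{3/2}\lesssim1$ and the triangle inequality, you get $\|(M_1-\tilde M_2)/l\|_{3/2}\lesssim1$, which is all Step 2 needed.

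A quick sanity check: the action per unit length $(W-D)/l$ is invariant in law under isotropic dilations $(x,y)\mapsto(\lambda x,\lambda y)$. Doubling the microscale therefore only removes $\ln 2$ from the number of scales and cannot change the constant $a^*$. In particular, your assertion that the maximum over $\tilde{\mathcal{H}}_{l,2}$ ``genuinely carries the constant $2^{-1/3}a^*$'' is false.

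The replacement route you adopt is not wrong and could be completed. It bounds $M_1-M_2$ by $W(\tilde h^*)-W(\Pi_2(\tilde h^*))$ for the $\tilde{\mathcal{H}}_{l,1}$-maximizer $\tilde h^*$. However, it is far heavier than necessary. Because $\tilde h^*$ is random, the deterministic triangle bound \eqref{m110} does not apply directly. You would need the regularity $\|\max_\rho D(\tilde h^*_\rho)/l\|_{3/2}\lesssim1$ from \cite{op}, plus the full net and union-bound machinery of the appendix. That is essentially redoing Lemma~\ref{L:24} with $\tilde h^*$ in place of $h^*$, and as written this part is only a sketch. The paper's proof uses nothing beyond monotonicity, the scaling above, and Theorem~\ref{T:4} at the two scales $l$ and $l/2$. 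You should simply keep Steps 1--3 with the corrected scaling and drop the rest.
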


The last ingredient is a regularity result for almost maximizers, proved in \cite[Lemma 14]{op}: There exists a random variable $\bar D$ with $\|\bar D\|_{3/2}\le1$ such that for every $\tilde h\in\tilde{\mathcal{H}}_{l,1}$,
\begin{align*}
&\max_\rho D(\tilde h_\rho)/l\lesssim\bar D+\Lambda(\tilde h)\\&\mbox{where}\quad\Lambda(h):=\max_{\tilde h'\in\tilde{\mathcal{H}}_{l,1}}(W-D)(\tilde h')-(W-D)(\tilde h)
\end{align*}
is the energy deficit. Let us now prove Theorem~\ref{T:3}. Using Lemma~\ref{L:25}, we deduce that
\begin{align*}
\|\max_\rho D(h^*_\rho)/l\|_{3/2}=\|D(\tilde\Pi(h^*))/l\|_{3/2}\lesssim1+\big\|W(h^*)-W(\tilde\Pi(h^*))\big\|_{3/2}/l.
\end{align*}
Substituting this result in Lemma~\ref{L:24} and using that the function $x-Cx^{1/4}\ln(e+x)$ is coercive, we get
\begin{align*}
\|W(h^*)-W(\tilde\Pi(h^*))\|_{3/2}\lesssim1\quad\mbox{and thus}\quad\|\max_\rho D(\tilde\Pi(h^*)_\rho)\|_{3/2}/l\lesssim1.
\end{align*}
The former estimate, together with Lemma~\ref{L:22}, gives the first part of Theorem~\ref{T:3}. By summing over the scales, the latter gives
\begin{align}\label{m117}
\|D(\tilde\Pi(h^*))/l\|_{3/2}\lesssim\ln l.
\end{align}
Moreover, together with the Poincaré inequality
\begin{align*}
\max_{x\in[0,l]}|h(x)|/l\le\sum_{\rho}\max_{x\in[0,l]}|h_\rho(x)|/l\overset{\text{Poincaré}}{\lesssim}\sum_{\rho}(l/\rho)^{-1}(D(h_\rho)/l)^{1/2}
\end{align*}
it also yields
\begin{align}\label{m118}
\|\max_{x\in[0,l]}|\tilde\Pi(h^*)(x)|/l\|_3\lesssim\sum_{\rho}(l/\rho)^{-1}\|D(\tilde\Pi(h^*)_\rho)/l\|_{3/2}^{1/2}\lesssim1.
\end{align}
Finally, since $h^*\in\mathcal{H}_{l,2}$ -- note the microscopic scale 2 -- with the same reasoning that led to \eqref{m110}, one has
\begin{align*}
&\max_{x\in[0,l]}|h^*(x)-\tilde\Pi(h^*)(x)|/l\lesssim (D(\tilde\Pi(h^*)_1)/l)^{1/2}\quad\mbox{and}\\&|D(h^*)-D(\tilde\Pi(h^*))|\lesssim D(\tilde\Pi(h^*)_1).
\end{align*}
Therefore, from \eqref{m117} and \eqref{m118} we obtain that the same bounds also hold for $h^*$, hence the second part of Theorem~\ref{T:3}.

\section{Acknowledgements}

The authors thank C. Wagner and A. Wachtel for valuable discussions. 


\end{document}